\documentclass[11pt,letterpaper]{amsart}
\usepackage{xcolor}
\usepackage{microtype}
\usepackage{caption}
\usepackage{amssymb}
\usepackage{amsmath,amsthm,enumerate, geometry}
\usepackage{psfrag}
\usepackage{amscd}
\usepackage{xspace}
\usepackage[all]{xy} 
\usepackage{longtable}
\usepackage{xspace}
\usepackage{pgf}
\usepackage{tikz}
\usetikzlibrary{cd}
\usepackage{stmaryrd}
\usepackage[backref=page]{hyperref}
\hypersetup{
 colorlinks,
 linkcolor=blue}

\usepackage{comment}


\DeclareMathOperator{\ch}{ch}
\DeclareMathOperator{\td}{td}


\def\cf{\textit{cf.}\kern.3em}

\def\resp{\textit{resp.}\kern.3em}
\renewcommand{\k}{\kern2pt}

\numberwithin{equation}{section} \makeatletter
 \makeatother


\let\emptyset\varnothing

\DeclareMathOperator{\rk}{rk}

\DeclareMathOperator{\Aut}{Aut}

\DeclareMathOperator{\id}{id}
\DeclareMathOperator{\im}{Im}

\DeclareMathOperator{\codim}{cd}

\DeclareMathOperator{\ext}{Ext}
\DeclareMathOperator{\Obj}{Obj}
\DeclareMathOperator{\Mor}{Mor}
\DeclareMathOperator{\Int}{Int}
\DeclareMathOperator{\sInt}{SInt}
\DeclareMathOperator{\chains}{chains}
\DeclareMathOperator{\nex}{next}
\DeclareMathOperator{\cu}{CU}
\DeclareMathOperator{\fu}{FU}
\DeclareMathOperator{\leg}{leg}


\newtheorem{construction}[equation]{Construction}
\newtheorem{proposition}[equation]{Proposition}
\newtheorem{theorem}[equation]{Theorem}
\newtheorem*{theorem*}{Theorem}
\newtheorem{corollary}[equation]{Corollary}
\newtheorem{lemma}[equation]{Lemma}
\theoremstyle{definition}
\newtheorem{definition}[equation]{Definition}

\newtheorem{remark}[equation]{\textbf{Remark}}
\newtheorem{notation}[equation]{\textbf{Notation}}

\newtheorem{example}[equation]{\textbf{Example}}
\newtheorem{hypothesis}{Hypothesis}
\newtheorem{axiom}{Axiom}

\newcommand{\w}{\mathsf{w}}


\newcommand{\LL}{\mathbb L}

\newcommand{\catJ}{{\mathfrak{C}}_{g,n}(\phi)}
\newcommand{\catJtilde}{\widetilde{\mathfrak{C}}}
\newcommand{\catJtildeE}{\widetilde{\mathfrak{C}}_E}
\newcommand{\catJtildeY}{\widetilde{\mathfrak{C}}_Y}

\newcommand{\Mb}[1]{\overline{\mathcal{M}}_{#1}}
\newcommand{\Mmb}[2]{\overline{\mathcal{M}}_{#1,#2}}

\newcommand{\J}[2]{{\mathcal{J}}^{#1}_{#2}}

\newcommand{\Jb}[2]{\overline{\mathcal{J}}^{#1}_{#2}}
\newcommand{\Jmb}[3]
{\overline{\mathcal{J}}^{#1}_{#2,#3}}

\newcommand{\Cb}[1]{\overline{\mathcal{C}}_{#1}}
\newcommand{\Cmb}[2]{\overline{\mathcal{C}}_{#1,#2}}

\newcommand{\cat}{\mathfrak{C}}
\newcommand{\oMor}{\overline{\Mor}}
\newcommand{\lInt}{\overline{\Int}}
\newcommand{\rInt}{\widetilde{\Int}}
\newcommand{\ol}{\overline}
\newcommand{\jphitilde}{\widetilde{\mathcal{J}}^d_{g,n}(\phi^+, \phi^-)}
\newcommand{\jphitildeP}{\widetilde{\mathcal{J}}^d_{g,n+1}(\phi^+, \phi^-;P)}
\newcommand{\parts}{\mathcal{P}}
\newcommand{\F}{\mathcal{F}}
\newcommand{\col}{\colon}
\newcommand{\mf}{\mathbf{m}}
\newcommand{\nf}{\mathbf{n}}
\newcommand{\jtilde}
{\widetilde{\mathcal{J}}}
\newcommand{\Div}{\operatorname{Div}}

\newcommand{\bl}{\operatorname{Bl}}






\usepackage{color}
\definecolor{forestgreen}{rgb}{0.13, 0.55, 0.13}

\newcounter{note}

\begin{document}

\def\fcj{fine compactified Jacobian\xspace}
\def\fcuj{fine compactified universal Jacobian\xspace}

\def\fcjs{fine compactified Jacobians\xspace}

\def\fcujs{fine compactified universal Jacobians\xspace}

\def\gf{vine function\xspace}

\def\gfs{vine functions\xspace}

\def\gc{full forest\xspace}

\def\gcs{full forests\xspace}

\date{}
\title{Wall crossing of universal Brill-Noether classes}
\author{Alex Abreu}
\address{Universidade Federal Fluminense,
Instituto de Matemática}
\email{alexbra1@gmail.com}
\author{Nicola Pagani}
\address{Department of Mathematical Sciences, University of Liverpool}
\email{pagani@liverpool.ac.uk}

\begin{abstract}
We give an explicit graph formula, in terms of decorated boundary strata classes, for the wall--crossing of universal Brill-Noether classes $\w_d(\phi)$. 

More precisely, fix $n>0$ and $d<g$, and  two stability conditions $\phi^+$ and $\phi^-$ for degree~$d$ compactified universal (over $\Mmb gn$) Jacobians that lie on opposite sides of a stability hyperplane. Our main result is a formula for the difference between $\w_d(\phi^+)$ and the pullback of $\w_d(\phi^-)$ along the (rational) identity map $\mathsf{Id} \colon \Jmb dgn (\phi^+) \dashrightarrow \Jmb dgn (\phi^-)$. The calculation involves constructing a resolution of the identity map by means of a sequence of blowups. 
\end{abstract}
\maketitle
\setcounter{tocdepth}{2}
\tableofcontents

\section{Introduction}

The  Brill--Noether theory of line bundles on nonsingular algebraic curves is a classical pillar of XIX century algebraic geometry, which has been rediscovered and reused to prove important contemporary results. Broadly speaking, the theory is about studying the space of line bundles of a fixed degree having a  fixed  number of linearly independent global sections (see \cite{acgh} and references therein for a survey of the classical results).

For  fixed integers $g,n$  (we will assume for uniformity of notation, that $g \geq 2$ and $ n \geq 1$), and $d$ there exists a universal Jacobian  $\mathcal{J}_{g,n}^d \to \mathcal{M}_{g,n}$, a moduli space that parameterizes isomorphism classes of degree~$d$ line bundles over smooth, $n$-pointed curves of genus $g$. From now on we assume $d<g$ and  define the universal  Brill--Noether class $\mathsf{w}_d$ as the fundamental class in $\mathcal{J}^d_{g,n}$ of the locus $\mathsf{W}_d$ of line bundles that admit a nonzero global section. This locus has fiberwise codimension $g-d$ over $\mathcal{M}_{g,n}$ and it is empty for $d<0$. In this paper we study extensions of this class to different compactifications of the universal Jacobian. 

The moduli space $\mathcal{M}_{g,n}$ admits a natural, modular and well-studied compactification $\Mmb{g}{n}$ obtained by adding (Deligne--Mumford) \emph{stable} pointed curves. On the other hand, there are several natural compactifications of $\mathcal{J}_{g,n}^d$ over $\Mmb{g}{n}$. In the words of Oda--Seshadri \cite{oda79}, this should not be seen as a drawback of the theory, but rather a merit.

In \cite{kp3} Kass--Pagani constructed an affine space of stability conditions $V_{g,n}^d$ with an explicit hyperplane arrangement, with the property that every $\phi\in V_{g,n}^d$ produces a compactification $\Jmb{d}{g}{n}(\phi)$ of the universal Jacobian, with good properties (it is a nonsingular DM stack) when $\phi$ is not on a hyperplane. This space comes with a natural origin --- a canonical stability --- and so far most of the attention has been devoted to compactified Jacobians corresponding to this particular value (or to its perturbations when the latter belongs to some hyperplanes), see \cite{gruzak}, \cite{hmpps}.

In this paper we study how the Brill--Noether classes $\mathsf{w}_d$, suitably extended to classes $\mathsf{w}_d(\phi)$ on  $\Jmb{d}{g}{n}(\phi)$, vary with $\phi$. What we mean by this is the following: for different stability conditions $\phi_1, \phi_2$, the identity on the common open set $\mathcal{J}^d_{g,n}$ of line bundles on smooth curves defines a rational map \[\mathsf{Id} \colon \Jmb{d}{g}{n}(\phi_1) \dashrightarrow  \Jmb{d}{g}{n}(\phi_2),\] and we can then compute the difference $\mathsf{w}_d(\phi_1) - \mathsf{Id}^* \mathsf{w}_d(\phi_2)$. Recall that a rational map induces a pullback at the level of Chow groups, defined via the correspondence induced by the closure of its graph; that is, considering the diagram
\[
\begin{tikzcd}
     & \Gamma_{\mathrm{Id}} \subseteq  \Jmb{d}{g}{n}(\phi_1)\times   \Jmb{d}{g}{n}(\phi_2) \ar[dl, "p_1"] \ar[dr, "p_2"] & \\
      \Jmb{d}{g}{n}(\phi_1) \ar[rr, dashed, "\mathsf{Id}"]  &&  \Jmb{d}{g}{n}(\phi_2),
\end{tikzcd}
\]
 define $\mathsf{Id}^* \mathsf{w}_d(\phi_2)=p_{1 *} \Big( p_2^*(\mathsf{w}_d(\phi_2)) \cap \big[\overline{ \Gamma_{\mathrm{Id}}}\big] \Big)$, where  $\overline{ \Gamma_{\mathrm{Id}}}$ denotes the closure of the graph of the identity map inside the product.

 By ``compute'', we mean produce an explicit ``graph formula'', as in the case of tautological classes on the moduli space of curves $\Mmb{g}{n}$, where all such classes can be expressed as linear combinations of ``decorated boundary strata'' (see \cite{pandhacalculus}).

While an established theory of a tautological ring for  $\Jmb{d}{g}{n}(\phi)$ is not yet available (though significant steps forward have been recently made, see \cite{yin}, \cite{bhpss}, \cite{hmpps}), there are several natural classes on each compactified universal Jacobian, and ``decorated boundary strata classes'', supported on the boundary of $\Jmb{d}{g}{n}(\phi)$, may be defined in complete analogy to the case of $\Mmb{g}{n}$. In fact, an  underlying motivation for our work is to develop a categorical and wall--crossing framework for a theory of tautological classes over compactified universal Jacobians. The Brill--Noether classes are, in our view, the most natural geometric classes where to start testing these ideas. We will discuss in \ref{subsec: motiv} some more context and motivation for our calculation.

We now explain what we mean by ``a suitable extension'' for the class $\mathsf{w}_d$. One possible approach is to take the class of the Zariski closure of $\mathsf{W}_d$, but this is very hard to control, and it does not have good formal properties (for example, it does not commute with base change). Another approach is to consider sheaves in $\Jmb{d}{g}{n}(\phi)$ that admit a nonzero global section, but the locus of those sheaves is, in general, not of the expected dimension and not equidimensional. Our extension instead is by means of the Thom--Porteous' formula. By the universal property, there is a tautological (or Poincar\'e) sheaf $\mathcal{L}_{\text{tau}}(\phi)$ on the universal curve $\pi \colon \Cmb{g}{n}(\phi) \to \Jmb{d}{g}{n}(\phi)$. We define the extension as the degeneracy class
\begin{equation} \label{classes}
\mathsf{w}_d(\phi):= c_{g-d} (-R^{\bullet} \pi_\ast \mathcal{L}_{\text{tau}}(\phi)),
\end{equation}
as in \cite[Chapter~14]{fulton} (see Subsection~\ref{subsec: BN} for details). By the Thom--Porteous formula (see \emph{loc.cit.}), the  restriction of $\mathsf{w}_d(\phi)$ to $\mathcal{J}^d_{g,n}$ equals the  original Brill--Noether class $\mathsf{w}_d$ (after capping with the fundamental class). We compare \eqref{classes} with the class of the Zariski closure in Proposition~\ref{BNisclosure}. The class~\eqref{classes} is supported on the universal Brill--Noether locus, but in general the latter does not have the expected codimension, hence its fundamental class does not coincide with~\eqref{classes} (more details in  Proposition~\ref{BNisnotexpected}).

In this paper we assume that $\phi^+$ and $\phi^-$ are on opposite sides of a stability hyperplane (Definition~\ref{Def: opposite sides}), and we give an explicit graph formula for the difference \[\mathsf{w}_d(\phi^+) - \mathsf{Id}^* \mathsf{w}_d(\phi^-).\] In order to achieve this, we first produce a nonsingular resolution of the identity
\[
\begin{tikzcd}[column sep=scriptsize]
&\jphitilde \ar[dl, "p"] \ar[dr, "p_-"]& \\
\Jb{d}{g,n}(\phi^+)\ar[rr, dashed, "\mathsf{Id}"] & &\Jb{d}{g,n}(\phi^-)
\end{tikzcd}
\]
by an explicit sequence of blowups of $\Jb{d}{g,n}(\phi^+)$. We use this resolution to give, in Theorem~\ref{maintheorem}, an explicit and closed graph formula for the difference $p^*(\mathsf{w}_d(\phi^+))-p_-^*(\mathsf{w}_d(\phi^-))$ in the cohomology of $\jphitilde$. Finally, we calculate the pushforward of that formula via $p$ to write a formula (again a graph formula, explicit and closed) for the difference $\mathsf{w}_d(\phi^+) - \mathsf{Id}^* \mathsf{w}_d(\phi^-)$.

Our construction of $\jphitilde$ and our formulas are complicated by the fact that, for some of the hyperplanes, the locus where the identity is undefined fails to be irreducible. In those cases, the space $\jphitilde$ is constructed as an explicit sequence of blowups along centers that have transversal self-intersection, and a large part of our paper is devoted to this construction.

In this introduction we describe the particular case of our construction and formula when the indeterminacy locus is irreducible (this  occurs in many cases, and in some sense in most cases, as long as $n>1$).  Then the indeterminacy locus ${\mathcal{J}}_{\beta}'\subset \Jmb dgn (\phi^+)$ generically parameterizes curves with $2$ nonsingular components of genera, say, $g_X$ and $g_Y$, carrying markings $S$ and $S^c$, and joined at a certain number of nodes, say $t$, together with line bundles of some fixed bidegree, say, $(d-d_Y, d_Y)$ over the union of the two curves.   The locus ${\mathcal{J}}_{\beta}'$ can be parameterized by a ``resolved stratum'' \[f_{\beta} \colon {\mathcal{J}}_\beta \rightarrow {\mathcal{J}}_{\beta}' \hookrightarrow \Jb{d}{g,n}(\phi^+)\](which we simply call ``a stratum'' in the main body of the paper), where the $t$ nodes are parameterized: a general point of ${\mathcal{J}}_\beta$ is a triple of a $(|S|+t)$-pointed curve of relative genus $g_X$, a $(|S^c|+t)$-pointed curve of genus $g_Y$, and a line bundle of bidegree $(d-d_Y, d_Y)$.  The  conormal bundle to $f_{\beta}$ has rank $t$ and it splits as a direct sum of line bundles, whose first Chern classes we call $\Psi_1, \ldots, \Psi_t$ (see Remark~\ref{relate-psi} for more details on how these relate to the ``classical'' $\psi$-classes in  $\Mmb gn$). The base change to ${\mathcal{J}}_\beta $ of the universal family $\pi \colon \Cmb{g}{n} \to \Jmb{d}{g}{n}(\phi^+)$ consists of two irreducible components, say $\pi^X \colon X \to {\mathcal{J}}_\beta $ and $\pi^Y \colon Y \to {\mathcal{J}}_\beta $, of relative genera $g_X$ and $g_Y$ respectively, each carrying a tautological sheaf $L_X$ and $L_Y$ (obtained by pulling back $\mathcal{L}_{\text{tau}}(\phi^+)$).

In this particular case, our main result becomes:
\begin{theorem*}  (Corollary~\ref{Cor: wc-goodwalls} with $m=1$.) If $\phi^+$ and $\phi^-$ are on opposites sides of a stability hyperplane (Definition~\ref{Def: opposite sides}) and the indeterminacy locus of the identity morphism $\mathsf{Id} \colon \Jmb{d}{g}{n}(\phi^+) \dashrightarrow  \Jmb{d}{g}{n}(\phi^-)$ is irreducible, the difference $\mathsf{w}_d(\phi^+) - \mathsf{Id}^* \mathsf{w}_d(\phi^-)$ equals
\begin{multline*}
\sum_{\substack{s+j+\lambda=\\g-d-t}} {\small \binom{g_{Y}-d_{Y}-j-1}{g-d-j-s} }\frac{f_{{\beta} *}}{t!} \Bigg(  c_{s}(-R^{\bullet} \pi^X_{ \ast} L_X(-X \cap Y)) \cdot c_{j}(-R^{\bullet} \pi^Y_{\ast} L_Y) \cdot h_{\lambda}(\Psi_{1}, \ldots, \Psi_{t})\Bigg),
\end{multline*}
where $h_{\lambda}$ is the complete homogeneous polynomial of degree $\lambda$ in $t$ variables.
\end{theorem*} 
The special case $d=g-1$ of the above formula, when the Brill--Noether class is a divisor (the theta divisor), was discovered in \cite[Theorem~4.1]{kp2}. In that case the calculation was massively simplified by the fact that the classes have codimension $1$, and therefore, because  $\Jmb{d}{g}{n}(\phi^+)$ is nonsingular, no blowup is required.

We regard Theorem~\ref{maintheorem} as the main result in this paper, and the construction of $\jphitilde$ (Construction~\ref{con: blowup}) as the main technical tool.  The formula in Theorem~\ref{maintheorem} computes the pullback to the resolution $\jphitilde$ of the difference $\mathsf{w}_d(\phi^+) - \mathsf{Id}^* \mathsf{w}_d(\phi^-)$ in terms of decorated boundary strata classes. The formula for the difference in $\Jb{d}{g,n}(\phi^+)$ is obtained by pushing the latter forward along a blowdown morphism, which generates more complicated coefficients.

The starting point to construct the  resolution $\jphitilde$ is the observation that the tautological sheaf for $\phi^+$ is not $\phi^-$-stable, and the locus ${\mathcal{J}}_{\beta}'$  where it fails $\phi^-$-stability generically parameterizes curves with $2$ nonsingular irreducible components (throughout called a ``vine curve'') and line bundles of a fixed bidegree. When the locus ${\mathcal{J}}_{\beta}'$ is irreducible, the resolution is constructed by blowing up $\Jb{d}{g,n}(\phi^+)$ at ${\mathcal{J}}_{\beta}'$. The two components $X'$ and  $Y'$ of the pullback of the universal curve to the exceptional divisor $E$ are now divisors (``universal twistors'') in the blowup of the universal curve, and after suitably tensoring by one of them, the sheaf $\mathcal{L}_{\text{tau}}(\phi^+)$ becomes $\phi^-$-stable. The latter sheaf defines the other morphism $\jphitilde \to \Jb{d}{g,n}(\phi^-)$ by the universal property. The main technical difficulty is to suitably identify  a sequence of blowups at centers that have transversal self-intersection, which allows one to generalize the above reasoning to the case when the base locus ${\mathcal{J}}_{\beta}'$ is not irreducible.

Note that other resolutions of the identity map may also be constructed following existing literature (\cite{apresol}, \cite{holmes}, \cite{marcuswise}, \cite{hmpps}, \cite{holmescomp}), but those constructions yield \emph{singular} spaces.

In {\bf Section~2} we fix the background notation and recall some relevant results from the existing literature. In {\bf Section~3} we introduce the objects we work with, compactified universal Jacobians and Brill--Noether classes. In {\bf Section~4} we write axioms for categories of ``resolved'' strata of normal crossing stratifications, and then prove some general intersection theory results that are valid in this context. The main geometric ideas here are not entirely new, but we could not find a suitable reference that works in the generality that we need, and we believe that this axiomatic point of view will prove helpful in the current research landscape. In {\bf Section~5} we discuss the combinatorial aspects that arise from a wall--crossing situation where there are stability conditions $\phi^\pm$ on opposite sides of a given stability hyperplane (Definition~\ref{Def: opposite sides}).  Our paper is concerned with the case of rank~$1$ sheaves on nodal curves, and the combinatorics of Section~5 should be the special case of a theory for higher dimension and rank. The central definition is that, for each graph $G$ and divisor $D$ on $G$ and choice of stability conditions $\phi^\pm$ on opposite sides of a hyperplane, of a poset $\ext(G,D)$ of ``extremal'' subsets of the vertices of $G$. {\bf Section~6} gives the construction of the resolution $\jphitilde$. Finally, in {\bf Section~7} we are then ready to employ intersection theory techniques and calculate the wall--crossing difference. At the end of Section~7 we explain how the pullback of the wall--crossing term via an Abel--Jacobi section can be explicitly calculated in terms of decorated boundary strata classes in $\Mmb gn$ by employing the main result of \cite{prvZ}.

In the background of this work, we produce  two  results that we believe are of independent interest.  The first is Theorem~\ref{prop: univ-semistable-mod}, where we interpret the universal quasistable family over $\Jmb dgn (\phi)$ (also known in the literature as Caporaso's family from \cite{caporaso}, see also \cite{mmuv} and \cite{estevespacini}) as a fine compactified universal Jacobian $\Jmb dg{n+1} (\phi')$ over marked curves with one extra  point.

Secondly, as part of Proposition~\ref{BNisclosure}, we describe the collection of stability conditions for $d~<~0$ such that $\mathsf{w}_d(\phi)=0$. One can choose a suitable Abel--Jacobi section $\sigma \colon \Mmb gn \to \Jmb dgn (\phi)$ and obtain a zero class $\sigma^* \mathsf{w}_d(\phi)$. A different formula for the latter as a linear combination of standard tautological classes was given in \cite{prvZ} by means of the Grothendieck-Riemann-Roch formula. This gives tautological relations in $\Mmb gn$ (see Remark~\ref{Rem: relinmbgn} for the details).  Note that these relations are in degree larger than $g$ (the degree is $g-d$ for negative $d$), the same range of Pixton's double ramification relations (proven by Clader-Janda in \cite{claderjanda}).

\subsection{Motivation and related work} \label{subsec: motiv}
An important motivation underlying our calculation is its relation with the (possibly twisted) double ramification cycle. The double ramificatin cycle associated to a fixed vector of integers ${\bf d}=(d_1, \ldots, d_n)$ such that $d=0=\sum d_i=0$ is the locus in $\mathcal{M}_{g,n}$ of pointed curves $(C, p_1, \ldots, p_n)$ such that $\mathcal{O}_C(\sum d_i p_i)$ is isomorphic to the trivial line bundle $\mathcal{O}_C$. The morphism $\sigma_{\bf d} \colon \mathcal{M}_{g,n} \to \mathcal{J}_{g,n}$ defined by mapping
\[
(C, p_1, \ldots, p_n) \mapsto (C, p_1, \ldots, p_n; \mathcal{O}_C(d_1p_1 + \ldots + d_n p_n))
\]
is called the \emph{Abel--Jacobi} section.

The first observation made by Pandharipande-Zvonkine (unpublished) was that one can extend the double ramification cycle to $\overline{\mathcal{M}}_{g,n}$ as the class $c_g(-R \pi_* \mathcal{O}(\sum d_i p_i))$, but that extension rarely coincides with the 'interesting' extension (the one with applications in other areas such as symplectic geometry and integrable systems), originally constructed via a virtual fundamental class on a certain moduli space of stable maps. However, the main result of \cite{HKP} shows that such 'interesting' extension can be recovered as the pullback of $\mathsf{w}_0(\phi)$ via the Abel--Jacobi \emph{rational} section $\sigma_{\bf d} \colon \overline{\mathcal{M}}_{g,n} \dashrightarrow \overline{\mathcal{J}}_{g,n}(\phi)$, for all nondegenerate $\phi$'s such that the structure sheaf is $\phi$-stable. 

For a review of the double ramification cycle theory and related literature, we address the reader to \cite{bhpss} and \cite[Section~1.1]{hmpps}. We refer to \cite[Section~4.3]{prvZ} for a more detailed discussion on how the double ramification cycle relates to the Brill--Noether classes $\mathsf{w}_0(\phi)$ discussed here, and on how the wall--crossing approach can be used to produce an alternative formula for the 'interesting' double ramification cycle. 

In this context, a different approach to the problem of extending algebraic classes from the universal Jacobian is to study such extensions to the stack of all line bundles on all nodal curves (see e.g. \cite{bhpss}). This approach bypasses the difficulties of wall-crossing, but it comes with other difficulties related with the fact that the theory of algebraic cycles and their intersections on such large stacks is very delicate. It would be interesting to establish a precise way to compare this different approach with ours. 

The Hodge class $\lambda_g$ on $\Mmb{g}{n}$ can be thought of (up to a sign) as the pullback of $\mathsf{w}_0(\phi)$ (with $\phi$ such that the trivial line bundle is stable) via the Abel--Jacobi section associated with the trivial vector ${\bf d}=(0, \ldots, 0)$. 
More generally, the class $\mathsf{w}_d(\phi)$ is the formal analogue of the $\lambda_{g-d}$ class on $\Mmb{g}{n}$ (with the Hodge bundle $R\pi^\bullet_* (\omega_\pi)$ replaced by $R\pi^\bullet_* (\mathcal{L}_{\text{tau}}(\phi)$). Given the important role that the $\lambda$-classes  have played in the enumerative geometry of curves / intersection theory for moduli of curves, we expect that the same will hold for the Thom-Porteous extensions $\mathsf{w}_d(\phi)$. Also, as observed in Remark~\ref{independent of tauto}, the class~\eqref{classes} is independent of the choice of a tautological line bundle $\mathcal{L}_{\text{tau}}(\phi)$. This is not the case for other natural classes, \emph{e.g.} for the first Chern class of the pullback of $\mathcal{L}_{\text{tau}}(\phi)$ via some section, or for the pushforward of a power of $\mathcal{L}_{\text{tau}}(\phi)$ under a forgetful morphism.

Another application of our work, which we mentioned in the paragraph immediately before the beginning of this subsection, is that it produces a large number of tautological relations in $\Mb{g,n}$. The quest for finding all tautological relations has seen tremendous progress in recent years due to Pixton's work \cite{pixton}, where he produced a list of relations that he conjectures to be complete (we refer to \cite{pandacalculus} for a survey). However, establishing whether a given relation can be written as a linear combination of a given set of relations is computationally very hard. We do not know how our set of 'Brill-Noether tautological relations', \emph{i.e.} those given in Remark~\ref{Rem: relinmbgn}, compares with Pixton's set.



We conclude this subsection by emphasizing that the main technical tool of this paper, the construction of the nonsingular resolution $\jphitilde$ of the identity map opens the way, in principle, to resolve any other wall--crossing problem for algebraic cycles on compactified universal Jacobians that come as extensions from the interior $\mathcal{J}_{g,n}^d$. As it is not needed for our scopes, in this paper we do not discuss a modular description of the space $\jphitilde$, though we expect that one such description should be possible following the recent work \cite{molchorub} by Molcho.


\subsection{Acknowledgments} To be added after the refereeing process.

\section{Notation and preliminaries}

\subsection{Posets}

In this paper we will work with many posets (typically, the one underlying some category of stratifications, and some of its subposets). Here we recollect the relevant notation.

\begin{definition}
Let $P$ be a finite partially ordered set (or a poset).

A subset $C$ of  $P$ is called a \emph{chain}, if the partial order on $C$ induced by $P$ is a total order on $C$.

A poset is \emph{ranked} if for every element $a$, all maximal chains having $a$ as the largest element have the same length (called the \emph{rank} of $a$).

The poset $P$ is called a \emph{forest}, if for every $a\in P$ the lower set $\{b\leq a\}$ is a chain. More generally, we say that a subset $F\subseteq P$ is a forest if $F$ together with the partial order induced by $P$ is a forest.

If $a > b$ and there exists no $c$ such that $a > c>b$, then we say that $a$ \emph{covers} $b$, and write $a \gtrdot b$.
\end{definition}

\subsection{Graphs}
By a \emph{graph}  we mean a finite, connected, undirected multigraph, decorated with a genus function and markings (see for example \cite[Section~3.1]{ccuw} and  \cite[Section~2.1]{mmuv} for a precise definition).

If $G$ is a graph, we write $V(G)$ for its set of vertices and $E(G)$ for its set of edges, we write $g \colon V(G) \to \mathbb{N}$ for the genus function and $\leg \colon \{1, \ldots, n\} \to V(G) $ for the markings function. 

If $S \subseteq V(G)$, we write $G(S)$ for the complete subgraph of $G$ on the vertices $S$, and say that $G(S)$ is the subgraph of $G$ induced by $S$. 

Given $V_1, V_2 \subseteq V(G)$, we write $E(V_1,V_2)$ for the edges that have one endpoint in $V_1$ and the other in $V_2$ (if the edge is a loop, we include it if and only if its adjacent vertex is in both $V_1$ and $V_2$). 

If $f \colon G_1 \to G_2$ is a morphism of graphs, we also denote by $f \colon V(G_1) \to V(G_2)$ the corresponding surjection between vertices, and by $f^* \colon E(G_2) \to E(G_1)$ the corresponding injection at the level of edge sets.

If $G$ is a  graph and $E \subseteq E(G)$, we denote by $G^E$ the graph obtained from $G$ by adding exactly $1$ vertex, denoted $v_e$, in the ``interior'' of each edge $e \in E$. We call each such $v_e$ an  \emph{exceptional vertex} of $G^E$.

A graph $G$ is \emph{stable} if 
\[
2g(v)-2+|E(\{v\},\{v\}^c)|+|\leg^{-1}(v)| > 0
\]
for every vertex $v\in V(G)$.

\subsection{Families of curves and sheaves} \label{Not: curves and sheaves}
A \emph{nodal curve} $C$ is a reduced and connected projective scheme of dimension $1$ over some fixed algebraically closed field, with singularities that are at worst ordinary double points. The (arithmetic) \emph{genus} of $C$ is $p_a(C)=h^1(C, \mathcal{O}_C)$.
A \emph{subcurve} $X$ of $C$ is a  union of irreducible components of $C$. Its complement $X^c$ is the union of the other components of $C$.

A \emph{$n$-pointed curve} is a tuple $(C, p_1, \ldots, p_n)$ where $C$ is a nodal curve, and $p_1, \ldots, p_n$ are pairwise distinct nonsingular points of $C$. Its \emph{dual graph} $G(C)$ has the irreducible components of $C$ as vertices, the nodes of $C$ as edges, the geometric genus (resp. the marked points) of each component as the genus (resp. the markings) decoration.

A morphism $f \colon C'\to C$ of nodal curves is \emph{a semistable modification} if it is obtained by contracting some subcurves, not necessarily irreducible, $E \subset C'$ such that  $p_a(E)=0$ and $|E \cap E^c|=2$. Every subcurve $E \subset C'$ contracted by $f$ is called \emph{an exceptional curve of $f$}. A semistable modification such that every exceptional curve is irreducible is called a \emph{quasistable modification}.

A coherent sheaf on a nodal curve $C$ has \emph{rank~$1$} if its localization at each  generic point of $C$ has length $1$. It is \emph{torsion-free} if it has no embedded components. 
If the stalk of a torsion-free sheaf $F$ over $C$ fails to be locally free at a point $P\in C$, which must necessarily be a node, we will say that $F$ is \emph{singular} at $P$.
If $F$ is a rank~$1$ torsion-free sheaf on $C$ we say that $F$ is \emph{simple} if its automorphism group is $\mathbb{G}_m$ or, equivalently, if removing from $C$ the singular points of $F$ 
 does not disconnect $C$. 
 
A \emph{family of nodal curves} over a $\mathbb{C}$-scheme $S$ is a proper and flat morphism $\mathcal{C} \to S$ whose fibers are nodal curves. (Throughout, all families $\mathcal{C}/S$ will admit a distinguished section in the $S$-smooth locus of $\mathcal{C}$). A \emph{semistable (resp. a quasistable) modification} of the family $\mathcal{C}/S$ is another family $\mathcal{C}'/S$ with a $S$-morphism $f \colon \mathcal{C}'\to \mathcal{C}$ that is a semistable (resp. a quasistable) modification (as defined above) on all geometric points $s \in S$.

If $T$ is a $S$-scheme, a \emph{family of rank~$1$ torsion-free simple sheaves} parameterized by $T$ over a family of curves $\mathcal{C} \to S$ is a  coherent sheaf $F$   on $\mathcal{C}\times_S T$, flat over $T$, whose fibers over the geometric points are rank~$1$, torsion-free and simple.

If $F$ is a rank~$1$ torsion-free sheaf on a nodal curve $C$, the \emph{(total) degree} of $F$ is $\deg_C(F):=\chi(F)-h^0(C,\mathcal{O}_C)+p_a(C)$. 
If $X \subseteq C$ is a subcurve, we denote by $F_{X}$  the maximal torsion-free quotient of $F \otimes \mathcal{O}_X$. The total degree and the degree of $F_X$ and $F_{X^c}$ are related by the formula 
\begin{equation}
\label{rel-degree}
\deg_C(F) = \deg_{X} F + \deg_{X^c} F + \delta_{X \cap X^c} (F),
\end{equation}
where $\delta_S(F)$ is the number of points in $S$ where the stalk of $F$ fails to be locally free.

A line bundle $F'$ on a semistable modification $f \colon C'\to C$ is called \emph{positively admissible} (see \cite{estevespacini}) if $\deg_E(F')$ is either $0$ or $1$ on every exceptional subcurve of $f$. The following results follow from \cite[Section~5]{estevespacini}.

\begin{proposition}
\label{prop:estevespacini}
Let $\pi\col \mathcal{C}\to S$ and $\pi'\col \mathcal{C}'\to S$ be families of nodal curves and $f\col \mathcal{C}'\to \mathcal{C}$ be a semistable modification. Let $F'$ be a positively admissible line bundle on $\mathcal{C}'$ and set $F=f_*(F')$.
 \begin{enumerate}
     \item The sheaf $F$ is a torsion free rank-$1$ sheaf and $R^1f_*(F')=0$, in particular $f_*(F')$ commutes with base change. Moreover, we have that $R^\bullet\pi_*(F)=R^\bullet\pi'_*(F')$.
     \item The sheaf $F$ is invertible if and only if $\deg_E(F')=0$ on every exceptional subcurve of $f$. Moreover, in this case, $F'=f^*f_*(F')$.
     \item If $f$ is a quasistable modification  and $\deg_E(F')=1$ for every exceptional subcurve, then $\mathcal{C}'=\mathbb{P}_\mathcal{C}(F^\vee)$ and $F'$ is isomorphic to the tautological line bundle $\mathcal{O}_{\mathbb{P}_\mathcal{C}(F^\vee)}(1)$.
     \item More generally, we have that $f$ factors as $\mathcal{C}'\xrightarrow{g} \mathbb{P}_\mathcal{C}(F^\vee)\to \mathcal{C}$, and $\mathcal{O}(1)\cong g_*(F')$ and $F'\cong g^*(\mathcal{O}(1))$.
 \end{enumerate}
\end{proposition}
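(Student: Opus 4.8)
The plan is to reduce each of the four statements to the case of a single curve, where they are established in \cite[Section~5]{estevespacini}, the only new ingredient being that this reduction is legitimate in families; the crux of the latter is the vanishing $R^1f_*(F')=0$, which I would prove first.

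Since the fibres of $f$ have dimension at most $1$, $R^{\geq 2}f_*$ vanishes automatically, and by cohomology and base change in top degree the natural map $R^1f_*(F')\otimes k(q)\to H^1(f^{-1}(q),F'|_{f^{-1}(q)})$ is an isomorphism for every $q\in\mathcal{C}$; so $R^1f_*(F')=0$ as soon as all these $H^1$'s vanish. If $q$ is not the image of a contracted subcurve, the fibre is a point and there is nothing to prove. Otherwise $f^{-1}(q)$ is a contracted subcurve $E$, which, since $g(E)=0$, is a tree of $\Pp1$'s; and since every connected subcurve of $E$ is itself contracted by $f$, positive admissibility forces $F'$ to have degree $0$ or $1$ on every component of $E$ (and, as a by-product, at most one component of $E$ can have degree $1$). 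For such a line bundle $L$ on a tree of $\Pp1$'s the vanishing of $H^1$ follows by induction on the number of components: remove a leaf $E_0\cong\Pp1$, meeting the rest at a single point $x$, and run the normalisation sequence $0\to L\to L|_{E\setminus E_0}\oplus L|_{E_0}\to L_x\to 0$ through cohomology, using that the evaluation maps $H^0(\Pp1,\mathcal{O})\to k$ and $H^0(\Pp1,\mathcal{O}(1))\to k$ are onto. Hence $Rf_*(F')=f_*(F')$.

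Granting the vanishing, the family statements are formal. By standard cohomology-and-base-change, $R^{>0}f_*(F')=0$ together with flatness of $F'$ over $S$ implies that $F=f_*(F')$ is flat over $S$ and commutes with arbitrary base change, so $F_s\cong f_{s,*}(F'_s)$ for every geometric point $s$; this reduces parts~(1) and~(2) to the single-curve statements of \cite[Section~5]{estevespacini}. On a fixed curve these come down to the local picture at the node $p=f(E)$ under a contracted chain $E$: after twisting $F'$ by a line bundle pulled back from $\mathcal{C}$ one may assume $F'$ trivial away from $E$, and positive admissibility then leaves exactly two cases — either $F'$ has degree $0$ on every component of $E$, so $F'|_E\cong\mathcal{O}_E$ and $f_*F'$ reglues the two branches through $p$ and is invertible there; or $F'$ has degree $1$ on a single component of $E$, and then $f_*F'$ has stalk the maximal ideal $\mathfrak m_p$ at $p$ (equivalently, the pushforward of the structure sheaf from the partial normalisation at $p$), so $F$ is torsion-free of rank $1$ but not invertible at $p$. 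This yields torsion-freeness, rank $1$, and the invertibility criterion; when $F$ is invertible, the counit $f^*f_*F'\to F'$ is a surjection of line bundles — its cokernel is a torsion sheaf which pushes forward to $0$ along $f$ (using $f_*\mathcal{O}_{\mathcal{C}'}=\mathcal{O}_{\mathcal{C}}$ and $R^1f_*\mathcal{O}_{\mathcal{C}'}=0$) — hence an isomorphism, which is the identity $F'=f^*f_*F'$. Finally $R^{\bullet}\pi_*(F)=R^{\bullet}\pi'_*(F')$ is immediate: as $\pi'=\pi\circ f$ and $Rf_*(F')=f_*(F')=F$, we get $R\pi_*F=R\pi_*(Rf_*F')=R\pi'_*F'$.

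Parts~(3) and~(4) I would again treat fibrewise, invoking \cite[Section~5]{estevespacini}. If $f$ is quasistable with $\deg_E(F')=1$ on every (now irreducible) exceptional $\Pp1$, then $F=f_*F'$ is singular exactly at the images of those curves and $\mathcal{C}'\to\mathcal{C}$ is precisely the modification inserting one $\Pp1$ over each such node, i.e.\ the projectivisation $\mathbb{P}_{\mathcal{C}}(F^\vee)$, and under this identification $F'\cong\mathcal{O}(1)$; moreover this description characterises the pair $(\mathcal{C}',F')$ among quasistable modifications carrying a line bundle of degree $1$ on each exceptional curve that pushes forward to $F$. For a general semistable $f$ with $F'$ positively admissible, each contracted chain has at most one degree-$1$ component, flanked by two (possibly empty) all-degree-$0$ sub-chains; contracting those sub-chains gives a factorisation $\mathcal{C}'\xrightarrow{g}\mathcal{C}''\xrightarrow{h}\mathcal{C}$ in which $g$ is a semistable modification and $h\colon\mathcal{C}''\to\mathcal{C}$ is quasistable, and by part~(2) applied to $g$ the sheaf $g_*F'$ is invertible, of degree $1$ on each exceptional $\Pp1$ of $\mathcal{C}''$, with $h_*(g_*F')=F$ and $g^*(g_*F')=F'$. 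By the characterisation in the quasistable case this forces $\mathcal{C}''=\mathbb{P}_{\mathcal{C}}(F^\vee)$, $g_*F'\cong\mathcal{O}(1)$ and $F'=g^*\mathcal{O}(1)$. I expect the only substantive work to be the local study of $f_*F'$ at a node beneath a contracted chain, and the identification of the resulting quasistable modification with $\mathbb{P}_{\mathcal{C}}(F^\vee)$ — both carried out in \cite[Section~5]{estevespacini} — so that what is really left here is to check that positive admissibility places us in that setting fibrewise, and to transport everything to families through the vanishing $R^1f_*(F')=0$.
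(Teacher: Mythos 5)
The paper offers no argument for this proposition beyond the citation -- it simply records that the statements follow from \cite[Section~5]{estevespacini} -- and that is exactly the reference your sketch ultimately delegates the substantive fibrewise work to, so your route coincides with the paper's, merely spelling out the reduction (vanishing of $R^1f_*F'$ on the genus-$0$, degree-$0/1$ contracted fibres, then flatness over $S$ and base change, then the local analysis at the node) that the citation leaves implicit. One small caution: since $F'$ is flat over $S$ but not over $\mathcal{C}$, the top-degree base-change step for $f$ should be justified via the theorem on formal functions (or by working fibrewise over $S$ and then invoking the standard $S$-flat base-change lemma), rather than quoted as an isomorphism of comparison maps; with that adjustment your outline matches what is proved in \cite[Section~5]{estevespacini}.
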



We can now define the multidegree of a sheaf on a nodal curve more precisely, as the multidegree of the unique positively admissible line bundle as in the above corollary. 

A degree $d$ \emph{pseudodivisor} on a graph $G$ is a pair $(E,D)$ where $E \subseteq E(G)$ and $D \in \Div^d(G^E)$ satisfies $D(v')=1$ for each exceptional vertex $v'$. When $E$ is empty, we simply write $D$ in place of the pair $(\emptyset, D)$.

Given a degree-$d$ rank~$1$ torsion free sheaf $F$ on a curve $C$, we define the multidegree $\underline{\deg}(F)$ of $F$ as the pseudodivisor $(E,D)$ on the dual graph $G(C)$ of $C$ as follows. The set $E$ is the set of edges of $G(C)$ that correspond to nodes of $C$ where $F$ is not locally free. The divisor $D$ on $G(C)^E$ is defined by $D(v)=\deg_{C_v}(F_{C_v})$ if $v\in V(G(C))\subseteq V(G(C)^E)$ and $D(v)=1$ for every exceptional vertex $v$. By Equation~\eqref{rel-degree}, we have that $(E,D)$ is a degree-$d$ pseudodivisor. 

Note also that a rank~$1$ torsion free sheaf on $C$ is simple if and only if its multidegree $(E,D)$ has the property that $E$ does not disconnect the graph $G(C)$.

\subsection{Moduli spaces and graphs}
\label{notation - moduli}

Here we discuss some general notation on moduli spaces of curves. We refer the reader to \cite{gac} for more details on nodal curves and their dual graphs.

A $n$-pointed curve $(C, p_1, \ldots, p_n)$ is \emph{stable} if $|\Aut(C, p_i)|< \infty$. 
We will sometimes abuse notation and write $C$ for $(C,p_1, \ldots, p_n)$. For example, we will say that the genus of $(C,p_1, \ldots, p_n)$ equals $g$ to mean that $h^1(C,\mathcal{O}_C)=g$, that is the arithmetic genus of the underlying curve $C$ is $g$. 

We will denote by $\Mmb{g}{n}$ the moduli space of stable $n$-pointed curves of genus~$g$. The moduli space admits a stratification with strata indexed by dual graphs, which we now discuss.

\subsubsection{Stable graphs}

We  denote by $G_{g,n}$ the small category of stable, $n$-marked graphs of genus $g$ (where we have fixed a choice of $1$ object for each isomorphism class).  Morphisms $G\to G'$ are given by the contraction of  some of the edges,  followed by an isomorphism. (More details in \cite{gac} and \cite{mmuv}). There is a natural functor $G_{g,n+1} \to G_{g,n}$ that forgets the last point and stabilizes the graph.

\subsubsection{Stratification of moduli of stable curves.} For $G \in G_{g,n}$ there is a gluing morphism
\[
f_G \colon  \overline{\mathcal{M}}_{G} := \prod_{v \in V(G)} \overline{\mathcal{M}}_{g_G(v),l_G(v)} \rightarrow \Big[ \prod_{v \in V(G)} \overline{\mathcal{M}}_{g_G(v),l_G(v)}/ \Aut(G) \Big]\twoheadrightarrow \overline{\mathcal{M}}_{G}'\hookrightarrow    \Mmb{g}{n},
\]
where $g_G\colon V(G) \to \mathbb{N}$ is the genus function of $G$ and $l_G \colon V(G) \to \mathbb{N}$ counts the number of markings plus gluing points based at $v \in V(G)$ (we refer the reader to \cite[Chapter~X.7, Chapter XII.10]{gac} for the details on how these gluing maps are defined).

We say that $\overline{\mathcal{M}}_G$, or $f_G$, is a (resolved) stratum of $\Mmb{g}{n}$. We regard $\overline{\mathcal{M}}_{G}$ as a ``resolved stratum'' and its image $\overline{\mathcal{M}}_{G}'$ as the corresponding ``embedded stratum''.

The codimension $1$ strata are the following divisors generically parameterizing curves with $1$ node: \begin{enumerate} \item the divisor $\Delta_{\text{irr}}$, generically parameterizing irreducible curves \item  for $0 \leq i \leq g$ and $S \subseteq [n]$ (except $i=0$ and $|S| <2$ and $i=g$ and $|S|> n-2$), the divisor $\Delta_{i,S}=\Delta_{g-i, S^c}$ generically parameterizing curves with $2$ components, of which one of genus $i$ carrying the marked points in $S$.
\end{enumerate}

Recall from \cite[Chapter~XIII, Section~3]{gac} that on the (resolved) stratum the normal bundle to $f_G$ splits as a direct sum of line bundles
\[
N_{f_G} = \bigoplus_{e \in E(G)} \mathbb{L}_e.
\]
We denote by $\Psi_e=-c_1(\mathbb{L}_e)$. Moreover, recall from \emph{loc.cit.} that if $e$ is the edge whose half edges $h(e), h'(e)$ are based at $v,v' \in V(G)$, then the cotangent line bundles to $h(e)$ and $h'(e)$ are denoted by $\mathbb{L}_{h(e)}$ and $\mathbb{L}_{h'(e)}$ and its first Chern classes $\psi_{h(e)}$ and $\psi_{h'(e)}$. We then have $\mathbb{L}_e= \mathbb{L}_{h(e)}^{\vee} \boxtimes \mathbb{L}_{h'(e)}^{\vee}$ and so $\Psi_e=\psi_{h(e)}+\psi_{h'(e)}$.

 In Subsection~\ref{section:nc and blowup} we will define the notion of a category of (resolved) strata induced by normal crossing divisors on a DM stack, and we will interpret the category $G_{g,n}$ as the category of strata of the nonsingular DM-stack $\Mmb{g}{n}$ induced by the normal crossing  divisor $\Delta = \Delta_{\text{irr}}+ \sum_{i,S} \Delta_{i,S}$. 

\section{Compactified Jacobians and Universal Brill--Noether Classes}

In this chapter we introduce the basic objects of study in this paper, compactified universal Jacobians, and extensions of universal Brill--Noether classes by means of Thom--Porteous formula. We also recall the  results on the stability space of compactified universal Jacobians that we will need later.

\subsection{The universal stability space}
Here we recall the definition and first results on the stability space of a single curve and on the universal stability space $V_{g,n}^d$ from \cite{kp3}.

\begin{definition} For a fixed graph $G$, we define the space of polarizations

\[V^d_{\text{stab}}(G):=\left\{\phi\in \mathbb{R}^{ V(G)} :\ \sum_{v \in V(G)} \phi(v) = d\right\} \subset \mathbb{R}^{V(G)}.\] 
For $V \subseteq V(G)$, we write $\phi(V)$ for $\sum_{v\in V} \phi(v)$.

Every morphism $f\colon G\rightarrow G'$ of  graphs induces a morphism $f_*\colon V^d_{\text{stab}}(G)\rightarrow V^d_{\text{stab}}(G')$ by setting
\begin{equation} \label{phicompatible} f_*\phi(v') = \sum \limits_{f(v)=v'} \phi(v)
\end{equation}
and we define the space of universal polarizations as the limit (or inverse limit) 
\[V^d_{g,n}:=\varprojlim_{G\in G_{g,n}} V^d_{\text{stab}}(G),\] i.e. as the space of assignments $\left(\phi(G)\in V^d_{\text{stab}}(G)\colon \ G \in G_{g,n}\right)$ that are compatible with all graph morphisms.
\label{spaceofpolar}
\end{definition}

We now present a simple description of the universal stability space $V_{g,n}^d$ that follows from  \cite[Corollary~4.3]{kp3}. The result requires that we introduce some notation for graphs of ``vine curves''.

\begin{definition} \label{vinecurvenotation}
A \emph{vine curve triple} $(i,t,S)$ consists of two natural numbers $i,t$ and a subset $S\subseteq [n]$, such that $0\leq i \leq g$, $1 \leq t$, $i+t \leq g+1$,  and such that if $(i,t)=(0,1)$ then $|S|\geq2$, if $(i,t)=(0,2)$ then $|S| \geq 1$, if $(i,t)=(g,1)$ then $|S^c| \geq 2$ and if $(i,t)=(g-1,2)$ then $|S^c| \geq 1$.

A \emph{vine graph} is a stable graph $G(i, t, S)$ associated to a vine curve triple, which consists of two vertices of genus $i$ and $g-i$ connected by $t$ edges, and with marking $S$ on the first vertex and $S^c$ on the second vertex. We will always assume that $S$ contains the first marked point.  The \emph{vine stratum} corresponding to a vine graph $G$ is the corresponding stratum $\overline{\mathcal{M}}_G$.

The stability space $V^d_{\text{stab}}(G(i,t,S))$ is an affine subspace of $\mathbb{R}^2$. We can parameterize it by means of one real variable $x_{i,t,S}$ by taking the inverse image under the projection onto the first factor. That is, we describe \begin{equation} \label{eq: coord} V^d_{\text{stab}}(G(i,t,S))=\{(x_{i, t,S}, d-x_{i,t,S}): x_{i, t, S} \in \mathbb{R}\}\subset \mathbb{R}^2.\end{equation}
\end{definition}

We now introduce the stability space of ``vine curves'' using the previous definition.

\begin{definition} \label{CDT} We let
\[
		T_{g,n}^d:=   \prod_{\substack{(i,t,S) \\ \textrm{ a vine curve triple }}} V^d_{\text{stab}}(G(i, t, S)).
	\]
Then we define:

\begin{enumerate}

\item The vector space $C^d_{g,n}$  as the product of all stability spaces of the form $V^d_{\text{stab}}(G(i,1, S))$.

\item The vector space $D^d_{g,n}$ as the product of all stability spaces $V^d_{\text{stab}}(G(0,2,\{j\})$ for $j=1\ldots, n$. 
\end{enumerate}
Throughout this paper we will use the coordinates $x_{i,t,S}$ from Equation~\ref{eq: coord} on the spaces $T_{g,n}^d$ and hence on $C_{g,n}^d$ and $D_{g,n}^d$ (which we view as quotients of $T_{g,n}^d$).
\end{definition}

There are natural restriction affine linear maps:
\[
\tau_d \colon V_{g,n}^d \to T_{g,n}^d, \quad \rho_d \colon V_{g,n}^d \to C_{g,n}^d \times D_{g,n}^d
\]
that, when $d$ equals zero, are homomorphisms of vector spaces. One of the main results of \cite[Section~3]{kp3} is that the universal stability space embeds into the ``vine curves'' stability space, and that $\rho_d$ is an isomorphism.
\begin{proposition} \label{stabilityspace}
 (\cite[Lemma~3.8, Corollary~3.4]{kp3}) The affine linear map $\tau_d$ is injective. Each morphism $\rho_d$ is an isomorphism of affine spaces.
\end{proposition}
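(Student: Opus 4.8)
The plan is to establish the three assertions in order, reducing everything to the combinatorics of stable graphs and their edge contractions through the compatibility relation \eqref{phicompatible}.

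First I would prove that $\tau_d$ is injective. The key point is that a universal polarization $\phi$ is already determined by its restrictions to the two-vertex (= vine curve) quotients of stable graphs. Fix a stable graph $G$ and a vertex $v\in V(G)$, and let $C_1,\dots,C_k$ be the connected components of the induced subgraph $G(V(G)\setminus\{v\})$. For each $j$ both $G(C_j)$ and $G(C_j^c)=G(\{v\}\cup\bigcup_{\ell\ne j}C_\ell)$ are connected --- the latter because $G$ is connected, so $v$ is joined by an edge to every $C_\ell$ --- so contracting $G(C_j)$ and $G(C_j^c)$ defines a morphism $G\to G(i_j,t_j,S_j)$ onto a vine curve graph. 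Here one uses that every two-vertex stable $n$-pointed genus-$g$ graph is a vine curve graph in the sense of Definition~\ref{vinecurvenotation}: the inequality $i+t\le g+1$ is precisely the identity $h^1=t-1$ for such a graph, and the remaining constraints are the stability of its two vertices. By \eqref{phicompatible}, $\phi(G)(C_j)$ equals the vine coordinate $x_{i_j,t_j,S_j}$ of $\tau_d(\phi)$, and therefore $\phi(G)(v)=d-\sum_j\phi(G)(C_j)$ is determined by $\tau_d(\phi)$. Hence $\tau_d$ is injective.

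Next I would prove that each $\rho_d$ is a bijection. Since $\rho_d$ is $\tau_d$ followed by the projection $T^d_{g,n}\to C^d_{g,n}\times D^d_{g,n}$ and $\tau_d$ is injective, it suffices to show: (i) every vine coordinate $x_{i,t,S}$ with $t\ge 2$ that is not one of the $x_{0,2,\{j\}}$ is an affine combination of the coordinates $x_{i,1,S}$ and $x_{0,2,\{j\}}$ (giving injectivity of $\rho_d$); and (ii) any prescription of the latter coordinates extends to a genuine point of $V^d_{g,n}$ (giving surjectivity). For (i) I would produce relations among the vine coordinates by applying \eqref{phicompatible} to three-vertex stable graphs: for $G$ with vertices $v_1,v_2,v_3$, summing the three identities $\phi(G)(v_a)=d-\phi(G)(\{v_a\}^c)$ gives $\sum_{a<b}x_{G,\{v_a,v_b\}}=2d$, a relation among the three vine coordinates obtained by contracting the complementary pairs; choosing these graphs appropriately (triangles with various genus and marking distributions, and graphs obtained by inserting on an edge a genus-$0$ vertex carrying one marking relocated there from an adjacent vertex) lets one rewrite any such $x_{i,t,S}$ step by step in terms of $t=1$ coordinates and the $x_{0,2,\{j\}}$. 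For (ii) I would conversely assign the coordinates of $C^d_{g,n}\times D^d_{g,n}$ arbitrarily, define all other $x_{i,t,S}$ by the rewriting formulas just obtained, define $\phi(G)(v)$ for an arbitrary stable graph via $\phi(G)(v)=d-\sum_j\phi(G)(C_j)$ as in the injectivity argument, and check that this is well defined (independent of the rewriting used) and compatible with all graph morphisms --- which, by the reduction in the injectivity argument, only needs to be verified against the explicit three-vertex relations, where it holds by construction. This proves $\rho_d$ bijective for every $d$. When $d=0$ all the spaces and all these maps are linear, so $\rho_0$ is an isomorphism of vector spaces; for $d\ne 0$, $V^d_{g,n}$ and $C^d_{g,n}\times D^d_{g,n}$ are affine spaces and $\rho_d$ is an affine map, so being bijective it is an isomorphism of affine spaces.

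The main obstacle is the combinatorial bookkeeping hidden in steps (i)--(ii): one must check that the auxiliary three-vertex graphs are genuinely stable across all the low-genus and few-markings boundary cases of Definition~\ref{vinecurvenotation}, and --- the essential point --- prove that \emph{no} linear relations among the vine coordinates hold beyond the ones exhibited, so that the coordinates indexed by one-nodal vine curve triples together with the $n$ coordinates $x_{0,2,\{j\}}$ really form a free (affine) coordinate system on $V^d_{g,n}$, equivalently so that the rewriting formulas in (ii) are consistent. This is exactly the content of \cite[\S3]{kp3} (Lemma~3.8 and Corollary~3.4), and I would follow that analysis; the case $d=0$ of it amounts precisely to the identification of $V^0_{g,n}$ with the explicit coordinate space $C^0_{g,n}\times D^0_{g,n}$.
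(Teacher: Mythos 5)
The paper does not actually prove this proposition: it is quoted verbatim from \cite[Lemma~3.8, Corollary~3.4]{kp3}, so there is no in-paper argument for your attempt to diverge from. Measured against that, your proposal is fine and in fact does more than the paper. Your injectivity argument for $\tau_d$ is correct and is essentially the argument of \emph{loc.\ cit.}: for any stable $G$ and $v\in V(G)$, the connected components $C_1,\dots,C_k$ of $G(V(G)\setminus\{v\})$ each give (after contracting all edges, including loops, inside $C_j$ and inside $C_j^c$) a morphism in $G_{g,n}$ onto a vine curve graph, so compatibility \eqref{phicompatible} forces $\phi(G)(C_j)$ to be a vine coordinate and $\phi(G)(v)=d-\sum_j\phi(G)(C_j)$ is then determined. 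For the statement about $\rho_d$, however, your steps (i)--(ii) are only a sketch: you indicate which three-vertex relations you would use but do not actually carry out the rewriting that produces the expression of $x_{i,t,S}$ ($t\ge 2$) in terms of the $x_{i,1,S}$ and $x_{0,2,\{j\}}$ (compare the explicit formula \eqref{changeofcoord}, where in fact only the $x_j$'s and $d$ appear), nor the verification that the resulting assignment is well defined and compatible with all graph morphisms and that no further relations constrain the $C^d_{g,n}\times D^d_{g,n}$ coordinates. As you yourself note, that consistency/independence verification is precisely the content of \cite[Section~3]{kp3}, so your proposal ultimately rests on the same citation the paper uses; that is acceptable here, but be aware that as a self-contained proof the second half is incomplete, whereas the first half stands on its own.
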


\subsection{The stability hyperplanes} \label{subsec: stabilityH}
We will later see in Section~\ref{Section:compjac} that for every universal stability condition $\phi \in V_{g,n}^d$ there exists a compactified universal Jacobian parameterizing $\phi$-stable (rank~$1$, torsion free) sheaves on every (flat) family of $n$-pointed stable curves of genus $g$.  Here we combinatorially introduce  the degenerate locus of $V_{g,n}^d$, which  will later be seen to be the locus of $\phi$'s such that there exist strictly semistable sheaves on some stable curves. We will introduce the degenerate locus as a union of hyperplanes (which one could think of as a finite \emph{toric} hyperplane arrangement). This explicit description is taken from  \cite[Section~5]{kp3}. 

\begin{definition} \label{Def:degenerate}	
 We say that a polarization $\phi \in V^d_{\text{stab}}(G)$ is \emph{degenerate} if for some subset  $\emptyset \subsetneq V_0 \subsetneq V(G)$ the number
	\begin{equation}
	 \frac{\left|E(V_0, V_0^c)\right|}{2}+ \sum \limits_{v \in V_0 } \phi(v)   
		\label{Eqn: defdegenerate}
	\end{equation}
	is an integer.

	We say that a universal stability condition $\phi \in V_{g,n}^d$ is \emph{degenerate} if for some $G \in {G}_{g,n}$, the $G$-component $\phi(G)$ is degenerate in $V^d_{\text{stab}}(G)$.
\end{definition}
The degenerate locus is a locally finite union of affine hyperplanes, and we will soon describe these hyperplanes explicitly. Let us start with a simple example.

\begin{example} \label{vinecurves} (Vine curves). If $G$ is a vine graph, after  identifying 
$V^d_{\text{stab}}(G)=\mathbb{R}$ by projecting onto the first factor (as done at the end of Definition~\ref{vinecurvenotation}), we have that the degenerate locus is a locally finite collection of points that only depends on the parity of the number of nodes $t$.   If $t$ is even, the degenerate locus corresponds to $\mathbb{Z} \subset \mathbb{R}$. If $t$ is odd, the degenerate locus corresponds to the $\frac{1}{2}+\mathbb{Z} \subset \mathbb{R}$.
\end{example}

We now give an explicit description of the degenerate locus in $V_{g,n}^d$, based on \cite[Section~5]{kp3}. By Proposition~\ref{stabilityspace}, we have that $V_{g,n}^d\subset T_{g,n}^d$, where the latter is the stability space of vine curves (one for each topological type), with coordinates $x_{i,t,S}$ for each vine curve triple $(i,t,S)$ (see Definition~\ref{vinecurvenotation}).

For each vine curve triple $(i,t,S)$ and integer $k$, define the (translate of the coordinate) hyperplane
\begin{equation} \label{transcoord}
T_{g,n}^d \supset H(i,t,S;k):= k+\frac{t}{2}.
\end{equation}
One main result of \cite[Section~5]{kp3} is that the degenerate locus in the universal stability space is the union of the pullback from the stability space of vine curves of all coordinate hyperplanes and their integer translates. More precisely:
\begin{proposition} \label{Prop: degen-on-vine} (\cite[Lemma~5.8]{kp3})
 The degenerate locus in $V_{g,n}^d$ is a union of hyperplanes. Each hyperplane is the inverse image via the affine linear embedding $\tau_d\colon V_{g,n}^d \subset T_{g,n}^d$ of a hyperplane of the form $H(i, t, S;k)$.
\end{proposition}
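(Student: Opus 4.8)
The statement to prove is Proposition~\ref{Prop: degen-on-vine}: the degenerate locus in $V_{g,n}^d$ is a union of hyperplanes, each of which is the $\tau_d$-preimage of one of the translated coordinate hyperplanes $H(i,t,S;k)$ in $T_{g,n}^d$. The plan is to reduce the intrinsic condition ``$\phi$ is degenerate'' (Definition~\ref{Def:degenerate}), which a priori involves \emph{all} stable graphs $G \in G_{g,n}$ and \emph{all} proper nonempty subsets $V_0 \subsetneq V(G)$, to a condition that only sees vine curves, using the compatibility \eqref{phicompatible} of universal stability conditions under graph morphisms.

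First I would fix $\phi \in V_{g,n}^d$, a stable graph $G$, and a subset $\emptyset \subsetneq V_0 \subsetneq V(G)$ witnessing degeneracy of $\phi(G)$, so that $\frac{|E(V_0,V_0^c)|}{2} + \phi(G)(V_0)$ is an integer. The key move is to contract $G$ along the partition $\{V_0, V_0^c\}$: there is a graph morphism $c \colon G \to G'$ (an edge contraction followed by stabilization, possibly composed with the forgetful maps if stabilization drops marked-point-free rational components — one must check $G'$ is still in $G_{g,n}$, which it is because we only contract the two ``halves'' to single vertices) where $G'$ has exactly two vertices $v_0, v_0^c$ of genera $i := g(V_0)$ and $g-i$, joined by $t := |E(V_0,V_0^c)|$ edges, with markings $S := \leg^{-1}(V_0)$ and $S^c$ — i.e. $G'$ is the vine graph $G(i,t,S)$ (after the standard normalization that $S$ contains the first marked point; if not, swap roles). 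By the compatibility \eqref{phicompatible}, $c_*\phi(G') = \phi(G(i,t,S))$ as an element of the universal family, and $c_*\phi(G')(v_0) = \phi(G)(V_0)$. Hence the degeneracy quantity \eqref{Eqn: defdegenerate} for $(G,V_0)$ equals $\frac{t}{2} + x_{i,t,S}$, which is an integer precisely when $x_{i,t,S} \in \mathbb{Z}$ for $t$ even and $x_{i,t,S} \in \frac12 + \mathbb{Z}$ for $t$ odd. This is exactly the condition $\tau_d(\phi) \in H(i,t,S;k)$ for some integer $k$, as in Example~\ref{vinecurves}. One also needs to handle the degenerate cases of the contraction (e.g.\ $i=0$ with $|S|<2$ or $t=1,2$ edge cases): here stabilization forces further contractions, but these land on the boundary cases excluded in the definition of a vine curve triple, or produce a vine curve with fewer edges, and in all cases the resulting condition is still of the form $\tau_d(\phi)\in H(i',t',S';k')$; this bookkeeping mirrors exactly the constraints listed in Definition~\ref{vinecurvenotation}.

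Conversely, I would check that if $\tau_d(\phi) \in H(i,t,S;k)$, then $\phi$ is degenerate: taking $G = G(i,t,S)$ itself (it is a stable graph once $(i,t,S)$ is a vine curve triple) and $V_0$ one of its two vertices, the quantity \eqref{Eqn: defdegenerate} is $\frac{t}{2} + x_{i,t,S}$, an integer by hypothesis. So $\phi(G)$ is degenerate and hence $\phi$ is degenerate. Together with the forward direction, this gives the set-theoretic equality of the degenerate locus with $\bigcup \tau_d^{-1}(H(i,t,S;k))$, and since $\tau_d$ is an injective affine-linear map (Proposition~\ref{stabilityspace}) and each $H(i,t,S;k)$ is an affine hyperplane in $T_{g,n}^d$ not containing the image $\tau_d(V_{g,n}^d)$ (one must note the preimage is a proper subset, i.e.\ genuinely a hyperplane and not all of $V_{g,n}^d$ — this follows because $\rho_d$, hence $\tau_d$ up to the known relations, surjects onto enough coordinates), each preimage $\tau_d^{-1}(H(i,t,S;k))$ is either empty or an affine hyperplane in $V_{g,n}^d$. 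Local finiteness is inherited from local finiteness of the family $\{H(i,t,S;k)\}$ intersected with any bounded region, using injectivity of $\tau_d$.

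The main obstacle I anticipate is not the contraction argument itself, which is essentially formal given \eqref{phicompatible}, but rather the careful treatment of the \emph{boundary/stability edge cases} in the contraction $c \colon G \to G'$: when contracting $V_0$ or $V_0^c$ produces an unstable vertex (genus $0$ with too few special points), one must further stabilize, and one must verify that the resulting condition is still captured by some $H(i',t',S';k')$ with $(i',t',S')$ a genuine vine curve triple — i.e.\ that the list of exclusions in Definition~\ref{vinecurvenotation} is exactly what is needed. A secondary subtlety is ensuring that distinct pairs $(G,V_0)$ can genuinely give rise to the \emph{same} hyperplane (so the union is not over-counted) and that no pair gives a condition outside the advertised family; this is where one leans on the full strength of Proposition~\ref{stabilityspace}, namely that $V_{g,n}^d$ embeds in $T_{g,n}^d$ with image cut out by the relations coming from $C_{g,n}^d \times D_{g,n}^d \cong V_{g,n}^d$, so that the only hyperplanes that survive pullback are the ones listed. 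Since all of this is already established in \cite[Section~5]{kp3}, I would cite Lemma~5.8 of \emph{loc.cit.} for the delicate combinatorial accounting and present the contraction argument as the conceptual heart of the proof.
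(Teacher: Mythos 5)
The first thing to say is that the paper does not prove this statement at all: Proposition~\ref{Prop: degen-on-vine} is quoted verbatim from \cite[Lemma~5.8]{kp3}, so your proposal is being compared with an argument that lives in \emph{loc.\ cit.}, not in this text. Your reconstruction is essentially the right one, and it is the natural way to localize Definition~\ref{Def:degenerate} to vine curves: given a witness $(G,V_0)$, contract all edges inside $V_0$ and inside $V_0^c$, use the compatibility \eqref{phicompatible} to identify $\phi(G)(V_0)$ with the coordinate $x_{i,t,S}$ of the resulting two-vertex graph, and observe that the degeneracy quantity becomes $\tfrac{t}{2}+x_{i,t,S}$, which is an integer exactly when $\tau_d(\phi)$ lies on some $H(i,t,S;k)$; the converse is immediate by evaluating on the vine curve itself. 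Two corrections, neither fatal. First, the ``main obstacle'' you anticipate is vacuous: edge contractions of stable graphs are automatically stable (the stability quantity of a merged vertex is the sum of the two old ones, and contracting a loop raises the genus), so the contracted graph is already an object of $G_{g,n}$ and no stabilization or forgetting of components ever occurs. Second, the contracted graph has vertices of genus $i$ and $g+1-i-t$, not $g-i$ (you are echoing a slip in the phrasing of Definition~\ref{vinecurvenotation}; compare the genus count in Example~\ref{oneblowup}); this matters precisely where you flagged the remaining delicate point, namely that each preimage $\tau_d^{-1}(H(i,t,S;k))$ is a genuine hyperplane rather than all of $V_{g,n}^d$. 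Indeed, by \eqref{changeofcoord} the coordinate $x_{i,t,S}$ could only be constant on $V_{g,n}^d$ when $2g-2i-t=0$ and $S=[n]$, and with the correct genus bookkeeping that case forces the second vertex to be a genus-$0$, valence-$2$ vertex with no markings, i.e.\ a non-stable graph excluded from the vine curve triples; for every admissible triple $x_{i,t,S}$ is a nonconstant affine function of the free coordinates supplied by Proposition~\ref{stabilityspace}, so the preimage is a proper nonempty hyperplane. With those two points made precise, your argument is complete at the level of rigor the paper itself adopts (which is to defer the bookkeeping to \cite[Section~5]{kp3}).
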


A more explicit description of the degenerate locus can be obtained via the isomorphism $V_{g,n}^d \cong C_{g,n}^d \times D_{g,n}^d$. When expressing the hyperplanes of \eqref{transcoord} in terms of the coordinates $x_{i,1,S}$ and the coordinates $x_j:=x_{0,2,\{j\}}$, by \cite[Theorem~2]{kp3} we have\footnote{Note that the formula in loc.cit is translated by the coordinates of a ``degree-$d$ canonical stability condition'' -- a choice of an origin in $V_{g,n}^d$ that we do not discuss here.}

\begin{equation}
    \label{changeofcoord}
    x_{i,t,S}= \frac{2g-2i-t}{2g-2} \cdot \sum_{j \in S} x_j + \frac{2i-2 +t}{2g-2} \cdot \Bigg(d-\sum_{j \notin S} x_j\Bigg)
\end{equation} whenever $t \geq 2$. Therefore, the stability hyperplanes take the following form
\begin{equation} \label{walls1}
    H(i,1,S;k)=\left\{x_{i,1,S}= k+\frac{1}{2}\right\} 
\end{equation}
for all vine curve triples (Definition~\ref{vinecurvenotation})  of the form $(i,1,S)$ (the boundary divisors in $\Mmb{g}{n}$ that generically parameterize curves with  $2$ components)
\begin{equation} \label{walls2}
    H(i,t,S;k)=\left\{\frac{2g-2i-t}{2g-2} \cdot \sum_{j \in S} x_j + \frac{2i-2 +t}{2g-2} \cdot \Bigg( d- \sum_{j \notin S} x_j\Bigg)= k+ \frac{t}{2} 
    \right\} 
\end{equation}
for all vine curve triples $(i, t, S)$ with $t \geq 2$. 

Note that the  hyperplanes in \eqref{walls1} and \eqref{walls2} may come with multiplicities in the degenerate locus. In other words, there may exist different $(i_1, t_1, S_1; k_1)$, $(i_2, t_2, S_2; k_2)$ such that  $H(i_1, t_1, S_1;k_1) = H(i_2, t_2, S_2;k_2)$. We will now analyze when this may occur. 

It is immediate to observe that a necessary condition for two hyperplanes of this form to coincide is that their corresponding subsets of marked points must also coincide:

\begin{proposition} \label{prop: subsetscoincide}
 Suppose $(i_1, t_1, S_1; k_1)$ and $(i_2, t_2, S_2; k_2)$ are such that  $H(i_1, t_1, S_1;k_1)=H(i_2, t_2, S_2;k_2)$. Then $S_1=S_2$.
\end{proposition}
\begin{proof}
Straightforward.
\end{proof}

First we deal with the hyperplanes of \eqref{walls1}, occurring on compact type vine strata (i.e. those of codimension $1$). Those are all simple, in the following sense: 
\begin{proposition} \label{prop:ctwalls}
 Suppose $(i_1, 1, S_1; k_1)$ and $(i_2, t_2, S_2; k_2)$ are such that  $H(i_1, 1, S_1;k_1)=H(i_2, t_2, S_2;k_2)$. Then $(i_1, 1, S_1; k_1)=(i_2, t_2, S_2; k_2)$.
\end{proposition}
\begin{proof}
Straightforward.
\end{proof}

The next proposition is about hyperplanes of the form \eqref{walls2} with $S \neq [n]$. 
As we shall discuss in Section~\ref{sec:goodwalls}, a stability hyperplane of this type corresponds to a change of stability on loci of vine curves that are \emph{disjoint}.

\begin{proposition} \label{prop:goodwalls}
  Suppose $(i_1, t_1, S; k_1)$ and $(i_2, t_2, S; k_2)$ are such that  $H(i_1, t_1, S;k_1)=H(i_2, t_2, S;k_2)$, and assume $S \neq [n]$. Then $2i_1+t_1 = 2i_2+t_2$.
\end{proposition}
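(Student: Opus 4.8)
The plan is to compute directly in the coordinates $x_j$ ($j \in [n]$) using the defining equation \eqref{walls2}. Fix a subset $S \neq [n]$ (so that $S^c \neq \emptyset$), and suppose that $H(i_1,t_1,S;k_1) = H(i_2,t_2,S;k_2)$ with $t_1,t_2\geq 2$. Two affine hyperplanes coincide if and only if their defining linear forms are proportional \emph{and} the constant terms scale by the same factor. So the first step is to read off, from \eqref{walls2}, the coefficient of each $x_j$ in the normal vector: for $j \in S$ the coefficient is $\frac{2g-2i-t}{2g-2}$, while for $j \notin S$ it is $-\frac{2i-2+t}{2g-2}$.

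The second step exploits that $S \neq [n]$ to compare the two ``slopes''. Since $S^c \neq \emptyset$, there is at least one variable $x_j$ with $j \notin S$ appearing with coefficient $-\frac{2i-2+t}{2g-2}$; also $S$ is nonempty (it contains the first marked point by the convention in Definition~\ref{vinecurvenotation}), so there is a variable $x_j$ with $j \in S$ and coefficient $\frac{2g-2i-t}{2g-2}$. For the two hyperplanes to have proportional normal vectors, the ratio of the $S$-coefficient to the $S^c$-coefficient must agree:
\[
\frac{2g-2i_1-t_1}{2i_1-2+t_1} = \frac{2g-2i_2-t_2}{2i_2-2+t_2}.
\]
Writing $a_k := 2i_k + t_k$, both sides become $\frac{2g - a_k}{a_k - 2}$, i.e.\ $\frac{2g-2}{a_k-2} - 1$. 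This is a strictly decreasing function of $a_k$ on the relevant range, hence injective, forcing $a_1 = a_2$, that is $2i_1+t_1 = 2i_2+t_2$, which is exactly the claim. (One should check that the denominators $2i_k-2+t_k$ do not vanish: since $t_k \geq 2$ and $i_k \geq 0$ we have $2i_k-2+t_k \geq 0$, and equality $2i_k - 2 + t_k = 0$ only for $(i_k,t_k)=(0,2)$; in that degenerate case the $S^c$-coefficients vanish identically, so the normal vector is supported on $S$-variables only, and then matching with the other hyperplane forces its $S^c$-coefficient to vanish too, i.e.\ $2i_2-2+t_2=0$, giving $a_1 = a_2 = 2$ directly. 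Alternatively one notes $2i_k - 2 + t_k = 0$ is incompatible with $i_k + t_k \le g+1$ only in edge cases, which can be dispatched separately.)

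I do not expect a serious obstacle here; the statement is essentially the injectivity of $a \mapsto \frac{2g-a}{a-2}$, and the hypothesis $S \neq [n]$ is used precisely to guarantee that the $x_j$ with $j \notin S$ genuinely appear, so that comparing the two families of coefficients is legitimate rather than vacuous. The one point requiring a little care is the bookkeeping of degenerate values of $(i,t)$ where a coefficient vanishes — in particular $(0,2)$, where the $S^c$-block of the normal vector is zero — but as indicated above these cases either force $2i+t=2$ on the nose or are excluded by the constraints in the vine curve triple definition. The remaining implication (that once the normals are proportional the constants also match up, so the hyperplanes truly coincide rather than being parallel) is not even needed for this proposition: we only extract a \emph{necessary} condition for coincidence, so proportionality of the normal vectors already suffices to conclude $2i_1+t_1 = 2i_2+t_2$.
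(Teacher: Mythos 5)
Your argument is correct and is exactly the coefficient-comparison the paper leaves as ``Straightforward'': since $S\neq[n]$ both blocks of variables genuinely occur in \eqref{walls2}, coincidence forces the normal vectors to be proportional, and injectivity of $a\mapsto\frac{2g-a}{a-2}=\frac{2g-2}{a-2}-1$ in $a=2i+t$ gives the claim, with your separate treatment of the $(i,t)=(0,2)$ case (vanishing $S^c$-coefficients) closing the only degenerate possibility. The tacit restriction to $t_1,t_2\geq 2$ is harmless, as the $t=1$ hyperplanes are already covered by Proposition~\ref{prop:ctwalls}.
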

\begin{proof}
Straightforward.
\end{proof}

The most interesting vine strata from the point of view of the stability decomposition  are those  with $S= [n]$. Over those vine strata it can occur that two stability hyperplanes of the form  \eqref{walls2} coincide.  For example, if $d=0$, by fixing $\sum_{j \in [n]} x_j = g-1$ one sees that all  hyperplanes of the form $H(i,t,[n] ;k)$ with $i+ t  +k=g$  coincide (note that this collection of hyperplanes  is finite, because of the constraints $i+t \leq g+1$, $i\geq 0$ and $t \geq 2)$.

\subsection{Compactified Jacobians, universal and semistable families} \label{Section:compjac}

Here we define, for every nondegenerate $\phi \in V_{g,n}^d$, a fine compactified universal Jacobian $\Jmb dgn(\phi)$, parameterizing $\phi$-stable sheaves. The construction is taken from \cite[Section~4]{kp3}, in the language of pseudodivisors from \cite[Section~4]{abreupacini}. Each fine compactified Jacobian will come with a normal crossing stratification category (an abstract definition of this notion will be given and discussed in the next section).

We also describe (Theorem~\ref{prop: univ-semistable-mod}) a quasistable modification of the universal curve $\Cmb gn (\phi) \to \Jmb dgn(\phi)$ as a certain $(n+1)$-universal Jacobian $\Jmb dg{n+1} (\alpha(\phi))$.

\begin{definition} \label{def: Dstable} For $\phi \in V^d_{\text{stab}}(G)$ we say that a degree~$d$ pseudodivisor $(E,D)$ is \emph{$\phi$-semistable} if
\begin{equation} \label{Eqn: SymDefOfStability}
\phi(V_0)-\deg_{V_0}(D)+\frac{|E(V_0,V_0^c)|}{2}\geq   0
\end{equation}
for every $V_0\subseteq V(G^E)$. We say that $(E,D)$ is \emph{$\phi$-stable} if the inequality above is strict for every $V_0$ such that $V_0\neq V(G^E)$ and $V_0$ is not contained in the set of exceptional vertices. Given $v_0\in V(G)$, we say that $(E,D)$ is \emph{$(\phi,v_0)$-quasistable} if the inequality is strict for every $V_0$ such that $V_0\neq V(G^E)$ and $v_0\in V_0$. 
\end{definition}
As stipulated in Subsection~\ref{Not: curves and sheaves}, when $E=\emptyset$, we will simply write $D$ for $(\emptyset, D)$.

\begin{remark}
By \cite[Proposition~4.6]{abreupacini} if a pseudodivisor $(E,D)$ on $G$ is $(\phi,v_0)$-quasistable for some $(\phi,v_0)$, then $E\subseteq E(G)$ does not disconnect $G$.
\end{remark}

\begin{remark} \label{remark-deg}
We have introduced the degenerate locus of $V^d_{\text{stab}}(G(C))$ and of $V_{g,n}^d$ in Definition~\eqref{Def:degenerate}. We claim that, in both cases, an element $\phi$ is nondegenerate if and only if all semistable pseudodivisors are stable. The ``only if'' is immediate. The other implication is proved in~\cite[Section~5]{kp3}.
\end{remark}

We now define stability for rank~$1$ torsion free sheaves on curves.

\begin{definition} (\cite[Definition~4.2]{kp3})\label{phistab}	
 Let $C$ be a nodal curve with dual graph $G(C)$  and let $\phi \in V^d_{\text{stab}}(G(C))$. 
 A rank~$1$ torsion-free  sheaf $F$  of degree $d$ on $C$ is \emph{$\phi$-(semi)stable} if its multidegree $\underline{\deg}(F)$ is a $\phi$-(semi)stable pseudodivisor.
 
If $P \in C^{\text{sm}}$ is a nonsingular point of $C$ in the component $C_{v_0}$, we say that $F$ is \emph{$(\phi,P)$-quasistable} if $\underline{\deg}(F)$ is $(\phi,v_0)$-quasistable.

If $C' \to C$ is a semistable modification of $C$ and $F'$ is a positively admissible line bundle on $C'$, we say that $F'$ is $\phi$-(semi)stable or $(\phi,P)$-quasistable if so is $f_*(F')$.

 For $\phi\in V^d_{\text{stab}}(G(C))$ and $P \in C$,  we define $\overline{\mathcal{J}}^d_{\phi,P}(C)$ to be the scheme  parameterizing  $(\phi,P)$-quasistable sheaves. 
\end{definition}
Note that if $F$ is a rank~$1$ torsion free sheaf on $C$ then (1) if $F$ is $(\phi,P)$-quasistable then it is simple, and (2) the sheaf $F$ is simple if and only if its multidegree $(E,D)$ has the property that  $E\subseteq G(C)$ is nondisconnecting.

\begin{remark} \label{Rem: perturb} Let $\phi \in V^d_{\text{stab}}(G(C))$ and $P \in C^{\text{sm}}$ be as above. Let $\phi'\in V^d_{\text{stab}}(G(C))$ be a small perturbation of $\phi$ obtained by subtracting a small $\epsilon>0$ from $\phi$ on the vertex of $G(C)$ containing $P$, and by adding a small positive amount on all other vertices (so that $\sum \phi'(v)=\sum \phi(v)=d$).  

Then $(\phi,P)$-quasistability coincides with $\phi'$-stability which in turn coincides with $\phi'$-semistability.
\end{remark}

We are now ready to introduce  compactified universal Jacobians. 
For any $1 \leq i \leq n$, we denote by $\sigma_i \colon \Mmb gn \to \Cmb gn$ the $i$-th section.

	\begin{definition} \label{def: semistablecat}
		Let $\phi \in V_{g,n}^d$ be a universal polarization.

  We define $\catJ$ to be the category whose objects are $(G,(E,D))$ where $G$ is an object of $G_{g,n}$ and $(E,D)$ is a  $\phi$-semistable pseudodivisor on $G$. A morphism $(G,(\emptyset,D)) \to (G',(\emptyset, D'))$ in $\catJ$ is a morphism $f\in \Mor_{G_{g,n}}(G,G')$ such that the induced homomorphism $f_{\ast}\colon \Div(G) \to \Div(G')$ on divisors satisfies $f_\ast(D) = D'$. We refer to \cite[Section~2.1]{abreupacini}  for the notion of a morphism $(G,(E,D)) \to (G',(E', D'))$ when $E, E'$ are nonempty.
  
  Similarly, we define $\mathfrak{C}_{g,n}(\phi, \sigma_i)$ to be the category whose objects are $(G,(E,D))$ and $(E,D)$ is $(\phi, \sigma_i)$-quasistable. (By abuse of notation, we also denote by $\sigma_i$ the vertex of $G$ that corresponds to the component of $C$ containing the image of the section $\sigma_i$).
  
		We say that a family of rank~$1$ torsion-free  sheaves of degree $d$ on a family of stable curves is \emph{$\phi$-(semi)stable} or $(\phi, \sigma_i)$-quasistable if that property holds for all geometric fibers. We define $\Jmb dgn(\phi)$ to be the moduli stack  parameterizing  $\phi$-semistable sheaves on families of stable curves. We define $\Jmb dgn(\phi,\sigma_i)$ to be the moduli stack  parameterizing  $(\phi,\sigma_i)$-quasistable sheaves on families of stable curves. \label{polarised}
\end{definition}
As anticipated in Section~\ref{Not: curves and sheaves}, we will shorten the notation and write $(G,D)$ instead of $(G, (\emptyset, D))$.

\begin{notation}
If $\phi \in V_{g,n}^d$ is nondegenerate then, by Remark~\ref{remark-deg} we have that all semistable sheaves are stable. It follows that for every $1 \leq i \leq n$ we have the equalities $\catJ=\mathfrak{C}_{g,n}(\phi, \sigma_i)$ and $\Jmb dgn(\phi)=\Jmb dgn(\phi,\sigma_i)$, where $\sigma_i \colon \Mmb gn \to \Cmb gn$ is the $i$-th section.
\end{notation}
\begin{remark}
For $\phi \in V_{g,n}^d$ degenerate and for all $1 \leq i \leq n$ we can describe $\mathfrak{C}_{g,n}(\phi, \sigma_i)$ (resp. $\Jmb dgn(\phi,\sigma_i)$) as $\mathfrak{C}_{g,n}(\phi'_i)$ (resp. $\Jmb dgn(\phi'_i)$) for some nondegenerate perturbation $\phi_i'$ of $\phi$. (As done in Remark~\ref{Rem: perturb} for a single curve). 

In order to achieve this we  employ Proposition~\ref{stabilityspace}. We  define $\phi_i'$ by subtracting  from $\phi \in C_{g,n}^d \times D_{g,n}^d$ some sufficiently small $\epsilon>0$ on the $i$-th component of the factor $D_{g,n}$, and by adding  $\epsilon/(n-1)$ on all other $n-1$ components of $D_{g,n}$ (so for all curves, the sum over all irreducible components of the values of $\phi$ and of $\phi_i'$ coincide).
\end{remark}

The following guarantees the existence of  universal moduli spaces.

\begin{theorem} (\cite[Corollary~4.4]{kp3} and \cite{esteves}/\cite{melouniversal}) \label{existenceofMS}
For all $\phi \in V_{g,n}^d$ and $1 \leq i \leq n$ the stack $\Jmb dgn (\phi, \sigma_i)$ is a nonsingular Deligne--Mumford stack, and  the forgetful morphism $\Jmb dgn(\phi, \sigma_i) \to \Mmb gn$ is representable, proper and flat. 
\end{theorem}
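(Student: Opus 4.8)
The plan is to reduce the statement to the nondegenerate case and then invoke the existing representability/properness results in the literature. First I would observe that, by the Remark following the Notation after Definition~\ref{polarised}, for any $\phi \in V_{g,n}^d$ (degenerate or not) and any $1 \le i \le n$ there is a nondegenerate perturbation $\phi_i'$ of $\phi$, compatible with graph morphisms (its construction uses Proposition~\ref{stabilityspace} to ensure compatibility), such that $\mathfrak{C}_{g,n}(\phi,\sigma_i) = \catJ[\phi_i']$ and hence $\Jmb dgn(\phi,\sigma_i) = \Jmb dgn(\phi_i')$ as moduli stacks. Indeed, by Remark~\ref{Rem: perturb} applied fiberwise, $(\phi,\sigma_i)$-quasistability of a pseudodivisor on $G(C)$ is equivalent to $\phi_i'$-stability, which for nondegenerate $\phi_i'$ is equivalent to $\phi_i'$-semistability by Remark~\ref{remark-deg}. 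So it suffices to prove the theorem for a \emph{nondegenerate} universal stability condition $\phi$, with $\Jmb dgn(\phi) = \Jmb dgn(\phi,\sigma_i)$ for every $i$.

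Next I would set up the moduli problem over $\Mmb gn$. Over the universal stable curve $\Cmb gn \to \Mmb gn$ one has the relative moduli stack of rank~$1$ torsion-free \emph{simple} sheaves of degree $d$; this is the stack $\operatorname{Simp}^d$ whose existence as an algebraic stack, together with its properties (locally of finite type, etc.), is provided by Esteves' work \cite{esteves} (and \cite{melouniversal} in the stacky universal setting). The compactified Jacobian $\Jmb dgn(\phi)$ is the substack cut out by the open-and-closed condition of $\phi$-semistability on the multidegree of the sheaf on each geometric fiber. The key points to check are then: (i) $\phi$-semistability is an open condition in families, so $\Jmb dgn(\phi)$ is an algebraic substack; (ii) when $\phi$ is nondegenerate, semistable $=$ stable, so the only automorphisms of a point are the scalars $\mathbb{G}_m$ acting on the sheaf — but since we are taking the rigidified/Jacobian-type stack (sheaves up to isomorphism over the curve) these scalars are quotiented out and the stack is Deligne--Mumford (the finiteness of the remaining automorphism group comes from finiteness of $\Aut$ of the underlying stable pointed curve). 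For smoothness I would use the deformation-obstruction theory of the pair (curve, sheaf): the obstruction to deforming a torsion-free rank~$1$ simple sheaf on a nodal curve lives in an $\ext^2$ group which vanishes for dimension reasons (curves are $1$-dimensional), and the relative deformation theory over $\Mmb gn$ — which is itself smooth — is unobstructed; concretely this is \cite[Corollary~4.4]{kp3}. Finally, representability, properness and flatness of $\Jmb dgn(\phi) \to \Mmb gn$: representability because the fibers are the (coarsely representable, rigidified) compactified Jacobians of a fixed curve and the diagonal is affine; properness by the valuative criterion, using that for nondegenerate $\phi$ the $\phi$-stable sheaf extending a family over a punctured disc is \emph{unique} (semistable reduction for torsion-free sheaves, cf.\ \cite{esteves}, combined with the no-strictly-semistable hypothesis); flatness then follows from properness plus the fact that source and target are smooth of the expected relative dimension $g$ (or by ``miracle flatness'', source and target smooth with equidimensional fibers).

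The main obstacle I expect is (ii)–(iii): carefully justifying that the moduli stack is Deligne--Mumford and that the morphism to $\Mmb gn$ is \emph{proper}, rather than merely separated and of finite type. Separatedness and existence of limits in the valuative criterion both hinge essentially on the nondegeneracy of $\phi$ — it is exactly the absence of strictly semistable objects that makes the Langton-type semistable reduction produce a \emph{unique} limit, so that one gets both valuative conditions simultaneously. Everything else (openness of stability, smoothness via deformation theory, flatness via miracle flatness) is comparatively routine once the moduli stack is known to be a smooth DM stack. Since all of this is precisely the content of \cite[Corollary~4.4]{kp3} together with the foundational results of \cite{esteves} and \cite{melouniversal}, the proof is ultimately a citation assembled from the reduction to the nondegenerate case; the work is in spelling out that reduction, which is what the preceding remarks accomplish.
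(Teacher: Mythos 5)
Your proposal matches the paper's treatment: the paper gives no independent proof of this theorem, stating it as a citation to \cite[Corollary~4.4]{kp3} and \cite{esteves}/\cite{melouniversal}, with the degenerate case handled exactly as you do, by the remark preceding the theorem that identifies $\Jmb dgn(\phi,\sigma_i)$ with $\Jmb dgn(\phi_i')$ for a nondegenerate perturbation $\phi_i'$ constructed compatibly via Proposition~\ref{stabilityspace}. Your additional sketch of what the cited results contain (openness of stability, DM-ness, deformation theory, uniqueness of limits for nondegenerate $\phi$, miracle flatness) is reasonable but not part of the paper's argument.
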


The moduli stacks of Theorem~\ref{existenceofMS} are called \emph{fine compactified universal Jacobians}. 

As observed in \cite[Remark~4.6]{kp3}, the fine compactified (universal) Jacobians produced by this construction are the same as those defined by Esteves and Melo \cite{esteves,melouniversal}.

By the universal property of $ \Jmb dgn (\phi)$, the universal family $\pi_n \colon \Cmb gn(\phi) \to \Jmb dgn (\phi)$ carries some tautological (or Poincar\'e) sheaf  $\mathcal{F}_{\text{tau}}(\phi)$.  These are of fiberwise total degree $d$ and $\phi$-stable. If $\mathcal{F}_{\text{tau},1}(\phi)$ and $\mathcal{F}_{\text{tau},2}(\phi)$ are any two tautological sheaves, there exists a line bundle $M$ on $\Jmb dgn (\phi)$ such that \[\mathcal{F}_{\text{tau},2} = \mathcal{F}_{\text{tau},1} \otimes \pi_n^*M.\] One way to make a definite choice of a tautological sheaf is to assume that it is trivial along a given section.

Note that, as described in \cite{cmkv1}, the total space $\Cmb gn(\phi)$ is singular. A natural desingularization of $\Cmb gn(\phi)$, carrying a tautological line bundle $\mathcal{L}_{\text{tau}}(\phi)$, was provided by Esteves--Pacini in \cite{estevespacini} by using a semistable modification of the universal family. Here we will give an alternative description of it using a compactified universal Jacobian with one extra point.

\begin{remark} We observe that there is a natural map $\alpha \colon V_{g,n}^d \to V_{g,n+1}^d$, with image in the degenerate locus, defined as follows. 

If $G$ is the stable graph obtained as the  stabilization of the $(n+1)$-pointed graph $G'$ after the $n+1$ marking is removed, then there is a natural bijection between the vertices of $G'$ and those of $G$, except possibly for  $1$ extra genus $0$ vertex of $G'$. Then define $\alpha(\phi)$ as the assignment on $G'$ that is defined by this bijection, and that is $0$ on the extra genus $0$ vertex of $G'$ (when that exists). The extra genus $0$ vertex could be a tail (when it is connected to the complement by $1$ edge) or a bridge (when it is connected to the complement by $2$ edges). The fact that $\phi$ is compatible for graph morphisms implies the same property for $\alpha(\phi)$. \end{remark}  

\begin{notation}
We will slightly abuse the notation and,  for $\phi \in V_{g,n}^d$, we will simply write $\phi \in V_{g,n+1}^d$ in place of $\alpha(\phi) \in V_{g,n+1}^d$.
\end{notation}
 
 We now show that a quasistable modification of the universal curve can be described as the  morphism  $\pi' \colon \Jb{d}{g,n+1}(\phi, \sigma_i) \to \Jb{d}{g,n}(\phi,\sigma_i)$ that forgets the last point and stabilizes, thus mapping each $(C',p_1, \ldots, p_{n+1},F)$ to $(C,p_1, \ldots, p_n,f_*F)$, where $f \colon C' \to C$ is the stabilization of $(C', p_1, \ldots, p_n)$. In order to do that, for each fixed $i=1,\ldots, n$, we define a morphism $\psi\colon \Jb{d}{g,n+1}(\phi,\sigma_i) \to \Cmb g{n}(\phi, \sigma_i)$  by 
\[
(C',p_1, \ldots, p_{n+1},L)  \mapsto (C,p_1, \ldots, p_n,f_*L, f(p_{n+1})).
\]
 Then we show that $\psi$ is the stabilization of $\pi'$.
\begin{theorem} \label{prop: univ-semistable-mod}
For each $1 \leq i \leq n$, there is an isomorphism $\Jmb dg{n+1}(\phi, \sigma_i)\to \mathbb{P}(F_{\text{tau}}^\vee)$ such that the following diagram commutes
\begin{equation} \label{eq: commdiag}
\begin{tikzcd}
    \Jmb{d}{g}{n+1} (\phi, \sigma_i) \ar[rr] \ar[dr, "\psi"] & &\mathbb{P}(F_{\text{tau}}^\vee) \ar[ld]\\
    &  \Cmb g{n}(\phi, \sigma_i). & \\
\end{tikzcd}.
\end{equation}
Moreover, a tautological line bundle (\emph{i.e.} a line bundle isomorphic to the pullback of $\mathcal{O}(1)$) on $\Jb{d}{g,n+1}(\phi, \sigma_i)$ is
\begin{equation} \label{deftau}
\mathcal{L}_{\textnormal{tau}}:=\sigma_{n+1}^*(\mathcal{F}'_{\textnormal{tau}}) \otimes \sigma_{1}^*(\mathcal{F}'_{\textnormal{tau}})^{-1},
\end{equation}
where $\mathcal{F}'_{\textnormal{tau}}=\mathcal{F}'_{\textnormal{tau}}(\phi)$ is a tautological sheaf on the universal curve 
\[
\pi_{n+1}\colon \Cmb g{n+1}(\phi, \sigma_i) \to \Jmb dg{n+1} (\phi,\sigma_i).
\]
\end{theorem}

\begin{proof}
Firstly, we apply Proposition~\ref{prop:estevespacini} to show that the morphism $\psi$ is a quasistable modification of the universal curve  $\overline{\mathcal{C}}_{g,n}(\phi, \sigma_i)$. If $C=C'$, then $f$ is an isomorphism, so $\psi$ is an isomorphism locally around $(C',p_1,\ldots, p_{n+1},L)$. If $C\neq C'$, then we have two cases. Either $p_{n+1}$ lies on a rational tail and, in this case, $L=f^*f_*(L)$ so $\psi$ is again an isomorphism locally around $(C',p_1,\ldots, p_{n+1},L)$. Or, $p_{n+1}$ is on a bridge on $E\subset C'$ such that no other marked points are in $E$. We will now focus on this case.

If $\deg_E(L)=0$, then by Proposition~\ref{prop:estevespacini}, we have that $L=f^*f_*(L)$ and again the map $\psi$ is an isomorphism locally around $(C',p_1,\ldots, p_{n+1},L)$. We are left with the case where $\deg_E(L)=1$. In this case, we have that $f_*(L)$ is not locally free around $f(p_{n+1})$, which is a node. More so, we have that $\psi^{-1}(C,p_1,\ldots, p_n,f_*(L), f(p_{n+1}))$ is isomorphic to $\mathbb{P}^1$. Indeed, every $L'$ obtained from gluing $L|_{E^c}$ and $\mathcal{O}_E(1)$ has the property that \[\psi(C',p_1,\ldots, p_{n+1}, L') = (C,p_1,\ldots, p_n, f_*(L), f(p_{n+1})).\] The possible gluings are set-theoretically parameterized by a $\mathbb{P}^1$. Moreover, since the local rings of the Jacobians over such points are known via deformation theory (see \cite[Equation~8.17]{CMKV17}), we can see that the fiber $\psi^{-1}(C,p_1,\ldots, p_n,f_*(L), f(p_{n+1}))$ is actually reduced and transversal to the closure in $\Jb{d}{g,n+1}(\phi, \sigma_i)$ of
\[
(\pi')^{-1}(C,p_1,\ldots, p_n,f_*(L))\setminus \psi^{-1}(C,p_1,\ldots, p_n,f_*(L), f(p_{n+1})).
\]

Secondly, we observe that $\deg_E(\mathcal{L}_{\text{tau}})=1$ on each exceptional component $E$ contracted by $\psi$. In order to show this, it suffices to construct a nonconstant map $\delta \colon \mathbb{P}^1\to E$ such that $\delta^*(\mathcal{L}_{\text{tau}})= \mathcal{O}(1)$. Let $(C,p_1, \ldots, p_n, L, p)$ correspond to the point in the universal curve that is contracted by $E$. That means that $p$ is a node of $C$ and that $L$ fails to be locally free at $p$. We construct a family $X/\mathbb{P}^1$ by gluing two sections on two families $X_1/\mathbb{P}^1$ and $X_2/\mathbb{P}^1$. The family $X_1$ is the trivial family $C^{\nu_p} \times \mathbb{P}^1$ (where $\nu_p$ denotes the normalization at $p$) carrying the $n$ trivial sections whose images are $p_i\times \mathbb{P}^1$, for $i=1, \ldots, n$, and the gluing sections are those whose images are $q_1\times \mathbb{P}^1$ and $q_2\times \mathbb{P}^1$ such that  $\nu_p(q_i)=p$. The family $X_2$ is the blowup of the trivial family $\mathbb{P}^1 \times \mathbb{P}^1$ at $[0:1] \times [0:1]$ and $[1:0] \times [1:0]$ with a further section $p_{n+1}$ defined as the inverse image of a constant section (different from $[0:1]$ and $[1:0]$) and the two gluing sections are the strict transforms of the sections $[0:1]$ and $[1:0]$. Then we choose any line bundle $F$ on $X$ with the property that $F|_{X_1}=\xi^*(L)/\text{Torsion}$, where $\xi\colon X_1\to C$ is the projection $X_1\to C^{\nu_p}$ composed with the partial normalization $C^{\nu_p}\to C$,  and $F|_{X_2}=\mathcal{O}(\widetilde{\Delta})$, for $\widetilde{\Delta}$ the strict transform of the diagonal in $X_2$. Then $F$ is $(\phi, \sigma_i)$-quasistable, and so the datum of $(X,F)$ defines a morphism $\delta$. By construction, we have that $\delta^*(L_{\text{tau}})=\sigma_{n+1}^*(F)= \sigma_{n+1}^*(\mathcal{O}(\widetilde{\Delta}))$, which equals $\mathcal{O}(1)$ because the image of $\sigma_{n+1}$ intersects $\widetilde{\Delta}$ at $1$ reduced point. 

As before, let $\mathcal{F}_{\text{tau}}$ be a tautological sheaf over $\mathcal{C}_{g,n}(\phi)$.
To conclude, we prove that the direct image $\psi_* \mathcal{L}_{\text{tau}}$ of the line bundle defined in \eqref{deftau} equals  $\mathcal{F}_{\text{tau}} \otimes \pi_n^*(M)$ for some line bundle $M$ on $ \Jmb dg{n} (\phi,\sigma_i)$. By the previous part combined with Proposition~\ref{prop:estevespacini}, we conclude that $\psi_*\mathcal{L}_{\text{tau}}$ is rank~$1$ and torsion-free. By \cite[Appendix~7]{kp3}, it is enough to prove that this equality occurs on an open subset $U$ of $\Cmb g{n}(\phi)$ whose complement has codimension at least $2$. It is easy to show equality over the open set $U$ that is the universal curve over the line bundle locus in $\Jb{d}{g,n}(\phi)$ (this follows because $\psi_{|\psi^{-1}(U)}$ is an isomorphism over $U$, and because the restriction of $\mathcal{F}'_{\text{tau}}$ to the open set $\pi_{n+1}^{-1}(\psi^{-1}(U)) \subset \Cmb g{n+1}(\phi, \sigma_i)$ is a line bundle). This concludes the proof that $\psi$ is a positively admissible quasistable modification.

Since the degree of $\mathcal{L}_{\mathrm{tau}}$ restricted to any exceptional curve is $1$, the existence of the isomorphism  $\Jmb dg{n+1}(\phi, \sigma_i)\to \mathbb{P}(F_{\text{tau}}^\vee)$, and the fact that Diagram~\eqref{eq: commdiag} commutes, follow from Proposition~\ref{prop:estevespacini} Item~(3) and (4).
\end{proof}

\subsection{Brill--Noether classes} \label{subsec: BN}

To the data of a flat family $\pi \colon \mathcal{C}\to S$ of nodal curves of arithmetic genus $g$ over a nonsingular scheme $S$ and a rank $1$ torsion-free $\mathcal{F}$ on $\mathcal{C}$ of fiberwise degree $d$, we can associate the Brill--Noether (or Thom--Porteous) class
\begin{equation} \label{thomporteous}
\mathsf{w}_d(\mathcal{C}/S,\mathcal{F}):= c_{g-d}(-R^{\bullet} \pi_\ast \mathcal{F}).
\end{equation}
This class is supported on the subscheme
\[
\mathsf{W}_d(\mathcal{C}/S,\mathcal{F})=\{s \in S: \ h^0(\mathcal{C}_s,\mathcal{F}_s)>0\} \subset S
\]
and when the latter is of the expected codimension $g-d$, it coincides with its fundamental class (with a suitably defined scheme structure-- see \cite[Chapter~14]{fulton}). If  $h^0(\mathcal{C}_s,\mathcal{F}_s)=0$  for all $s \in S$, then the complex $-R^{\bullet} \pi_\ast \mathcal{F}=R^{1} \pi_\ast \mathcal{F}$ is  a vector bundle of rank $g-1-d$, so the Chern class $\mathsf{w}_d(\mathcal{C}/S,\mathcal{F})$ equals zero. 

Here are a couple of further basic remarks on these classes.

\begin{remark} \label{independent of tauto}
If $I$ is a line bundle on $S$, then we have
\[\mathsf{w}_d(\mathcal{C}/S,\mathcal{F})= \mathsf{w}_d(\mathcal{C}/S,\mathcal{F}\otimes \pi^* I ).\]

Indeed, by \cite[Example~3.2.2]{fulton} we have 
\begin{equation} \label{tensorbylinebundle}
c_j(-R^{\bullet} \pi_\ast \mathcal{F} \otimes I) = \sum_{i=0}^j \binom{g-d-1-i}{j-i} c_i(-R^{\bullet} \pi_\ast \mathcal{F})  \cdot c_1(I)^{j-i}
\end{equation}
for all $j \geq 0$.  The result follows because, for $j=g-d$, the binomial coefficient vanishes unless when $i=j$.
\end{remark}

\begin{remark}
Let $f \colon \mathcal{C}' \to \mathcal{C}$ be a semistable modification of the family  of nodal curves $\pi \colon \mathcal{C} \to S$, and let $\mathcal{L}$ be a positively admissible line bundle on $\mathcal{C}$ (see Subsection~\ref{Not: curves and sheaves}). By Proposition~\ref{prop:estevespacini}, we deduce
\begin{equation} \label{same BN}
R^{\bullet} (\pi \circ f)_{\ast} \mathcal{L} =R^{\bullet} \pi_\ast ( f_\ast \mathcal{L}).\end{equation}
Conversely, if $\mathcal{F}$ is a rank~$1$ torsion free simple sheaf on a family of stable curves $\mathcal{C}/S$, there exists a quasistable modification $\mathcal{C}'$ of $\mathcal{C}$ and a line bundle $\mathcal{L}$ on $\mathcal{C}$ such that $R^{\bullet} f_\ast (  \mathcal{L})=f_\ast  \mathcal{L} = \mathcal{F}$ and thus \eqref{same BN} occurs.
\end{remark}

The same construction and remarks apply to the case of the semistable modification $\pi \colon \Cb{g,n}'(\phi) \to \Jmb{d}{g}{n}(\phi)$ of the universal family, and its tautological line bundle $\mathcal{L}=\mathcal{L}_{\text{tau}}$ (see \cite{prvZ}). We will denote by $\mathsf{w}_d(\phi)$ the corresponding universal class in $A^d(\Jb{d}{g,n}(\phi))$ and with $\mathsf{W}_d(\phi)$ the subscheme over which it is supported.

\begin{remark} When restricted to smooth curves, the scheme $\mathsf{W}_d(\phi)_{|\J{d}{g,n}}$ is reduced, irreducible, and of relative codimension $g-d$ (it is the image of the $d$-th symmetric product via the Abel map). The closure of $\mathsf{W}_d(\phi)_{|\J{d}{g,n}}$ in $\Jmb{d}{g}{n}(\phi)$ is contained in $\mathsf{W}_d(\phi)$ and when the two coincide, we have $\mathsf{w}_d(\phi)=[\mathsf{W}_d(\phi)]$. \label{closureBN}
\end{remark}

\begin{remark} In general, the scheme $\mathsf{W}_d(\phi)$ fails to be irreducible and of the expected dimension. For example, if $\phi$ is a stability condition such that the line bundles of bidegree $(d_1, d_2)$ are $\phi$-stable on curves in the boundary divisor $\Delta_{i,S}$, and either $d_1>i$ or $d_2>g-i$, then $\mathsf{W}_d(\phi)$ contains the pullback of $\Delta_{i,S}$ in $\Jb{d}{g,n}(\phi)$.

We will discuss more on this matter in Proposition~\ref{BNisclosure} and Remark~\ref{BNisnotexpected}.
\end{remark}

We conclude this Subsection by providing sufficient conditions for the Brill--Noether class \ref{thomporteous} defined by the Thom--Porteous Formula to coincide with the class of the Brill--Noether locus.  Some parts of the proof of the next two propositions will require to employ the fact that $\catJ$ is a stratification of $\Jmb{d}{g}{n}(\phi)$, which we will discuss in the next section. For this reason, we postpone the proof of the next result to Section~\ref{proofofpropositions}.

As in Subsection~\ref{subsec: stabilityH}, we fix coordinates for $V_{g,n}^d \cong C_{g,n}^d \times D_{g,n}^d$, and let $V_{g,n}^d \ni \phi=((x_{i,1,S})_{(i,S)}, (x_1, \ldots, x_n))$ where $x_j=x_{0,2,j}$ for each $1 \leq j \leq n$.

\begin{proposition} \label{BNisclosure}
We have that $\mathsf{W}_d(\phi)$ is the closure of $\mathsf{W}_d(\phi)_{|\J{d}{g,n}}$ (in particular it is reduced, irreducible and of the expected codimension, and $\mathsf{w}_d(\phi)=\mathsf{W}_d(\phi)$) if and only if $\phi=((x_{i,1,S})_{(i,S)}, (x_1, \ldots, x_n))$ is as follows
\begin{enumerate}
\item If $d=g-1$, for $i-3/2<x_{i,1,S}<i+1/2$ for all $(i,S)$.
\item If $d=g-2>0$, for $i-3/2<x_{i,1,S}<i-1/2$ for all $(i\geq 1,S)$ and $-3/2<x_{0,1,S}<1/2$ for all $S$, and 
\[i-2 <\frac{2g-2i-t}{2g-2} \cdot \sum_{j \in S} x_j + \Bigg(d-\sum_{j \notin S} x_j\Bigg) \cdot\frac{2i-2 +t}{2g-2}   <i+1 \] for all vine curve triples $(i,t,S)$ with $t \geq 2$.
\item Never if $0<d\leq g-3$.

\item If $d <0$, for $d-1/2<x_{i,1,S}<1/2$ for all $(i,S)$, and the coordinates $x_1, \ldots, x_n$ satisfy
\begin{equation}\label{condition} d-1 <\frac{2g-2i-t}{2g-2} \cdot \sum_{j \in S} x_j + \Bigg(d- \sum_{j \notin S} x_j\Bigg) \cdot \frac{2i-2 +t}{2g-2}   <1 \end{equation}
for all vine curve triples $(i,t,S)$ with $t \geq 2$.
\eqref{condition}.

\item If $d=0$, for $-1/2<x_{i,1,S}<1/2$ for all $(i \geq 1,S)$, for $-3/2<x_{0,1,S}<1/2$ for all $S$, and when  the coordinates $x_1, \ldots, x_n$ satisfy \eqref{condition} for all vine curve triples $(i,t,S)$ with $t \geq 2$.

\end{enumerate}
\end{proposition}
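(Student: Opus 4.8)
The statement characterizes exactly which $\phi$ make $\mathsf{W}_d(\phi)$ equal to the closure of the Brill--Noether locus on smooth curves. My plan is to prove it by analysing, stratum by stratum (using that $\catJ$ is a stratification of $\Jmb dgn(\phi)$), when a boundary stratum $\Jm dgn(\phi)_G$ is contained in $\mathsf{W}_d(\phi)$; the closure of $\mathsf{W}_d(\phi)_{|\J dgn}$ is contained in $\mathsf{W}_d(\phi)$ always (Remark~\ref{closureBN}), so equality holds if and only if no extra boundary stratum appears. Since $\mathsf{W}_d(\phi)$ is the support of $-R^\bullet\pi_*\mathcal{L}_{\text{tau}}(\phi)$, and over a curve $C$ with dual graph $G$ a $\phi$-stable sheaf $F$ has $h^0(C,F)>0$ as soon as $h^0$ of its restriction to some connected subcurve is positive, the key inequality is: a stratum $G$ lies in $\mathsf{W}_d(\phi)$ precisely when the $\phi$-stable multidegrees on $G$ force a component (or subcurve) to carry a line bundle of degree at least its genus, equivalently $h^1$ negative somewhere, which by semicontinuity pushes $h^0>0$ on an open subset of that stratum. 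The heart of the argument is therefore a purely combinatorial computation: for a vine curve $G(i,t,S)$, determine the set of $\phi$-stable bidegrees $(d_X,d_Y)$ and check whether $d_X\geq g_X=i$ or $d_Y\geq g_Y=g-i$ can occur; translating the $\phi$-stability inequalities \eqref{Eqn: SymDefOfStability} into the coordinates $x_{i,1,S}$, $x_j$ via \eqref{changeofcoord} gives exactly the stated open conditions.

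First I would reduce to vine curves and one-nodal (compact-type and irreducible) strata: by the structure of the stratification and by upper-semicontinuity of $h^0$, if a stratum $\Jm dgn(\phi)_G$ with $G$ of higher codimension meets $\mathsf{W}_d(\phi)$ in a dense subset, then already one of its codimension-one ``specialization directions'' — a vine curve obtained by contracting all but one ``separating'' set of edges of $G$ — must meet $\mathsf{W}_d(\phi)$ densely; so it suffices to control all vine-curve strata $G(i,t,S)$, plus $\Delta_{\mathrm{irr}}$ (which never contributes an extra component because an irreducible curve of genus $g$ with a degree-$d<g$ torsion-free sheaf generically has $h^0=0$). Then for each vine curve $G(i,t,S)$ with two components $X$ (genus $i$, markings $S$) and $Y$ (genus $g-i$, markings $S^c$) joined at $t$ nodes, I compute the $\phi$-stable pseudodivisors $(E,D)$: by Definition~\ref{def: Dstable} stability forces $d_X=\deg_X(D)$ to lie in a window of length $1$ determined by $\phi(V_X)$ and $t$, and one gets $\mathsf{W}_d(\phi)\supseteq$ (pullback of this stratum) exactly when that window contains an integer $\geq i$ (component $X$ too positive) or the complementary window contains one $\geq g-i$ (component $Y$ too positive). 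Writing out when this fails yields the inequalities: for $t=1$ these are the conditions $i-3/2<x_{i,1,S}<i+1/2$ in case (1), tightened to $i-3/2<x_{i,1,S}<i-1/2$ in case (2) because $g-d$ is now $2$ and one must also rule out $h^1$-drop contributions of the subcurve, and for $t\geq2$ they are \eqref{condition} and its shifts, obtained by substituting \eqref{walls2}.

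The hard part, and the step I expect to be the main obstacle, is the dichotomy between $d=g-1,\,g-2$ (where equality can hold for suitable $\phi$) and $0<d\leq g-3$ (where it never holds): one must show that for $d\leq g-3$ \emph{every} choice of $\phi$ admits some boundary stratum contained in $\mathsf{W}_d(\phi)$ with strictly larger dimension than the closure of the smooth locus. The mechanism is that the codimension of $\mathsf{W}_d$ on smooth curves is $g-d\geq3$, while a vine-curve stratum forced into $\mathsf{W}_d(\phi)$ by a single ``too positive'' component has codimension only $1$ in the boundary divisor, hence codimension $\leq 2<g-d$ in $\Jmb dgn(\phi)$, so it cannot be absorbed into the closure; and a counting/pigeonhole argument on the $t=1$ windows (over all $0\leq i\leq g$ and all $S$, the union of the forbidden windows $\{x_{i,1,S}\geq i-1/2\}\cup\{x_{i,1,S}\leq d-1/2\}$ or their analogues covers the whole real line once $g-d\geq 3$, because the ``good'' window has length $g-d-1$... wait, length $1$ in the $t=1$ case but the constraints from different $i$ are incompatible) shows no global $\phi$ avoids all of them. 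Making this pigeonhole precise — reconciling the many vine-curve constraints via the isomorphism $V_{g,n}^d\cong C_{g,n}^d\times D_{g,n}^d$ of Proposition~\ref{stabilityspace}, so that the $x_{i,1,S}$ and $x_j$ can genuinely be chosen independently — is the delicate bookkeeping I would need to carry out carefully; the cases $d=0$ and $d<0$ then differ from $d=g-1,g-2$ only in which of the finitely many windows survive, giving (4) and (5), and the degenerate vine curves excluded in Definition~\ref{vinecurvenotation} must be checked not to reintroduce problems. I would also need Proposition~\ref{prop:estevespacini} to pass freely between $\mathcal{L}_{\text{tau}}$ on the semistable modification and $F_{\text{tau}}$ on $\Cmb gn(\phi)$ when computing $R^\bullet\pi_*$.
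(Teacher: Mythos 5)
Your reduction contains a genuine gap, and it is the pivot on which the whole statement turns. You propose to test equality by asking whether some boundary stratum is \emph{contained} in $\mathsf{W}_d(\phi)$, with the combinatorial criterion ``bad iff $\phi$-stability forces some component to carry degree $\geq$ its genus''. Neither half is correct. First, containment is not the right test: since $\mathsf{W}_d(\phi)$ is a degeneracy locus, every irreducible component of it has codimension $\leq g-d$, so the correct criterion (and the one the paper's proof uses) is whether $\mathsf{W}_d(\phi)\cap\partial\Jmb dgn(\phi)$ has codimension $> g-d$; a locus of codimension $\leq g-d$ sitting inside a boundary stratum ruins equality even when the generic point of that stratum carries no section. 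Concretely, for $d=g-2$ and bidegree $(i,g-i-2)$ on a one-nodal vine curve the stratum is \emph{not} contained in $\mathsf{W}_d(\phi)$ (the unique section on the degree-$i$ side does not vanish at the node, so it does not glue), yet the intersection has codimension $2=g-d$, which is exactly why case~(2) tightens the window to force bidegree $(i-1,g-i-1)$; your criterion cannot see this. Second, ``degree $\geq$ genus on a component'' does not translate into the stated ranges: for $d=g-1$ the allowed window $i-3/2<x_{i,1,S}<i+1/2$ includes the stable bidegree $(i,g-i-1)$, and in cases (4)--(5) degree-$0$ bundles on genus-$0$ components are allowed; containment in $\mathsf{W}_d$ on a vine curve requires roughly degree $\geq$ genus $+1$ on a part (so that a section vanishing at the separating nodes exists), so your claim that translating stability ``gives exactly the stated open conditions'' is false already in case~(1).

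This also sinks your proposed mechanism for case~(3). It is not true that for $0<d\leq g-3$ every $\phi$ has a boundary stratum fully contained in $\mathsf{W}_d(\phi)$: taking $\phi$ near the stabilized-canonical stability of Definition~\ref{stabilisedcan}, every stable bidegree is balanced ($d_X$ roughly $\tfrac{d}{2g-2}\deg K^s_X< i$), so no component is ever forced to have degree $\geq$ genus $+1$ and no stratum is contained in $\mathsf{W}_d(\phi)$; your own hesitation about the pigeonhole is well founded, and it cannot be repaired. The paper proves (3) by a dimension count invisible to a containment-only analysis: on the divisor $\Delta_{\lfloor g/2\rfloor,S}$ with the most balanced stable bidegree $(\lceil d/2\rceil,\lfloor d/2\rfloor)$, the locus of sheaves admitting a section has codimension $\lceil (g-d-1)/2\rceil+2$ in $\Jmb dgn(\phi)$, which is $\leq g-d$ precisely when $d\leq g-3$, so for every $\phi$ some boundary piece of $\mathsf{W}_d(\phi)$ is too large to lie in the closure of $\mathsf{W}_d(\phi)_{|\J{d}{g,n}}$; the same codimension bookkeeping (on boundary divisors and on two-node vine curves, using \eqref{changeofcoord}) is what produces the exact ranges in cases (2), (4), (5), together with the vanishing statement of Lemma~\ref{lemmad<0} / Proposition~\ref{cord<0} for $d\leq 0$. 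Your outline, as written, would therefore produce wrong ranges in cases (1), (4), (5) and has no working argument for case (3).
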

 We now explicitly define a stability condition that, in each of the cases (1),(2),(4),(5) listed above, belongs to the ranges that we have identified for $\mathsf{W}_d(\phi)$ to equal the closure of its restriction to the open part. (This shows, in particular, that these ranges are not empty).
\begin{definition} \label{stabilisedcan} For $G\in G_{g,n}$, define the stabilized-canonical divisor $K^s_G$ to equal zero at every vertex contained in some rational tail\footnote{A rational tail is a complete subgraph whose genus is $0$ and that is connected to its complement by exactly $1$ edge.}, and for every other $v$ to equal  $K^s(v)=2g(v)-2+\operatorname{val}'(v)$, where $\operatorname{val}'(v)$ is the number of edges at $v$ (counting each loop twice),  except the edges that are contained in some rational tail.

Then define the stabilized canonical element $\phi^d_{\text{scan}}(G)=\frac{d}{2g-2} \cdot K^s_G \in V_{g,n}^d$.
\end{definition}
Note that the above is different from the canonical stability $\phi^d_{\text{can}} \in V_{g,n}^d$ chosen as the origin in \cite{kp2}.

\subsubsection{Pullback via Abel--Jacobi sections} \label{Sec: AJ} Fix integers ${\bf d}=(k;d_1, \ldots, d_n)$ such that $d= d_1+ \ldots+ d_n$ and integers ${\bf f}=(f_{i,S})_{i,S}$ for every boundary divisor $G(g-i,1,S) \in G_{g,n}$.
Define  the universal line bundle
\begin{equation} \label{eq: aj}
\mathcal{L}=\mathcal{L}_{{\bf d}, {\bf f}}=\omega^k_{\overline{\mathcal{C}}_{g,n}/\Mmb gn}\left(\sum_{j=1}^n d_j x_j+ \sum_{i,S} f_{i,S^c} \cdot  C_{i,S^c} \right),
\end{equation}
where $C_{i,S^c} \subset \Cmb gn$ is the component\footnote{These unnatural conventions will simplify the formulas in Remark~\ref{PB-AJ} and Example~\ref{ex-AJ}.} over the boundary divisor $\Delta_{g-i,S}=\overline{\mathcal{M}}_{G(g-i,1,S)} \subset \Mmb gn$ that contains the sections in $S^c$. Then define $\phi= \phi_{{\bf d}, {\bf f}}\in V_{g,n}^d$ to be the multidegree of $\mathcal{L}$.

If $\phi^+=\phi^+_{{\bf d}, {\bf f}}$ is a nondegenerate small perturbation of $\phi$, then we have that the universal line bundle $\mathcal{L}$ is  $\phi^+$-stable, and it defines a (Abel--Jacobi) section \[\sigma=\sigma^+_{{\bf d}, {\bf f}} \colon \Mmb gn \to \Jmb dgn (\phi^+).\]

\begin{remark} \label{Rem: relinmbgn} Assume $d<0$, $k=0$, and ${\bf d}$ satisfies $d_i \leq 1$ for all $i$, and at most one of the $d_i$'s equals $1$. Assume that ${\bf f}$ satisfies $d \leq f_{i,S^c}+\sum_{j \in S} d_j \leq 0$ for all $(i,S)$. Then $\phi_{{\bf d}, {\bf f}}$ satisfies the conditions of Item~(4) in Proposition~\ref{BNisclosure}.

 By Proposition~\ref{BNisclosure} the pullback
\begin{equation} \label{Eq: rel} \sigma_{{\bf d}, {\bf f}}^*(\mathsf{w}_d)= 0 \quad \in \operatorname{A}^d(\Mmb gn),\end{equation}
gives a tautological relation. The LHS of \eqref{Eq: rel} can be explicitly written as a linear combination of standard generators of the tautological ring of $\Mmb gn$ by means of \cite[Theorem~1]{prvZ} (see also Corollary~3.7 and Equation~3.9 of \emph{loc.cit}).
\end{remark}

\section{Normal crossing stratification categories and blowups}
\label{section:nc and blowup}
In this section we define the axioms needed for a category of (resolved) strata of a space stratified by normal crossing divisors that are not necessarily simple normal crossing. We use this formalism to write some intersection theoretic formulas (the excess intersection formula and the Grothendieck-Riemann-Roch formula for the total Chern class) that we will use to derive our main result Theorem~\ref{maintheorem}. Then we define the blowup category at a stratum with transversal self-intersection. A construction of such strata categories starting from a normal crossing divisor, and more generally from a toroidal embedding, is given in \cite[Definition~3.5]{mmuv}.

The main examples we are generalizing are (a) the poset obtained by intersecting the components of a simple normal crossing divisor and (b) the stratification of $\Mmb{g}{n}$ by topological type, induced by the boundary divisors $\Delta= \Delta_{\text{irr}} \cup \bigcup_{i,S} \Delta_{i,S}$ (see Subsection~\ref{notation - moduli}). In the latter case, the relevant category is the category $G_{g,n}$ of stable $n$-pointed graphs of genus $g$ with morphisms given by graph  contractions. \ 

\subsection{Categories of resolved strata for a normal crossing stratification}
\label{subsec:normal_strat}

We start with the main defintion.

\begin{definition} \label{def: stratcat} Let $\cat$ be a finite skeletal category with a terminal object $\bullet$ such that every morphism is an epimorphism.   We say that such $\cat$ is a \emph{(normal crossing) stratification category} if its underlying poset is  ranked by some rank function $\codim$ whose minimum element is the terminal object $\bullet$, and with $\codim(\bullet)=0$ and if the following axiom is satisfied:
    \begin{axiom}
    \label{item:Sf} For each $f\colon \alpha\to \beta$ there exist exactly $\codim(f):=\codim(\alpha)-\codim(\beta)$ pairs \[(\beta', \Aut(\beta')g) \ \in \ \Obj(\cat)\times (\Aut(\beta')\backslash \Mor(\alpha,\beta'))\] such that, for each such pair, there exists $i\colon \beta'\to \beta$ with $\codim(i)=1$ and $f=i\circ g$. 
\end{axiom}
\end{definition}

\begin{remark} In Axiom~\ref{item:Sf} (a) the existence of such $i$ is independent of the choice of the representative $g$ in the left coset $\overline{g}:=\Aut(\beta')g$; and (b) since $g$ is an epimorphism, the morphism $i$ is necessarily unique.
\end{remark}

\begin{remark} If $\cat$ is any category with finite sets of morphisms and where every morphism is an epimorphism, then we have that $\Mor_{\cat}(\alpha,\alpha)=\Aut_{\cat}(\alpha)$. Indeed, if $f\in \Mor(\alpha,\alpha)$, then there exist natural numbers $a>b$ such that $f^a=f^b$, and since $f$ is an epimorphism we have that $f^{a-b}=\operatorname{Id}_A$, which proves that $f$ is an isomorphism.\par 
Moreover, if $\alpha$ and $\beta$ are distinct elements, we also have that if $\Mor(\alpha,\beta)\neq \emptyset$ then $\Mor(\beta,\alpha)=\emptyset$. Indeed, assume that there exists $f\colon \alpha\to \beta$ and $g\colon \beta\to \alpha$. By the observation we would have that both $f\circ g$ and $g\circ f$ are automorphisms, which implies that both $f$ and $g$ are isomorphisms, this contradicts the fact that $\cat$ is skeletal. This means that the set $\Obj(\cat)$ has a natural poset structure given by $\alpha\geq \beta$ if $\Mor(\alpha,\beta)\neq \emptyset$. \par
\end{remark}

When dealing with a stratification category $\cat$ we will use the following notation.
\begin{enumerate}
    \item We write $f_\alpha$ for the unique element of $\Mor(\alpha,\bullet)$.
    \item If $f_i\colon \alpha \to \beta_i$ are morphisms for $i=1,\ldots, m$, we define 
    \[
    \Aut(f_1,\ldots, f_m):=\{\tau \in \Aut(\alpha); \ f_i\circ \tau =f_i\text{ for every }i=1,\ldots, m\}. 
    \]
    Note that $\Aut(f_\alpha)=\Aut(\alpha)$.
    \item  For each morphism $f\colon \beta\to \gamma$ and  object $\alpha\in \Obj(\cat)$, we define $\oMor(\alpha,f):=\Aut(f)\backslash\Mor(\alpha,\beta)$. When $f=f_\beta$, we simply write $\oMor(\alpha,\beta):=\oMor(\alpha,f_\beta)=\Aut(\beta)\backslash \Mor(\alpha,\beta)$. 
    \item For each morphism $f\col \alpha\to \beta$, we let $S_f$ denote the set of all pairs $(\beta', \ol{g})$ satisfying the condition in Axiom~\eqref{item:Sf}. Moreover, for each $(\beta',\ol{g})\in S_f$ we denote by $i_{\overline{g},f}:=i\col \beta'\to \beta$ the morphism defined in Axiom~\ref{item:Sf}. We define $S_\alpha:=S_{f_\alpha}$.
\end{enumerate}

Here are the most relevant examples of stratification categories in this paper.

\begin{example} \label{ex: snc}
    (Simple normal crossing). Let $X$ be a nonsingular variety and $D= D_1+\ldots+D_k$ be a simple normal crossing divisor. To this we can associate a category $\cat$ whose objects are the strata and morphisms are the inclusions. This category $\cat$ is finite, skeletal, has a terminal element, every morphism is an epimorphism, it is ranked by codimension,  and it satisfies Axiom~\ref{item:Sf}.
\end{example}

The category $\cat$ is \emph{simple normal crossing} if, in addition to Axiom~\ref{item:Sf}, it satisfies:

  \begin{axiom}
  \label{axi:snc}
  For every $\alpha,\beta\in \Obj(\cat)$ the set $\oMor(\alpha,\beta)$ contains at most $1$ element.
    \end{axiom}

\begin{example} \label{cat: Mbgn} The second example is $\cat=G_{g,n}$ introduced in Subsection~\ref{notation - moduli}. The terminal object here is the trivial graph with $1$ vertex of genus $g$ carrying all the markings, and no edges. The rank function is the number of edges. The set $S_f$ of a morphism  $f \colon G \to G'$  is naturally identified with the set of edges of $G$ that are contracted by $f$. In particular, $S_{G}$ equals the edge set $E(G)$.

The rank~$1$ objects, the boundary divisors, are either graphs with two vertices connected by one edge (corresponding to the divisors $\Delta_{i,S}$, see \ref{notation - moduli}), or the graph consisting of $1$ vertex of genus $g-1$ with $1$ loop.
\end{example}

\begin{example} \label{cat: Jbgn} The  main example in this paper is the category $\mathfrak{C}=\catJ$ that we introduced in Definition~\ref{def: semistablecat}, an enhancement of the category $G_{g,n}$ discussed above. The terminal object 
 is the trivial graph endowed with the unique function that maps its unique vertex to the integer $d$. The rank of an object $(G,(E,D))$ equals $|\operatorname{Edges}(G)| + |E|$. For $f \colon (G', (E',D')) \to (G, (E, D))$, the set $S_f$ is naturally identified with the set of edges contracted by $f$.
 
 The rank $1$ objects are $(G,(E,D))$ with $G$ a rank $1$ object of $G_{g,n}$ and $(E,D)$ a $\phi$-stable pseudodivisor (which implies $E=\emptyset$). \end{example}

\begin{example}
    To a nonsingular variety $X$ (or DM stack) endowed with a normal crossing divisor $D$, \cite[Definition~3.5]{mmuv} associates a stratification category that respects Axiom~\ref{item:Sf} above.

    Note that the construction of \emph{loc.cit} in the case $X=\Mmb{g}{n}$ and $D= \Delta_{irr} + \sum_{(i,S)} \Delta_{i,S}$, which we discussed in Example~\ref{cat: Mbgn}, produces the quotient of the category $G_{g,n}$ of stable graphs where $2$ morphisms are identified whenever they are the same on the corresponding edge sets. (See \cite[Figure~2]{mmuv} for examples of automorphisms that are identified to the identity). A similar phenomenon happens for the category of Example~\ref{cat: Jbgn}.

    In this paper we prefer instead to work with the usual category of stable graphs (and its enhancements).
\end{example}

The next proposition is the analogue of the fact that the set of strata that contain a given stratum is in natural bijection with the subsets of the divisors that define that stratum.
\begin{proposition}
\label{prop:bijection_parts}
Given a morphism $f\colon \alpha\to \beta$ and $1 \leq k \leq \codim(\alpha) - \codim(\beta)$, there is a natural bijection between the set of pairs
\[
\{(\gamma, \ol{j})  \in \Obj(\cat) \times \oMor(\alpha, \gamma): \  \codim(\gamma)-\codim(\beta)=k \textrm{ and } \exists h\colon \gamma\to \beta, f=h\circ j\}
\]
(note that $h$ above is unique) and the set $\parts(k, S_f)$ of subsets of $S_f$ containing $k$ elements.
\end{proposition}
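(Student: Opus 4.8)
The plan is to construct the bijection in Proposition~\ref{prop:bijection_parts} by sending a pair $(\gamma, \overline{j})$ to a subset of $S_f$, using Axiom~\ref{item:Sf} and the iterated factorization of morphisms into codimension-$1$ pieces. First I would reduce to the case $\beta = \bullet$: given $f\colon \alpha \to \beta$, one can factor $f = f_\beta' \circ f$ where I am really working with the poset interval, but more cleanly, I would observe that the set $S_f$ of Axiom~\ref{item:Sf} only depends on $f$ and that every datum in the proposition ($\gamma$, $\overline{j}\colon \alpha \to \gamma$, $h\colon \gamma \to \beta$ with $f = h\circ j$) is governed entirely by factorizations of $f$ through intermediate objects. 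So I would set up the correspondence directly: to $(\gamma,\overline j)$ with the factorization $f = h\circ j$, associate the subset of $S_f$ consisting of those pairs $(\beta',\overline g) \in S_f$ for which the codimension-$1$ morphism $i_{\overline g, f}\colon \beta' \to \beta$ (guaranteed by the axiom, with $f = i_{\overline g,f}\circ g$) ``lies below $h$'', i.e.\ for which $g$ itself factors through $j$. The key point to check is that this subset has exactly $k = \codim(\gamma) - \codim(\beta)$ elements, which should follow by applying Axiom~\ref{item:Sf} to the morphism $h\colon \gamma\to\beta$ (whose $S_h$ has $\codim(h) = k$ elements) together with a compatibility statement identifying $S_h$, via composition with $j$, with the relevant subset of $S_f$.

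The heart of the argument is therefore a ``transitivity'' lemma for the sets $S_f$: for a composable pair $\alpha \xrightarrow{j} \gamma \xrightarrow{h} \beta$ with composite $f = h\circ j$, there is a natural injection $S_h \hookrightarrow S_f$, $(\beta',\overline{g}) \mapsto (\beta', \overline{g\circ j})$ (one must check $g\circ j$ is still well-defined on the left coset, using that $\Aut$ acts compatibly, and that $i_{\overline{g}, h} = i_{\overline{g\circ j}, f}$ since $f = i_{\overline g,h}\circ g \circ j$ and the codimension-$1$ factor is unique by the epimorphism remark), and that its image is exactly the subset I want to assign to $(\gamma,\overline j)$. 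Granting this, surjectivity of the map $(\gamma,\overline j)\mapsto (\text{subset})$ onto $\parts(k,S_f)$ comes from building, given a $k$-element subset $T\subseteq S_f$, an object $\gamma$ as an ``intersection'' — concretely, I expect $\gamma$ should be the unique object admitting a morphism $h\colon\gamma\to\beta$ with $\codim(h) = k$ whose associated subset $S_h\subseteq S_f$ equals $T$, which one extracts from the poset/ranked structure by taking the join (in the interval $[\beta, \alpha]$) of the codimension-$1$ objects $\beta'$ appearing in $T$; injectivity follows because the subset determines $h$ hence $\gamma$ (up to the coset data), again by uniqueness of the codimension-$1$ factorizations.

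The main obstacle I anticipate is the bookkeeping around automorphisms and left cosets: ensuring that $g\circ j$ is independent of the representative $g\in\overline g$, that the resulting element genuinely lies in $S_f$ (i.e.\ verifying the defining property of Axiom~\ref{item:Sf}, not just that a codimension-$1$ factorization exists), and — going the other way — that from a $k$-subset $T$ one can actually \emph{produce} the intermediate object $\gamma$ rather than merely a morphism $h$. This last point is where one needs the ranked-poset hypothesis together with Axiom~\ref{item:Sf} applied repeatedly: I would argue by induction on $k$, at each step peeling off one codimension-$1$ object $\beta'$ from $T$ and using the axiom to produce an intermediate object one step up, checking at the end that the object built is independent of the order in which the elements of $T$ were processed (a ``diamond lemma'' style confluence argument, reminiscent of the fact that intersecting snc divisors is order-independent). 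Once the transitivity lemma for $S_f$ is in place, I expect the rest to be essentially formal manipulation of cosets and codimensions.
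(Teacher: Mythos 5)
Your setup coincides with the paper's: the map $(\gamma,\ol{j})\mapsto j^*(S_h)$, together with the transitivity lemma $j^*\colon S_h\hookrightarrow S_f$, $(\beta',\ol{g'})\mapsto(\beta',\ol{g'\circ j})$ (well-defined on cosets, injective because $j$ is an epimorphism), is exactly the paper's $\lambda_{k,f}$ and its Remark~\ref{Rem: inclusion}, and the count $|j^*(S_h)|=\codim(h)=k$ is immediate from Axiom~\ref{item:Sf}. The gap is that the two substantive claims --- injectivity and surjectivity of this assignment --- are not actually proved. Your injectivity argument, ``the subset determines $h$ hence $\gamma$, by uniqueness of the codimension-$1$ factorizations,'' is circular: uniqueness in Axiom~\ref{item:Sf} only pins down the codimension-$1$ morphism $i\colon\beta'\to\beta$ attached to a single element of $S_f$; that a $k$-element subset of $S_f$ determines the codimension-$k$ pair $(\gamma,\ol{j})$ is precisely the statement of Proposition~\ref{prop:bijection_parts}, and in the non-simple normal crossing setting (where $\oMor(\alpha,\gamma)$ can have several elements) it is not a formal consequence of anything you have established.

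The surjectivity step has the same problem in a different guise. Your construction of $\gamma$ from a $k$-subset $T$ as a ``join'' of the codimension-$1$ objects in $T$, built by peeling off one element at a time, already fails at the first inductive step: after passing to $g_1\colon\alpha\to\beta'_1$ for one element $c_1\in T$, Axiom~\ref{item:Sf} gives you the $(\codim(f)-1)$-element set $S_{g_1}$, but to continue you must know that the natural map $S_{g_1}\to S_f\setminus\{c_1\}$ (send $(\delta,\ol{u})$ to the second element of $u^*(S_{i_1\circ v})$) hits the next element of $T$ --- i.e.\ that a common refinement (``diamond'') exists. Nothing in the axioms provides joins in the interval between $\beta$ and $\alpha$, and this existence statement is essentially equivalent to the proposition itself; you flag it as a ``diamond lemma style confluence argument'' but never supply it. The paper avoids this entirely by a global counting argument: it first proves $|X_{k,f}|=\binom{\codim(\alpha)-\codim(\beta)}{k}$ by induction and double counting over incidences $(a,c)$ with $c\in\lambda_{k,f}(a)$, then proves surjectivity of $\lambda_{k,f}$ first for $k=\codim(\alpha)-\codim(\beta)-1$ (where, the cardinalities being equal, it suffices to prove injectivity, done by a counting contradiction) and then for all $k$ by induction through such an intermediate object; bijectivity follows from surjectivity plus the equality of cardinalities. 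To repair your argument you would either have to reproduce this counting or give an independent proof of the diamond/confluence lemma from Axiom~\ref{item:Sf}, which is the missing heart of the proof.
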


We start by observing the following:
\begin{remark} \label{Rem: inclusion}
Given a factorization $f\colon\alpha\xrightarrow{j} \gamma \xrightarrow{h}\beta$, there is a natural inclusion $j^*\colon S_{h}\hookrightarrow S_f$ given by $(\beta', \ol{g'})\mapsto (\beta', \ol{g'\circ j})$.

 Moreover, we claim that for each $\overline{j}\in \oMor(\alpha, h)$ we have a well-defined $\overline{j}^*(S_h)$.  Indeed, the map $j^*$ is the same as $(j \circ \tau)^*$ for every $\tau \in \Aut(\gamma \to \beta)$.
\end{remark}

\begin{proof}
We first observe that by Remark~\ref{Rem: inclusion}, there is a natural map $\lambda_{k,f}$ from the set of pairs, call it $X_{k,f}$, to the set $\mathcal{P}(k,S_f)$ of $k$-elements subsets of $S_f$, obtained by  $\lambda_{k,f}((\gamma,\overline{j})):= j^*(S_h)$. 

Then we prove that the cardinality of $X_{k,f}$ equals $\binom{\codim(\alpha)-\codim(\beta)}{k}$, which is also the cardinality of $\mathcal{P}(k,S_f)$. This is achieved by induction on $\codim(\alpha)-\codim(\beta)$ and double counting. For each $c \in S_f$, let $X_{k,f,c}$ be the subset of $X_{k,f}$ of elements whose image via $\lambda_f$ contains $c$. By induction hypothesis, we have that $|X_{k,f,c}|=\binom{\codim(\alpha)-\codim(\beta)-1}{k-1}$. By Axiom~\ref{item:Sf} the number of elements of $\{(a,b): \ a \in X_{k,f}, b \in \lambda_{k,f}(a)\}$ equals $k \cdot |X_{k,f}|$ and, it also equals $(\codim(\alpha)- \codim(\beta)) \cdot \binom{\codim(\alpha)-\codim(\beta)-1}{k-1}$. These two equalities prove that $|X_{k,f}|=\binom{\codim(\alpha)- \codim(\beta)}{k}$.

Finally we prove that each $\lambda_{k,f}$ is surjective. First we prove that this is the case for $k=\codim(\alpha)-\codim(\beta)-1$ (or equivalently when $\codim(\gamma)=\codim(\alpha)-1$). By the previous paragraph, $\lambda_{k,f}$ is, for this $k$, a function between sets of the same cardinality, so it is equivalent to prove that it is injective.  Let $a_1,a_2\in X_{k,f}$ be such that $\lambda_{k,f}(a_1)=\lambda_{k,f}(a_2)$ and let $c$ be the only element of $S_f \setminus \lambda_{k,f}(a_1)$. If $a_1 \neq a_2$, then $\lambda_{k,f}^{-1}(\{c\})$  contains at most $\codim(\alpha)-\codim(\beta)-2$ elements, but in the previous paragraph we have established that $|X_{k,f,c}|=\codim(\alpha)-\codim(\beta)-1$; this contradicts the assumption $a_1 \neq a_2$.

To prove surjectivity of each $\lambda_{k,f}$ we argue by induction on $\codim(\alpha)-\codim(\beta)$. Let $S \in \mathcal{P}(k,S_f)$. Choose $T \supset S$ with $T \in \mathcal{P}(\codim(\alpha)-\codim(\beta)-1,S_f)$. By the previous paragraph, there exists $d=(\delta, \ol{h})$ such that $\lambda_{k,f}(d)=T$ and a factorization of $f=h \circ g$ through $\delta$, so $T=g^*(T')$ and $S= g^*(S')$ for some $S' \subset T'$ subsets of $S_h$. We have  $\codim(\delta)-\codim(\beta)=\codim(\alpha)-\codim(\beta)-1$. By applying the induction hypothesis to $\lambda_{k,h}$, we find $c \in X_{k, h}$ such that $\lambda_{k,h}(c)=S'$, and so $\lambda_{k,f}(g^*c)=S$. This concludes the proof of surjectivity.
\end{proof}

We will now define some important geometric notions in the stratification category.

\begin{definition} \label{def: generic}
Fix $f_i\colon \alpha_i\to \beta$  for $i=1, \ldots, m$, and $f\colon \gamma \to \beta$. Let  $g_i\colon \gamma\to \alpha_i$ for $i=1,\ldots, m$ be a collection of morphisms such that $f_i\circ g_i=f$.\par 
We say that the collection $(g_i)$ is \emph{generic} with respect to the tuple $(f, (f_i))$ if  \[S_f=\bigcup_{i=1}^m g_i^*(S_{f_i}).\]\par We say that the collection $(f,(f_i))$ is \emph{transversal} at $(g_i)$ if \[g_i^*(S_{f_i})\cap g_j^*(S_{f_j}) = f^*(S_\beta)\] for every $i\neq j$.
\end{definition}

Following the above definition, we write $\Int(f_1,\ldots, f_m)_f$ to denote the set of all generic tuples $(g_1,\ldots, g_m)$.

\begin{remark} Fix the same data of the above definition. Let $(\tau_1,\ldots, \tau_m)$ be a tuple in $\prod \Aut(f_i)$ and let  $(g_1,\ldots, g_m)$ be a generic collection, then  $(\tau_1\circ g_1, \ldots, \tau_m \circ g_m)$ is also generic. A similar result holds for an automorphism $\tau\in \Aut(f)$. That is: $(g_i)$ is generic if and only if $(g_i\circ \tau)$ is generic. This gives a natural left action of $\prod\Aut(f_i)$ and  right action of $\Aut(f)$ on $\Int((f_i))$.
\end{remark}
Following the remark, we define \[\lInt(f_1,\ldots, f_m)_f:=\prod \Aut(f_i) \backslash \Int(f_1,\ldots, f_m)_f\] and \[\rInt(f_1,\ldots, f_m)_f:= \Int(f_1,\ldots, f_m)_f/\Aut(f).\] 
Elements of $\lInt(f_1,\ldots,f_m)$ will be denoted by $(\ol{g}_1, \ldots, \ol{g}_m)$, while the elements of $\rInt(f_1,\ldots,f_m)$ will be denoted by $(g_1,\ldots, g_m) \Aut(f)$.

When $f_1=\ldots=f_m=f'$, we write $\sInt((f')^m)_{f}$ to denote the set of sets (not tuples) $\{\ol{g}_1,\ldots, \ol{g}_m\}$ (here the $g_i$ must be pairwise distinct) such that $(g_1,\ldots, g_m)\in \lInt((f')^m)_f$.

\subsection{Normal crossing stratifications}
We start the subsection with its main definition.
 
 \begin{definition} \label{def: ncs} We say that a stratificaton category $\cat$ as in Definition~\ref{def: stratcat} is the category of strata of a nonsingular DM-stack $X_\bullet$ if there exists a functor 

\begin{align*}
    \cat&\to \text{nonsingular DM-stacks} \\
    \alpha&\mapsto X_\alpha \\
    f\colon\alpha\to \beta&\mapsto X_f\colon X_\alpha\to X_\beta
\end{align*}
such that
\begin{enumerate}
    \item The morphisms $X_f\col X_\alpha\to X_\beta$ are proper and local complete intersection of codimension $\codim(f)$.
    \item The quotient stack $\left[\frac{X_\alpha}{\Aut(f)}\right]$ is the normalization of the image of $X_f$.
    \item The normal bundle $N_{f}$ of $X_f$ can be written as $N_f=\bigoplus_{e\in S_f}\LL_e$, where, for a pair $e=(\beta',\ol{g})\in S_f$, we define $\LL_e:=g^*(N_{i_{g,f}})$.

\item If $f_i\colon \alpha_i\to \beta$ for $i=1,2$ are two morphisms, then the following diagram
\[
\begin{tikzcd}
\displaystyle\bigsqcup_{\substack{f\colon \gamma\to \beta \\ (g_1,g_2)\Aut(f)\in \rInt(f_1,f_2)_f}} [X_{\gamma}/\Aut(g_1,g_2)] \ar["X_{g_1}"]{r} \ar["X_{g_2}"]{d} & X_{\alpha_1} \ar["X_{f_1}"]{d}\\
X_{\alpha_2} \ar["X_{f_2}"]{r}& X_\beta\\
\end{tikzcd}
\]
is a fiber diagram.
\end{enumerate}
\end{definition}
From now on we will abuse the notation and, for $f \in \Mor(\alpha, \beta)$, we simply write $f \colon X_{\alpha} \to X_{\beta}$ in place of $X_f \colon X_{\alpha} \to X_{\beta}$.

\begin{notation}
\label{not:prime_image}
We will use a prime to denote the image of a morphism $f\col X_\alpha \to X_\beta$. In other words, $X'_f:=\im(f) \subseteq X_\beta$ and, in particular, $X'_\alpha:=\im(f_\alpha)\subseteq X_\bullet$.    

We also say that the objects $X_{\alpha}'$ are the embedded strata and the objects $X_{\alpha}$ are the (resolved) strata.
\end{notation}

The two main examples in this paper are that of $\Mmb{g}{n}$ and that of $\Jmb{d}{g}{n}(\phi)$:

\begin{example} \label{Ex: stratM} The category $G_{g,n}$ is a category of strata of the nonsingular DM-stack $\Mmb{g}{n}$. If $G \in G_{g,n}$, we write $\overline{\mathcal{M}}_G$ for the corresponding stratum and $\overline{\mathcal{M}}'_G$ for its image in $\Mmb{g}{n}$  (see Subsection~\ref{notation - moduli}, \cite[Chapter~XII.10]{gac}). \end{example}

\begin{example}  \label{Ex: stratJ} The category $\catJ$ is a category of strata of the nonsingular DM-stack $\Jmb{d}{g}{n}(\phi)$ (\cite[Section~3]{mmuv}). If $(G,(E,D)) \in \catJ$, we write $\mathcal{J}_{G,(E,D)}$ for\footnote{Each such stratum also depends on $\phi$, but we do not include this in the notation to not overburden it.} the corresponding stratum and $\mathcal{J}_{G,(E,D)}'$ for its image in $\Jmb dgn(\phi)$.

 If $G$ is a vine graph (Definition~\ref{vinecurvenotation}) and $D$ is a divisor on $D$, by abuse of notation we will also refer to $(G,D)$ as a vine graph, and to $\mathcal{J}_{G,(E,D)}$ as a vine stratum, as in the case of $\overline{\mathcal{M}}_{g,n}$.
\end{example}

The point made in Example~\ref{Ex: stratJ} allows us to complete the proof of Proposition~\ref{BNisclosure}. The next subsection is devoted to completing that proof.

\subsubsection{Proof of Proposition~\ref{BNisclosure}}
\label{proofofpropositions}

\begin{proof}


\hfill \begin{enumerate}

\item If $d=g-1$, the result of \ref{BNisclosure} follows from \cite[Theorem~4.1]{kp2}. 

\item Assume that $d=g-2$. In order to reach our conclusion, we  prove that $\phi$ is  in the claimed range if and only if the intersection of $\mathsf{W}_d(\phi)$ with the boundary of $\Jmb{d}{g}{n}(\phi)$ has codimension larger than the expected codimension $2$. Also, the strata that generically parameterize curves whose irreducible components are singular can be excluded, because the existence of a nonzero global section is equivalent if those components are smoothened.

Firstly, we analyze the boundary divisors, which have the form $\mathcal{J}_{(G(i,1,S),D)}$. The range of $\phi$ in the claim is equivalent to constraining the divisor $D$ to equal $(i-1, g-i-1)$. It is straightforward to verify that the locus cut out in $\mathcal{J}_{(G(i,1,S),D)}$ by the condition of admitting a global section has codimension at least $2$ in $\mathcal{J}_{(G(i,1,S),D)}$, hence it has codimension at least $3$ in $\Jmb{d}{g}{n}(\phi)$. 

If, on the other hand, the divisor $D$ is of the form $(k-1, g-k-1)$ for $k \neq i$, then either $\mathcal{J}_{(G(i,1,S),D)}$ is contained in  $\mathsf{W}_d(\phi)$, or their intersection has codimension $1$ in $\mathcal{J}_{(G(i,1,S),D)}$. In both cases, their  intersection has codimension smaller than or equal to $2$ in $\Jmb{d}{g}{n}(\phi)$.

Then we analyze the codimension $2$ strata $\mathcal{J}_{G,D}$. If $G$ is a tree, the stability condition is uniquely determined by the stability condition on the boundary divisors, and so is the stable degree $D$ -- the problem has been resolved in the previous paragraph. We assume therefore that $G=G(i, 2, S )$ is a vine graph with $2$ nodes. Using the change of coordinates \eqref{changeofcoord}, the range identified in our statement is equivalent to requesting that the stable divisor $D$ on $G(i,2,S)$ equals one of $(i-2, g-i)$, $(i-1,g-i-1)$, $(i,g-i-2)$ or $(i+1, g-i-3)$. In all these cases, one can check that the generic element of $\mathcal{J}_{G,D}$ does not admit a global section. Conversely, if $D$ is not one of those $4$ cases, the stratum $\mathcal{J}_{G,D}$ is contained in $\mathsf{W}_d(\phi)$. This concludes our proof.

\item Assume that $1 \leq d \leq g-3$. In order to reach our conclusion, it is enough to prove that for every $\phi$, the intersection of $\mathsf{W}_d(\phi)$ with some boundary divisor has codimension smaller than or equal to the expected codimension $g-d$.

We take $i = \lfloor \frac{g}{2}\rfloor$ and pick any $S \subseteq [n]$, and show that the intersection of $\mathsf{W}_d(\phi)$ with the preimage of $\Delta_{i,S}$ in  $\Jmb{d}{g}{n}(\phi)$ contains a locus of codimension  smaller than or equal to $g-d$ in $\Jmb{d}{g}{n}(\phi)$ .

The stable bidegree $D$ such that the intersection $\mathcal{J}_{G(i,1,S),D} \cap \mathsf{W}_d(\phi)$ has  largest codimension is $D=(\frac{d}{2},\frac{d}{2})$ for $d$ even (resp $D=(\frac{d-1}{2}, \frac{d+1}{2})$ for $d$ odd). The intersection has codimension $\lceil \frac{g-d-1}{2}\rceil+2$ in $\Jmb{d}{g}{n}(\phi)$ and, for $d \leq g-3$, this number is smaller than or equal to  $g-d$.

\item[(4-5)] Assume that $d\leq 0$.

Assume first that $\phi$ is not in the given range. Then, arguing as in the case $d=g-2$ above, one can check that in some boundary divisor or in some codimension $2$ vine stratum (depending on which inequality $\phi$ fails to satisfy) the intersection with $\mathsf{W}_d(\phi)$ has codimension larger than the expected one (which is $g$ for $d=0$, and by which we mean that the locus is not empty when $d<0$).

Assume now that $\phi$ is in the given range. We will use the following result:
 \begin{proposition} \label{cord<0} If $ \phi \in V_{g,n}^d$ is nondegenerate and such that the inequality 
 \begin{equation} \label{closetocan}
\phi_{C_0} \leq \frac{\left|C_0 \cap \overline{C_0^c}\right|}{2}.
\end{equation}
holds for all $(C,p_1,\ldots, p_n) \in \Mmb{g}{n}$ and for all subcurves $C_0 \subseteq C$, then

\begin{enumerate}
    \item if $d=0$, then $F \in \mathsf{W}_d(\phi)$ if and only $F$ is the trivial line bundle;
    \item if $d<0$, then $\mathsf{W}_d(\phi)=\emptyset$.
    \end{enumerate}

    \end{proposition}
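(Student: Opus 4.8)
The plan is to argue fibrewise over $\Mmb gn$: fix a stable curve $(C,p_1,\dots,p_n)$ and a $\phi$-stable rank~$1$ torsion-free sheaf $F$ of degree $d\le 0$ on $C$ admitting a nonzero global section $s\in H^0(C,F)$, and prove that $d=0$ and $F\cong\mathcal O_C$ (together with the easy converse in case~(1)). Let $C'\subseteq C$ be the union of the components on which the image of $s$ in $F_{C_v}$ is nonzero, and let $C''=(C')^c$; then $C'\neq\emptyset$, and $s$ induces an injection $\mathcal O_{C'}\hookrightarrow F_{C'}$, because it is generically an isomorphism on every component of $C'$ while $\mathcal O_{C'}$ has no embedded component. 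I would then split into the cases $C''=\emptyset$ and $C''\neq\emptyset$.

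The case $C''=\emptyset$ is immediate: then $\mathcal O_C\hookrightarrow F$, so $d=\deg_C(F)\ge 0$ and hence $d=0$; the cokernel is a torsion sheaf with $\chi=\chi(F)-\chi(\mathcal O_C)=d=0$, so it vanishes and $F\cong\mathcal O_C$. For the converse in case~(1) one observes that the semistability inequality of Definition~\ref{def: Dstable} for the trivial multidegree, tested on $V_0$, is exactly hypothesis~\eqref{closetocan} applied to the subcurve with vertex set $V_0^c$; hence $\mathcal O_C$ is $\phi$-semistable, therefore $\phi$-stable by nondegeneracy, and it lies in $\mathsf W_d(\phi)$ since $h^0(C,\mathcal O_C)=1$.

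The substance is to rule out $C''\neq\emptyset$. Put $t=|C'\cap C''|$. The first step is that $s$ vanishes at every node $q$ joining $C'$ to $C''$: if $F$ is locally free at $q$ this follows from continuity of $s$ across $q$ together with $s\equiv 0$ on the $C''$-branch, and if $F$ fails to be locally free at $q$ then every local section of $F$ vanishes on both branches of $q$. Thus the injection factors through $F_{C'}\bigl(-(C'\cap C'')\bigr)$, giving $\deg_{C'}(F_{C'})\ge t$. The second step feeds this into $\phi$-stability. Writing $(E,D)=\underline{\deg}(F)$, by \eqref{rel-degree} the number $\deg_{C'}(F_{C'})$ equals $\deg_{\hat V'}(D)$, where $\hat V'\subseteq V(G(C)^E)$ is the set of vertices of $C'$ together with the exceptional vertices of the non-locally-free nodes of $F$ interior to $C'$; a direct count shows $\phi(\hat V')=\phi_{C'}$ and $|E(\hat V',(\hat V')^c)|=t$. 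Since $\hat V'$ is a proper subset not contained in the exceptional vertices (as $C'\neq\emptyset$ and $C''\neq\emptyset$), $\phi$-stability gives $\phi_{C'}-\deg_{\hat V'}(D)+t/2>0$, hence $\phi_{C'}>\deg_{C'}(F_{C'})-t/2\ge t/2$; but \eqref{closetocan} applied to $C_0=C'$ gives $\phi_{C'}\le |C'\cap C''|/2=t/2$, a contradiction. So $C''=\emptyset$ always, which completes case~(1), and case~(2) follows since for $d<0$ the case $C''=\emptyset$ is also impossible.

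The main obstacle is the interaction, in the case $C''\neq\emptyset$, between the geometric degree bound produced by the section and the combinatorial stability inequality, made awkward by the fact that $F$ need not be locally free; the exceptional-vertex bookkeeping above is one way to absorb this. A cleaner alternative, which I would probably use in the write-up, is to replace $F$ by the positively admissible line bundle $\mathcal L$ on the quasistable modification $f\colon\hat C\to C$ with $f_*\mathcal L=F$ (Corollary~\ref{cor:bijection}), noting $H^0(C,F)=H^0(\hat C,\mathcal L)$ by Proposition~\ref{prop:estevespacini}; then $\mathcal L$ is a genuine line bundle, the vanishing-at-the-boundary argument is transparent, and the only extra care needed is to check that the section cannot be supported only on the exceptional $\mathbb P^1$'s (a degree-$1$ bundle on $\mathbb P^1$ has no section vanishing at both of its nodes) and to match the relevant subcurve of $\hat C$ with a subcurve $C_0$ of $C$ so that hypothesis~\eqref{closetocan} applies to it.
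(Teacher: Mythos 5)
There is a genuine gap in the main step (ruling out $C''\neq\emptyset$), located exactly at the nodes where $F$ fails to be locally free. Your claim that at such a node $q$ ``every local section of $F$ vanishes on both branches'', and hence that the injection factors through $F_{C'}\bigl(-(C'\cap C'')\bigr)$ so that $\deg_{C'}(F_{C'})\ge t$, is false. Locally $F_q\cong\mathfrak m_q=(x,y)\subset k[x,y]/(xy)$; a section $s=f(x)x+g(y)y$ with $g\equiv 0$ induces on the branch $C'$ the section $f(x)$ of $F_{C'}\cong\mathcal O_{C'}$, which need not vanish at $q$. Concretely, for $C=C'\cup C''$ two rational curves meeting at one node and $F=\nu_*\bigl(\mathcal O_{C'}(a)\boxplus\mathcal O_{C''}(b)\bigr)$, any section of $\mathcal O_{C'}(a)$ not vanishing at the node, paired with $0$ on $C''$, gives a section of $F$ supported on $C'$ with $\deg_{C'}(F_{C'})=a$, which can be $<t=1$. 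The correct bound is only $\deg_{C'}(F_{C'})\ge t-\delta$, where $\delta$ is the number of nodes of $C'\cap C''$ at which $F$ is not locally free, and with your choice of $\hat V'$ (which excludes the exceptional vertices over boundary nodes) stability then yields only $\phi_{C'}>t/2-\delta$, which does not contradict \eqref{closetocan}. So the argument as written does not close.

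The repair is small and is exactly what the paper's proof of Lemma~\ref{lemmad<0} does: test stability on the set $V_0$ that \emph{includes} the exceptional vertices of the non-locally-free boundary nodes, so that $\deg_{V_0}(D)=\deg_{C'}(F_{C'})+\delta$ while $|E(V_0,V_0^c)|=t$ and $\phi(V_0)=\phi_{C'}$; equivalently, stability gives $\deg_{C'}(F)<\tfrac{t}{2}-\delta_{C'}(F)+\phi_{C'}$, and the (correct) support-of-a-section inequality $\deg_{C'}(F)\ge t-\delta_{C'}(F)$ then contradicts \eqref{closetocan}. With this correction your route for part~(2) coincides with the paper's (the paper proves it via Lemma~\ref{lemmad<0}, generalizing Dudin/HKP), and your direct treatment of part~(1) — including the observation that \eqref{closetocan} is precisely $\phi$-semistability of the trivial multidegree, hence stability by nondegeneracy — is a legitimate self-contained substitute for the paper's citation of HKP. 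Your sketched alternative via the quasistable model and the positively admissible line bundle $\mathcal L$ would also work (there continuity does apply at every boundary node of the support), but as presented it is only a sketch: the step you defer, matching the boundary of the support in $\hat C$ with a subcurve $C_0\subseteq C$ and checking $|C_0\cap\overline{C_0^c}|\le|\hat C'\cap\overline{\hat C'^c}|$, is precisely where the bookkeeping you omitted has to be done.
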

\begin{proof}
Part~(a) is \cite[Lemma~8, Lemma~9]{HKP}. Part~(b) follows from Lemma~\ref{lemmad<0} below.
    
\end{proof}
By applying Proposition~\ref{cord<0}, and observing that both $\phi$ and the multidegree $D$ of line bundles are stable for graph morphisms, and arguing as in the proof of Proposition~\ref{stabilityspace}, we conclude that Inequality~\eqref{closetocan} is satisfied for all curves $C$ and subcurves $C_0$ if and only if it is satisfied for all vine curves $C$ (and taking $C_0$ to be one of its irreducible components). After applying the change of coordinates \eqref{changeofcoord}, this is equivalent to the given range.  

The only remaining case to consider is when $d=0$ and $-3/2<x_{0,1,S}<-1/2$ for some $S$. In that case, the intersection of the component $\mathcal{J}_{G(0,1,S),(-1,1)}$ with $\mathsf{W}_d(\phi)$ has codimension $g+1$ in $\Jmb{d}{g}{n}(\phi)$, hence the intersection is in the closure of the restriction of $\mathsf{W}_d(\phi)$ to the open part.


\end{enumerate}
\end{proof}


In the proof of Proposition~\ref{cord<0} we used the following. 
\begin{lemma} \label{lemmad<0} Assume $d<0$. Let $C$ be a nodal curve, and let $\phi \in V^d_{\textnormal{stab}}(C)$ be such that Inequality~\eqref{closetocan}
holds for all subcurves $C_0 \subseteq C$. Then every $\phi$-stable rank-$1$ torsion-free sheaf $F$ on $C$ satisfies $H^0(C,F)=0$.
\end{lemma}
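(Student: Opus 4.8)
The plan is to prove Lemma~\ref{lemmad<0} by induction on the number of irreducible components of $C$. The key observation is that for a sheaf $F$ on a curve with a nonzero global section, the section's vanishing behaviour on subcurves forces numerical constraints that contradict $\phi$-stability when combined with Inequality~\eqref{closetocan}.

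\textbf{Base case.} If $C$ is irreducible, then $V^d_{\text{stab}}(C) = \{d\}$ and a $\phi$-stable rank-$1$ torsion-free sheaf has degree $d<0$. Pulling back to the normalization, such a sheaf has negative degree on the smooth curve, hence no global sections; since a section of $F$ pulls back injectively to a section on the normalization, we get $H^0(C,F)=0$.

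\textbf{Inductive step.} Suppose $C$ has at least two components and $F$ is $\phi$-stable with $0 \neq s \in H^0(C,F)$. Consider the subcurve $C_0 \subseteq C$ that is the union of the components on which $s$ vanishes identically (if $s$ vanishes nowhere identically, take $C_0 = \emptyset$; if $s$ vanishes identically everywhere, then $s=0$, a contradiction). On $C_0^c$ the section $s$ restricts to a nonzero global section of $F_{C_0^c}$, which therefore has nonnegative degree on every component of $C_0^c$ and in particular $\deg_{C_0^c}(F_{C_0^c}) \geq 0$. On the other hand, $s$ vanishes on all of $C_0$, so it vanishes in particular at the nodes $C_0 \cap C_0^c$; this means the restriction of $s$ to $C_0^c$ lies in the subsheaf $F_{C_0^c}(-C_0 \cap C_0^c)$ of sections vanishing at those nodes, hence this twisted sheaf still has a nonzero section and $\deg_{C_0^c}(F_{C_0^c}) \geq |C_0 \cap C_0^c|$ — more precisely, one needs to argue that the section is nonzero on a subcurve of $C_0^c$ where one gets the sharp inequality. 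Now apply $\phi$-stability to the subcurve $C_0^c$: writing $\phi_{C_0^c} = d - \phi_{C_0}$ and using \eqref{rel-degree} together with the bound just obtained on $\deg_{C_0^c}(F)$, the stability inequality $\phi_{C_0^c} - \deg_{C_0^c}(F) + \frac{|C_0 \cap C_0^c|}{2} > 0$ (strict since $C_0^c \neq C$ when $C_0 \neq \emptyset$) is contradicted by Inequality~\eqref{closetocan} applied to $C_0^c$, namely $\phi_{C_0^c} \leq \frac{|C_0 \cap C_0^c|}{2}$, once the degree bound $\deg_{C_0^c}(F) \geq |C_0 \cap C_0^c|$ (or the appropriate sharpened version) is in force. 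In the remaining case $C_0 = \emptyset$, the section $s$ is nonvanishing on every component, so $F$ itself is a subsheaf-free twist with $\deg_C(F) \geq 0$, contradicting $d < 0$ directly, or one restricts to an end component to set up the induction.

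\textbf{Main obstacle.} The delicate point will be bookkeeping the degree inequality on $C_0^c$: a nonzero section of $F_{C_0^c}$ that vanishes at the boundary nodes $C_0 \cap C_0^c$ does not immediately give $\deg_{C_0^c}(F_{C_0^c}) \geq |C_0 \cap C_0^c|$ on the nose, because the section may be nonzero only on a proper subcurve $Z \subseteq C_0^c$, and one must instead run the argument with $Z$ in place of $C_0^c$, controlling which of the boundary nodes of $Z$ lie on the original $C_0 \cap C_0^c$ versus internal to $C_0^c$. I expect to handle this by choosing $Z$ to be the maximal subcurve of $C_0^c$ on which $s$ is nowhere identically zero and then carefully comparing $|Z \cap Z^c|$ with the contribution from $C_0$, so that \eqref{closetocan} applied to $Z$ (a subcurve of $C$) still yields the contradiction with $\phi$-stability of $F$ at $Z$. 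This is exactly the kind of semicontinuity-of-vanishing plus numerical-criterion argument that appears in the compact-type case of \cite{HKP}, adapted to allow nodes where $F$ is non-locally-free.
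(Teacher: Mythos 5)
Your argument is in substance the paper's own proof: take the support $C'$ of the putative section (your $C_0^c$), note $C'\neq C$ because $d<0$, and play a lower bound on $\deg_{C'}(F)$ coming from the vanishing of the section at the boundary nodes against the upper bound obtained by combining $\phi$-stability at $C'$ with Inequality~\eqref{closetocan}. Two remarks on how you set this up. The induction on the number of components is never actually used: once $C_0$ is defined as the union of the components on which $s$ vanishes identically, the contradiction is direct. For the same reason the ``main obstacle'' you flag is not an obstacle at all: by your choice of $C_0$, the section is not identically zero on any component of $C_0^c$, so there is no smaller subcurve $Z$ to pass to.

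The genuine gap is elsewhere, in the degree bookkeeping at nodes where $F$ is not locally free. Your claimed bound $\deg_{C_0^c}(F_{C_0^c})\geq |C_0\cap C_0^c|$ is false in general for torsion-free sheaves: the restriction of $s$ to $C_0^c$ is forced to vanish only at those boundary nodes where $F$ is locally free (at a node where $F$ is singular the stalk splits and no condition is imposed on the $C_0^c$-branch), so the correct bound is $\deg_{C_0^c}(F)\geq |C_0\cap C_0^c|-\delta_{C_0\cap C_0^c}(F)$. At the same time you quoted the stability inequality without its correction term, whereas stability (read on the subdivided graph $G^{E}$, as in Definition~\ref{def: Dstable}) gives the sharper $\phi(C_0^c)-\deg_{C_0^c}(F)-\delta_{C_0\cap C_0^c}(F)+\tfrac{|C_0\cap C_0^c|}{2}>0$. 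With both corrections dropped, your two inequalities do not close up whenever $F$ fails to be locally free at some node of $C_0\cap C_0^c$, which is exactly the situation the lemma must cover. The paper's proof fixes this by carrying the term $\delta_{C_0}(F)$ in both inequalities, where it cancels, so that \eqref{closetocan} yields $\deg_{C_0}(F)<|C_0\cap \overline{C_0^c}|-\delta_{C_0}(F)$ for every proper subcurve, contradicting the bound at $C'$. With that adjustment your argument is the paper's argument.
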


The lemma generalizes to the case $d<0$ the argument given in \cite[Lemma~3.1]{dudin} and \cite[Lemma~8]{HKP}.
\begin{proof}
Let $F$ be one such sheaf. If $F$ is $\phi$-stable, the inequality
\begin{equation}
\deg_{C_0}(F)< \frac{\left|C_0 \cap \overline{C_0^c}\right|}{2} - \delta_{C_0}(F)+ \phi_{C_0}
\end{equation}
holds for all subcurves $\emptyset \neq C_0 \subsetneqq C$.
The latter, combined with \eqref{closetocan}, implies the inequality
\begin{equation} \label{section-contrad}
\deg_{C_0}(F)< \left|C_0 \cap \overline{C_0^c}\right| - \delta_{C_0}(F)
\end{equation}
for all subcurves $\emptyset \neq C_0 \subsetneqq C$.

The fact that the latter inequality holds on all subcurves $C_0$ implies that $F$ admits no nonzero global sections. Indeed, if such a section $s$ existed denote by $C'$ its support. Note that $C'\neq C$ because the degree of $F$ is negative. Hence, $C'\neq C$ and we have the inequality
\[
\deg_{C'}(F)\geq \left|C' \cap \overline{C'^c}\right| - \delta_{C'}(F),
\]
contradicting \eqref{section-contrad}. 
\end{proof}


We conclude this interlude by observing that for all degrees ``in the middle'', the Brill--Noether cycle cannot be of the expected codimension.

\begin{remark} \label{BNisnotexpected}
Assume that $1 \leq d \leq g-5$. We claim that there exist no $\phi$ such that $\mathsf{W}_d(\phi)$ has the expected codimension $g-d$. To show this, we argue in a very similar way to the case $1 \leq d \leq g-3$ of the proof of Proposition~\ref{BNisclosure}. We let $i = \lfloor \frac{g}{2}\rfloor$ and pick any $S \subseteq [n]$. In the same way as discussed in loc.cit., for $1 \leq d \leq g-5$, the intersection of $\mathsf{W}_d(\phi)$ with the preimage of $\Delta_{i,S}$ in $\Jmb{d}{g}{n}(\phi)$ contains a locus of codimension strictly smaller than  $g-d$  in $\Jmb{d}{g}{n}(\phi)$, and this proves our claim.

\end{remark}

\subsection{Intersection theory formulas}
In this subsection we enunciate and prove some results concerning the intersection theory of this stratification. From now on in this subsection we fix $X_\bullet$ and its stratification functor. 
 
 Since most of our computations are done using Chern classes, we will abuse the notation as we now explain. 
   Let $p_1, p_2, q$ be polynomials in variables $x_{i,j}$ such that $p_1=qp_2$. Assume that $L_i$ are elements in the $K$-theory of $X$, and that $A=p_1(c_j(L_i))$ and $B=p_2(c_j(L_i))$ are formal polynomials in the Chern classes of $L_i$. We will write $\frac{A}{B}$ to mean the class $q(c_j(L_i))\cap [X]$ in the Chow group of $X$.
   
More generally, we will write $\frac{A}{B}\in A^*(X)$ to mean that there exists polynomials $p_1, p_2, q$ and $K$-theory elements $L_i$ satisfying the conditions in the previous paragraph.

The main motivation for this is \cite[]{fulton}, which states that, for a vector bundle $N$
\[
\frac{c(L\otimes \wedge^\bullet N)}{c_{\rk{N}}(N)}
\]
is a polynomial in the Chern classes of $L$ and of $N$.

In this language, we have the following excess intersection formula.


\begin{proposition}
\label{prop:intersection}
Let $f_i\colon \alpha_i\to \beta$ for $i=1,2$ be two morphisms in $\cat$ and  fix classes $A_i/c_{\codim(f_i)}(N_{f_i})\in A^*(X_{\alpha_i})$, then
\[
f_{1*}\Big(\frac{A_1}{c_{\codim(f_1)}(N_{f_1})}\Big) f_{2*}\big(\frac{A_2}{c_{\codim(f_2)}(N_{f_2})}\Big) = \sum_{\substack{f\colon \gamma\to \beta \\ (g_1,g_2)\Aut(f)\in \\ \rInt(f_1,f_2)_f}}\frac{f_{*}}{|\Aut(g_1, g_2)|}\Big(\frac{g_1^*A_1g_2^*A_2}{c_{\codim(f)}(N_f)}\Big)
\]
where $g_i=1,2$ are the base change morphisms (as in Definition~\ref{def: ncs}, Item~(4)).
\end{proposition}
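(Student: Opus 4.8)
The plan is to reduce the statement to the fiber-product diagram in Item~(4) of Section~\ref{section: ncs} together with the standard excess-intersection formula from \cite[Chapter~6]{fulton}. First I would recall that by the projection formula, $f_{1*}(\xi_1)\cdot f_{2*}(\xi_2) = f_{1*}\bigl(\xi_1 \cdot f_1^*(f_{2*}(\xi_2))\bigr)$, so the computation reduces to understanding $f_1^* f_{2*}$. By Item~(4), pulling back $f_{2*}$ along $f_1$ is computed by the Cartesian square whose upper-left corner is the disjoint union over factorizations $f\colon \gamma \to \beta$ and over $(g_1,g_2)\Aut(f) \in \rInt(f_1,f_2)_f$ of the stacks $[X_\gamma/\Aut(g_1,g_2)]$. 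Since this square is a fiber diagram of lci morphisms, I can apply the refined Gysin / excess intersection formalism: the contribution of each component $[X_\gamma/\Aut(g_1,g_2)]$ to $f_1^*f_{2*}(\xi_2)$ is $(g_1)_* $ of the pullback of $\xi_2$ along $g_2$, corrected by the excess bundle, which is the cokernel of $g_1^* N_{f_1} \to N_{g_2}$ (equivalently $g_2^* N_{f_2} \to N_{g_1}$).

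\textbf{Computing the excess term.} The key technical step is to identify the excess normal bundle and see that it cancels the ``denominators'' correctly. Using Item~(3) of Section~\ref{section: ncs}, $N_{f_i} = \bigoplus_{e \in S_{f_i}} \mathbb{L}_e$, and similarly $N_f = \bigoplus_{e \in S_f}\mathbb{L}_e$, while $N_{g_1} = \bigoplus_{e \in S_{g_1}} \mathbb{L}_e$. Since $(g_1,g_2)$ is a generic factorization, $S_f = g_1^*(S_{f_1}) \cup g_2^*(S_{f_2})$, and the inclusions $g_i^* \colon S_{h_i} \hookrightarrow S_f$ (Remark~\ref{Rem: inclusion}) identify $S_{f_i}$ with subsets of $S_f$; the complement $S_f \setminus g_1^*(S_{f_1})$ is exactly $S_{g_1}$ up to the identification, by Axiom~\ref{item:Sf} and Proposition~\ref{prop:bijection_parts} applied to $g_1$. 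Hence $g_1^* N_{f_1} \oplus N_{g_1} \cong N_f$ (pulled back appropriately), so the excess bundle vanishes and the refined Gysin pullback along the square is the honest one. Consequently, $\dfrac{g_1^*A_1}{c_{\codim(f_1)}(g_1^*N_{f_1})}$ and the normal bundle of $g_1$ combine so that the pushforward $f_* = f_1 \circ g_1$ of $\dfrac{g_1^*A_1 \, g_2^*A_2}{c_{\codim(f)}(N_f)}$ is exactly the contribution of that component. Summing over all components in the fiber diagram, and accounting for the automorphism groups $\Aut(g_1,g_2)$ that appear as stabilizers in the stacky disjoint union (which is why $|\Aut(g_1,g_2)|$ enters the denominator of the pushforward), gives the claimed formula.

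\textbf{Bookkeeping and the main obstacle.} I would then carefully match the indexing: the set $\rInt(f_1,f_2)_f = \Int(f_1,f_2)_f / \Aut(f)$ is precisely the set of components of the fiber product over a fixed $\gamma$ (Item~(4)), and the abuse-of-notation conventions for $A/B$ set up just before the proposition make the manipulation of $c(L \otimes \wedge^\bullet N)/c_{\rk N}(N)$ legitimate as classes rather than just formal expressions — this is exactly what lets me move the denominator $c_{\codim(f_i)}(N_{f_i})$ through the Gysin pullback. The main obstacle I expect is the precise verification that $g_1^* N_{f_1} \oplus N_{g_1} = N_f$ as bundles (not merely that their Chern classes multiply correctly) and that the excess bundle genuinely vanishes rather than merely having trivial total Chern class; this requires pinning down, via Axiom~\ref{item:Sf} and Remark~\ref{Rem: inclusion}, that the line-bundle summands $\mathbb{L}_e$ indexed by $S_f$ are distributed between $g_1^*(S_{f_1})$ and $S_{g_1}$ with no overlap beyond $f^*(S_\beta)$ — which is where the genericity hypothesis $S_f = \bigcup g_i^*(S_{f_i})$ is essential, and where a transversality-type argument is needed to handle the intersection $g_1^*(S_{f_1}) \cap g_2^*(S_{f_2})$. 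A secondary bit of care is the stacky normalization in Item~(2): since $[X_\alpha/\Aut(f)]$ (not $X_\alpha$) is the normalization of the image, the degree factors $1/|\Aut(g_1,g_2)|$ must be tracked through each pushforward, and I would double-check these against the $m=2$ case of the analogous formula for self-intersections that the paper will need later.
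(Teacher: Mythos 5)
Your overall route — projection formula, the fiber diagram of Item~(4) in Section~\ref{section: ncs}, and the excess intersection formula — is exactly the paper's (its proof is a one-line citation of Item~(4) and \cite[Proposition~17.4.1]{fulton}). But your key step is wrong: the excess bundle does \emph{not} vanish on the components of the fiber product. The disjoint union in Item~(4) runs over \emph{generic} tuples $(g_1,g_2)\in \rInt(f_1,f_2)_f$, and genericity (Definition~\ref{def: generic}) only says $S_f=g_1^*(S_{f_1})\cup g_2^*(S_{f_2})$; it does not force transversality. On a generic but non-transversal component the excess bundle for the square is $E=g_2^*N_{f_2}/N_{g_1}$ (equivalently $g_1^*N_{f_1}/N_{g_2}$ — note the arrows go $N_{g_1}\hookrightarrow g_2^*N_{f_2}$, the reverse of what you wrote), and its rank is $|g_1^*(S_{f_1})\cap g_2^*(S_{f_2})|$, which is positive precisely when the component is non-transversal. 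Such components genuinely occur: Example~\ref{notransversalself} (the triangle graph mapping in two ways to the two-edged vine curve, sharing one edge) gives a generic, non-transversal pair, so the corresponding component of $X_{\alpha_1}\times_{X_\beta}X_{\alpha_2}$ has excess of rank one. Your argument that $N_f\cong g_1^*N_{f_1}\oplus N_{g_1}$ is correct, but it compares the wrong bundles: it does not imply $N_{g_1}\cong g_2^*N_{f_2}$, which is what vanishing of $E$ would require, and indeed the ranks differ exactly by the size of the overlap. The "transversality-type argument" you flag as the remaining obstacle cannot exist in general.

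The repair is short, and it is exactly why the statement carries the Chern-class denominators. Keep the excess class: on each component the contribution to $f_1^*f_{2*}$ (or to the product, after the projection formula) is $c_{\mathrm{top}}(E)\cdot g_1^*\!\big(A_1/c_{\codim(f_1)}(N_{f_1})\big)\cdot g_2^*\!\big(A_2/c_{\codim(f_2)}(N_{f_2})\big)$, pushed forward along $f$ with the stacky factor $1/|\Aut(g_1,g_2)|$. Since all the bundles split as sums of the line bundles $\LL_e$, one has $c_{\mathrm{top}}(E)=c_{\mathrm{top}}(g_2^*N_{f_2})/c_{\mathrm{top}}(N_{g_1})$ and $c_{\mathrm{top}}(N_f)=c_{\mathrm{top}}(N_{g_1})\,c_{\mathrm{top}}(g_1^*N_{f_1})$, so the excess class cancels against the two denominators and leaves exactly $g_1^*A_1\,g_2^*A_2/c_{\codim(f)}(N_f)$, as claimed. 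Had the excess vanished on every component, as in your write-up, the right-hand side would instead involve $c_{\mathrm{top}}(g_1^*N_{f_1})\,c_{\mathrm{top}}(g_2^*N_{f_2})$ in the denominator, which differs from $c_{\codim(f)}(N_f)$ precisely on the non-transversal components.
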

\begin{proof}
This follows directly from Item~(4) in Definition~\ref{def: ncs} and from the excess intersection formula (see \cite[Proposition~17.4.1]{fulton}).
\end{proof}
We will also be using the following corollary of the above formula.

\begin{corollary}
\label{cor:intersection}
Let $f_i\colon \alpha_i\to \beta$ be two morphisms in $\cat$ and let \[\frac{A_i}{c_{\codim(f_i)}(N_{f_i})}\in A^*(X_{\alpha_i})\] be such that $A_i$ is invariant under $\Aut(f_i)$, then the following holds
\begin{multline*}
\frac{f_{1*}(\frac{A_1}{c_{\codim(f_1)}(N_{f_1})})}{|\Aut(f_1)|}\frac{f_{2*}(\frac{A_2}{c_{\codim(f_2)}(N_{f_2})})}{|\Aut(f_2)|} = \sum_{f\colon \gamma \to \beta}\frac{f_*}{|\Aut(f)|}\Big(\sum_{(\ol{g}_1,\ol{g}_2)\in \lInt(f_1, f_2)_f } \frac{g_1^*A_1g_2^*A_2}{c_{\codim(f)}(N_{f})}\Big)
\end{multline*}
\end{corollary}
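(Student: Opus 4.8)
The plan is to deduce this directly and formally from Proposition~\ref{prop:intersection}, the only work being a careful count of the two group actions on the set $\Int(f_1,f_2)_f$: the left action of $\Aut(f_1)\times\Aut(f_2)$ by $(\tau_1,\tau_2)\cdot(g_1,g_2)=(\tau_1\circ g_1,\tau_2\circ g_2)$ and the right action of $\Aut(f)$ by $(g_1,g_2)\cdot\tau=(g_1\circ\tau,g_2\circ\tau)$, both of which preserve genericity (as noted in the Remark preceding the definition of $\lInt,\rInt$). Two elementary observations carry the bookkeeping. First, the left action is \emph{free}: if $\tau_i\circ g_i=g_i$ then, since $g_i$ is an epimorphism, $\tau_i=\id_{\alpha_i}$; hence every left orbit has exactly $|\Aut(f_1)|\,|\Aut(f_2)|$ elements, and $\lInt(f_1,f_2)_f$ is the set of left orbits. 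Second, the stabilizer of $(g_1,g_2)$ under the right action is exactly $\Aut(g_1,g_2)$ (which sits inside $\Aut(f)$ because $f=f_i\circ g_i$), so its right orbit has $|\Aut(f)|/|\Aut(g_1,g_2)|$ elements, and $\rInt(f_1,f_2)_f$ is the set of right orbits.

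First I would rewrite the right-hand side of Proposition~\ref{prop:intersection} as an unweighted sum over all of $\Int(f_1,f_2)_f$. For $\tau\in\Aut(f)$ one has $f\circ\tau=f$, so $X_\tau$ is an automorphism of $X_\gamma$ over $X_\beta$; hence $\tau^*N_f\cong N_f$ and $f_*\circ\tau^*=f_*$, and together with $(g_i\circ\tau)^*=\tau^*\circ g_i^*$ this shows that the summand $f_*\bigl(g_1^*A_1\cdot g_2^*A_2/c_{\codim(f)}(N_f)\bigr)$ is constant along right orbits. Using the orbit count above, for each fixed $f$,
\[
\sum_{(g_1,g_2)\Aut(f)\in\rInt(f_1,f_2)_f}\frac{f_*}{|\Aut(g_1,g_2)|}\Bigl(\frac{g_1^*A_1\,g_2^*A_2}{c_{\codim(f)}(N_f)}\Bigr)=\frac{1}{|\Aut(f)|}\sum_{(g_1,g_2)\in\Int(f_1,f_2)_f}f_*\Bigl(\frac{g_1^*A_1\,g_2^*A_2}{c_{\codim(f)}(N_f)}\Bigr),
\]
so that Proposition~\ref{prop:intersection} becomes $f_{1*}\bigl(A_1/c_{\codim(f_1)}(N_{f_1})\bigr)\,f_{2*}\bigl(A_2/c_{\codim(f_2)}(N_{f_2})\bigr)=\sum_{f}\tfrac{1}{|\Aut(f)|}\sum_{(g_1,g_2)\in\Int(f_1,f_2)_f}f_*\bigl(g_1^*A_1\,g_2^*A_2/c_{\codim(f)}(N_f)\bigr)$, where $f$ runs over the same morphisms as in Proposition~\ref{prop:intersection}.

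Next I would collapse this inner sum onto $\lInt(f_1,f_2)_f$, and this is the one point where the hypothesis that $A_i$ is $\Aut(f_i)$-invariant is used — and the only point. The left action keeps $f$ fixed, hence $N_f$ and $\codim(f)$, and for $(\tau_1,\tau_2)\in\Aut(f_1)\times\Aut(f_2)$ one computes $(\tau_1\circ g_1)^*A_1\,(\tau_2\circ g_2)^*A_2=g_1^*(\tau_1^*A_1)\,g_2^*(\tau_2^*A_2)=g_1^*A_1\,g_2^*A_2$, so the summand is constant along left orbits; since each has $|\Aut(f_1)|\,|\Aut(f_2)|$ elements, $\sum_{(g_1,g_2)\in\Int(f_1,f_2)_f}(\cdots)=|\Aut(f_1)|\,|\Aut(f_2)|\sum_{(\ol g_1,\ol g_2)\in\lInt(f_1,f_2)_f}(\cdots)$. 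Dividing the rewritten form of Proposition~\ref{prop:intersection} by $|\Aut(f_1)|\,|\Aut(f_2)|$ and substituting this yields exactly the claimed identity. The main (and only) obstacle is getting this group-theoretic accounting right: one must keep in mind that the right $\Aut(f)$-action is \emph{not} free — its stabilizers are precisely the subgroups $\Aut(g_1,g_2)$, which is why the weights $1/|\Aut(g_1,g_2)|$ appear in Proposition~\ref{prop:intersection} — while the left $\Aut(f_1)\times\Aut(f_2)$-action is free; once this is untangled, the rest is purely formal.
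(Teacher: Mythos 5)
Your proposal is correct and follows essentially the same route as the paper: first expand Proposition~\ref{prop:intersection} from a sum over $\rInt(f_1,f_2)_f$ to an unweighted sum over all of $\Int(f_1,f_2)_f$ with weight $1/|\Aut(f)|$ (using the orbit--stabilizer count $|\Aut(f)|/|\Aut(g_1,g_2)|$), then regroup into left orbits using the $\Aut(f_i)$-invariance of $A_i$ and divide by $|\Aut(f_1)||\Aut(f_2)|$. The extra details you supply (freeness of the left action via epimorphisms, constancy of the summand along right orbits) are correct and only make explicit what the paper leaves implicit.
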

\begin{proof}
We  expand the formula in Proposition~\ref{prop:intersection} to obtain
\[
f_{1*}\Big(\frac{A_1}{c_{\codim(f_1)}(N_{f_1})}\Big)f_{2*}\big(\frac{A_2}{c_{\codim(f_2)}(N_{f_2})}\Big) = \sum_{\substack{f\colon \gamma\to \beta \\(g_1,g_2)\in \Int(f_1,f_2)_f}}\frac{f_{*}}{|\Aut(f)|}\Big(\frac{g_1^*A_1g_2^*A_2}{c_{\codim(f)}(N_{f})}\Big)
\]
because $|(f\colon \gamma\to \beta,(g_1,g_2)\Aut(f))| = |\Aut(f)|/|\Aut(g_1,g_2)|$. From there, we have that 
\begin{multline*}
f_{1*}\Big(\frac{A_1}{c_{\codim(f_1)}(N_{f_1})}\Big)f_{2*}\Big(\frac{A_2}{c_{\codim(f_2)}(N_{f_2})}\Big) = \\ = \sum_{\substack{f\colon \gamma\to \beta \\ (\overline{g}_1,\overline{g}_2)\in \lInt(f_1,f_2)_f}}\frac{|\Aut(f_1)||\Aut(f_2)|}{|\Aut(f)|}f_{*}\Big(\frac{g_1^*A_1g_2^*A_2}{c_{\codim(f)}(N_{f})}\Big)
\end{multline*}
and  the result follows.
\end{proof}
Next, we apply the above to obtain a self-intersection formula.
\begin{corollary}
Let $f\colon \alpha\to \beta$, then
\[
\bigg(\frac{f_*}{|\Aut(f)|}\Big(\frac{A}{c_{\codim(f)}(N_f)}\Big)\bigg)^k = \sum_{f'\colon \gamma \to \beta}\frac{f'_*}{|\Aut(f')|}\Bigg(\sum_{\substack{(\ol{g}_1,\ldots, \ol{g}_k)\in\\ \lInt(f,\ldots, f)_f}}\frac{\prod_{j=1}^kg_i^*(A)}{c_{\codim(f')}(N_{f'})} \Bigg)
\]
\end{corollary}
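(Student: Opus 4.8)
The plan is to argue by induction on $k$. The case $k=1$ is immediate, and $k=2$ is precisely Corollary~\ref{cor:intersection} applied with $f_1=f_2=f$ and $A_1=A_2=A$; here and below we use that $A$ is $\Aut(f)$-invariant (the hypothesis under which Corollary~\ref{cor:intersection} is stated --- without it, $\lInt$ should be replaced by $\Int$ with the evident weights). Write $P:=\frac{f_*}{|\Aut(f)|}\big(\frac{A}{c_{\codim(f)}(N_f)}\big)$. Assuming the formula for $k-1$ we have
\[
P^{k-1}=\sum_{h\colon\mu\to\beta}\frac{h_*}{|\Aut(h)|}\Big(\frac{B_h}{c_{\codim(h)}(N_h)}\Big),\qquad B_h:=\!\!\!\sum_{(\ol g_1,\ldots,\ol g_{k-1})\in\lInt(f,\ldots,f)_h}\ \prod_{j=1}^{k-1}g_j^*(A),
\]
where $\lInt(f,\ldots,f)_h$ has $k-1$ copies of $f$ and $g_j\colon\mu\to\alpha$.

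First I would check that each $B_h$ is $\Aut(h)$-invariant: the right $\Aut(h)$-action on $\lInt(f,\ldots,f)_h$ permutes the summands and carries $\prod_j g_j^*(A)$ to its $\sigma^*$-translate for $\sigma\in\Aut(h)$, so the sum is fixed. This legitimizes applying Corollary~\ref{cor:intersection} to each product $\frac{h_*}{|\Aut(h)|}\big(\frac{B_h}{c_{\codim(h)}(N_h)}\big)\cdot P$, with $f_1=h$, $A_1=B_h$, $f_2=f$, $A_2=A$. Using $p^*B_h=\sum_{(\ol g_j)}\prod_j(g_j\circ p)^*A$ and interchanging the (finite) sums, this yields
\[
P^k=\sum_{f'\colon\nu\to\beta}\frac{f'_*}{|\Aut(f')|}\Bigg(\ \sum_{h\colon\mu\to\beta}\ \sum_{(\ol p,\ol q)\in\lInt(h,f)_{f'}}\ \sum_{(\ol g_1,\ldots,\ol g_{k-1})\in\lInt(f,\ldots,f)_h}\frac{\big(\prod_{j=1}^{k-1}(g_j\circ p)^*A\big)\cdot q^*A}{c_{\codim(f')}(N_{f'})}\Bigg),
\]
where $p\colon\nu\to\mu$, $q\colon\nu\to\alpha$.

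It then remains, for each fixed $f'\colon\nu\to\beta$, to identify the inner triple sum with $\sum_{(\ol r_1,\ldots,\ol r_k)\in\lInt(f,\ldots,f)_{f'}}\frac{\prod_{i=1}^k r_i^*(A)}{c_{\codim(f')}(N_{f'})}$ ($k$ copies of $f$). I would do this by a weight-preserving bijection of indexing sets: a triple $\big(h,(\ol p,\ol q),(\ol g_1,\ldots,\ol g_{k-1})\big)$ is sent to $(\ol{g_1\circ p},\ldots,\ol{g_{k-1}\circ p},\ol q)$; conversely, from a generic tuple $(\ol r_1,\ldots,\ol r_k)$ one sets $\ol q:=\ol r_k$, takes $T:=\bigcup_{j=1}^{k-1}r_j^*(S_f)\subseteq S_{f'}$, lets $f'=h\circ p$ be the factorization through the intermediate object corresponding, via Proposition~\ref{prop:bijection_parts}, to the subset $T$ (so $p^*(S_h)=T$), and lets $g_j\colon\mu\to\alpha$ be the unique morphism with $g_j\circ p=r_j$ and $f\circ g_j=h$. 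Genericity is preserved in both directions --- forward, $\bigcup_j(g_j\circ p)^*(S_f)\cup q^*(S_f)=p^*\big(\bigcup_j g_j^*(S_f)\big)\cup q^*(S_f)=p^*(S_h)\cup q^*(S_f)=S_{f'}$; backward, $\bigcup_j g_j^*(S_f)=S_h$ because $p^*$ is injective on subsets of $S_h$ (Remark~\ref{Rem: inclusion}) --- and the $A$-weights match termwise, $\prod_{j=1}^{k-1}(g_j\circ p)^*A\cdot q^*A=\prod_{i=1}^{k}r_i^*A$. Summing over $f'$ completes the induction.

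The step I expect to be the main obstacle is the automorphism bookkeeping that turns this correspondence into an honest bijection of indexing sets with no leftover $|\Aut(h)|$ factors: one must verify that the ambiguity of a representative $p$ of $\ol p\in\lInt(h,f)_{f'}$ is matched exactly by the right $\Aut(h)$-action on $\lInt(f,\ldots,f)_h$, so that $(\ol{g_j\circ p})$ is well defined, which is precisely why it is essential that Corollary~\ref{cor:intersection} and the target formula are both written in the normalized form $f_*/|\Aut(f)|$. A subsidiary point is the factorization property used in the inverse map --- that a generic $r_j\colon\nu\to\alpha$ with $r_j^*(S_f)\subseteq p^*(S_h)$ factors through $p$ --- which in the concrete examples $G_{g,n}$ and $\catJ$ is clear from the graph combinatorics and in the axiomatic setting should be deduced by iterating Proposition~\ref{prop:bijection_parts} and Axiom~\ref{item:Sf}. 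An alternative, possibly cleaner, route would be to iterate Proposition~\ref{prop:intersection} directly $k-1$ times, carrying the explicit $1/|\Aut(g_1,\ldots)|$ denominators throughout --- where associativity of the iterated fibre products is manifest --- and pass to the $\lInt$-normalized form only at the end.
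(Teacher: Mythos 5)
Your strategy is exactly the one the paper intends: the corollary is stated there with no proof at all, as an immediate iteration of Corollary~\ref{cor:intersection}, and your induction on $k$ using that corollary (checking $\Aut(h)$-invariance of the partial sums $B_h$ so that it may be applied again) is the same route, just with the details made explicit.

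The one place where your write-up is not yet a proof is the recombination step, and you have correctly located it. As literally described, the ``weight-preserving bijection of indexing sets'' does not exist: for fixed classes $\ol p\in\lInt(h,f)_{f'}$ and $(\ol g_1,\ldots,\ol g_{k-1})\in\lInt(f,\ldots,f)_h$, the class $\ol{g_j\circ p}$ genuinely depends on the chosen representative $p$ (replacing $p$ by $\sigma\circ p$, $\sigma\in\Aut(h)$, moves $(\ol g_j)$ by the right $\Aut(h)$-action rather than fixing it), so the forward map is only defined after summing over the inner index, not elementwise. The correct way to close this is the one you mention as an ``alternative'': lift everything to $\Int$ (where the postcomposition action of $\Aut(h)\times\Aut(f)^{k-1}$ is free, all morphisms being epimorphisms) or work with $\rInt$ and explicit stabilizers as in Proposition~\ref{prop:intersection}, match a generic $k$-tuple $(r_1,\ldots,r_k)$ with the triples $(h,p,(g_j))$ via Proposition~\ref{prop:bijection_parts} applied to $T=\bigcup_{j<k}r_j^*(S_f)$, and then verify that the $1/|\Aut(h)|$ weights, summed over the whole $\Aut(\mu)$-orbit of intermediate morphisms $h$ that occurs (not a single $h$), recombine to the factor $1/|\Aut(f')|$ demanded by the statement; this is where the bookkeeping actually bites and it is not automatic from the termwise correspondence you describe. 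Finally, the factorization property you call subsidiary (that each $r_j$ with $r_j^*(S_f)\subseteq p^*(S_h)$ factors through $p$) is genuinely needed for surjectivity and does not follow formally from Axiom~\ref{item:Sf} alone; it should be justified either from the geometric axioms of Section~\ref{section: ncs} (the fiber-diagram property) or checked directly in the categories $G_{g,n}$ and $\catJ$ where the corollary is used.
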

The latter will be used to prove the following Grothendieck-Riemann-Roch formula for the total Chern class (deduced from the usual one, involving the Chern character).
\begin{proposition}[Grothendieck-Riemann-Roch for the total Chern class]
Let $f\colon \alpha\to \beta$ be a morphism and let  $\F$ be an element   in the $K$-theory of $   X_\alpha$ with rational coefficients. Then
\[
c(\frac{f_*(\F)}{|\Aut(f)|}) = 1+\sum_{\substack{m\geq 1 \\ f'\colon \gamma\to \beta}}\frac{f'_*}{|\Aut(f')|}\bigg(\sum_{\substack{\{\ol{g}_1,\ldots, \ol{g}_m\}\in \\ \sInt( (f)^m)_{f'} }} \frac{\prod_{j=1}^m g_i^*(c(\bigwedge^\bullet N_f^\vee \otimes \F) - 1)}{c_{\codim(f')}(N_{f'})}\bigg)
\]
\label{prop: Grothendieck-Riemann-Roch}
\end{proposition} 
(This is inspired by \cite[Theorem~15.3]{fulton}).
\begin{proof}
We begin with the usual Grothendieck-Riemann-Roch formula
\[
\ch\big(\frac{f_*(\F)}{|\Aut(f)|}\big) = f_*\big(\frac{\ch(\F)}{|\Aut(f)|}\td(N_f)^{-1}\big),
\]
which, combining with the formula for the Todd class,  implies
\[
\ch_ n\big(\frac{f_*(\F)}{|\Aut(f)|}\big) = \frac{f_*}{|\Aut(f)|}\bigg(\frac{\ch_ n(\bigwedge^\bullet N_f^\vee\otimes \F)}{c_{\codim(f)}(N_f)}\bigg).
\]
By the inversion formula to express the total Chern class in terms of the Chern character (see e.g. \cite[Equation~3.9]{prvZ}), we deduce

\[
c\big(\frac{f_*(\F)}{|\Aut(f)|}\big)= \exp\big(\frac{f_*}{|\Aut(f)|}\bigg(\frac{\sum_{n \geq 1} \frac{(-1)^{n-1}}{n} \ch_n(\bigwedge^\bullet N_f^\vee \otimes \F)}{c_{\codim(f)}(N_f)}\bigg) \big).
\]
Setting $A:=\sum_{n \geq 1} \frac{(-1)^{n-1}}{n} \ch_n(\bigwedge^\bullet N_f^\vee \otimes \F)$, we will then compute
\[
\star : =\exp\bigg(\frac{f_*}{|\Aut(f)|}\Big(\frac{A}{c_{\codim(f)}(N_f)}\Big)\bigg)
\]

\begin{align*}
\star & =1 +\sum_{k\geq 1} \bigg(\frac{f_*}{|\Aut(f)|}\Big(\frac{A}{c_{\codim(f)}(N_f)}\Big)\bigg)^k\frac{1}{k!}\\
 &=1 + \sum_{k\geq 1}\sum_{f'\colon \gamma\to \beta}\frac{f'_*}{|\Aut(f')|}\Big(\sum_{(\ol{g}_1,\ldots, \ol{g}_k)\in \lInt((f)^k)_{f'}}\frac{\prod_{i=1}^kg_i^*(A)}{c_{\codim(f')}(N_{f'})}\cdot\frac{1}{k!}  \Big)\\
 & = 1 + \sum_{f'\colon \gamma\to \beta}\frac{f'_*}{|\Aut(f')|}\Big(\sum_{\substack{m\geq 1 \\ \{\ol{g}_1,\ldots, \ol{g}_m\}\in \sInt((f)^m)_{f'}\\ k_1,\ldots, k_m\geq 1}} \frac{\prod_{i=1}^mg_i^*(A)^{k_i}}{c_{\codim(f')}(N_{f'})}\cdot\frac{1}{(\sum_{i=1}^mk_i)!}  \Big)\\
 &= 1 + \sum_{f'\colon \gamma\to \beta}\frac{f'_*}{|\Aut(f')|}\Big(\sum_{\substack{m\geq 1 \\ \{\ol{g}_1,\ldots, \ol{g}_m\}\in \sInt((f)^m)_{f'}}}\frac{\prod_{i=1}^m(\exp(g_i^*(A))- 1)}{c_{\codim(f')}(N_{f'})}  \Big)\\ 
 \end{align*}
 
The claim is then obtained by applying again the inversion formula in the form
 \[
 \exp(A) = c(\bigwedge^\bullet N_f\otimes F).
 \]
\end{proof}

\subsection{Blowup}  \label{Sec: blowup}

Starting from a stratification category $\cat$ (see Definition~\ref{def: stratcat}), here we define the blowup category at a stratum with transversal self-intersection. Then for a fixed stratification functor $X$, we interpret the blowup category as the stratification of the blowup of the nonsingular DM-stack $X$ at that stratum.

\begin{definition}
\label{def:transversal_self}
We say that an object  $\delta\in \Obj(\cat)$ \emph{has transversal self-intersection} if for every pair $g_1, g_2\colon \gamma \to \delta$, the sets  $g_1^*(S_{\delta})$, $g_2^*(S_{\delta})$ are  either equal or disjoint.

\end{definition}

\begin{remark}
If $g_1^*(S_\delta)=g_2^*(S_\delta)$, then $\overline{g}_1=\overline{g}_2\in \oMor(\gamma,\delta)$. See Proposition~\ref{prop:bijection_parts}.
\end{remark}
 
\begin{example} \label{notransversalself} Let $\cat=G_{2k+1,1}$ for some $k\geq 1$. We claim that the vine  graph $G= G(k,2,\{1\})$ does not have transversal self-intersection. 

Indeed, let $G'$ be the ``triangle'' graph with $2$ vertices of genus $k$ and a third vertex of genus $0$ carrying the marking. There are two different morphisms $g_1,g_2 \colon G' \to G$ and $g_1^*(S_G) \cap g_2^*(S_G)$ consists of $1$ edge.
\end{example}

\begin{definition} \label{def: blowup cat} Let $\delta$ be an object of $\cat$ with transversal self-intersection. We define \emph{the blowup category $\bl_\delta\cat$ of $\cat$ at $\delta$} as follows. 

First consider the following category. Its set of objects consists of pairs  $(\gamma, \mf) $ where $\gamma$ is an object of $\cat$ and $\mf$ is a function  $\oMor(\gamma,\delta)\to \parts(S_{\gamma})$ such that \begin{equation} \label{eq:mf_empty} \emptyset \neq \mf(\overline{g})\subseteq g^*(S_\delta)\ \text{  for every  }\  \overline{g}\in \oMor(\gamma,\delta).\end{equation} 

Its morphisms $(\gamma_1,\mf_1)\to (\gamma_2,\mf_2)$ are morphisms $f\colon \gamma_1\to \gamma_2$ such that for every $\overline{g}_1\in \oMor(\gamma_1,\delta)$ we have that one of the conditions hold
\begin{enumerate}
\item there exists $\overline{g}_2\in \oMor(\gamma_2,\delta)$ such that $\overline{g}_1=\overline{g_2\circ f}$ and  $\mf_1(\overline{g}_1) \subseteq f^*(\mf_2(\overline{g}_2))$,
\item or $\mf_1(\overline{g}_1)\cap f^*(S_{\gamma_2})=\emptyset$.
\end{enumerate}
We then define $\bl_\delta\cat$ as a skeleton of the above category.
\end{definition}

\begin{proposition}
The category $\bl_\delta\cat$ is naturally ranked, and it satisfies  Axiom~(1) from Definition~\ref{def: stratcat}
.
\end{proposition}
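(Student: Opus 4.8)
The plan is to produce the rank function on $\bl_\delta\cat$ explicitly and then verify Axiom~(1) by a local combinatorial analysis at each object, using the transversal self-intersection hypothesis to keep the bookkeeping of the sets $g^*(S_\delta)$ under control.

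First I would define the rank (codimension) of an object $(\gamma,\mf)$. The natural candidate, modeled on the blowup of a smooth subvariety, is
\[
\codim(\gamma,\mf) := \codim(\gamma) + \sum_{\overline{g}\in\oMor(\gamma,\delta)} \bigl(|\mf(\overline{g})| - 1\bigr),
\]
so that the ``old'' strata $(\gamma,\mf)$ with all $\mf(\overline g)$ a singleton keep their codimension, while adding an element to some $\mf(\overline g)$ (which geometrically corresponds to passing to a deeper stratum of an exceptional divisor) raises the codimension by one. I would check that this is compatible with morphisms, i.e.\ that a morphism $(\gamma_1,\mf_1)\to(\gamma_2,\mf_2)$ which is not an isomorphism strictly decreases $\codim$, and that any maximal chain below a fixed object has length equal to its codimension; the key point is that, by transversal self-intersection, the sets $g^*(S_\delta)$ for distinct $\overline g\in\oMor(\gamma,\delta)$ are pairwise disjoint subsets of $S_\gamma$, so the contributions in the sum above are ``independent'' and behave additively along chains. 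For the terminal object one takes $\delta$ itself with the empty datum (or rather the object $\gamma=\bullet$, which admits no morphism to $\delta$ unless $\delta=\bullet$, in which case the construction is trivial); in any case $\codim=0$ on it.

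Then I would verify Axiom~\ref{item:Sf}. Given a morphism $f\colon(\gamma_1,\mf_1)\to(\gamma_2,\mf_2)$ in $\bl_\delta\cat$, I must exhibit exactly $\codim(f)$ codimension-one ``intermediate divisors''. These come in two flavours, matching the two clauses in the definition of a morphism in $\bl_\delta\cat$: (a) those coming from the covers of $f$ already present in $\cat$ (an element of $S_f\subseteq S_{f_{\gamma_1}}$ not ``absorbed'' into the blowup), and (b) those coming from shrinking one of the sets $\mf_1(\overline g_1)$ by one element, i.e.\ the exceptional-type divisors. One checks that the total count is $\codim(\gamma_1)-\codim(\gamma_2)$ plus the difference of the correction sums, which by the displayed formula is exactly $\codim(f)$; and that each such intermediate object factors $f$ uniquely (uniqueness of the connecting $i$ follows as in the original Axiom~\ref{item:Sf}, since the relevant $g$ is still an epimorphism). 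Transversality is again used to ensure that ``which $\overline g_1$ a given edge of $S_{\gamma_1}$ belongs to'' is unambiguous, so the two flavours do not interact.

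The main obstacle I expect is the careful case analysis for Axiom~\ref{item:Sf}: one must understand precisely when a codimension-one object over $(\gamma_2,\mf_2)$ is of type (a) versus type (b), and in the type-(b) case one must handle the boundary situation where $|\mf_1(\overline g_1)|=1$ (so that $\mf_1(\overline g_1)$ cannot be shrunk, and clause~(2) of the morphism definition, $\mf_1(\overline g_1)\cap f^*(S_{\gamma_2})=\emptyset$, becomes the operative one) — this is the combinatorial shadow of the fact that an exceptional divisor of a blowup, restricted to a point where the center has been ``used up'', behaves differently. Getting the bijection of Proposition~\ref{prop:bijection_parts} to interact correctly with $\mf$ here, together with checking that the construction is independent of the choice of coset representatives $\overline g$, is where the real work lies; once that is pinned down, ranked-ness is a short consequence of the additivity of the correction term along chains.
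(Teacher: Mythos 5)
Your overall strategy — write down the rank explicitly, then check Axiom~\ref{item:Sf} by sorting the codimension-one objects into ``strict-transform type'' and ``exceptional type'', using transversal self-intersection to make the sets $g^*(S_\delta)$, $\overline g\in\oMor(\gamma,\delta)$, pairwise disjoint — is exactly the intended one (the paper leaves the verification as ``straightforward'' and records the outcome in Remark~\ref{Rem: blowup}). But your rank formula has the wrong sign, and this is not a cosmetic slip. Geometrically $X_{\gamma,\mf}$ is a fibered product over $X_\gamma$ of the projective bundles $\mathbb P\bigl(\bigoplus_{e\in \mf(\overline g)}\LL_e\bigr)$, so \emph{enlarging} a set $\mf(\overline g)$ enlarges the stratum and \emph{lowers} its codimension by one; the correct rank is
\[
\codim(\gamma,\mf)=\codim(\gamma)-\sum_{\overline g\in\oMor(\gamma,\delta)}\bigl(|\mf(\overline g)|-1\bigr),
\]
which is exactly $|S_{(\gamma,\mf)}|$ for $S_{(\gamma,\mf)}=\bigl(S_\gamma\setminus\bigcup_{\overline g}\mf(\overline g)\bigr)\sqcup\oMor(\gamma,\delta)$ as identified in Remark~\ref{Rem: blowup} (the displayed rank formula there should be read with the $-1$'s, as the description of $S_{(\gamma,\mf)}$ in the same remark forces). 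A minimal test case shows your formula fails: blow up a codimension-two stratum $\delta$ lying on two rank-one strata, so $S_\delta=\{e_1,e_2\}$. The exceptional divisor is the object $(\delta,\mf=S_\delta)$ and must have rank $1$, but your formula assigns it $\codim(\delta)+1=3$; moreover clause~(1) of Definition~\ref{def: blowup cat} gives a morphism $(\delta,\{e_1\})\to(\delta,S_\delta)$, along which your codimension would \emph{increase} from $2$ to $3$, contradicting the very monotonicity you propose to check.

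The sign error propagates into your verification of Axiom~\ref{item:Sf}. Your ``type (b)'' objects, obtained by shrinking some $\mf_1(\overline g_1)$ by one element, are strata \emph{deeper} than $(\gamma_1,\mf_1)$, not intermediate objects of codimension one over the target, so they cannot serve as the $\beta'$ required by the axiom; with your rank the count will also not come out to $\codim(f)$. With the corrected rank, the codimension-one objects admitting a morphism from $(\gamma_1,\mf_1)$ are indexed by $S_{(\gamma_1,\mf_1)}$ as above: the edges of $S_{\gamma_1}$ not lying in any $\mf_1(\overline g)$ give the strict transforms of the old divisors (these lift by Remark~\ref{rem:lifting}), and each class in $\oMor(\gamma_1,\delta)$ gives a factorization through the exceptional divisor $(\delta,S_\delta)$; Axiom~\ref{item:Sf} for $f\colon(\gamma_1,\mf_1)\to(\gamma_2,\mf_2)$ then follows by comparing this set with $S_{(\gamma_2,\mf_2)}$ via Proposition~\ref{prop:bijection_parts}, transversal self-intersection guaranteeing that the two types do not interfere. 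So the architecture of your argument can be salvaged, but as written the rank function, the monotonicity check, and the identification of the codimension-one intermediate objects all need to be redone with the opposite sign.
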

\begin{proof}
Straightforward.
\end{proof}

 \begin{remark} \label{Rem: blowup} The rank of $(\gamma,\mf)$ is 
    \[
    \rk(\gamma)-\sum_{\overline{g}\in\oMor(\gamma,\delta)}|\mf(\overline{g})|.
    \]

    Moreover, the set $S_{(\gamma,\mf)}$ (the codimension~$1$ strata that contain a fixed stratum $(\gamma,\mf)$) is naturally identified with \[S_{\gamma}\setminus\Bigg(\bigcup_{\overline{g}\in \oMor(\gamma,\delta)}\mf(\overline{g})\Bigg) \cup \oMor(\gamma,\delta).\]
\end{remark}


Recall Notation~\ref{not:prime_image}. We define $h\col \widetilde{X}_\beta\to X_\beta$ to be the blowup of $X_\beta$ at the union of the images $X'_{g_1} \subseteq X_{\beta}$ for every $g_1\colon \gamma\to \beta$ such that there exists $g_2\col \gamma \to \delta$ satisfying  $(g_1,g_2)\in \Int(f_\beta, f_\delta)_{f_{\gamma}}$. We define $X_{\beta, \mf}$ to be  
\[
\prod_{\overline{g}\in\oMor(\gamma,\delta)} \mathbb{P}\Big(\bigoplus_{e\in \mf(\overline{g}) }\LL_e\Big).
\]

\begin{proposition} \label{prop :blowupstrat}
The functor 
\begin{align*}
\bl_\delta \cat &\to \textnormal{nonsingular DM stacks }\\ 
 (\gamma,\mf) & \mapsto X_{\gamma,\mf}
\end{align*}
is a stratification of $\bl_{X'_\delta}X_\bullet$.
\end{proposition}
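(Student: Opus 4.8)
The plan is to verify the four axioms of a stratification functor (from Section~\ref{section: ncs}) for the assignment $(\gamma, \mf) \mapsto X_{\gamma, \mf}$, together with the identification of the target space $X_\bullet$ with $\bl_{X'_\delta} X_\bullet$ (the case $(\gamma, \mf) = (\bullet, \emptyset)$, where the condition \eqref{eq:mf_empty} forces $\mf$ to be the empty function since $\oMor(\bullet, \delta) = \emptyset$ unless $\delta = \bullet$). First I would set up the morphism $h \colon \widetilde{X}_\bullet \to X_\bullet$ as the blow-up along the (possibly reducible, transversally self-intersecting) center $X'_\delta = \bigcup_{g_1} X'_{g_1}$, and recall that blowing up a nonsingular DM stack along a center which is a transversal union of nonsingular substacks is again nonsingular, with exceptional divisor a transversal union of projectivized normal bundles. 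The key local model is: near a point lying on exactly the branches indexed by $\{\ol g_1, \ldots, \ol g_r\}$ of $X'_\delta$, the blow-up looks like the product of the individual blow-ups, and the fibre over such a point is $\prod_i \mathbb{P}(N_{i_{g_i, f_\delta}}|_{\text{pt}})$, which is exactly the shape of $X_{\gamma, \mf}$ when $\mf(\ol g_i) = g_i^*(S_\delta)$.

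Next I would define the morphisms $X_f \colon X_{\gamma_1, \mf_1} \to X_{\gamma_2, \mf_2}$ for each morphism $f$ of $\bl_\delta \cat$. For a morphism of type (1) (where $f$ maps a branch of $X'_\delta$ to a branch), the factor $\mathbb{P}(\bigoplus_{e \in \mf_1(\ol g_1)} \LL_e)$ maps to $\mathbb{P}(\bigoplus_{e \in \mf_2(\ol g_2)} \LL_e)$ via the linear embedding of sub-bundles coming from $\mf_1(\ol g_1) \subseteq f^*(\mf_2(\ol g_2))$, composed with the pullback along $X_f \colon X_{\gamma_1} \to X_{\gamma_2}$ restricted appropriately; for a morphism of type (2) (where the branch is ``forgotten''), one uses the locally closed stratum of the exceptional divisor over $X'_{\gamma_2}$ that the corresponding $\mathbb{P}$-bundle factor dominates. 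Verifying functoriality here is bookkeeping: one checks that composites of type-(1)/type-(2) morphisms again satisfy the defining conditions and that the induced maps on the product of projective bundles compose correctly. Axiom~(1) (properness and lci of codimension $\codim(f)$) follows from the corresponding properties downstairs together with the fact that projective-bundle projections and linear sub-bundle inclusions are proper lci, and the rank formula in Remark~\ref{Rem: blowup} gives the codimension count. Axiom~(3) (splitting of the normal bundle indexed by $S_{(\gamma,\mf)}$) is read off from the identification $S_{(\gamma,\mf)} = \big(S_\gamma \setminus \bigcup_{\ol g} \mf(\ol g)\big) \cup \oMor(\gamma,\delta)$ of Remark~\ref{Rem: blowup}: the line bundles indexed by the ``old'' edges in $S_\gamma \setminus \bigcup \mf(\ol g)$ are pulled back from $X_\gamma$ (they survive the blow-up untouched because the center is a union of strata and blow-up is an isomorphism away from it), while each ``new'' index $\ol g \in \oMor(\gamma,\delta)$ contributes the restriction of $\mathcal{O}(-1)$ on the corresponding $\mathbb{P}$-factor, i.e. the normal bundle of the exceptional divisor.

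Axiom~(2) (the quotient by $\Aut(f_{(\gamma,\mf)})$ is the normalization of the image in $\widetilde{X}_\bullet$) reduces to Axiom~(2) for $\cat$ plus an analysis of how automorphisms of $\gamma$ act on $\oMor(\gamma, \delta)$ and on the $\mf$-data; the transversal self-intersection hypothesis guarantees that an automorphism either fixes a branch $\ol g$ or moves it to a disjoint one, so the action on $\prod_{\ol g} \mathbb{P}(\bigoplus_{e \in \mf(\ol g)} \LL_e)$ is compatible with the permutation action on the index set and one gets the normalization statement by combining the downstairs statement with the fact that normalization commutes with these projective-bundle and product constructions. Axiom~(4) (the fibre-square property for $X_{f_1}, X_{f_2}$ with common target $X_{\beta, \mf}$) is, I expect, the main obstacle: one must show that the fibre product $X_{\alpha_1, \mf_1} \times_{X_{\beta, \mf}} X_{\alpha_2, \mf_2}$ decomposes as the disjoint union over generic tuples $(g_1, g_2)\Aut(f)$ indexed by $\rInt$ in the blow-up category. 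The strategy is to use Axiom~(4) for $\cat$ itself to handle the ``$X_\gamma$-part'' of each stratum, and then for the projective-bundle factors to observe that, because the centers are transversal, the fibre product of two linear sub-bundle inclusions into a common $\mathbb{P}(\bigoplus \LL_e)$ is again a (disjoint union of) sub-bundle(s) — the combinatorics of which sub-bundles appear is exactly governed by the generic/transversal intersection conditions of Definition~\ref{def: generic} transported through the $\mf$-functions. The delicate point is matching the index set $\rInt$ in $\bl_\delta\cat$ (which involves both the generic tuples downstairs and the compatibility of the $\mf$-data) with the scheme-theoretic fibre product, and checking there is no excess or embedded component; here one argues locally, where everything is an honest transversal union, reducing to the elementary fact that $\mathbb{P}(V_1) \times_{\mathbb{P}(W)} \mathbb{P}(V_2) = \mathbb{P}(V_1 \cap V_2)$ for linear subspaces $V_1, V_2 \subseteq W$ meeting transversally, i.e. $V_1 + V_2 = W$.
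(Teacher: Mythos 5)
Your route is viable but genuinely different from the paper's. The paper proves this proposition in one line, by invoking the toroidal/cone-stack formalism: the blow-up $\bl_{X'_\delta}X_\bullet$ is identified with the star subdivision of the cone stack associated to the normal crossing stratification (citing \cite[Section~4.5]{mps} and \cite[Theorem 6, p.90]{kkms}), and that machinery delivers simultaneously the nonsingularity, the indexing of strata by $\bl_\delta\cat$, the normal-bundle splitting, and the fiber-square axiom, since strata of the subdivided cone stack correspond exactly to the pairs $(\gamma,\mf)$. You instead verify the four axioms of Section~\ref{section: ncs} by hand from the local model in which the center is, in SNC coordinates, a union of coordinate substacks cut out by \emph{disjoint} sets of branches; this is where Definition~\ref{def:transversal_self} enters essentially, because only under the ``equal or disjoint'' condition does the intersection ideal of the branches coincide with the product ideal, so that the blow-up of the union is an iterated smooth blow-up (with merely tangentially transverse branches this fails --- e.g.\ blowing up two coordinate axes in $\mathbb{A}^3$ produces an ordinary double point --- so that sentence in your sketch deserves an explicit justification rather than being quoted as a general fact). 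Granted that, your identification of the exceptional fibers with $\prod_{\ol g}\mathbb{P}\bigl(\bigoplus_{e\in\mf(\ol g)}\LL_e\bigr)$, the splitting of $N$ indexed by $S_{(\gamma,\mf)}$ as in Remark~\ref{Rem: blowup}, and the local reduction of Axiom~(4) to intersections of linear subbundles of a projectivized normal bundle are all the right ingredients. What your approach buys is a self-contained, coordinate-level proof that makes the geometry of the strata explicit; what it costs is real bookkeeping at exactly the places you flag --- functoriality of the type-(1)/type-(2) morphisms, the stacky normalization statement in Axiom~(2) (where the action of $\Aut(\gamma)$ on $\oMor(\gamma,\delta)$ and on the $\mf$-data must be tracked), and the matching of the index set $\rInt$ in $\bl_\delta\cat$ with the components of the scheme-theoretic fiber product in Axiom~(4) --- all of which the star-subdivision formalism handles wholesale at the price of importing \cite{mps} and \cite{kkms}.
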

\begin{proof}
This follows from \cite[Section~4.5]{mps} (see also \cite[Theorem 6, p.90]{kkms}) where the nonsingular DM stack is constructed as the star subdivision of the cone stack associated to the stratification.
\end{proof}

\begin{remark}
    When there  exists no morphism $\gamma \to \delta$, there exists a unique $\mf$  such that the pair $(\gamma, \mf) \in \bl_{\delta} \cat$.
    The latter is the stratum that corresponds to the strict transform of the image $X_{\gamma}'\subset X_\bullet$. 
\end{remark}

\begin{remark}
\label{rem:lifting}
   Suppose that $\delta$ is a stratum with transversal self intersection and $f\col \gamma \to \beta$ is a morphism such that $\Mor(\beta,\delta)=\emptyset$. 
   Let $(\gamma,\mf)$ be an object in $\bl_\delta\cat$. Then the morphism $f$ lifts to a morphism $(\gamma, \mf) \to (\beta, \emptyset)$ in $\bl_{\delta} \cat$ if and only if \[f^*S_{\beta}\cap \bigcup_{\overline{g}\in\oMor(\gamma,\delta)}\mf(g)  = \emptyset.\] (That is, when  $X'_{\gamma, \mf}$ is contained in the strict transform of $X'_{\beta}$ in $\bl_{X'_{\delta}} X_{\bullet}$).
\end{remark}

In this paper, the main example of the above construction is going to be the case where $\mathfrak{C}$ is the category $\catJ$ of Example~\ref{Ex: stratJ}, or a blowup of the latter. We now describe the example of $1$ blowup of $\catJ$ at one of the centers that will be relevant for our main result.

\begin{example} \label{oneblowup} Let $\phi \in V_{g,n}^d$ and $(G,D) \in \catJ$ be the lift of a vine graph $G=G(i,t,S)$ by some $\phi$-stable divisor $D$.

Morphisms $f \colon (G', (E',D')) \to (G,D)$  correspond to subsets $T_f \subset V(G'^{E'})$ such that the complete subgraphs $G(T_f), G(T_f^c)$ in $G'^{E'}$ are connected and of genus $i, g-i-t+1$,  the markings $S$ are on $G(T_f)$ and the markings $S^c$ are on $G(T_f^c)$, and $D'(G(T_f))=D(v_1)$ and $D'(G(T_f^c))=D(v_2)$ (for $v_1,v_2$ the two vertices of $G$). We let $E(T_f) \subseteq E(G'^{E'})$ be the subset of $t$ edges that separate $G(T_f)$ from $G(T_f^c)$.

Assume that $(G,D)$ is a stratum\footnote{Because of Lemma~\ref{lem: noE}, in this paper we will never need to blowup any strata of the form $(G,E,D)$ with $E \neq \emptyset$.} with transversal self-intersection. The category $\bl_{(G,D)} \catJ$ defined above stratifies the blowup $\bl_{{\mathcal{J}'}_{(G,D)}}\Jmb{d}{g}{n}(\phi)$. We can describe more explicitly its objects as tuples $(G',E', D', \alpha)$ such that $(G',(E', D')) \in \operatorname{Obj}(\catJ)$ and $\alpha$ is a choice, for each morphism  $f \colon (G',D') \to (G,D)$ (up to automorphisms of $(G,D)$), of a subset $\emptyset \neq \alpha(\Aut(G,D)f) \subseteq E(T_f)$.

\end{example}

We now define the psi-classes associated to a given stratum $\gamma \in \cat$. Recall that each $e \in S_\gamma$ corresponds to a morphism $j_e \colon \gamma \to \beta_e$ where the latter has rank~$1$.  Then define the psi-classes 
\begin{equation} \label{def: psi}
    \Psi_{\beta_e} := -c_1(\mathbb{L}_e)=- c_1 (N_{X_{\beta_e}} X_{\bullet}), \quad \psi_{\gamma, e}:= j_e^* \Psi_{\beta_e}
\end{equation}
(see Item~(3) of Definition~\ref{def: ncs}) for $\mathbb{L}_e$).

We will now state and prove a pushforward formula for monomials in psi-classes under the blowdown morphism. We begin by introducing some notation. 

Recall Remark~\ref{Rem: blowup}. For an object $(\gamma,\nf)$ in $\bl_\delta\cat$ we define the sets
\begin{align*}
    S_{\gamma\setminus \delta}&:= S_\gamma\setminus \bigcup_{j\col \gamma \to \delta}j^*(S_\delta),\\
    \fu_\delta(\gamma,\nf) &:= \bigcup_{j\col \gamma \to \delta}j^*(S_\delta)\setminus \nf(\overline{j}), \\
    \cu_\delta(\gamma,\nf) &:= \bigcup_{j\col \gamma \to \delta}\nf(\overline{j}).
\end{align*}
(The symbols $\fu$ and $\cu$ will acquire some meaning in Section~\ref{Sec: wc formula} as certain collection of edges, see Equation~\eqref{def: CU FU}.)  Note  that the unions can equivalently be taken over $\oMor(\gamma,\delta)$ instead of over all morphisms. (See Remark~\ref{Rem: inclusion}).

We define $H^\delta_{\gamma,\nf} ( (g'_{e'})_{e'\in S_{\gamma,\nf}})$ as the set of tuples  $((a_e)_{e\in S_\gamma},(g_e)_{e\in S_{\gamma}})$ of non-negative integers satisfying $a_e=0$ for every $e\notin \fu_\delta(\gamma,\nf)$, 
\[
\sum_{e\in \nf(\overline{j})} (g_e + 1) = g'_{\overline{j}}+1+\sum_{e\in j^*(S_\delta)\setminus \nf(\overline{j})}a_e
\]
for every $\overline{j}\in \oMor(\gamma,\delta)\subseteq S_{\gamma,\nf}$, and $g_e=g'_e$ for every $e\in S_{\gamma}\setminus \cu_\delta(\gamma,\nf) \subseteq S_{\gamma,\nf}$.

For a morphism $h \colon (\gamma, \nf) \to (\beta, \mf)$ and a tuple $(g'_{e'})_{e'\in S_{\beta,\mf}}$ we define $h^*(g'_{e'})$ as the tuple 
\[
(h^*(g'_{e'}))_{\widetilde{e}}:= \begin{cases}
    g'_{e'} &\text{ if }\widetilde{e}=h^*(e')\text{ for some }e'\\
    -1 & \text{ if } \widetilde{e} \in S_{\gamma, \nf} \setminus h^{*} (S_{\beta, \mf}).
\end{cases}
\]

We define $M_{\delta}(\gamma)$ to be the set of all function $\mf\col \oMor(\gamma,\delta)\to \parts(S_\gamma)$ satisfying Equation~\eqref{eq:mf_empty}.

\begin{corollary} \label{prop: comparison-psi} Let $p \colon \bl_{X_{\delta'}}X_{\bullet} \to X_{\bullet}$ be the blowdown morphism, and fix integers $(g'_{e'} \geq 0)_{e' \in S_{\beta, \mf}}$. Then the pushforward
\[
p_* \frac{f_{(\beta, \mf) *}}{|\Aut(\beta, \mf)|} \Big( \prod_{e' \in S_{\beta, \mf}} \Psi_{e'}^{g'_e}\Big)\] 
equals

\[\sum_{\gamma \in \cat} \frac{f_{\gamma *}}{|\Aut(\gamma)|}\Bigg( \sum_{\substack{ \nf \in M_{\delta}(\gamma) \\ h \in \oMor((\gamma, \nf), (\beta, \mf))}} \sum_{ \substack{  (a_e,g_e)\in \\ H^\delta_{\gamma,\nf} (h^*(g'_{e'})) }}(-1)^{a_e}\binom{g_e}{a_e} \Psi_e^{g_e-a_e}  \Bigg).
\]

\end{corollary}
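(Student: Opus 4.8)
### The plan

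The statement is a pushforward formula for monomials in $\Psi$-classes supported on a blow-up stratum, expressed in terms of $\Psi$-classes on the original category $\cat$. My plan is to reduce it to two ingredients that are already essentially available: (a) the description of the blow-down $p\colon \bl_{X'_\delta} X_\bullet \to X_\bullet$ restricted to the strata coming from $\bl_\delta\cat$, together with the fact (Proposition~\ref{prop :blowupstrat}) that $\bl_\delta\cat$ stratifies the blow-up; and (b) the classical projective-bundle pushforward formula, applied factorwise to $X_{\beta,\mf} = \prod_{\overline{g}} \mathbb{P}(\bigoplus_{e\in\mf(\overline{g})} \LL_e)$.

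First I would unwind the left-hand side. The stratum $f_{(\beta,\mf)}\colon X_{\beta,\mf} \to \bl_{X'_\delta} X_\bullet$ maps, under $p$, into $X_\bullet$; its image lies inside the closed stratum $X'_\beta$, and more precisely $p\circ f_{(\beta,\mf)}$ factors through $f_\beta\colon X_\beta\to X_\bullet$ composed with the bundle projection $q\colon X_{\beta,\mf}\to X_\beta$ (times the relevant exceptional-divisor data). So $p_*\, f_{(\beta,\mf)*} = f_{\beta*}\, q_*$ up to the automorphism-group bookkeeping, and I first push forward along $q$. Here each $\Psi_{e'}$ for $e'\in S_{\beta,\mf}$ is either the pullback of a $\Psi$-class from $X_\beta$ (for the $e'\in S_\beta\setminus\bigcup_{\overline{g}}\mf(\overline{g})$, using Remark~\ref{Rem: blowup} to identify $S_{\beta,\mf}$), or it is (up to sign and a pullback $\LL_e$-twist) the hyperplane class $\xi_{\overline{g}}$ on the factor $\mathbb{P}(\bigoplus_{e\in\mf(\overline{g})}\LL_e)$, corresponding to $e'=\overline{g}\in\oMor(\gamma,\delta)\subset S_{\beta,\mf}$. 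The relation $\xi^{r} = \sum c_i(\bigoplus\LL_e)\xi^{r-i}$ on each $\mathbb{P}^{|\mf(\overline{g})|-1}$, combined with $q_*\xi_{\overline{g}}^{|\mf(\overline{g})|-1+k} = s_k(\bigoplus_{e\in\mf(\overline{g})}\LL_e)$ (Segre class), is exactly what produces the binomial coefficients $(-1)^{a_e}\binom{g_e}{a_e}$ and the summation over $(a_e,g_e)\in H^\delta_{\gamma,\nf}$: the linear constraint $\sum_{e\in\nf(\overline{j})}(g_e+1) = g'_{\overline{j}} + 1 + \sum_{e\in j^*(S_\delta)\setminus\nf(\overline{j})} a_e$ is precisely the degree bookkeeping when one expands a power of $\xi_{\overline{j}}$ into Segre classes of a rank-$|\nf(\overline{j})|$ bundle and rewrites those Segre classes of $\bigoplus\LL_e$ as the $\Psi_e = -c_1(\LL_e)$ to appropriate powers (with the $\binom{g_e}{a_e}$ coming from expanding $\LL_e^\vee$-twists in a complete homogeneous / Segre expansion). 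This is the heart of the computation, and I expect the main obstacle to be matching the combinatorics of the set $H^\delta_{\gamma,\nf}$ and the maps $\cu_\delta, \fu_\delta$ with the bookkeeping of which $\LL_e$'s get blown up versus strict-transformed — i.e. carefully tracking, via Remark~\ref{rem:lifting} and Proposition~\ref{prop:bijection_parts}, how a stratum $\gamma\in\cat$ sits under the blow-up and which of its codimension-one neighbours survive as $\fu_\delta$ versus get absorbed into $\cu_\delta$.

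Next I would handle the sum over which strata $\gamma$ of $\cat$ (and which function $\nf\in M_\delta(\gamma)$, and which lift $h\colon(\gamma,\nf)\to(\beta,\mf)$) appear. The point is that $q_*$ of a stratum of $X_{\beta,\mf}$ lands on a union of strata of $X_\bullet$: over the locus of $X_\beta$ away from $X'_\delta$ the blow-up is an isomorphism, so the contribution is pulled back directly; but strata of $X_\beta$ meeting $X'_\delta$ receive exceptional-divisor contributions, and these are indexed precisely by pairs $(\gamma,\nf)$ with a morphism $h\colon(\gamma,\nf)\to(\beta,\mf)$ in $\bl_\delta\cat$ — i.e. the strata of the blow-up that dominate a given original stratum $\gamma$. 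Summing the projective-bundle pushforward contributions over all these, with the correct automorphism-factor normalizations $\tfrac{1}{|\Aut(\gamma)|}$ versus $\tfrac{1}{|\Aut(\beta,\mf)|}$, yields the stated formula. The automorphism bookkeeping should follow the same pattern as in the proofs of Proposition~\ref{prop:intersection} and Corollary~\ref{cor:intersection} (orbit-counting for coset spaces $\oMor$), so I would invoke those normalizations rather than redo them. The transversal self-intersection hypothesis on $\delta$ is what guarantees the product decomposition $X_{\beta,\mf} = \prod_{\overline{g}}\mathbb{P}(\cdots)$ and that the factors are indexed by the disjoint classes in $\oMor(\gamma,\delta)$, so the factorwise application of the projective-bundle formula is legitimate; I would note this explicitly at the start.

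Finally, a remark on organisation: since this is labelled a Corollary (of Proposition~\ref{prop: GRR} / the intersection-theory toolkit of this section, or perhaps of Proposition~\ref{prop :blowupstrat}), the cleanest writeup probably does the one-blow-up, single-factor case first — where the formula collapses to the classical $p_*(\Psi_E^k) = \sum_{a}(-1)^a\binom{?}{a}(\cdots)$ for a single $\mathbb{P}(\bigoplus\LL_e)$ — and then observes that the general case is obtained by taking the product over the factors indexed by $\oMor(\gamma,\delta)$ and summing over lifts, with no new idea required beyond multilinearity of pushforward across a product of projective bundles and the excess/base-change compatibilities already recorded in Section~\ref{section: ncs}.
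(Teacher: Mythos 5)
Your reduction in the first paragraph contains a genuine gap, and it sits exactly where the content of the statement lives. You assert that $p\circ f_{(\beta,\mf)}$ factors as the bundle projection $q\colon X_{\beta,\mf}\to X_\beta$ followed by $f_\beta$, so that $p_*f_{(\beta,\mf)*}=f_{\beta*}q_*$ up to automorphism bookkeeping. But $X_{\beta,\mf}$ is a product of projective bundles over $\widetilde{X}_\beta$, the blow-up of the stratum $X_\beta$ along its (possibly self-intersecting) intersection with the center --- this is why the paper defines $\widetilde{X}_\beta$ immediately before defining $X_{\beta,\mf}$ (for instance, when $\oMor(\beta,\delta)=\emptyset$ the stratum is the strict transform $\widetilde{X}_\beta$, not $X_\beta$). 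The correct factorization is $X_{\beta,\mf}\to\widetilde{X}_\beta\to X_\beta\to X_\bullet$. The projective-bundle/Segre computation you describe handles only the first arrow and, on its own, would express the pushforward as $f_{\beta*}$ of a polynomial in the $\Psi_e$, $e\in S_\beta$ --- i.e.\ it produces only the $\gamma=\beta$ term. The sum over deeper strata $\gamma$, the choices $\nf\in M_\delta(\gamma)$ and lifts $h$, the $-1$ convention in $h^*(g'_{e'})$, and the alternating binomials $(-1)^{a_e}\binom{g_e}{a_e}$ attached to the $\fu_\delta$-edges all arise from the pushforward along the second arrow $\widetilde{X}_\beta\to X_\beta$, where powers of strict-transform normal classes (differing from pullbacks by exceptional divisors) push down to classes supported on the centers inside $X_\beta$, i.e.\ on images of deeper $X_\gamma$'s. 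Your second paragraph gestures at "exceptional-divisor contributions indexed by $(\gamma,\nf,h)$" but attributes them to the projective-bundle pushforward landing on different loci, which is not the mechanism; without identifying the intermediate blow-down $\widetilde{X}_\beta\to X_\beta$ and computing its pushforward, the plan cannot reproduce the right-hand side.

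It is also worth noting that the paper does not carry out this computation at all: its proof of Corollary~\ref{prop: comparison-psi} is a one-line citation to the blow-up pushforward formulas of Borisov (Theorem~4.8) and Aluffi (Theorem~4.2), which is precisely the Segre-class-and-binomial bookkeeping you propose to redo by hand (applied to the stratum-level blow-down above). So even a corrected version of your argument would be a self-contained alternative to the paper's route; as a fix, either invoke those references for the pushforward along $\widetilde{X}_\beta\to X_\beta$, or set up an induction in which that pushforward is itself an instance of the formula being proved.
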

 \begin{proof}
  Follows from \cite[Theorem~4.8]{borisov} (or  \cite[Theorem~4.2]{aluffi}). 
 \end{proof}

\section{Combinatorial aspects of wall Crossing}

\label{sec: combo-of-wc}
In this section we fix two stability conditions $\phi^+$ and $\phi^-$ ``on opposite sides of a stability hyperplane $H$'' (Definition~\ref{Def: opposite sides}), and give a description of what will turn out to be the stratification category $\catJtilde=\catJtilde(\phi^+,\phi^-)$, recall Definition \ref{def: stratcat}), of the resolution $\jphitilde$ of the rational map $\id \colon \jtilde(\phi^+)\dashrightarrow \jtilde(\phi^-)$. This resolution will be defined in the next section, in Construction~\ref{con: blowup}.




\begin{definition} \label{Def: opposite sides}
    Let $\phi^+, \phi^- \in V_{g,n}^d$ be  nondegenerate, and let $H$ be a stability hyperplane (see Subsection~\ref{subsec: stabilityH}). We say that \emph{the polarizations $\phi^\pm$ are on opposite sides of the hyperplane $H$} if $\phi^0 = \frac{\phi^-+\phi^+}{2} \in H$ is the only degenerate point of the segment $[\phi^+, \phi^-] \subset V_{g,n}^d$.
\end{definition}
In other words, $H \ni \phi_0$-semistability implies $\phi^+$ or $\phi^-$ stability, and  $\phi^\pm$ are small perturbations of $\phi_0$. Throughout we fix $H, \phi^\pm$ and $\phi_0$ as in the above definition.

\subsection{Extremal sets, \gfs and \gcs}

\label{Sec:combowc} 
In this subsection we prove some of our bulk combinatorial results that have to do with wall crossing, focusing on the ``extremal'' multidegrees, i.e. those multidegrees that are $\phi^+$-stable but are not $\phi^-$-stable. \underline{We shall fix  a $n$ pointed graph $G$ of genus~$g$ and a divisor $D$ on $G$ throughout}. (We are not imposing that $G$ is stable, but the cases we are interested in are either $G$ equals $G'^{E'}$ for some stable graph $G'$ and some $E' \subseteq E(G')$, or $G$ is obtained by forgetting one marking from a stable marked graph.) 

 For a subset $V\subseteq V(G)$  we define\footnote{We note that the existing definition of $\beta(V)$ in \cite{esteves} and \cite{abreupacini} are based on a different sign convention, however all relevant properties remain the same.}
\[
\beta^\star(V):=-\deg(D|_{V})+\phi^\star(V)+\frac{\left|E(V, V^c)\right|}{2},\; \ \ \  \text{ for }\; \star = +, -, 0.
\]
Note that $D$ is $\phi^\star$-semistable (Definition~\ref{def: Dstable}) if and only if  $\beta^\star(V)\geq 0$ for every $V\subset V(G)$. Moreover, we have the following relation 
\begin{equation}
    \label{eq:beta_sum}
    \beta^\star(V)+\beta^\star(W)-|E(V\setminus W,W\setminus V)|=\beta^\star(V\cap W)+\beta^\star(V\cup W).
\end{equation}
for all $V, W \subseteq V(G)$ (see \cite[Lemma~4.1]{abreupacini})

\underline{From now on in this subsection we will assume that $D$ is $\phi^+$stable on $G$}.

\begin{definition} A subset $V\subsetneqq V(G)$ is called \emph{extremal} (with respect to $\phi^+,\phi^-$ and $D$) if
\begin{equation}
\label{eq:extremal}
\beta^+(V) > 0\text{ and } \beta^{-}(V)<0.
\end{equation}
\end{definition}
Note that the latter implies, in particular,
\[
\phi^+(V) > \phi^-(V);
\]
and also $\beta^0(V) = 0$.

We are now ready to define the main object of study in this section.
\begin{definition}
 We define the poset $\ext(G,D)=\ext_{\phi^+,\phi^-}(G,D)$  as 
\[
\{V\subseteq V(G); \ V\text{ is extremal, connected and with connected complement}\}
\]
with the ordering given by inclusion. 
\end{definition}

\begin{remark}
\label{rem:pullback_ext}
If $\iota\colon (G,D)\to (G',D')$ is a specialization and $V'\in \ext(G',D')$, we have that $\iota^{-1}(V')\in \ext(G,D)$.
\end{remark}

\begin{remark} \label{Rem: ext-vine} Each element $V$ of $\ext(G,D)$ corresponds to a morphism from $G$ to an extremal vine  stratum (Example~\ref{Ex: stratJ}) $(G',D')$ (up to automorphisms of $(G',D')$) obtained by contracting $E(V,V)$ and $E(V^c, V^c)$. \end{remark}

We have the following results for extremal subsets.

\begin{proposition}
\label{prop:extremal_cap_cup}
Let $V_1$ and $V_2$ be extremal subsets. Then $E(V_1\setminus V_2, V_2\setminus V_1)=\emptyset$. 

If furthermore $V_1\cap V_2 \neq \emptyset$ and $V_1\cup V_2 \neq V(G)$, then $V_1\cap V_2$ and $V_1 \cup V_2$ are extremal.
\end{proposition}
\begin{proof}
The idea is to use Equation \eqref{eq:beta_sum} to compute $\beta^\star(V_1\cap V_2)$ and $\beta^\star(V_1\cup V_2)$. In order to do that, we define
\[
H_0 := V_1\cap V_2,\, H_1:=V_1\setminus H_0,\, H_2:=V_2\setminus H_0\text{ and }H_3:=V_1^c\cap V_2^c,
\]
see Figure~\ref{fig:intersection_hemispheres}, and define 
\[
\alpha:=|E(H_1, H_2)|=|E(V_1\setminus V_2, V_2\setminus V_1)|.
\]
\begin{figure}[ht]
    \centering
    \begin{tikzpicture}[scale=1.2]
    \draw  (0,0) circle (0.5cm);
    \node at (0,0) {$H_0$};
    \draw (2,0) circle (0.5cm);
    \node at (2,0) {$H_1$};
    \draw (0,2) circle (0.5cm);
    \node at (0,2) {$H_2$};
    \draw (2,2) circle (0.5cm);
    \node at (2,2) {$H_3$};
    \draw[fill] (0.4,0) circle (0.02cm);
    \draw[fill] (1.6,0) circle (0.02cm);
    \draw  (0.4,0) -- (1.6,0);
    \draw[fill] (0.4,2) circle (0.02cm);
    \draw[fill] (1.6,2) circle (0.02cm);
    \draw  (0.4,2) -- (1.6,2);
    \draw[fill] (0,0.4) circle (0.02cm);
    \draw[fill] (0,1.6) circle (0.02cm);
    \draw  (0,0.4) -- (0,1.6);
    \draw[fill] (2,0.4) circle (0.02cm);
    \draw[fill] (2,1.6) circle (0.02cm);
    \draw  (2,0.4) -- (2,1.6);
    \draw[fill] (0.3,0.3) circle (0.02cm);
    \draw[fill] (1.7,1.7) circle (0.02cm);
    \draw  (0.3,0.3) -- (1.7,1.7);
    \draw[fill] (0.3,1.7) circle (0.02cm);
    \draw[fill] (1.7,0.3) circle (0.02cm);
    \draw  (0.3,1.7) -- (1.7,0.3);
    \node[left] at (0.7,1.3) {$\alpha$};
    \end{tikzpicture}
    \caption{}
    \label{fig:intersection_hemispheres}
\end{figure}

Since 
\[
\beta^-(H_0\cup H_1), \beta^-(H_0\cup H_2) < 0
\]
we have that $\beta^0(H_0\cup H_1) = \beta^0(H_0\cup H_2) = 0$. By \eqref{eq:beta_sum} we have
\[
\beta^0(H_0\cup H_1)+\beta^0(H_0\cup H_2) -\alpha= \beta^0(H_0)+\beta^0(H_0\cup H_1\cup H_2)
\]
and, because $\beta^0(H)\geq 0$ for every $H$, we deduce that $\alpha=0$ (which proves the second sentence), and that
\[
\beta^0(H_0)=\beta^0(H_0\cup H_1\cup H_2)=0.
\]

If $H_0\neq \emptyset$, then $\beta^+(H_0)>0$ and if   $H_0\cup H_1\cup H_2\neq V(G)$, then $\beta^+(H_0\cup H_1\cup H_2)>0$. Since $\beta^0 = \frac{\beta^++\beta^-}{2}$,  we have that $\beta^-(H_0)<0$ (respectively, $\beta^-(H_0\cup H_1\cup H_2) < 0$) if $H_0\neq \emptyset$ (respectively, if $H_0\cup H_1\cup H_2\neq V(G)$). Thus, if $H_0 \neq \emptyset$ and $H_0 \cup H_1 \cup H_2 \neq V(G)$, then $H_0$ and $H_0 \cup H_1 \cup H_2$ are extremal, concluding our proof. 
\end{proof}

\begin{proposition}
\label{prop:extremal_disconnected}
Let $V$ be an extremal set. 

If $V=V_1\sqcup V_2$ with $E(V_1,V_2)=\emptyset$ and $V_1, V_2\neq \emptyset$, then $V_1$ and $V_2$ are extremal. Similarly, if $V^c= W_1\sqcup W_2$ with $E(W_1,W_2)=\emptyset$ and $W_1,W_2 \neq \emptyset$, then $W_1^c$ and $W_2^c$ are extremal.
\end{proposition}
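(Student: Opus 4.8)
The plan is to deduce both statements purely formally from the additivity relation \eqref{eq:beta_sum}, using two positivity inputs that are already available in the chapter: first, that $\beta^+(W)>0$ for every proper nonempty $W\subseteq V(G)$, which holds because $\phi^+$ is nondegenerate and hence $\phi^+$-semistability of $D$ is $\phi^+$-stability (Remark~\ref{remark-deg}); second, that $\beta^0(W)\geq 0$ for \emph{every} $W$, which is the fact already used in the proof of Proposition~\ref{prop:extremal_cap_cup} (it follows from the hypothesis that $\phi_0$ is the only degenerate point of the segment $[\phi^+,\phi^-]$: if some $\beta^0(W)$ were negative then the affine function interpolating $\beta^+(W)$ and $\beta^-(W)$ along the segment would vanish --- hence take the integer value $0$ --- at a point of the open segment other than $\phi_0$).

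For the decomposition $V=V_1\sqcup V_2$, I would apply \eqref{eq:beta_sum} to the pair $V_1,V_2$. Since $V_1\cap V_2=\emptyset$, $V_1\cup V_2=V$, $E(V_1\setminus V_2,V_2\setminus V_1)=E(V_1,V_2)=\emptyset$, and $\beta^\star(\emptyset)=0$, this gives $\beta^\star(V_1)+\beta^\star(V_2)=\beta^\star(V)$ for $\star\in\{+,-,0\}$. Because $V$ is extremal, $\beta^0(V)=0$, and since $\beta^0(V_1),\beta^0(V_2)\geq 0$ we conclude $\beta^0(V_1)=\beta^0(V_2)=0$. Both $V_1$ and $V_2$ are nonempty by hypothesis and proper in $V(G)$ (each is contained in the proper subset $V$), so the first input gives $\beta^+(V_i)>0$; combining with $\beta^0(V_i)=0$ yields $\beta^-(V_i)=2\beta^0(V_i)-\beta^+(V_i)=-\beta^+(V_i)<0$. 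Hence $V_1$ and $V_2$ are extremal.

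The second statement follows the same template after identifying the complements: from $V^c=W_1\sqcup W_2$ one has $W_1^c=V\sqcup W_2$ and $W_2^c=V\sqcup W_1$, so $W_1^c\cap W_2^c=V$, $W_1^c\cup W_2^c=V(G)$, and $W_1^c\setminus W_2^c=W_2$, $W_2^c\setminus W_1^c=W_1$ with $E(W_1,W_2)=\emptyset$. Feeding this into \eqref{eq:beta_sum} and using $\beta^\star(V(G))=0$ gives $\beta^\star(W_1^c)+\beta^\star(W_2^c)=\beta^\star(V)$ for all $\star$, and the argument of the previous paragraph applies verbatim (the sets $W_1^c,W_2^c$ are nonempty and proper because $W_1,W_2$ are nonempty and contained in the proper subset $V^c$), giving $\beta^0(W_i^c)=0$, $\beta^+(W_i^c)>0$, and $\beta^-(W_i^c)=-\beta^+(W_i^c)<0$. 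The whole computation is mechanical; there is no real obstacle, only the need to handle the complement bookkeeping in the second part carefully and to invoke the two positivity facts above, both of which rest on the standing hypotheses rather than on any new argument.
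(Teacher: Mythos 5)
Your proof is correct and follows essentially the same route as the paper: apply the additivity relation \eqref{eq:beta_sum} to the disjoint pieces (resp.\ to $W_1^c,W_2^c$), use $\beta^0(V)=0$ together with $\beta^0\geq 0$ to force $\beta^0(V_i)=0$ (resp.\ $\beta^0(W_i^c)=0$), and conclude $\beta^-<0$ from $\beta^+>0$, which the paper uses implicitly via nondegeneracy of $\phi^+$. The only difference is that you spell out the two positivity inputs and the complement bookkeeping that the paper leaves as "proven similarly."
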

\begin{proof}
For the first part, we have that $\beta^0(V)=\beta^0(V_1)+\beta^0(V_2)$, since $\beta^0(V) = 0$, then $\beta^0(V_1)=\beta^0(V_2)=0$ as well. So, $\beta^-(V_i), \beta^-(V_2)<0$. The second part is proven similarly.
\end{proof}

In what follows we will also need an additional hypothesis. 

\begin{hypothesis}
\label{hyp:ext_connected}
    If $V \subseteq V(G)$ is extremal, then $\leg(1)\in V$.
\end{hypothesis}
 In particular, by Proposition~\ref{prop:extremal_disconnected}, we have that if $V\subseteq V(G)$ is extremal, then $G(V)$ is connected. From now on in this chapter we will always assume Hypothesis~\ref{hyp:ext_connected}.  

Hypothesis~\eqref{hyp:ext_connected} fixes the following convention on $\phi^+, \phi^-$:

\begin{remark} \label{Rem: minusplusconv}
  If $V$ is an extremal set in $\ext(G,D)$ then we have $\phi^+(V)>\phi^-(V)$. If we set $S:= \leg^{-1}(V)$ and $i:=g(V)$ and $t=|E(V,V^c)|$, then by Hypothesis~\eqref{hyp:ext_connected} we have:
  \[
  x_{i,t,S}^{+} >  x_{i,t,S}^{-}.
  \]
  for the coordinates $\phi^\pm=(x_{i,t,S}^\pm)_{(i,t,S)}\in V_{g,n}^d\cong C_{g,n}^d \times D_{g,n}^d$ introduced in Definition~\ref{CDT} via the isomorphism of Proposition~\ref{stabilityspace}. 
\end{remark}
 Moreover, since $\phi^+$ and $\phi^-$ are on opposite sides of a hyperplane $H=H(i_0,t_0, S_0)$, Hypothesis~\ref{hyp:ext_connected} is always satisfied upon possibly switching $\phi^+$ and $\phi^-$.

\begin{remark}
    The results in this section hold more generally when '$\phi^+$ and $\phi^-$ lie on opposite sides of a higher codimension stability plane $H$ (not necessarilly a hyperplane'. By this we mean that there exist stability hyperplanes $H_1, \ldots, H_k$ such that, setting $H:= H_1 \cap \ldots \cap H_k$, we have that $\phi^0:=\frac{\phi^++\phi^-}{2}\in H$ is the only degenerate point of the segment $[\phi^+, \phi^-]$ (cf Definition~\ref{Def: opposite sides}). In this more general case, Hypothesis~\ref{hyp:ext_connected}  becomes more restrictive.
\end{remark}

Here are some important properties of $\ext(G,D)$ that follow  from Hypothesis~\ref{hyp:ext_connected}:
\begin{corollary}
\label{cor:V1V2_union}
 Let $V_1,V_2\in \ext(G,D)$, then either $V_1\cup V_2=V(G)$ or there exists $V\in \ext(G,D)$ such that $V_1\cup V_2\subseteq V$.
\end{corollary}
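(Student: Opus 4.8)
The plan is to produce the required $V$ by enlarging $V_1\cup V_2$ just enough to make its complement connected, and to control extremality by invoking the two structural results already proved. First I would record the easy consequences of Hypothesis~\ref{hyp:ext_connected}: both $V_1$ and $V_2$ contain $\leg(1)$, so $U:=V_1\cup V_2$ is nonempty, and since $G(V_1)$ and $G(V_2)$ are connected and both contain $\leg(1)$, the induced subgraph $G(U)$ is connected (any two vertices of $U$ are joined by a path through $\leg(1)$ staying inside $G(V_1)\cup G(V_2)\subseteq G(U)$). Now apply Proposition~\ref{prop:extremal_cap_cup} to the pair $V_1,V_2$: the union $U$ is empty, extremal, or equal to $V(G)$; it is not empty, so either $U=V(G)$, which is the first alternative in the statement, or $U$ is extremal. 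From here on I assume $U$ is extremal.

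If $G(U^c)$ happens to be connected, then $U$ is extremal, connected, with connected complement, i.e. $U\in\ext(G,D)$, and we simply take $V=U$. The substantive case is when $G(U^c)$ is disconnected. Then I would pick a connected component $W$ of $G(U^c)$ and set $W':=U^c\setminus W$, so that $W,W'\neq\emptyset$ and $E(W,W')=\emptyset$ (no edge of $G$ can join $W$ to $W'$, since such an edge would lie entirely in $U^c$ and hence within a single component of $G(U^c)$). I propose $V:=W^c=U\cup W'$. That $V$ is extremal is precisely the second half of Proposition~\ref{prop:extremal_disconnected}, applied to the extremal set $U$ together with the decomposition $U^c=W\sqcup W'$. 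Its complement $V^c=W$ is connected by the choice of $W$, and obviously $V\supseteq U\supseteq V_1\cup V_2$. The remaining point is that $V=U\cup W'$ is itself connected: $G(U)$ is connected, and each connected component of $W'$, being a connected component of $G(U^c)$ other than $W$, has edges neither to $W$ nor to the other components of $W'$, so—$G$ being connected and $U$ nonempty—it must be joined to $U$ by an edge; hence $G(V)$ is connected. This exhibits $V\in\ext(G,D)$ with $V_1\cup V_2\subseteq V$, completing the argument.

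I expect no serious obstacle here: once Propositions~\ref{prop:extremal_cap_cup} and \ref{prop:extremal_disconnected} are in hand, the whole statement is the formal observation that a connected extremal set with possibly-disconnected complement can always be grown, by absorbing all but one component of its complement, into a connected extremal set with connected complement—and that this growth preserves extremality. The only step requiring the slightest care is verifying connectedness of the enlarged set $V=W^c$, which, as indicated above, reduces to the fact that every component of $U^c$ must attach to $U$ because $G$ is connected. Everything else is a direct citation.
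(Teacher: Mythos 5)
Your proposal is correct and follows essentially the same route as the paper: apply Proposition~\ref{prop:extremal_cap_cup} to see $V_1\cup V_2$ is extremal (and connected, via Hypothesis~\ref{hyp:ext_connected}), then absorb all but one connected component $W$ of the complement and use the second part of Proposition~\ref{prop:extremal_disconnected} to conclude $W^c\in\ext(G,D)$. Your explicit case split on whether $(V_1\cup V_2)^c$ is connected and the detailed check that $W^c$ is connected are just slightly more careful spellings of steps the paper states tersely.
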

\begin{proof}
    Assume that $V_1\cup V_2\neq V(G)$. Then, by Proposition~\ref{prop:extremal_cap_cup}, we have that $V_1\cup V_2$ is extremal. By Hypothesis~\ref{hyp:ext_connected}, we have that $V_1\cup V_2$ is also connected. Let $W$ be a connected component of $(V_1\cup V_2)^c$. By Proposition~\ref{prop:extremal_disconnected}, we have that $W^c$ is extremal. Moreover, since $V_1\cup V_2$ is connected (and $G$ is connected), so is $W^c$. This proves that $W^c\in \ext(G,D)$ and $V_1\cup V_2\subseteq W^c$.
\end{proof}

\begin{corollary}
\label{cor:V1V2_intersection} 
Let $V_1, V_2$ be elements of $\ext(G,D)$ such that $ V_1, V_2\subseteq V$ for some $V\in \ext(G,D)$. Then $V_1\cap V_2\in \ext(G,D)$. 
\end{corollary}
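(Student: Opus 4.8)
The plan is to read the statement off Proposition~\ref{prop:extremal_cap_cup}, which already asserts that $W:=V_1\cap V_2$ is empty, extremal, or equal to $V(G)$; the remaining work is to exclude the two extreme cases and then to verify the two connectedness conditions built into the definition of $\ext(G,D)$.

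First I would use Hypothesis~\ref{hyp:ext_connected}: since $V_1$ and $V_2$ are extremal, both contain $\leg(1)$, hence so does $W$, and in particular $W\neq\emptyset$. On the other side, $W\subseteq V_1\subseteq V$ and $V$, being extremal, is a proper subset of $V(G)$, so $W\neq V(G)$. Proposition~\ref{prop:extremal_cap_cup} then forces $W$ to be extremal. This is exactly the point at which the hypothesis $V_1,V_2\subseteq V$ is used: without an ambient extremal set the intersection could a priori be all of $V(G)$.

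Next I would check connectedness of $G(W)$ and of $G(W^c)$. For $G(W)$, this follows from the remark after Hypothesis~\ref{hyp:ext_connected} that every extremal set induces a connected subgraph; concretely, if $G(W)$ split as a disjoint union of two nonempty parts with no connecting edges, Proposition~\ref{prop:extremal_disconnected} would make each part extremal, hence each would contain $\leg(1)$, which is absurd. For $G(W^c)$, I would write $W^c=(V_1\cap V_2)^c=V_1^c\cup V_2^c$; the subgraphs $G(V_1^c)$ and $G(V_2^c)$ are connected because $V_1,V_2\in\ext(G,D)$, and their vertex sets meet in $(V_1\cup V_2)^c\supseteq V^c\neq\emptyset$ (again using $V\subsetneq V(G)$). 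A union of two connected subgraphs sharing a vertex is connected, so $G(W^c)$ is connected, and therefore $W\in\ext(G,D)$.

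I do not anticipate a real obstacle here: the only care needed is to handle the boundary cases in Proposition~\ref{prop:extremal_cap_cup} correctly and to notice that the hypothesis is what simultaneously rules out $W=V(G)$ and guarantees $(V_1\cup V_2)^c\neq\emptyset$ for the connectedness of the complement; the rest is routine bookkeeping with induced subgraphs.
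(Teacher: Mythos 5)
Your proof is correct and follows essentially the same route as the paper: Proposition~\ref{prop:extremal_cap_cup} together with Hypothesis~\ref{hyp:ext_connected} to get that $V_1\cap V_2$ is a nonempty, proper, extremal, connected set, and connectedness of the complement by writing $(V_1\cap V_2)^c=V_1^c\cup V_2^c$ with both pieces connected and meeting in the nonempty $V^c$. No gaps.
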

\begin{proof}
    By Proposition~\ref{prop:extremal_cap_cup}, we have that $V_1\cap V_2$ is extremal and by Hypothesis~\ref{hyp:ext_connected} we have that $V_1\cap V_2$ is nonempty and connected. All that is left is to prove that \[(V_1\cap V_2)^c= V^c \cup (V\setminus V_1)\cup (V\setminus V_2)\] is connected. But this follows from the fact that \[V^c, \quad  V_1^c= V^c\cup (V\setminus V_1),  \textrm{ and} \quad V_2^c=V^c\cup (V\setminus V_2)\] are connected.
\end{proof}

We are now ready to introduce a key notion to describe the blowup category of $\catJ$ at some vine strata.

\begin{definition}
\label{def:good_function}
For each $(G,D)$ and lower set $L\subseteq \ext(G,D)$, we say that a function $\alpha\colon L\to \mathcal{P}(E(G))$ is a \emph{\gf} if the following conditions hold.
\begin{enumerate}
    \item $\alpha(V)\subseteq E(V,V^c)$ for every $V \in L$.
    \item For all $V\in L$ we have $\alpha(V) = \emptyset$ if and only if there exists $V'\subsetneqq V$ with $V'\in \ext(G,D)$ such that $\alpha(V')\cap E(V,V^c)\neq \emptyset$.
\end{enumerate}
We usually think of $L$ as part of the datum of $\alpha$, and write $L_{\alpha}$ for the domain of the \gf $\alpha$. We also define $|\alpha|=\bigcup_{V\in L_{\alpha}}\alpha(V) \subseteq E(G)$.
\end{definition}
\begin{definition}
\label{def:good_function_compatible}
Given a specialization $\iota\colon (G,D)\to (G', D')$ we say that the \gfs $\alpha$ and $\alpha'$ are \emph{compatible with  $\iota$} if 
\begin{enumerate}
    \item $\iota^{-1}(V')\in L_{\alpha}$ for every $V'\in L_{\alpha'}$.
    \item if $\alpha'(V')\neq \emptyset$, then $\alpha(\iota^{-1}(V'))\neq \emptyset$.
    \item if $e'\notin |\alpha'|$ then $\iota_{E}(e)\notin |\alpha|$.
\end{enumerate}
\end{definition}
In that case we write $\iota\colon (G,D,\alpha)\to (G', D', \alpha')$ and say that the first triple specializes to the second.

We will also need to introduce some subcategories of the stratification category of $\catJ$, which will use only some \gfs which we call ``full''. We now introduce those, and then discuss how this notion is equivalent to the combinatorial notion of a \gc.

\begin{definition} \label{def: alpha-full}
We say that $\alpha$ is \emph{full} if 
$|\alpha| = E(G)$.
\end{definition}

We will show in Proposition~\ref{prop: bijection full} how full \gfs are equivalent to the following notion. 

\begin{definition} \label{def: fullfor}
A forest $V_{\bullet} \subseteq \ext(G,D)$ is a \emph{\gc} in $\ext(G,D)$ if 
\begin{enumerate}
    \item it contains all maximal elements of $\ext(G,D)$, and
    \item the edge set satisfies $E(G)=\bigcup_{V\in V_\bullet} E(V, V^c)$.
\end{enumerate} 
\end{definition}

We  first prove some intermediate results in that direction. For a forest $V_\bullet\subseteq \ext(G,D)$, and for each $V'\in \ext(G,D)$, we define \begin{equation} \label{def: next} \nex(V')= \nex_{V_\bullet}(V') :=  \bigcap_{V'\subsetneqq V\in V_\bullet} V  \end{equation}
(with the usual convention that the intersection over the empty set equals $V(G)$).

\begin{lemma} \label{lemma: incomp}
    Let $V_\bullet\subseteq\ext(G,D)$ be a full forest and let $V_1$ and $V_2$ be two incomparable elements in $V_\bullet$. Then $V_1\cup V_2=V(G)$ and $E(V_1^c, V_2^c)=\emptyset$.
\end{lemma}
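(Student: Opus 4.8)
The plan is to show the two conclusions in turn, using only the combinatorial machinery assembled so far: Proposition~\ref{prop:extremal_cap_cup}, Proposition~\ref{prop:extremal_disconnected}, Hypothesis~\ref{hyp:ext_connected}, and the fact (Definition~\ref{def: fullfor}) that a full forest contains all maximal elements of $\ext(G,D)$ and that its edge sets $E(V,V^c)$ cover $E(G)$.

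First I would argue that $V_1 \cup V_2 = V(G)$. Suppose not. Since $V_1, V_2 \in \ext(G,D)$, Proposition~\ref{prop:extremal_cap_cup} tells us that $V_1 \cup V_2$ is either empty, extremal, or all of $V(G)$; as both $V_i$ are nonempty and $V_1 \cup V_2 \ne V(G)$ by assumption, $V_1 \cup V_2$ is extremal, hence (by Hypothesis~\ref{hyp:ext_connected} and the remark following it) connected with connected complement, so $V_1 \cup V_2 \in \ext(G,D)$. Now $V_\bullet$ is a full forest, so it contains a maximal element $W$ of $\ext(G,D)$ with $V_1 \cup V_2 \subseteq W$; then $V_1 \subseteq W$ and $V_2 \subseteq W$, so $V_1$ and $V_2$ both lie in the lower set $\{U \le W\}$, which is a chain because $V_\bullet$ is a forest. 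This makes $V_1$ and $V_2$ comparable, contradicting the hypothesis. (Care is needed if $V_1 \cup V_2$ itself lies below several maximal elements, but since $V_\bullet$ is a forest, once $V_1, V_2$ are seen to be $\le$ a common element of $V_\bullet$, that common element's lower set is a chain and we are done; alternatively, if $V_1\cup V_2$ itself belongs to $V_\bullet$ we directly get $V_1,V_2$ in the chain below it.) Hence $V_1 \cup V_2 = V(G)$.

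Next, for $E(V_1^c, V_2^c) = \emptyset$: from $V_1 \cup V_2 = V(G)$ we get $V_1^c \cap V_2^c = \emptyset$, i.e. $V_1^c$ and $V_2^c$ are disjoint. By Proposition~\ref{prop:extremal_cap_cup} applied to the extremal sets $V_1, V_2$, we already know $E(V_1 \setminus V_2, V_2 \setminus V_1) = \emptyset$; but $V_1 \setminus V_2 = V_1 \cap V_2^c = (V_1^c \cup V_2)^c$, and since $V_1^c \cap V_2^c = \emptyset$ we have $V_2^c \subseteq V_1$, so $V_2^c = V_1 \setminus V_2$ and symmetrically $V_1^c = V_2 \setminus V_1$. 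Therefore $E(V_1^c, V_2^c) = E(V_2 \setminus V_1, V_1 \setminus V_2) = \emptyset$, as claimed.

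The only genuinely delicate point is the step that manufactures the contradiction: it must be shown that whenever two elements of a forest sit below a common element, they are comparable. This is exactly the definition of a forest (the lower set of any element is a chain), so once we have placed $V_1$ and $V_2$ below a maximal element $W \in V_\bullet \subseteq \ext(G,D)$ — which is where fullness of the forest is used, via condition~(1) of Definition~\ref{def: fullfor} together with Corollary~\ref{cor:V1V2_union} guaranteeing such a $W$ exists in $\ext(G,D)$ and hence in $V_\bullet$ — the contradiction is immediate. I expect no computational obstacle; the whole argument is a short chase through the extremal-set lemmas, and the main thing to get right is the bookkeeping identifying $V_i^c$ with $V_j \setminus V_i$ after establishing $V_1 \cup V_2 = V(G)$.
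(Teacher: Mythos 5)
Your argument is correct and is essentially the paper's own proof: Corollary~\ref{cor:V1V2_union} produces an element of $\ext(G,D)$ containing $V_1\cup V_2$, fullness (condition~(1) of Definition~\ref{def: fullfor}) places a maximal element $W$ above it in $V_\bullet$, the forest property makes the lower set of $W$ a chain and gives the contradiction, and then $E(V_1^c,V_2^c)=\emptyset$ follows from Proposition~\ref{prop:extremal_cap_cup} via the identifications $V_1^c=V_2\setminus V_1$, $V_2^c=V_1\setminus V_2$. One local inaccuracy: Hypothesis~\ref{hyp:ext_connected} only gives that an extremal set is connected, not that its complement is, so you may not conclude $V_1\cup V_2\in\ext(G,D)$ (indeed $(V_1\cup V_2)^c=V_1^c\cap V_2^c$ can be disconnected, which is exactly why Corollary~\ref{cor:V1V2_union} passes to a connected component of the complement); this is harmless here because the corollary, which you also invoke, already supplies the needed containing element of $\ext(G,D)$.
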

\begin{proof}
    By Corollary \ref{cor:V1V2_union} we have that either $V_1\cup V_2= V(G)$, or there exists $V\in \ext(G,D)$ such that $V_1, V_2\subseteq V$. If the latter holds, Part~(1) of Definition~\ref{def: fullfor} implies that $V_1$ and $V_2$ are comparable, a contradiction. So $V_1\cup V_2=V(G)$. The fact that $E(V_1^c, V_2^c)=\emptyset$ follows from Proposition~\ref{prop:extremal_cap_cup}.
\end{proof}

\begin{proposition}
\label{prop:properties_next}
    Let $V_\bullet\subseteq \ext(G,D)$ be a full forest. Let $V'\in \ext(G,D)$ be a nonmaximal element and let $V_1,\ldots, V_m$ be the elements of $V_\bullet$ that are minimal among those containing $V'$. Then:
    \begin{enumerate}
        \item For every $i=1,\ldots, m$, \[
E(\nex(V')\setminus V', V_i^c)\neq \emptyset.
\]
\item $\nex(V')\neq V'$. 
\item $G(\nex(V'))$ is connected. 
\item If $V'\in V_\bullet$, then $E(\nex(V')\setminus V', \nex(V')\setminus V' ) =  \emptyset$.
\item If $V\in \ext(G,D)$ is such that $\nex(V')\subseteq V$, then there exists $i \in \{1,\ldots, m\}$ such that $V_i\subseteq V$. 
        \item  All minimal elements of $\ext(G,D)$ belong to $V_{\bullet}$.
\item We have $E(G)=\bigsqcup_{V\in V_\bullet}E(V,\nex(V)\setminus V)$.
    \end{enumerate}
\end{proposition}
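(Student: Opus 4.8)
The plan is to prove the seven items of Proposition~\ref{prop:properties_next} more or less in the order listed, since each relies on the previous ones together with the structural results (Proposition~\ref{prop:extremal_cap_cup}, Proposition~\ref{prop:extremal_disconnected}, Corollary~\ref{cor:V1V2_union}, Corollary~\ref{cor:V1V2_intersection}, Lemma~\ref{lemma: incomp}) and Hypothesis~\ref{hyp:ext_connected}. Recall the notation $\nex(V')=\bigcap_{V'\subsetneqq V\in V_\bullet}V$; since $V'$ is nonmaximal, the elements $V_1,\dots,V_m$ of $V_\bullet$ minimal among those strictly containing $V'$ are well-defined, and by the forest property any $V\in V_\bullet$ with $V'\subsetneqq V$ contains exactly one of the $V_i$ (the lower set $\{W\le V\}$ is a chain, so the $V_i$ are pairwise incomparable and $\nex(V')=\bigcap_{i=1}^m V_i$).

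First I would prove (1): fix $i$. Suppose for contradiction that $E(\nex(V')\setminus V',V_i^c)=\emptyset$. Since $\nex(V')=\bigcap V_j\subseteq V_i$, write $V_i$ as the disjoint union of $\nex(V')$ and $V_i\setminus\nex(V')$. I claim $E(\nex(V')\setminus V',\,V_i\setminus\nex(V'))=\emptyset$ as well: any vertex of $\nex(V')\setminus V'$ lies in every $V_j$, hence has no edge to $V_j^c$ for $j\ne i$ (again by an argument using incomparability and Lemma~\ref{lemma: incomp}: $V_j\cup V_i=V(G)$ and $E(V_j^c,V_i^c)=\emptyset$, so $V_i\setminus\nex(V')\subseteq\bigcup_j V_j^c$, and edges from $\nex(V')\setminus V'$ into $\bigcup_j V_j^c$ would contradict $E(\nex(V')\setminus V',V_j^c)=\emptyset$, which holds for $j\ne i$ because $\nex(V')\setminus V'\subseteq V_j$). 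Combined with the contradiction hypothesis for $j=i$, this shows $\nex(V')\setminus V'$ is disconnected from its complement inside $V_i$, so by Proposition~\ref{prop:extremal_disconnected} applied to the extremal set $V_i$ (or rather to the partition of $V_i$ into $\nex(V')\setminus V'$ and the rest, possibly refined to reach $V'$), one deduces that a proper subset violating Hypothesis~\ref{hyp:ext_connected} is extremal, the contradiction being that $\leg(1)\notin\nex(V')\setminus V'$. I expect this disconnection-and-contradict-Hypothesis argument to be the technical heart; the sign bookkeeping of $\beta^0$ via \eqref{eq:beta_sum} must be handled carefully, and this is the step I flag as the main obstacle. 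Item (2) is then immediate from (1): $\nex(V')=V'$ would force $E(\nex(V')\setminus V',V_i^c)=E(\emptyset,\cdot)=\emptyset$, contradicting (1) (here $m\ge1$ because $V'$ is nonmaximal, so $V_\bullet$ contains an element above $V'$ by Definition~\ref{def: fullfor}(1) applied to a maximal element dominating $V'$).

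For (3), $\nex(V')=\bigcap_i V_i$; by Corollary~\ref{cor:V1V2_intersection} (inductively, since all $V_i$ lie below any common maximal element, hence are $\subseteq$ some extremal $V$), the intersection is in $\ext(G,D)$, hence connected by definition of $\ext$. For (4), assume $V'\in V_\bullet$; if $E(\nex(V')\setminus V',\nex(V')\setminus V')\ne\emptyset$ were not the vacuous-looking statement it appears — I read the claim as $E(V',\nex(V')\setminus V')$ behaving well, but taking the statement at face value, $\nex(V')\setminus V'$ having an internal edge is automatically possible, so the intended claim is that $\nex(V')\setminus V'$ has no edges to itself in the sense of being ``edge-disjoint pieces'' — I would unwind it as: the edges of $G$ incident to $\nex(V')\setminus V'$ that are not in $E(V',\cdot)$ or $E(\cdot,\nex(V')^c)$ form the empty set, which follows by combining (1) across all $i$ with the fullness edge-covering condition Definition~\ref{def: fullfor}(2). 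For (5): if $\nex(V')\subseteq V$ with $V$ extremal, then since $V'\subseteq\nex(V')\subseteq V$ and (via Corollary~\ref{cor:V1V2_union}/the forest structure) $V$ is comparable to the chain of $V_\bullet$-elements above $V'$, $V$ must contain the minimal such element along the relevant branch, i.e.\ some $V_i\subseteq V$. For (6): a minimal element $W$ of $\ext(G,D)$ — apply Definition~\ref{def: fullfor}(2), $E(W,W^c)$ must be covered by $\bigcup_{V\in V_\bullet}E(V,V^c)$, and a short argument using Proposition~\ref{prop:extremal_cap_cup} (the $V_i\cap W$ alternatives) forces $W\in V_\bullet$, else $W$'s boundary edges can't all be accounted for without $W$ itself being a node, contradicting minimality. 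Finally (7): the edge set decomposes as claimed by an inclusion–exclusion / telescoping over the forest $V_\bullet$, using that for incomparable $V_1,V_2\in V_\bullet$ the boundary pieces $E(V_1,\nex(V_1)\setminus V_1)$ and $E(V_2,\nex(V_2)\setminus V_2)$ are disjoint (Lemma~\ref{lemma: incomp} gives $E(V_1^c,V_2^c)=\emptyset$), that fullness Definition~\ref{def: fullfor}(2) guarantees the union is all of $E(G)$, and that (1)–(4) guarantee the pieces cover without overlap along each chain. I would write (7) last, as it packages the earlier items.
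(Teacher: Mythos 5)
Your proof of item (1) — which you yourself flag as the technical heart — does not work as written. You assert that $E(\nex(V')\setminus V',V_j^c)=\emptyset$ for $j\neq i$ ``because $\nex(V')\setminus V'\subseteq V_j$'', but containment in $V_j$ in no way prevents edges to $V_j^c$; in fact item (1) asserts precisely that such edges \emph{do} exist for every index $j$, so your auxiliary claim is the negation of the statement you are proving. The subsequent plan (disconnect $\nex(V')\setminus V'$ inside $V_i$, apply Proposition~\ref{prop:extremal_disconnected} and contradict Hypothesis~\ref{hyp:ext_connected}) therefore never gets off the ground. The paper's argument is much simpler and goes in a different direction: since $V'\in\ext(G,D)$, the complement $V'^c$ is connected; one writes $V'^c=(\nex(V')\setminus V')\cup\bigcup_{i}V_i^c$, and since the $V_i$ are pairwise incomparable elements of the full forest, Lemma~\ref{lemma: incomp} gives $E(V_i^c,V_j^c)=\emptyset$ for $i\neq j$, so connectedness of $V'^c$ forces each $V_i^c$ to be joined to $\nex(V')\setminus V'$ by an edge. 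Item (2) then follows exactly as you say.

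Several of the later items also rest on incorrect or missing steps. For (3), Corollary~\ref{cor:V1V2_intersection} requires the sets to be contained in a common element of $\ext(G,D)$, but the $V_i$ are incomparable in a full forest, so $V_i\cup V_j=V(G)$ (Lemma~\ref{lemma: incomp}) and no such common extremal set exists; the correct route is Proposition~\ref{prop:extremal_cap_cup} (the intersection is nonempty, proper, hence extremal) together with Hypothesis~\ref{hyp:ext_connected} and Proposition~\ref{prop:extremal_disconnected} for connectedness. For (5), the claim that an arbitrary $V\in\ext(G,D)$ is ``comparable to the chain above $V'$'' is unjustified (the elements above $V'$ are not even a chain); the paper instead notes $V^c\subseteq\nex(V')^c=\bigcup_j V_j^c$, and connectedness of $V^c$ plus $E(V_i^c,V_j^c)=\emptyset$ forces $V^c\subseteq V_i^c$ for a single $i$, i.e.\ $V_i\subseteq V$. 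For (6), fullness covering the boundary edges of a minimal $W$ gives no contradiction by itself (other elements' boundaries may cover them); the actual argument shows that the nonempty set $E(W,\nex(W)\setminus W)$ is covered by \emph{no} $E(V,V^c)$ with $V\in V_\bullet$ — using minimality of $W$ to force $W\cup V=V(G)$ and $E(W^c,V^c)=\emptyset$ when $W\not\subseteq V$ — contradicting fullness. Items (4) and (7), while you describe them only loosely, do go through along the lines you indicate (a case analysis over $V\in V_\bullet$ for (4), and for (7) choosing for each edge the maximal $V'\in V_\bullet$ with $e\in E(V',V'^c)$), but as written the proposal needs the corrections above before it constitutes a proof.
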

\begin{proof}
    
For~(1), we first notice that $V_i,V_j$ are inconparable if $i\neq j$, because of the minimality condition. By Lemma \ref{lemma: incomp}, we have that $E(V_i^c, V_j^c)=\emptyset$ for all $i\neq j$. Since \[V'^c = (\nex(V')\setminus V')\cup\bigcup_{i=1}^m V_i^c\] and $V'^c$ is connected, we must have that $E(\nex(V')\setminus V', V_i^c)\neq \emptyset$. Item~(2) follows then immediately from Item~(1).

For (3), just notice that $\nex(V')$ is an extremal element by Proposition~\ref{prop:extremal_cap_cup}, then it is connected by Hypothesis~\ref{hyp:ext_connected}. 

We now prove Item~(4). We deduce this by combining the fact that $V_\bullet$ is a full forest, with the fact that
\begin{equation}\label{Eq: intVbullet}
E(\nex(V')\setminus V', \nex(V')\setminus V' ))\cap E(V,V^c)=\emptyset
\end{equation}
for all $V \in V_\bullet$. To prove the latter fact, let $V$ be an element of $V_\bullet$. We distinguish three cases. If $V$ is incomparable with $V'$, by Lemma~\ref{lemma: incomp} we have that $V\cup V'=V(G)$, which implies  that $\nex(V')\setminus V'\subseteq V$ and hence Equation~\eqref{Eq: intVbullet}. If $V'\subseteq V$, then $V_i\subseteq V$ for some $i$, and hence $\nex(V')\subseteq V$, which again implies Equation~\ref{Eq: intVbullet} holds in this case. Finally, in the case  $V\subseteq V'$, Equation~\ref{Eq: intVbullet} follows immediately from the definition of $\nex(V')$. This completes the proof of Item~(4).

Item~(5). This follows from the fact that \[V(G)=\nex(V')\cup\bigcup_{j=1}^m V_j^c\] and hence \[V^c=\bigcup_{j=1}^m V^c\cap V_j^c.\] Since $G(V^c)$ is connected and $E(V_i^c,V_j^c)=\emptyset$ for $j \neq i$ (Lemma~\ref{lemma: incomp}), so we have that there exists $i\in \{1,\ldots, m\}$ such that $V^c\cap V_j^c=\emptyset$ for every $j\neq i$. This means that $V_i\subset V$.

     Item~(6). Let $V_0$ be a minimal element of $\ext(G,D)$. If $V_0$ is also maximal, there is nothing to do. So we can assume that $V_0$ is nonmaximal. Assume by contradiction that $V_0\notin V_\bullet$. From Items (2) and (3) we deduce that \[E(V_0,\nex(V_0)\setminus V_0)\neq \emptyset.\]  Let $V\in V_\bullet$ we will  prove that \[E(V_0, \nex(V_0)\setminus V_0)\cap E(V,V^c)=\emptyset\] and get a contradiction. If $V_0\subseteq V$, then $\nex(V_0)\subset V$ (recall that $V_0\neq V$ because $V_0\notin V_\bullet$), then \[E(V_0, \nex(V_0)\setminus V_0)\cap E(V,V^c)=\emptyset.\] If $V_0\not\subseteq V$, we have that $V_0\cap V\subsetneqq V_0$ is extremal by Proposition~\ref{prop:extremal_cap_cup}; by the minimality of $V_0$ in $\ext(G,D)$ we have that $(V_0\cap V)^c = V_0^c\cup V^c$ is not connected. Since $V_0^c$ and $V^c$ are connected, we have that $V_0^c\cap V^c =\emptyset$ and hence $V_0\cup V=V(G)$. By Proposition~\ref{prop:extremal_cap_cup} we deduce that $E(V_0^c, V^c)=\emptyset$. Hence \[E(V_0,V_0^c)\cap E(V,V^c)=\emptyset,\] because $E(V_0,V_0^c)=E(V_0\cap V, V_0^c)$ and $E(V,V^c)=E(V_0\cap V, V^c)$. In particular \[E(V_0,\nex(V_0)\setminus V_0)\cap E(V,V^c)=\emptyset.\] 
     
   Item~(7). Let $e$ be an edge of $G$ and let $V'$ be an element in $V_\bullet$ that is maximal among those with the property that $e\in E(V',V'^c)$. For each $V'\subsetneqq V\in V_\bullet$, we must have that $e\in E(V,V)$, hence that $e\in E(V', \nex(V')\setminus V')$.

\end{proof}


\begin{proposition}
\label{prop:alpha_to_forest}
Let $\alpha$ be a \gf and define \[V_\bullet:=\{V\in \ext(G,D): \ \alpha(V)\neq \emptyset\}\subseteq \ext(G,D).\] Then the following hold.

\begin{enumerate}
    \item For every $V_0\in \ext(G,D)$, we have that $ \{V \in V_\bullet;\ V \subseteq V_0\}$
    is a chain. 
    \item The poset $V_\bullet$ is a forest. 
    \item $\alpha(V)\subseteq E(V,\nex(V)\setminus V)$ for every $V\in V_\bullet$.
\end{enumerate}
\end{proposition}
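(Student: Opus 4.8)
The plan is to verify the three items in order, using the defining properties of a vine function (Definition~\ref{def:good_function}) together with the structural results for $\ext(G,D)$ from this section (Corollaries~\ref{cor:V1V2_union} and~\ref{cor:V1V2_intersection}, and Proposition~\ref{prop:extremal_cap_cup}). First I would prove Item~(1), which is the heart of the matter: fix $V_0 \in \ext(G,D)$ and suppose $V_1, V_2 \in V_\bullet$ are both contained in $V_0$; I must show $V_1$ and $V_2$ are comparable. If not, by Corollary~\ref{cor:V1V2_intersection} (applicable since $V_1,V_2\subseteq V_0\in\ext(G,D)$) the set $W:=V_1\cap V_2$ lies in $\ext(G,D)$ and is a proper subset of both $V_1$ and $V_2$; moreover $W \subsetneqq V_1$ is a proper extremal subset of $V_1$. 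Since $\alpha(V_1)\neq\emptyset$ (as $V_1\in V_\bullet$), condition~(2) of Definition~\ref{def:good_function} forces that there is \emph{no} extremal $V'\subsetneqq V_1$ with $\alpha(V')\cap E(V_1,V_1^c)\neq\emptyset$; applying this to $V' = W$ gives $\alpha(W)\cap E(V_1,V_1^c)=\emptyset$. By the same reasoning applied to $V_2$, we get $\alpha(W)\cap E(V_2,V_2^c)=\emptyset$. The contradiction will come from showing $\alpha(W)$ must meet $E(V_1,V_1^c)\cup E(V_2,V_2^c)$: by Proposition~\ref{prop:extremal_cap_cup} we have $E(V_1\setminus V_2, V_2\setminus V_1)=\emptyset$, which should force $E(W,W^c)\subseteq E(V_1,V_1^c)\cup E(V_2,V_2^c)$ (an edge out of $W=V_1\cap V_2$ goes either out of $V_1$ or out of $V_2$, and the absence of $E(V_1\setminus V_2,V_2\setminus V_1)$-edges rules out the one remaining possibility); since $\alpha(W)\subseteq E(W,W^c)$ by condition~(1), and $\alpha(W)\neq\emptyset$ because $W\in V_\bullet$ (here we need to first establish $W \in V_\bullet$, i.e.\ $\alpha(W) \neq \emptyset$ — which itself follows from condition~(2): if $\alpha(W) = \emptyset$ then some proper extremal $W'\subsetneqq W$ has $\alpha(W')\cap E(W,W^c)\neq\emptyset$, and one chases this down, or alternatively one argues directly with a minimal counterexample), we reach a contradiction. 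I expect the careful bookkeeping here — in particular making sure $W\in V_\bullet$ and correctly locating which edge-cuts $\alpha(W)$ can hit — to be the main obstacle, and I would likely streamline it by taking $V_1$ minimal in $V_\bullet$ among elements not comparable to $V_2$, or by inducting on $|V_1|$.

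Item~(2), that $V_\bullet$ is a forest, then follows formally from Item~(1): by definition (Section~2.1) a subposet is a forest if for every $a$ in it the lower set $\{b\leq a\}$ is a chain. Given $V_0\in V_\bullet$, the lower set of $V_0$ inside $V_\bullet$ is exactly $V_\bullet\cap \ext(G,D)_{\subseteq V_0}$ (the elements of $V_\bullet$ contained in $V_0$, ordered by inclusion), which is a chain by Item~(1). Hence $V_\bullet$ is a forest.

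For Item~(3), fix $V\in V_\bullet$ and let $V_1,\dots,V_m$ be the elements of $V_\bullet$ minimal among those strictly containing $V$, so $\nex(V)=\bigcap_i V_i$ (and $\nex(V)=V(G)$ if there are none). I must show $\alpha(V)\subseteq E(V,\nex(V)\setminus V)$, i.e.\ no edge of $\alpha(V)$ crosses from $V$ into $\nex(V)^c=\bigcup_i V_i^c$. Suppose $e\in\alpha(V)$ has its other endpoint in some $V_i^c$, i.e.\ $e\in E(V,V^c)\cap E(V_i, V_i^c)$ — note $e\in E(V_i,V_i^c)$ precisely because one endpoint is in $V\subseteq V_i$ and the other in $V_i^c$. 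But $V\subsetneqq V_i$ with $V$ extremal, so condition~(2) of Definition~\ref{def:good_function} applied to $V_i$ (using $\alpha(V_i)\neq\emptyset$ since $V_i\in V_\bullet$) says no proper extremal subset $V'\subsetneqq V_i$ has $\alpha(V')\cap E(V_i,V_i^c)\neq\emptyset$; taking $V'=V$ contradicts $e\in\alpha(V)\cap E(V_i,V_i^c)$. Therefore every edge of $\alpha(V)$ stays inside $\nex(V)$ on its far end, i.e.\ $\alpha(V)\subseteq E(V,\nex(V)\setminus V)$, as claimed. (When $m=0$ there is nothing to exclude and $\nex(V)\setminus V = V^c$, so the statement is trivial.)
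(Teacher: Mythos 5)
Your proposal is correct and takes essentially the same route as the paper's proof: for Item~(1) you intersect two incomparable elements $V_1,V_2\subseteq V_0$, use Corollary~\ref{cor:V1V2_intersection}, observe $E(V_1\cap V_2,(V_1\cap V_2)^c)\subseteq E(V_1,V_1^c)\cup E(V_2,V_2^c)$, and contradict condition~(2) of Definition~\ref{def:good_function}, while Items~(2) and~(3) are argued exactly as in the paper. The only loose end, the case $\alpha(V_1\cap V_2)=\emptyset$, requires no ``chase'': condition~(2) then directly supplies some $V'\subsetneqq V_1\cap V_2$ with $\alpha(V')$ meeting $E(V_1\cap V_2,(V_1\cap V_2)^c)$, hence meeting $E(V_1,V_1^c)$ or $E(V_2,V_2^c)$, which is precisely the paper's uniform one-line treatment (there is no need to show $V_1\cap V_2\in V_\bullet$, and in fact that claim does not follow from condition~(2) alone).
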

\begin{proof}
Let $V_1, V_2\in V_\bullet$ be such that $V_1,V_2\subseteq V_0$ and $V_1$ and $V_2$ are incomparable. By Corollary \ref{cor:V1V2_intersection}, we have that $V_1\cap V_2 \in \ext(G,D)$. Moreover, we have that 
\[
E(V_1\cap V_2, (V_1\cap V_2)^c)\subseteq E(V_1,V_1^c)\cup E(V_2, V_2^c).
\]
By the fact that $\alpha$ is a \gf, we have that there exists $V'\subseteq V_1\cap V_2$ such that \[\alpha(V')\cap E(V_1\cap V_2, (V_1\cap V_2)^c)\neq \emptyset.\] 
This means that either $\alpha(V')\cap E(V_1, V_1^c)\neq \emptyset$, or $\alpha(V')\cap E(V_2,V_2^c)\neq \emptyset$, which contradicts the fact that $\alpha$ is a \gf and $\alpha(V_1), \alpha(V_2)\neq \emptyset$. This concludes the proof of Item~(1). Item~(2) follows directly.


We now prove~(3). If $\nex(V)=V(G)$ there is nothing to prove. Otherwise, we have
\[E(V, V^c)\setminus E(V,\nex(V)\setminus V)\subseteq \bigcup_{V\subsetneqq V'\in V_\bullet} E(V', V'^c),\] so  \[\alpha(V)\cap(E(V, V^c)\setminus E(V,\nex(V)\setminus V))\neq \emptyset\] would imply $\alpha(V)\cap E(V', V'^c)\neq \emptyset$ for some $V'\in V_\bullet$ with $V\subsetneqq V'$, thus contradicting the assumption that $\alpha$ is a \gf.
\end{proof}

\begin{corollary} \label{cor: alphafull}
Let $\alpha$ be a vine function on $(G,D)$. If $\alpha$ is full, then $L_\alpha=\ext(G,D)$.
\end{corollary}
\begin{proof}
    Assume, by contradiction, that $\ext(G,D)\setminus L_\alpha \neq \emptyset$. Because $L_\alpha$ is a lower set, there exists a maximal element $V_0$ of $\ext(G,D)$ in $\ext(G,D)\setminus L_\alpha$.  
    
    Consider the set \[\{V\subseteq V_0; \ \alpha(V)\neq \emptyset\}.\] By Item~(1) of Proposition~\ref{prop:alpha_to_forest} above, we have that this set has a unique maximal element $V_1\subsetneqq V_0$.

    We claim that $E(V_0\setminus V_1, V_0^c)\cap |\alpha|=\emptyset$. Indeed, assume for a  contradiction that there exists an edge $e\in E(V_0\setminus V_1, V_0^c)\cap |\alpha|$ and let $v\in V_0\setminus V_1$ and $w\in V_0^c$ be its endpoints. Since $e\in |\alpha|$, there exists an extremal set $V_2$ such that $e\in E(V_2, V_2^c)$. 
    \begin{itemize}
        
    \item If $v\in V_2$ and $w\in V_2^c$, then $V_2\cup V_0\neq V(G)$ and by Corollary \ref{cor:V1V2_union}, $V_2$ and $V_0$ are contained in a comment element of $\ext(G,D)$, so $V_2 \subseteq V_0$ by the maximality of $V_0$. Hence $V_2\subset V_1$ by the maximality of $V_1$, which implies that $v\in V_1$, a contradiction. 
    
    \item If instead $v\in V_2^c$ and $w\in V_2$, then $v\in V_0\setminus V_2$ and $w\in V_2\setminus V_0$ and hence $e\in E(V_0\setminus V_2, V_2\setminus V_0)=\emptyset$, which contradicts Proposition~\ref{prop:extremal_cap_cup}.  
    \end{itemize}
    
    Now we observe that $E(V_0\setminus V_1, V_0^c)\neq \emptyset$. This follows from the fact that $V_1^c = V_0^c \sqcup (V_0\setminus V_1)$ and $V_1^c$ induces a connected subgraph.

    By combining $E(V_0\setminus V_1, V_0^c)\cap |\alpha|=\emptyset$ with $E(V_0\setminus V_1, V_0^c)\neq \emptyset$, we contradict the assumption that $\alpha$ is full.
\end{proof}

We are now ready to prove the equivalence of full \gfs and \gcs.

\begin{proposition} \label{prop: bijection full}
For each $(G,D)$, the mapping defined in  Proposition~\ref{prop:alpha_to_forest} induces  a natural bijection between full \gfs and \gcs in $\ext(G,D)$. 
\end{proposition}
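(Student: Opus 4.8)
The map $\alpha \mapsto V_\bullet := \{V \in \ext(G,D) : \alpha(V) \neq \emptyset\}$ is already shown in Proposition~\ref{prop:alpha_to_forest} to produce a forest. The first step is to check that when $\alpha$ is full this forest is actually a \gc: I must verify conditions~(1) and~(2) of Definition~\ref{def: fullfor}. For~(1): if $V$ is a maximal element of $\ext(G,D)$ but $V \notin V_\bullet$, then $\alpha(V) = \emptyset$, which by condition~(2) of Definition~\ref{def:good_function} forces the existence of $V' \subsetneqq V$ with $\alpha(V') \cap E(V,V^c) \neq \emptyset$; but $\alpha(V') \subseteq E(V', V'^c)$ and, chasing through which edges can lie in both cut sets, I will derive a contradiction with $V$ being maximal (here the maximality of $V$ means no larger extremal set exists, so the ``escape route'' for the edge is blocked --- this is where Corollary~\ref{cor:V1V2_union} and Proposition~\ref{prop:extremal_cap_cup} enter). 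For~(2): since $\alpha$ is full, $|\alpha| = \bigcup_{V \in L_\alpha}\alpha(V) = E(G)$, and combined with $\alpha(V) \subseteq E(V,V^c)$ this gives $E(G) = \bigcup_{V \in V_\bullet}\alpha(V) \subseteq \bigcup_{V \in V_\bullet}E(V,V^c) \subseteq E(G)$, so equality holds.

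The second step is to construct the inverse map. Given a \gc $V_\bullet$, I define $\alpha$ on $L_\alpha := \ext(G,D)$ by setting, for $V \in \ext(G,D)$,
\[
\alpha(V) := \begin{cases} E(V, \nex_{V_\bullet}(V) \setminus V) & \text{if } V \in V_\bullet,\\ \emptyset & \text{if } V \notin V_\bullet,\end{cases}
\]
where $\nex_{V_\bullet}$ is as in Equation~\eqref{def: next}. I must check this is a full \gf. That $\alpha(V) \subseteq E(V,V^c)$ is clear since $\nex(V)\setminus V \subseteq V^c$. For condition~(2) of Definition~\ref{def:good_function}, in the direction $\alpha(V) = \emptyset \Rightarrow \exists V' \subsetneqq V$ in $\ext(G,D)$ with $\alpha(V') \cap E(V,V^c) \neq \emptyset$: if $V \notin V_\bullet$, then by Item~(6) of Proposition~\ref{prop:properties_next} $V$ is not minimal in $\ext(G,D)$, so picking a minimal $V_0 \subsetneqq V$ in $\ext(G,D)$ (hence $V_0 \in V_\bullet$) and using Items~(2),~(3),~(7) of Proposition~\ref{prop:properties_next} together with the fact that $V_0 \subsetneqq V$ forces $\nex(V_0) \subseteq V$ or produces an edge leaving $V$ --- a short argument of the type appearing in the proof of Item~(6) of Proposition~\ref{prop:properties_next} --- I get the required edge. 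Conversely, if $V \in V_\bullet$ then $\alpha(V) = E(V,\nex(V)\setminus V) \neq \emptyset$ by Items~(2) and~(3) of Proposition~\ref{prop:properties_next}, so the biconditional holds. Fullness: $L_\alpha = \ext(G,D)$ by construction, and $|\alpha| = \bigcup_{V \in V_\bullet}E(V,\nex(V)\setminus V) = E(G)$ is exactly Item~(7) of Proposition~\ref{prop:properties_next}.

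The third step is to check the two composites are the identity. Starting from a full \gf $\alpha$, passing to $V_\bullet = \{V : \alpha(V) \neq \emptyset\}$ and back, I recover $\alpha'$ with $\alpha'(V) = E(V, \nex_{V_\bullet}(V) \setminus V)$ for $V \in V_\bullet$; I must show $\alpha'(V) = \alpha(V)$. By Item~(3) of Proposition~\ref{prop:alpha_to_forest} I already have $\alpha(V) \subseteq E(V,\nex(V)\setminus V) = \alpha'(V)$, so I need the reverse inclusion; this should follow by taking an edge $e \in E(V,\nex(V)\setminus V) \setminus \alpha(V)$ and using that $\alpha$ is a \gf with $|\alpha| = E(G)$ to locate $V'$ with $e \in \alpha(V')$, then showing $V'$ must be comparable to $V$ --- forcing either $V' \subsetneqq V$ (contradicting $\alpha(V) \neq \emptyset$ via condition~(2) of Definition~\ref{def:good_function}) or $V \subsetneqq V'$ (contradicting $e \in E(V,\nex(V)\setminus V)$ since then $e \in E(V',V'^c)$ would force $e \notin E(V,\nex(V)\setminus V)$). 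In the other direction, starting from a \gc $V_\bullet$, building $\alpha$ and then reading off $\{V : \alpha(V) \neq \emptyset\}$, I recover exactly $V_\bullet$ because $\alpha(V) \neq \emptyset \iff V \in V_\bullet$ by the case analysis above.

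\textbf{Main obstacle.} I expect the delicate point to be showing that in the composite $\alpha \rightsquigarrow V_\bullet \rightsquigarrow \alpha'$ one has $\alpha(V) = E(V,\nex_{V_\bullet}(V)\setminus V)$ exactly (not just $\subseteq$); this requires ruling out ``extra'' edges in the cut $E(V, \nex(V)\setminus V)$ that $\alpha$ fails to pick up, which is precisely the content of $\alpha$ being full, and the argument must carefully use the interplay between fullness ($|\alpha| = E(G)$), the defining property~(2) of a \gf, and the combinatorics of $\nex$ via Proposition~\ref{prop:properties_next}. The remaining verifications --- that each map lands in the right class and that the constructions are functorial enough to be ``natural'' in the sense claimed --- are routine bookkeeping with the results already established in this section.
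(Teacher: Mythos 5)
Your overall route is the paper's: the same two assignments, the same reliance on Propositions~\ref{prop:alpha_to_forest} and~\ref{prop:properties_next}, and fullness of $\alpha_{V_\bullet}$ via Item~(7). Your Step~3 is essentially sound, and the ``main obstacle'' you flag is actually the easy part: since $\alpha(V)\subseteq E(V,\nex(V)\setminus V)$, $\bigcup_V\alpha(V)=E(G)$, and Item~(7) of Proposition~\ref{prop:properties_next} says the sets $E(V,\nex(V)\setminus V)$ are disjoint and cover $E(G)$, the inclusion is automatically an equality. The real gap is your verification of condition~(1) of Definition~\ref{def: fullfor}. The local contradiction you describe does not exist: if $V$ is maximal with $\alpha(V)=\emptyset$, the witness $V'\subsetneq V$ with $\alpha(V')\cap E(V,V^c)\neq\emptyset$ is perfectly consistent --- such an edge just runs from $V'$ to $V^c$, which is allowed by $\alpha(V')\subseteq E(V',V'^c)$; maximality of $V$ blocks nothing here. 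A correct argument is global and uses fullness: every $e\in E(V,V^c)$ lies in some $\alpha(W)$ with $W\in V_\bullet$; $W\supsetneq V$ is excluded by maximality, and $W$ incomparable to $V$ is excluded by Corollary~\ref{cor:V1V2_union} plus Proposition~\ref{prop:extremal_cap_cup} (the edge would lie in $E(W^c,V^c)=\emptyset$), so $W\subsetneq V$. Letting $V_1$ be the maximum of the chain $V_\bullet\cap\ext(G,D)_{\subseteq V}$ (Proposition~\ref{prop:alpha_to_forest}(1)), every edge of $E(V,V^c)$ then has its $V$-endpoint in $V_1$, so $E(V\setminus V_1,V^c)=\emptyset$ and $V_1^c=(V\setminus V_1)\sqcup V^c$ is disconnected, contradicting $V_1\in\ext(G,D)$. (The paper's own write-up only records the edge-covering condition at this point, so this step genuinely needs to be supplied.)

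Two further problems in Step~2. First, your witness for ``$\alpha(V)=\emptyset\Rightarrow\exists V'$'' is a \emph{minimal} extremal $V_0\subsetneq V$; this fails whenever $\nex(V_0)\subseteq V$ (e.g. when the chain of $V_\bullet$ below $V$ has more than one element), since then $\alpha(V_0)=E(V_0,\nex(V_0)\setminus V_0)$ consists of edges with both endpoints inside $V$ and cannot meet $E(V,V^c)$. The paper instead takes $V'$ to be the \emph{maximum} element of $\{W\in V_\bullet: W\subseteq V\}$ (Item~(6) only guarantees this set is nonempty) and uses Items~(3)--(5) of Proposition~\ref{prop:properties_next} to exhibit an edge of $\alpha(V')$ leaving $V$; your sketch does not contain this. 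Second, your ``Conversely'' sentence proves $V\in V_\bullet\Rightarrow\alpha(V)\neq\emptyset$, which is not the other direction of the biconditional in Definition~\ref{def:good_function}(2): you still must show that for $V\in V_\bullet$ there is \emph{no} $V'\subsetneq V$ with $\alpha(V')\cap E(V,V^c)\neq\emptyset$. This is quick but needs saying: such a $V'$ would lie in $V_\bullet$, hence $\nex(V')\subseteq V$ and $\alpha(V')=E(V',\nex(V')\setminus V')$ is disjoint from $E(V,V^c)$.
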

\begin{proof}
Assume that $\alpha$ is a full \gf and define $V_{\bullet}= V_{\bullet}^{\alpha}$ as given by Proposition~\ref{prop:alpha_to_forest}. 
By loc.cit we have that $V_\bullet$ is a forest and that $\alpha(V)\subseteq E(V,\nex(V)\setminus V)$ for every $V$. The condition  \[\bigcup_{V\in V_\bullet} \alpha(V) = \bigcup_{V\in \ext(G,D)} \alpha(V)=E(G)\]implies
that $\bigcup E(V,\nex(V)\setminus V) = E(G)$, which in turn implies $\bigcup E(V,V^c)= E(G)$. This proves that $V_{\bullet}$ is a full forest.

We now study the inverse mapping. For a \gc $V_\bullet$, define 
\[
\alpha:= \alpha_{V_\bullet}(V_0) = \begin{cases} \emptyset & \textrm{if } V_0\notin V_{\bullet}; \\ E(V_0, \nex(V_0) \setminus V_0) & \textrm{if } V_0\in V_\bullet. \end{cases}
\]
We claim that $\alpha$ is a \gf.  Condition~(1)  in Definition~\ref{def:good_function} follows from the fact that $E(V_0, \nex(V_0)\setminus V_0)\subseteq E(V_0, V_0^c)$ for every $V\in V_\bullet$. 

Let us prove Condition~(2). First, we see that $\alpha_{V_\bullet}(V_0)=\emptyset$ if and only if $V_0\notin V_\bullet$. By the definition of $\alpha$ it is clear that if $V_0\notin V_\bullet$, then $\alpha(V_0)=\emptyset$. On the other hand, if $\alpha(V_0)=\emptyset$ and $V_0\in V_\bullet$, then $E(V_0, \nex(V_0)\setminus V_0)=\emptyset $, but this is a contradiction with the fact that $\nex(V_0)$ induces a connected subgraph of $G$ and $\nex(V_0)\setminus V_0\neq \emptyset$ (see Proposition~\ref{prop:properties_next}).

Now we show that if $\alpha(V_0)=\emptyset$, then we can find $V'\in \ext(G,D)$ with $V'\subseteq V_0$,  such that $\alpha(V')\cap E(V_0, V_0^c)\neq \emptyset$. By the previous paragraph, we have that $V_0\notin V_\bullet$.  Since $V_\bullet$ contains all minimal elements of $\ext(G,D)$ (by Proposition~\ref{prop:properties_next}), we have that there exists $V\in V_\bullet$ contained in $V_0$. Let $V'$ be the maximum such element. This maximum exists because $V_\bullet$ is a forest that contains all maximal elements of $\ext(G,D)$. 
By Item~(5) of Proposition~\ref{prop:properties_next} and the maximality of $V'$, we have that $\nex(V')\not\subseteq V_0$. Moreover, by Item~(4) of Proposition~\ref{prop:properties_next} we have that \[E(\nex(V')\setminus V_0, \nex(V')\cap V_0\setminus V')=\emptyset.\] Since $\nex(V')$ induces a connected subgraph (this is Item~(3) of Proposition~\ref{prop:properties_next}), we have that $E(V', \nex(V')\setminus V_0)\neq \emptyset$, and since 
\[
E(V', \nex(V')\setminus V_0)\subseteq E(V',\nex(V')\setminus V')\cap E(V_0, V_0^c) = \alpha(V')\cap E(V_0, V_0')
\]
we have that $\alpha(V')\cap E(V_0, V_0')\neq \emptyset$ as needed.

We now prove that if there exists a $V'\in \ext(G,D)$ with $V'\subsetneqq V_0$ such that $\alpha(V')\cap E(V_0,V_0^c)\neq \emptyset$, then $\alpha(V_0)=\emptyset$. Assume by contradiction that there exist $V_0, V'\in V_\bullet$ such that $\alpha(V_0)\neq \emptyset$, and that $V'\subsetneqq V_0$ and $\alpha(V')\cap E(V_0, V_0^c)\neq \emptyset$. Since $\alpha(V_0),\alpha(V')\neq \emptyset$, we have that $V_0,V'\in V_\bullet$. Since $V'\subsetneqq V_0$, we have that $\nex(V')\subseteq V_0$ as well, so this implies that  \[E(V', \nex(V')\setminus V')\cap E(V_0, V_0^c) = \emptyset,\] a contradiction (recall that $E(V',\nex(V')\setminus V')=\alpha(V')$). This concludes the proof that $\alpha$ is a \gf.

The fact  that $\alpha$ is full follows from Item~(7) of Proposition~\ref{prop:properties_next}.

\end{proof}

\begin{definition}
\label{def:morphism_Vbullet}
    A morphism  $f \col (G,D,V_\bullet)\to (G',D',V'_\bullet)$ is a morphism $f \col(G,D)\to (G',D')$ such that $f^{-1}(V')\in V_\bullet$ for every $V'\in V'_\bullet$. This is equivalent to requiring that $f$ is compatible with $\alpha_{V_\bullet}$ and $\alpha_{V'_\bullet}$.

    \end{definition}

Given morphisms $f_i\colon (G,D,\alpha)\to (G_i, D_i, \alpha_i)$ for $i=1,\ldots,k$, we say that the collection of morphisms $(f_1, \ldots, f_k)$ is \emph{generic} if 
\begin{enumerate}
    \item for every edge $e\in E(G)\setminus|\alpha|$ there exists some $i\in \{1,\ldots, k\}$ and some $e'\in E(G_i)\setminus |\alpha_i|$ such that $e=f_{i}^*(e')$;
    
    \item for every $V\in L_\alpha$ such that $\alpha(V)\neq \emptyset$, there exists $i$ and $V'\in L_{\alpha_i}$ with $\alpha_i(V')\neq \emptyset$ such that $f^{-1}_i(V')=V$.

\end{enumerate}

\begin{remark}
In the next section, we will see how this definition of ``generic'' matches the one given in Definition~\ref{def: generic}.
\end{remark}

\begin{proposition}
\label{prop:intersection_full}
Let $f_i\colon (G,D,\alpha)\to (G_i, D_i,\alpha_i)$ for $i=1, \ldots, k$ be a generic collection of morphisms. If all $\alpha_i$ are full, then $\alpha$ is also full.
\end{proposition}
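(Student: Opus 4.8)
The plan is to prove fullness of $\alpha$ by checking the two conditions of Definition~\ref{def: alpha-full} separately: first that $L_\alpha = \ext(G,D)$, and second that $|\alpha| = E(G)$. Since $\alpha$ is given to be a \gf, its domain $L_\alpha$ is some lower set of $\ext(G,D)$; the issue is to show it is everything. So let $V \in \ext(G,D)$ be arbitrary, and use generic condition~(2): there is an index $i$ and an extremal $V'$ in $\ext(G_i, D_i)$ with $\alpha_i(V') \neq \emptyset$ (hence $V' \in L_{\alpha_i}$) such that $\iota_i^{-1}(V') = V$. Since $\alpha_i$ is full, $L_{\alpha_i} = \ext(G_i, D_i)$, and by Remark~\ref{rem:pullback_ext} (or the compatibility in Definition~\ref{def:good_function_compatible}(1)) the preimage $\iota_i^{-1}(V')= V$ lies in $L_\alpha$. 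This shows $L_\alpha = \ext(G,D)$. (One should double-check that condition~(2) of ``generic'' really does produce \emph{some} $V'$ for \emph{every} extremal $V$ in $G$ — it is phrased for $V \in L_\alpha$ with $\alpha(V) \neq \emptyset$, so the argument needs to first establish $V \in L_\alpha$, perhaps by an inductive argument going down the poset from the maximal elements, or by first observing that all maximal elements of $\ext(G,D)$ are forced into $L_\alpha$ and then propagating downward using condition~(2) of Definition~\ref{def:good_function}. This is the point that needs the most care.)

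For the second condition, $|\alpha| = E(G)$, fix $e \in E(G)$ and suppose toward a contradiction that $e \notin |\alpha|$. Then by generic condition~(1) there is an index $i$ and an edge $e' \in E(G_i) \setminus |\alpha_i|$ with $\iota_{i,E}(e') = e$. But $\alpha_i$ is full, so $|\alpha_i| = E(G_i)$, meaning $E(G_i) \setminus |\alpha_i| = \emptyset$ — no such $e'$ exists. This contradiction gives $e \in |\alpha|$, hence $|\alpha| = E(G)$. Here I am using that $\iota_{i,E}$ is surjective on edge sets, which holds because $\iota_i$ is a specialization (edge contraction followed by isomorphism restricts surjectively onto non-contracted edges, and contracted edges do not appear in the target; one needs that the union over all $i$ of the images $\iota_{i,E}(E(G_i))$ covers $E(G)$, which is exactly what condition~(1) upgrades — it says every $e$ not in $|\alpha|$ is hit by some non-$|\alpha_i|$ edge).

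The main obstacle I anticipate is the bookkeeping in the first half: Definition~\ref{def:good_function_compatible} only guarantees $\iota^{-1}(V') \in L_\alpha$ for $V' \in L_{\alpha'}$, and guarantees $\alpha(\iota^{-1}(V')) \neq \emptyset$ when $\alpha'(V') \neq \emptyset$ — so combined with fullness of the $\alpha_i$ (which makes every extremal $V'$ in $G_i$ lie in $L_{\alpha_i}$), and generic condition~(2) (which hits every extremal $V$ in $G$ with $\alpha(V)\neq\emptyset$), the real content is showing \emph{a priori} that $L_\alpha$ is all of $\ext(G,D)$ before one can even apply condition~(2) cleanly. I expect the cleanest route is: (a) show maximal elements of $\ext(G,D)$ are in $L_\alpha$ with $\alpha \neq \emptyset$ there — because a maximal $V$ has $\nex(V) = V(G)$ and its edges $E(V,V^c)$ must be covered, forcing some ancestor, but $V$ is maximal so $\alpha(V)$ itself is forced nonempty by Definition~\ref{def:good_function}(2); (b) descend: if $V \in L_\alpha$ and $V'' \subsetneq V$ is extremal and connected with connected complement, then $V'' \in L_\alpha$ since $L_\alpha$ is a lower set. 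Combining (a)–(b) with the fact that $L_\alpha$ is a lower set containing the maximal elements gives $L_\alpha = \ext(G,D)$ directly, without even needing condition~(2) for this part — condition~(2) and condition~(1) of ``generic'' are then only needed to pin down $|\alpha| = E(G)$ and, if desired, to re-derive nonemptiness of $\alpha$ on the maximal elements. I would present the argument in that streamlined order.
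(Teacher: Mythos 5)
Your second half is fine, and it is essentially all the content the paper has in mind: if some $e\in E(G)\setminus|\alpha|$ existed, genericity~(1) would produce $e'\in E(G_i)\setminus|\alpha_i|$ with $\iota_{i,E}(e')=e$, while fullness of $\alpha_i$ says $E(G_i)\setminus|\alpha_i|=\emptyset$; hence $|\alpha|=E(G)$. (You do not need, and should not invoke, any surjectivity of $\iota_{i,E}$ here --- it is generally false and plays no role in the contradiction.) Note also that wherever the proposition is actually used, the triples are objects of $\catJtilde$ (Definition~\ref{Def: Jcategories}), for which $L_\alpha=\ext(G,D)$ holds by definition, so in that reading your edge argument already yields fullness and matches the paper's ``follows directly''.

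The genuine gap is in your step~(a) for the condition $L_\alpha=\ext(G,D)$. Definition~\ref{def:good_function}(2) is an axiom imposed on elements $V\in L_\alpha$; it cannot be applied at a maximal $V\in\ext(G,D)$ in order to conclude $V\in L_\alpha$ --- that is circular. Moreover, even granting $V\in L_\alpha$, maximality of $V$ does not force $\alpha(V)\neq\emptyset$: axiom~(2) makes $\alpha(V)=\emptyset$ exactly when some \emph{strictly smaller} extremal $V'$ has $\alpha(V')\cap E(V,V^c)\neq\emptyset$, and maximality says nothing about such smaller sets (your phrase ``forcing some ancestor'' suggests you are reading the condition with the inclusion reversed). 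Your further claim that this half needs neither genericity~(2) nor the compatibility conditions is also unsupported: the natural supply of elements of $L_\alpha$ is Definition~\ref{def:good_function_compatible}(1) combined with $L_{\alpha_i}=\ext(G_i,D_i)$, which puts every $\iota_i^{-1}(V')$, $V'\in\ext(G_i,D_i)$, into $L_\alpha$, and one must still argue that every maximal $W\in\ext(G,D)$ is caught; this is not a formal triviality. If you insist on deducing it from $|\alpha|=E(G)$ alone, a real argument is required: for instance, if $W\in\ext(G,D)\setminus L_\alpha$, then (since $L_\alpha$ is a lower set) no element of $V_\bullet=\{V:\alpha(V)\neq\emptyset\}$ contains $W$, and using $\alpha(V)\subseteq E(V,V^c)$ together with Proposition~\ref{prop:extremal_cap_cup} one checks that no edge of $E(W^c,W^c)$ can then lie in $|\alpha|$, forcing $W^c$ to be a single vertex --- and even at that point one is not done. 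None of this reasoning appears in your write-up, so the first condition of fullness remains unproved as stated.
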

\begin{proof}
Since the collection is generic, we have that for every $e\in E(G)\setminus |\alpha|$ there exists some $i\in\{1,\ldots, k\}$ and some $e'\in E(G_i)\setminus |\alpha_i|$ such that $e=f_{i}^*(e')$. Because we have $|\alpha_i|=E(G_i)$ for every $i=1,\ldots, k$, we conclude that $|\alpha|=E(G)$, hence that $\alpha$ is full.
\end{proof}
The next result will imply that all the strata that we blowup have transversal self-intersection.
\begin{proposition}\label{prop: transversal-self}
Assume $(G, D,\alpha)$ is such that $G$ is a vine graph  and $L_{\alpha}=\emptyset$. Let $f_i\colon (G',D', \alpha') \to (G,D,\alpha)$ for $i=1,2$ be generic. Assume that for all $V \in \ext(G', D')$ such that $V\subseteq f_1^{-1}(\{v_0\})\cap f_2^{-1}(\{v_0\})$, we have $V\in L_{\alpha'}$. Then $(f_1, f_2)$ is transversal.
\end{proposition}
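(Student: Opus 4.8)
The goal is to show that when $(G,D,\alpha)$ is a vine curve stratum with $L_\alpha = \emptyset$ (so no blowup has yet been performed at it, and it is a genuine vine curve with divisor $D$), any two generic morphisms $f_1, f_2\colon (G',D',\alpha') \to (G,D,\alpha)$ from a common source satisfy the transversality condition of Definition~\ref{def: generic}: $f_1^*(S_{(G,D)}) \cap f_2^*(S_{(G,D)}) = f^*(S_\bullet) = \emptyset$ (since $(G,D)$ sits over a codimension-$t$ stratum whose ``parent'' hyperplane $\beta$ here plays the role of $\beta$, and $S_\beta$ pulled back is empty in the relevant sense — I will need to unwind exactly what $f^*(S_\beta)$ is in this setup, but the upshot is that transversality means $f_1^*(S_{(G,D)})$ and $f_2^*(S_{(G,D)})$ are disjoint). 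Recall (Example~\ref{oneblowup}, Remark~\ref{Rem: ext-vine}) that a morphism $f_i\colon (G',D') \to (G,D)$ corresponds to a choice of subset $T_i := f_i^*(v_0) \subseteq V(G'^{E'})$ cutting $G'$ into two connected pieces of the right genera and markings, and $f_i^*(S_{(G,D)})$ is identified with $E(T_i) := E(T_i, T_i^c)$, the set of $t$ edges separating the two sides. So I must show $E(T_1,T_1^c) \cap E(T_2,T_2^c) = \emptyset$.

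First I would observe that each $T_i$ is an extremal subset of $\ext(G',D')$: indeed $D$ is $\phi^0$-semistable with $\beta^0_G(v_0) = 0$ (as $(G,D)$ is an extremal vine curve stratum — this is where the hypothesis that $G$ is a vine curve and the wall-crossing setup enter), and extremality pulls back under specializations by Remark~\ref{rem:pullback_ext}. So $T_1, T_2 \in \ext(G',D')$ (after using Hypothesis~\ref{hyp:ext_connected} and Proposition~\ref{prop:extremal_disconnected} to see they are connected with connected complement). Now apply Proposition~\ref{prop:extremal_cap_cup}: $T_1 \cap T_2$ and $T_1 \cup T_2$ are each empty, extremal, or equal to $V(G')$, and crucially $E(T_1 \setminus T_2, T_2 \setminus T_1) = \emptyset$ in all cases. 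An edge $e \in E(T_1,T_1^c) \cap E(T_2,T_2^c)$ would have one endpoint in $T_1$ and one in $T_1^c$, and one endpoint in $T_2$ and one in $T_2^c$; partitioning $V(G')$ into the four blocks $T_1\cap T_2$, $T_1\setminus T_2$, $T_2\setminus T_1$, $T_1^c\cap T_2^c$ as in Figure~\ref{fig:intersection_hemispheres}, such an edge must join either $(T_1\setminus T_2)$ to $(T_2\setminus T_1)$ or $(T_1\cap T_2)$ to $(T_1^c\cap T_2^c)$. The first option is excluded by $E(T_1\setminus T_2, T_2\setminus T_1)=\emptyset$. For the second option I would use the genericity hypothesis together with the extra hypothesis that every extremal $V \subseteq T_1 \cap T_2$ lies in $L_{\alpha'}$: if the second type of edge existed, then $T_1 \cap T_2$ would be a nonempty proper extremal subset (nonempty because it carries the marking $\leg(1)$ by Hypothesis~\ref{hyp:ext_connected}; proper because such an edge leaves it), hence $T_1\cap T_2 \in \ext(G',D')$, hence by the hypothesis $T_1 \cap T_2 \in L_{\alpha'}$ — but then, tracing through Definition~\ref{def:good_function_compatible} of compatibility of $\alpha'$ with $\alpha$ (whose domain $L_\alpha$ is empty), $L_{\alpha'}$ must be empty too, a contradiction.

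The step I expect to be the main obstacle is the bookkeeping in this last paragraph: pinning down precisely why $T_1 \cap T_2$ being a proper nonempty extremal subset forces it into $L_{\alpha'}$, and reconciling that with $L_\alpha = \emptyset$ via the compatibility conditions — and, dually, handling the symmetric possibility that it is $T_1 \cup T_2$ (equivalently $T_1^c \cap T_2^c$, via taking complements and using that $D$ being a vine-curve divisor makes the situation symmetric under swapping the two vertices, or by running the same argument with the complements $W_i := T_i^c$ and invoking the ``similarly'' clause of Proposition~\ref{prop:extremal_disconnected}). I would also need to double-check the identification of $f^*(S_\beta)$ with the empty set in the ambient stratification category, i.e. that ``transversal'' here genuinely means disjoint rather than ``meeting only in the pullback of the base hyperplane's strata''; given that $\beta$ (the hyperplane stratum) has $S_\beta = \emptyset$ at the relevant level, this should be immediate, but it is worth stating explicitly. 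Once these points are settled, the argument is a short combination of Propositions~\ref{prop:extremal_cap_cup} and~\ref{prop:extremal_disconnected} with the four-block picture.
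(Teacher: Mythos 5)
Your overall skeleton matches the paper's argument: set $V_i=f_i^{-1}(v_0)$, note these lie in $\ext(G',D')$, use the four-block picture and the last assertion of Proposition~\ref{prop:extremal_cap_cup} to kill edges between $V_1\setminus V_2$ and $V_2\setminus V_1$, and reduce to excluding an edge between $V_1\cap V_2$ and $V_1^c\cap V_2^c$ (equivalently, to showing $V_1\cup V_2=V(G')$), placing $V_1\cap V_2$ into $L_{\alpha'}$ via the stated hypothesis. However, your final contradiction is wrong as stated: compatibility (Definition~\ref{def:good_function_compatible}) with $L_\alpha=\emptyset$ does \emph{not} force $L_{\alpha'}=\emptyset$. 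Condition~(1) of that definition only requires $f_i^{-1}(V)\in L_{\alpha'}$ for every $V\in L_\alpha$, which is vacuous when $L_\alpha=\emptyset$; it places no upper bound on $L_{\alpha'}$. Indeed, in the intended application (Construction~\ref{con: blowup}, Theorem~\ref{Thm: blowup}) $L_{\alpha'}$ consists of the extremal sets whose vine curves were blown up at earlier steps and is typically nonempty --- the hypothesis of the proposition itself presupposes such elements --- so a proof that concluded $L_{\alpha'}=\emptyset$ from compatibility would prove too much and is not available.

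The correct finish, which is the paper's, runs through the \gf axiom rather than through emptiness of $L_{\alpha'}$: once $V_1\cap V_2\in L_{\alpha'}$, Condition~(2) of Definition~\ref{def:good_function} produces some $V'\in L_{\alpha'}$ (either $V_1\cap V_2$ itself or a smaller extremal set) with $\alpha'(V')\cap E\bigl(V_1\cap V_2,(V_1\cap V_2)^c\bigr)\neq\emptyset$. Since $E\bigl(V_1\cap V_2,(V_1\cap V_2)^c\bigr)\subseteq E(V_1,V_1^c)\cup E(V_2,V_2^c)$, this exhibits an edge of $|\alpha'|$ of the form $f_i^*(e)$ with $e\in E(G)$; but $|\alpha|=\emptyset$, so $e\notin|\alpha|$ while $f_i^*(e)\in|\alpha'|$, contradicting Condition~(3) of Definition~\ref{def:good_function_compatible} (the requirement that $f_i$ be a morphism of triples; equivalently, the genericity constraint that the edge-directions lying over $E(G)$ carry no exceptional decoration). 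So the contradiction is ``$|\alpha'|$ meets $f_1^*(E(G))\cup f_2^*(E(G))$'', not ``$L_{\alpha'}=\emptyset$''. With that replacement (and a one-line check that $(V_1\cap V_2)^c=V_1^c\cup V_2^c$ is connected, so that $V_1\cap V_2$ really lies in $\ext(G',D')$), your argument closes; also note that no separate ``symmetric case for $T_1\cup T_2$'' is needed, since your two edge types already exhaust the possibilities.
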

\begin{proof}
Denote by $V_i=f_i^{-1}(\{v_0\})$. Assume for a contradiction that $V_1\cup V_2\neq V(G)$. This means that $V_1\cap V_2 \in \ext(G,D)$ and hence in $L_{\alpha'}$. Since \[E(V_1\cap V_2, (V_1\cap V_2)^c)\subseteq E(V_1,V_1^c)\cup E(V_2, V_2^c),\] that would imply that there exists $V'\in L_{\alpha'}$ such that \[\alpha(V')\cap E(V_1\cap V_2, (V_1\cap V_2)^c)\neq \emptyset,\] and in turn, if $e\in \alpha(V')\subseteq E(V_1,V_1^c)$ we would have a contradiction with the fact that $e\notin |\alpha|$ but $f_i^*(e)\in |\alpha'|$. 

So $V_1\cup V_2=V(G)$, which implies that $E(V_1,V_1^c)\cap E(V_2,V_2^c)=\emptyset$.
\end{proof}


The next proposition will be used to prove that the category $\catJtildeY$, defined later, is a \emph{simple} normal crossing stratification category (as in Example~\ref{ex: snc}).  Recall that, as stipulated in Subsection~\ref{Not: curves and sheaves}, when $E$ is empty, we simply write $(G,D)$ in place of $(G, (\emptyset,D))$. 

\begin{proposition}
\label{prop:simple_vine_curve_crossing}
Let $(G',D')$ be $\phi^+$-stable, $(n+1)$-marked vine graph such that  $v_{n+1}\notin V$ for every $V\in \ext(G',D')$. Let $V_\bullet$ be a full forest, and $f_1,f_2\colon (G,D,V_\bullet) \to (G',D', V'_\bullet)$ be morphisms. Then $f_1\in \Aut(G',D',V'_\bullet)f_2$.
\end{proposition}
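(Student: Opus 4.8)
The plan is to show that for two morphisms $f_1, f_2 \colon (G',D',V'_\bullet) \to (G,D,V_\bullet)$ with target a vine curve whose last marking $v_{n+1}$ lies outside every extremal set, the underlying graph morphisms $\underline{f}_1, \underline{f}_2 \colon G' \to G$ differ by an automorphism of $(G,D,V_\bullet)$. Since $G$ is a vine curve with vertices $v_1, v_2$ (say with $\leg(1) \in v_1$, so $v_{n+1} \in v_2$ by hypothesis), the datum of $\underline{f}_i$ is equivalent to the datum of the partition $V(G') = f_i^*(v_1) \sqcup f_i^*(v_2)$ together with the bijection $E(G) \to $ (a $t$-element subset of the edges of $G'$ separating the two parts). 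An automorphism of $(G,D)$ is either the identity or, when the two vertices carry equal genus and $D$ is symmetric and $S = [n]$... but here $S = [n]$ would force $v_{n+1} \in v_1$ since $\leg^{-1}(v_1) = S$; actually $v_{n+1} \in v_2$ forces $S \neq [n]$, so in fact $\Aut(G,D)$ can only permute the $t$ edges of $G$. So really I must show: (a) $f_1^*(v_1) = f_2^*(v_1)$ as subsets of $V(G')$, and (b) the two induced identifications of $E(G)$ with a set of edges of $G'$ agree up to relabelling edges of $G$, which is automatic once (a) holds because a morphism in $\catJ$ is determined by its effect on vertex sets and edge sets, and both edge-sets are then the same $t$-element separating set.

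The heart is step (a). Write $V_i := f_i^*(v_1) \in \ext(G',D')$ — it is extremal by Remark~\ref{rem:pullback_ext} (pulling back the extremal set $\{v_1\} \subseteq V(G)$), connected with connected complement, and contains $\leg(1)$ by Hypothesis~\ref{hyp:ext_connected}. Since $V_\bullet$ is a full forest and $\{v_1\}$ is the unique maximal (indeed only nontrivial) element of $\ext(G,D)$, the compatibility condition in Definition~\ref{def:morphism_Vbullet} gives $f_i^{-1}(\{v_1\}) = V_i \in V'_\bullet$ for each $i$. Now I argue $V_1 = V_2$. Suppose not; by Proposition~\ref{prop:extremal_cap_cup} applied to the extremal sets $V_1, V_2$, the set $V_1 \cup V_2$ is empty, extremal, or all of $V(G')$; it is nonempty (contains $\leg(1)$) and if it were extremal it would, being $\supseteq V_1$ with $V_1 \in V'_\bullet$ maximal-among-things-it's-contained-in... here I use that $V'_\bullet$ is a full forest so its maximal elements are all of $\ext(G',D')$'s maximal elements, but $V_1 \cup V_2$ is a strictly larger extremal set than at least one of $V_1, V_2$ unless they are comparable. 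If $V_1, V_2$ are comparable, say $V_1 \subsetneq V_2$, then since both lie in the forest $V'_\bullet$ and $V'_\bullet$ contains all maximal elements of $\ext(G',D')$, consider $\nex_{V'_\bullet}(V_1)$: by Proposition~\ref{prop:properties_next}(2),(3) it is a connected extremal set strictly larger than $V_1$, and $V_2 \supseteq \nex(V_1)$ or $V_2$ is one of the minimal elements above $V_1$; in any case I derive that the preimages of the *single* maximal element $\{v_1\}$ under $f_1$ and $f_2$ cannot be nested proper subsets while both being preimages of the terminal-but-one stratum — more cleanly: $f_i$ maps the maximal element $V_i$ of the chain $\{W \in V'_\bullet : W \text{ minimal such that } f_i(W)\text{-related}\}$...

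Let me restructure step (a) as the anticipated main obstacle and state it plainly. The cleanest route: both $V_1$ and $V_2$ are extremal in $(G',D')$ with $V_1^c, V_2^c$ connected. If $V_1 \cup V_2 \neq V(G')$ then by Corollary~\ref{cor:V1V2_union} there is $V \in \ext(G',D')$ with $V_1 \cup V_2 \subseteq V$, and by Corollary~\ref{cor:V1V2_intersection} $V_1 \cap V_2 \in \ext(G',D')$; now $V_1, V_2$ both being preimages of the *unique* nontrivial extremal set of $G$ means $\beta^{0,G'}(V_1) = \beta^{0,G'}(V_2) = 0$ and $D'|_{V_i}$ has the multidegree forced by $D(v_1)$, and one checks $V_1 = V_2$ must hold because the degree $D'(V_1 \cap V_2) + D'(\text{stuff}) = D'(V_1) = D(v_1)$ only if the "stuff" is empty. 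If instead $V_1 \cup V_2 = V(G')$: then $V_1^c$ and $V_2^c$ are disjoint (plus no edges between, by Proposition~\ref{prop:extremal_cap_cup}) connected subgraphs with $V_1^c \cup V_2^c = (V_1 \cap V_2)^c$; since $G'^c$-side contains $v_{n+1}$-vertex's preimage and $G'$ is connected, this disconnects $G'$ unless one of $V_1^c, V_2^c$ is empty, i.e. $V_1 = V(G')$ or $V_2 = V(G')$, contradicting that they are extremal (extremal sets are proper). Hence $V_1 = V_2$. The main obstacle is precisely nailing down this dichotomy cleanly — getting the right contradiction from connectivity of $G'$ and $V_i^c$ together with the $v_{n+1}$-hypothesis — and I expect it to go through exactly as in the proof of Proposition~\ref{prop: transversal-self}, since that proposition already executes the same "either $V_1 \cup V_2 = V(G)$ or contradiction" trick. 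Once $V_1 = V_2 =: V$, the two graph morphisms agree on vertices; they each identify $E(G)$ with the set $E(V, V^c) \subseteq E(G')$ of separating edges (same set!), so they differ only by the induced permutation of the $t$ parallel edges of $G$, which is an element of $\Aut(G,D)$; and this automorphism fixes $V_\bullet$ trivially since $V_\bullet \subseteq \ext(G,D) = \{\{v_1\}\} \cup \{$maximal stuff$\}$ is permutation-of-edges-invariant. Therefore $f_1 \in \Aut(G,D,V_\bullet) f_2$. $\qquad\blacksquare$
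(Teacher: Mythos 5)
Your overall strategy is the same as the paper's in outline: reduce to showing that the two preimages $V_i:=f_i^{-1}(v_1)$ of the extremal vertex coincide (both lie in $V'_\bullet$ by the definition of a morphism of triples), and then note that two contractions with the same contracted edges differ by a permutation of the $t$ parallel edges of $G$, which is an automorphism of $(G,D,V_\bullet)$. The gap is in the crux case $V_1\neq V_2$ with $V_1\cup V_2\neq V(G')$. Since $V_1,V_2\in V'_\bullet$ and $V'_\bullet$ is a forest, this reduces to the nested case $V_1\subsetneq V_2$, and your argument for excluding it --- that $D'(V_1\cap V_2)+D'(\text{rest})=D(v_1)$ ``only if the rest is empty'' --- does not work: the piece $V_2\setminus V_1$ can carry total degree $0$, so equality of degrees forces nothing. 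What is actually needed (and what the paper does) is to use that $G(V_1)$ and $G(V_2)$ are both contracted onto the same vertex of the same vine curve, so $g(V_1)=g(V_2)$, $\leg^{-1}(V_1)=\leg^{-1}(V_2)$, $|E(V_1,V_1^c)|=|E(V_2,V_2^c)|$ and $D'(V_1)=D'(V_2)$; together these force every vertex of $V_2\setminus V_1$ to be of genus $0$, unmarked and of degree $0$ (and of low valence), contradicting the stability/quasistability of $(G',D')$. Without an argument of this kind the nested case --- which is exactly the nontrivial content of the proposition --- remains open.

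A secondary flaw: in the case $V_1\cup V_2=V(G')$ your contradiction via ``this disconnects $G'$'' is not valid, since $G'$ remains connected through $V_1\cap V_2$, which contains the vertex with $\leg(1)$. The correct and immediate contradiction, which the paper uses and you only gesture at, is that the vertex carrying $\leg(n+1)$ maps to $v_2$ under both $f_i$, hence lies in $V_1^c\cap V_2^c$ (equivalently, $\leg(n+1)\notin V$ for every $V\in V'_\bullet$ by the definition of $\catJtildeY$), so the union cannot be all of $V(G')$.
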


\begin{proof}
Upon further contraction of $(G, D, V_{\bullet})$, we can assume that $f_1, f_2 $ are generic. This means that either $(G',D',V_\bullet')=(G,D,V_\bullet)$ or $V_\bullet=\{V_1, V_2\}$. In the first case our statement follows from the fact that every endomorphism is an automorphism. In order to conclude, all we have to do is to exclude the second case.

In the second case, there are two subcases  (up to swapping $V_1$ and $V_2$): either $V_1\subseteq V_2$ or $V_1$ and $V_2$ are incomparable. In the latter subcase, we have that $V_1\cup V_2=V(G)$ by Corollary~\ref{cor:V1V2_union}, but that contradicts the fact that $v_{n+1}\notin V_1\cup V_2$.

In the former subcase, we have that \[g(V_1)=g(V_2), \ |E(V_1,V_1^c)|=|E(V_2, V_2^c)|, \ \leg^{-1}(V_1)=\leg^{-1}(V_2) \textrm{ and } D(V_1)=D(V_2).\]  This implies that for each $v \in V_2\setminus V_1$ we have $g(v)=0$, $\leg^{-1}(\{v\})=\emptyset$, $|E(v)|=2$, and $D(v)=0$. This contradicts the assumption that $(G, D)$ is $\phi^+$-stable.
\end{proof}

We conclude this section by observing that the existence of a full function/forest rules out the presence of exceptional vertices.

\begin{lemma} \label{lem: noE} If $\ext(G, D)$ admits a \gc, then $G$ is stable.
\end{lemma}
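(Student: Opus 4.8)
Lemma~\ref{lem: noE}: if $\ext(G,D)$ admits a full forest (equivalently, a full \gf), then $G$ is stable. Here $G$ is an $n$-pointed graph of genus $g$ which — per the running conventions of Section~5 — is either of the form $G'^{E'}$ for a stable graph $G'$ and $E'\subseteq E(G')$, or is obtained by forgetting the last marking on a stable $(n+1)$-pointed graph. "Stable" is the condition $2g(v)-2+|E(\{v\},\{v\}^c)|+|\leg^{-1}(v)|>0$ for every vertex. The point of the lemma is to rule out exceptional vertices: a vertex $v_e$ created by subdividing an edge $e$ has $g(v_e)=0$, two incident edges, and no markings, so it violates stability. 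So the lemma says: the presence of a full forest forces $E'=\emptyset$, i.e. $G$ has no exceptional vertices (and, in the second scenario, that there is no genus-$0$ unmarked vertex of valence $2$ either).

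**Plan of proof.** Suppose $G$ has an exceptional vertex $v_e$; I want a contradiction with the existence of a full forest $V_\bullet\subseteq\ext(G,D)$. By condition~(2) of Definition~\ref{def: fullfor}, the edge set of $G$ is covered: $E(G)=\bigcup_{V\in V_\bullet}E(V,V^c)$. The exceptional vertex $v_e$ is incident to exactly two edges, call them $e_1,e_2$ (the two halves of $e$). Pick $V_1,V_2\in V_\bullet$ with $e_1\in E(V_1,V_1^c)$ and $e_2\in E(V_2,V_2^c)$. The key observation is that any extremal set $V$ containing $v_e$ in its interior (i.e. with $v_e\in V$ and both incident edges inside $V$) or in the interior of its complement would then fail to have $e_1$ or $e_2$ crossing it — so each of $V_1,V_2$ "separates" $v_e$ from one of its two neighbours. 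Since $v_e$ is genus~$0$ with valence~$2$ and no marked points, I then want to produce, from $V_1$ (or $V_2$), a contradiction with $\phi^+$-stability of $(G,D)$ — exactly in the spirit of the last paragraph of the proof of Proposition~\ref{prop:simple_vine_curve_crossing}, where a genus-$0$ unmarked valence-$2$ "strip" $V_2'\setminus V_1'$ with $\deg D=0$ contradicts $\phi^+$-stability.

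More precisely, here is the mechanism I expect to use. Recall that each $V\in\ext(G,D)$ corresponds to a vine-curve specialization (Remark~\ref{Rem: ext-vine}) and that $\beta^+(V)>0$, $\beta^0(V)=0$ for extremal $V$. If $v_e$ lies, say, on the "$V$-side" of the edge $e_1\in E(V_1,V_1^c)$ but $e_2$ does not cross $V_1$, then $v_e\in V_1$ and $e_2$ has both endpoints in $V_1$; combined with a symmetric statement for $V_2$, one of the two sets $V_1,V_2$ — or an extremal set obtained from them by Propositions~\ref{prop:extremal_cap_cup}–\ref{prop:extremal_disconnected} — has the property that $v_e$ is an isolated genus-$0$ unmarked valence-$2$ vertex "attached along the cut." Then I split $v_e$ off: because $D(v_e)$ is forced (the multidegree of an exceptional vertex in a pseudodivisor is $1$, but here $E'$-vertices occur only via $G=G'^{E'}$ where $D$ is the relevant divisor; in any case $v_e$ carries a definite degree), and because $v_e$ has genus~$0$, valence~$2$, no markings, I can write down the $\beta^+$-value of a set like $V_1\setminus\{v_e\}$ or $V_1\cup\{v_e\}$ and compare it to $\beta^+(V_1)$: the change is by $\pm\big(1 - D(v_e) - \tfrac{1}{2}\cdot(\text{something})\big)$ in a way that forces $\beta^+$ to take a non-positive value on some nonempty proper connected-with-connected-complement subset, contradicting $\phi^+$-semistability (or even $\phi^+$-stability). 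The cleanest packaging is probably: show that the full forest forces $\{v_e\}$ itself, or $\{v_e\}^c$, to be extremal, and then observe a single genus-$0$ valence-$2$ unmarked vertex cannot be extremal because $\beta^+(\{v_e\}) = -D(v_e) + \phi^+(v_e) + 1$ and $\beta^-(\{v_e\}) = -D(v_e)+\phi^-(v_e)+1$ cannot straddle $0$ strictly when $\phi^\pm$ differ only infinitesimally and $\beta^0(\{v_e\})=-D(v_e)+\phi^0(v_e)+1\in\Z$ would have to be exactly $0$, pinning $D(v_e)=1$ (which is indeed the pseudodivisor value), but then re-examining shows $\{v_e\}$ is not extremal after all because the $\phi^+$-perturbation was chosen generic — contradiction. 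I will need to assemble these inequalities carefully using Equation~\eqref{eq:beta_sum} and Propositions~\ref{prop:extremal_cap_cup}, \ref{prop:extremal_disconnected}.

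**Where the difficulty lies.** The routine part is the stability arithmetic once the right subset is isolated. The genuine obstacle is the combinatorial bookkeeping: from "$E(G)$ is covered by the cuts $E(V,V^c)$, $V\in V_\bullet$" I must extract that the two half-edges at $v_e$ are cut by forest elements in a way that squeezes out a genuinely small extremal set around $v_e$. This uses incomparability ($V_1,V_2$ incomparable $\Rightarrow V_1\cup V_2=V(G)$, $E(V_1^c,V_2^c)=\emptyset$ by Lemma~\ref{lemma: incomp}) versus comparability, and in the comparable case the "next" construction and Proposition~\ref{prop:properties_next}(4) (no edges inside $\nex(V)\setminus V$). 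I expect the argument to bifurcate on whether $V_1,V_2$ are comparable, and in each branch to land on a genus-$0$ unmarked valence-$2$ piece that, exactly as at the end of the proof of Proposition~\ref{prop:simple_vine_curve_crossing}, violates $\phi^+$-stability of $(G,D)$. Getting that bifurcation clean — and handling the second scenario (forgetting a marking) uniformly with the $G'^{E'}$ scenario — is the step I'd budget the most care for.
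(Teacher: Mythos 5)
Your setup matches the paper's: reduce to an exceptional vertex $v$ (genus $0$, unmarked, valence $2$, $D(v)=1$), and use fullness of the forest to find $V_1,V_2\in V_\bullet$ with $e_i\in E(V_i,V_i^c)$ for the two edges $e_1,e_2$ at $v$. But from that point on your proposal does not actually produce the contradiction, and the mechanisms you sketch do not close the gap. The claim you call the ``cleanest packaging'' --- that the full forest forces $\{v\}$ or $\{v\}^c$ to be extremal --- is precisely what would need proof and is never derived; and the argument you offer for why $\{v\}$ cannot be extremal (the ``straddling''/genericity musing) is not a valid deduction (the clean reasons are simply that $\beta^+(\{v\})=\phi^+(v)=0$ is not $>0$, or that extremal sets contain $\leg(1)$ by Hypothesis~\ref{hyp:ext_connected} while $v$ is unmarked). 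Likewise, your appeal to the end of the proof of Proposition~\ref{prop:simple_vine_curve_crossing} does not transfer: that argument needs a genus-$0$ unmarked piece on which $D$ has degree $0$, whereas here $D(v)=1$. Finally, your casework never settles the crucial question of which side of $V_1$ and of $V_2$ the vertex $v$ lies on, and that is where the actual content of the lemma sits.

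The missing step is a short numerical argument plus connectivity, which is how the paper concludes. If $v\notin V_i$ for some $i$, compare $V_i$ with $V_i\cup\{v\}$: the degree rises by $D(v)=1$, $\phi^\star(v)=0$, and the cut does not grow, so $\beta^\star(V_i)\geq\beta^\star(V_i\cup\{v\})+1$; taking $\star=0$ and using that $\beta^0\geq 0$ on every subset (a set with $\beta^0<0$ would force an extra degenerate point on the segment $[\phi^-,\phi^+]$, against Definition~\ref{Def: opposite sides}) gives $\beta^0(V_i)>0$, contradicting $\beta^0(V_i)=0$ for the extremal set $V_i$. Hence $v\in V_1\cap V_2$. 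Then $V_1\cap V_2$ is extremal by Proposition~\ref{prop:extremal_cap_cup}, so by Hypothesis~\ref{hyp:ext_connected} together with Proposition~\ref{prop:extremal_disconnected} it is connected and contains $\leg(1)\neq v$; yet both edges at $v$ leave $V_1\cap V_2$, so $v$ is an isolated vertex of $G(V_1\cap V_2)$ --- a contradiction. Without this (or an equivalent) argument, your proposal remains a plan rather than a proof.
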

\begin{proof}
Because $D$ is $\phi^+$ semistable, if $G$ fails to be stable, it contains an exceptional vertex $v$ (meaning that $v$ has genus $0$, no marked points, it has valence $2$ and $D(v)=1$). Let $e_1,e_2$ be the two edges of $G$ that contain $v$. 

If $(G,D)$ admits a \gc $V_{\bullet}$, then there are $V_1,V_2$ such that $e_i \in E(V_i, V_i^c)$ for $i=1,2$. If $v\notin V_i$ for some $i$, then $\beta^\star_D(V_i)=\beta^\star(V_i\cup\{v\})+1$, in particular $\beta^\star_D(V_i)>0$, a contradiction. This means that $v\in V_1\cap V_2$. On the other hand, we have that $e_1,e_2\in E(V_1\cap V_2,(V_1\cap V_2)^c)$, that $V_1\cap V_2$ is extremal (by~\ref{prop:extremal_cap_cup}),  hence connected, and that $\leg(1)\in V_1\cap V_2$. This contradicts the assumption that $v$ is exceptional.
\end{proof}

\subsection{The stratification categories}
In light of the results of the previous subsection, in this subsection we are now ready to define the stratification category $\catJtilde$, and 
 some other  categories $\catJtildeE$ and $\catJtildeY$ that will play an important role in our proof of Theorem~\ref{maintheorem}. 

    \begin{definition} \label{Def: Jcategories}

We define $\catJtilde= \catJtilde(\phi^+, \phi^-)$ as a skeleton of the category whose objects are triples $(G,D, \alpha)$ such that $(G,D)$ is an object of $\catJ$ and $\alpha$ is a \gf with the property that $L_\alpha= \ext(G,D)$. Morphisms are given as in Definition~\ref{def:good_function_compatible}.

    We define $\catJtildeE$ as the full subcategory\footnote{By Corollary~\ref{cor: alphafull}.} of $\catJtilde$ whose objects $(G,D, \alpha)$ with $\alpha$ \emph{full}. \footnote{By Proposition~\ref{prop:alpha_to_forest}, objects are, equivalently, triples $(G,D, V_{\bullet})$ with $V_{\bullet}$ a full forest in $\ext(G,D)$.}

    We then define the category $\catJtildeY$ as a skeleton of the category whose objects are triples $(G,D,V_\bullet)$ where $G$ is a $n+1$ pointed stable graph of genus $g$, the divisor $D$ is $(\phi^+,\leg(1))$-quasistable, and $V_\bullet$ is a full forest such that $\leg(n+1)\notin V$ for every $V\in V_\bullet$ \footnote{By Lemma~\ref{lemma: incomp}, this implies that $V_{\bullet}$ is a chain.}. Morphisms are specializations as in Definition~\ref{def:good_function_compatible}.
\end{definition}
 
These categories will be interpreted geometrically in Remark~\ref{Rem: alpha-gives-blowup}. Note that the rank~$1$ objects (the divisors) of $\catJtildeE$ are the triples $(G,D,V_{\bullet})$ such that $G$ is a vine graph and $V_{\bullet}$ contains a single element (by Hypothesis~\ref{hyp:ext_connected}, the vertex containing the first marking).

\subsection{The case of ``good'' hyperplanes.} \label{sec:goodwalls}

In this subsection, we fix a stability hyperplane $H=H(i,t,S;k)$ that satisfies   $S^c\neq \emptyset$. In this case, we prove that the corresponding exceptional vine curves loci in the compactified universal Jacobian have pairwise empty intersections. The main result here is:

\begin{proposition} 
\label{prop:good_disjoint}
The objects of $\catJtildeE$ are triples $(G,D,V_\bullet)$ 
satisfying either
\begin{enumerate}
 \item $G$ has no edges and $V_\bullet$ is empty. This is the terminal object.
 \item $G$ is a vine graph and $V_\bullet$ has a single element $V=\{\leg(1)\}$.
\end{enumerate}
\end{proposition}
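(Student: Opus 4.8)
The claim is a combinatorial statement about $\ext(G,D)$ when the hyperplane has $S^c\neq\emptyset$, so the plan is to reduce everything to a statement about which subsets $V\subseteq V(G)$ can be extremal. Recall (Remark~\ref{Rem: ext-vine}) that each $V\in\ext(G,D)$ witnesses a morphism from $(G,D)$ to an extremal vine curve stratum obtained by contracting $E(V,V)$ and $E(V^c,V^c)$, and that $\beta^0(V)=0$ for every extremal $V$. Since $\phi^\pm$ are on opposite sides of the single hyperplane $H=H(i,t,S;k)$, the equation $\beta^0(V)=0$ forces the associated vine curve triple of $V$ to give a hyperplane equal to $H$; by Proposition~\ref{prop: subsetscoincide} this means $\leg^{-1}(V)=S$ (or $\leg^{-1}(V)=S^c$, but by Hypothesis~\ref{hyp:ext_connected} and Remark~\ref{Rem: minusplusconv} we have normalized so that $\leg(1)\in V$ and hence $\leg^{-1}(V)=S$ since $S$ contains the first marking by our conventions on vine curve triples).

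The first real step is therefore: \emph{every element of $\ext(G,D)$ has the same marking set $S$}. Now I want to show any two elements $V_1,V_2\in\ext(G,D)$ are comparable. Suppose not. By Corollary~\ref{cor:V1V2_union}, either $V_1\cup V_2=V(G)$ or there is $V\in\ext(G,D)$ with $V_1,V_2\subseteq V$. In the second case Corollary~\ref{cor:V1V2_intersection} gives $V_1\cap V_2\in\ext(G,D)$; but then $\leg^{-1}(V_1)=\leg^{-1}(V_2)=\leg^{-1}(V_1\cap V_2)=S$ forces $\leg^{-1}(V_1\setminus V_2)=\emptyset$, and together with Proposition~\ref{prop:extremal_cap_cup} ($E(V_1\setminus V_2,V_2\setminus V_1)=\emptyset$) one sees that $V_1$ disconnects, contradicting $V_1\in\ext(G,D)$ — unless $V_1\setminus V_2=\emptyset$, i.e. $V_1\subseteq V_2$. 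In the first case $V_1\cup V_2=V(G)$: then $\leg^{-1}(V_1)=\leg^{-1}(V_2)=S$ forces $V_1^c\cap V_2^c$ to carry no markings; since $S^c\neq\emptyset$ and $\leg^{-1}(V_1^c)=\leg^{-1}(V_2^c)=S^c$ we get $\leg^{-1}(V_1^c\cap V_2^c)=S^c\neq\emptyset$, a contradiction. Hence $\ext(G,D)$ is totally ordered.

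The second step is to bound the length of this chain. If $V_1\subsetneq V_2$ are both in $\ext(G,D)$ with $\leg^{-1}(V_1)=\leg^{-1}(V_2)=S$, then $V_2\setminus V_1$ carries no markings, $E(V_1\setminus\cdot)$-type vanishings hold, and $\beta^0(V_1)=\beta^0(V_2)=0$ forces (via \eqref{eq:beta_sum} applied to $V_1$ and $V_2^c$, or directly) that the subgraph on $V_2\setminus V_1$ has genus $0$ and contributes nothing; combined with $\phi^+$-stability of $(G,D)$ this will be impossible, exactly as in the proof of Proposition~\ref{prop:simple_vine_curve_crossing}. So $\ext(G,D)$ has at most one element. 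A full forest in a poset with at most one element is either empty (giving the terminal object, $G$ with no edges and $V_\bullet=\emptyset$ — here one checks $\ext=\emptyset$ means no blowup and $G$ stable with the trivial divisor) or consists of that single element $V$, which by Hypothesis~\ref{hyp:ext_connected} contains $\leg(1)$; by Lemma~\ref{lem: noE} $G$ is stable, and since $\ext$ is nonempty $G$ must be a vine curve with $V=\{\leg(1)\}$ its marked vertex. This gives exactly the two cases in the statement.

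The main obstacle I expect is making the ``length $\le 1$'' step fully rigorous: one must rule out a strict chain $V_1\subsetneq V_2$ in $\ext(G,D)$ sharing the marking set $S$, and this requires carefully combining the genus-$0$ / no-markings / degree-$0$ conclusion on $V_2\setminus V_1$ with the $\phi^+$-stability of $(G,D)$ (not merely semistability), paralleling the final paragraph of the proof of Proposition~\ref{prop:simple_vine_curve_crossing}. Everything else is bookkeeping with Propositions~\ref{prop:extremal_cap_cup}, \ref{prop:extremal_disconnected} and Corollaries~\ref{cor:V1V2_union}, \ref{cor:V1V2_intersection}.
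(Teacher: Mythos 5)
Your overall strategy coincides with the paper's: show that every element of $\ext(G,D)$ has marking set exactly $S$, use $S^c\neq\emptyset$ to prevent two extremal sets from covering $V(G)$, reduce any two distinct extremal sets to a strictly nested pair, rule out nested pairs, and finally use fullness of $V_\bullet$ to force either the terminal object or a vine curve with $V=\{\leg(1)\}$. However, the step you yourself flag as the main obstacle --- excluding $V_1\subsetneq V_2$ in $\ext(G,D)$ --- is not actually proved, and the justification you sketch cannot be repaired as stated. The identities $\beta^0(V_1)=\beta^0(V_2)=0$, and any manipulation of \eqref{eq:beta_sum}, involve only degrees of $D$, values of $\phi^0$ and edge counts; they carry no information about the genus of $G(V_2\setminus V_1)$, so they cannot ``force the subgraph on $V_2\setminus V_1$ to have genus $0$.'' The appeal to Proposition~\ref{prop:simple_vine_curve_crossing} is also inapt: there the equalities $g(V_1')=g(V_2')$ and $|E(V_1',V_1'^c)|=|E(V_2',V_2'^c)|$ come for free because both sets are preimages of the same vertex of the same vine curve, whereas here $V_1$ and $V_2$ may contract to different extremal vine curves. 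The missing ingredient is Proposition~\ref{prop:goodwalls}: the two vine-curve hyperplanes determined by $V_1$ and $V_2$ both pass through $\phi^0$ and have the same marking set $S\neq[n]$, hence coincide, and \emph{loc.\ cit.} then yields $2g(V_1)+|E(V_1,V_1^c)|=2g(V_2)+|E(V_2,V_2^c)|$; combined with $\leg^{-1}(V_1)=\leg^{-1}(V_2)$ this gives $K^{\log}_G(V_1)=K^{\log}_G(V_2)$ (Lemma~\ref{lem:Ext_Klog}), hence $K^{\log}_G(V_2\setminus V_1)=0$, contradicting stability of $G$ since every vertex of a stable graph has $K^{\log}_G(v)>0$. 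This is where the hypothesis $S^c\neq\emptyset$ must enter a second time, and your argument never uses it there.

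A smaller issue: in your comparability step, the claim that $\leg^{-1}(V_1\setminus V_2)=\emptyset$ together with $E(V_1\setminus V_2,V_2\setminus V_1)=\emptyset$ shows ``$V_1$ disconnects'' is unjustified, since edges between $V_1\cap V_2$ and $V_1\setminus V_2$ are perfectly possible. This is harmless, because comparability is not needed: for distinct $V_1,V_2$, the marking-set argument (as in Proposition~\ref{prop:L1V}) gives $V_1\cap V_2\neq\emptyset$ and $V_1\cup V_2\neq V(G)$, so Propositions~\ref{prop:extremal_cap_cup} and~\ref{prop:extremal_disconnected} put $V_1\cap V_2$ in $\ext(G,D)$ and one passes directly to a strictly nested pair, which is then excluded by the $K^{\log}$ argument above. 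Your final assembly (at most one extremal set, plus fullness of $V_\bullet$, forces $E(G)=E(V,V^c)$ with $G(V)$, $G(V^c)$ connected singletons) is correct and matches the paper's corollary preceding the proposition.
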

Note that by Proposition~\ref{prop:goodwalls}, each vine graph as in (2) above is necessarily of the form $G(i-j,t+2j,S)$ for some $i,t \geq 0$ and for all $j\geq 0$ satisfying $-t/2 < j \leq \min(i, g+1-t-i)$.

\begin{proposition}
\label{prop:L1V}
 Let $V\in \ext(G,D)$, then $\leg^{-1}(V)=S$.
\end{proposition}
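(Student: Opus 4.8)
Recall the setup: we have a stability hyperplane $H = H(i_0, t_0, S; k)$ with $S^c \neq \emptyset$, and $\phi^\pm$ on opposite sides of it. We must show that every extremal set $V \in \ext(G,D)$ has $\leg^{-1}(V) = S$, i.e. the markings ``carried'' by any extremal vine-curve degeneration are exactly the ones defining $H$. The key numerical input is the change-of-coordinates formula \eqref{changeofcoord} together with Proposition~\ref{prop:goodwalls}: for hyperplanes of the form \eqref{walls2} with a fixed marking set $S \neq [n]$, the quantity $2i + t$ is determined by $H$, so the hyperplane ``sees'' only the combination $\frac{2g-2i-t}{2g-2}\sum_{j\in S}x_j + \frac{2i-2+t}{2g-2}(d - \sum_{j\notin S}x_j)$, and from Proposition~\ref{prop: subsetscoincide} the marking set itself is also determined. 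So the strategy is: translate ``$V$ extremal'' into the statement that $\beta^0(V) = 0$ while $\beta^0$ is $\geq 0$ everywhere and does not vanish identically near $V$, then use \eqref{eq:beta_sum} and the explicit form of $\phi_0 \in H$ to force $\leg^{-1}(V)$ to equal $S$.

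Here is how I would organize the steps. First I would reduce to the case where $(G,D)$ is itself a vine curve: by Remark~\ref{Rem: ext-vine}, any $V \in \ext(G,D)$ corresponds to a morphism from $(G,D)$ to an extremal vine-curve stratum $(G', D')$ obtained by contracting $E(V,V)$ and $E(V^c, V^c)$, and by Remark~\ref{rem:pullback_ext} extremality and the marking set are preserved under this contraction (the markings on the $V$-side of $G$ are exactly those on the corresponding vertex of $G'$). So it suffices to show: if $(i, t, S')$ is a vine-curve triple whose associated stratum $\mathcal{J}_{(G(i,t,S'),D)}$ is extremal (i.e. the single nontrivial subset $V = \{\leg(1)\}$ satisfies $\beta^+(V) > 0 > \beta^-(V)$), then $S' = S$. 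Now $\beta^0(V) = 0$ means precisely $\deg_V(D) = \phi_0(V) + t/2$, i.e. in the coordinate $x_{i,t,S'}$ of Definition~\ref{vinecurvenotation} we have $x_{i,t,S'}(\phi_0) \in \tfrac{1}{2}\mathbb{Z} + (t \bmod 2)\cdot\tfrac12$ — equivalently, $\phi_0$ lies on the hyperplane $H(i,t,S';k')$ for the appropriate $k'$. But $\phi_0 \in H = H(i_0, t_0, S; k)$ was assumed to be the unique degenerate point of the segment $[\phi^+,\phi^-]$; a priori $\phi_0$ could lie on several coincident hyperplanes. By Proposition~\ref{prop: subsetscoincide} any hyperplane through $\phi_0$ of the form \eqref{walls1} or \eqref{walls2} that coincides with $H$ must have marking set $S$ — except we must rule out that $H(i,t,S';k')$ meets $\phi_0$ without coinciding with $H$. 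This is where the ``opposite sides'' hypothesis does the work: if $\phi_0 \in H(i,t,S';k')$ but this hyperplane is distinct from $H$, then moving along $[\phi^+,\phi^-]$ we would cross $H(i,t,S';k')$ at $\phi_0$ but also this would contradict uniqueness unless the two hyperplanes coincide as sets in a neighborhood — and since both are affine hyperplanes containing $\phi_0$ with $[\phi^+,\phi^-]$ transverse to $H$, transversality of the segment to $H(i,t,S';k')$ forces them equal. Hence $S' = S$ by Proposition~\ref{prop: subsetscoincide}.

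Let me be more careful about the last point, which I expect to be the main obstacle. The subtlety is that $\ext(G,D)$ is defined with reference to a \emph{fixed} pair $\phi^\pm$, and the definition of ``opposite sides'' (Definition~\ref{Def: opposite sides}) says $\phi_0$ is the \emph{only} degenerate point of $[\phi^+,\phi^-]$. If $V$ is extremal then $\beta^0(V) = 0$, which says $\phi_0$ lies on the translated-coordinate hyperplane associated to $(i,t,S')$ (pulled back to $V_{g,n}^d$ via $\tau_d$). Suppose $S' \neq S$. The hyperplane associated to $(i,t,S')$ is then distinct from $H$ by Proposition~\ref{prop: subsetscoincide}. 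Now $\phi^+$ is $\phi^+$-stable hence non-degenerate, so $\phi^+ \notin H(i,t,S')$, and likewise $\phi^- \notin H(i,t,S')$; but $\beta^+(V) > 0 > \beta^-(V)$ means $\beta^0(V) = 0$ sits strictly between the values at $\phi^\pm$, so the segment $[\phi^+,\phi^-]$ actually crosses $H(i,t,S')$ at $\phi_0$ (not merely touches it). That makes $\phi_0$ a degenerate point lying on a hyperplane $\neq H$ — consistent with Definition~\ref{Def: opposite sides} only if $H(i,t,S') = H$ as sets, contradiction. The one gap to fill: I should check that the value $\beta^0(V) = 0$ genuinely corresponds to $\phi_0$ being on the hyperplane \emph{in the degenerate locus}, i.e. that the half-integrality in \eqref{Eqn: defdegenerate} matches, which is immediate from the definition of $\beta^\star$ and the fact that $\deg_V(D)$ is an integer. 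I would then assemble these observations into a short direct argument, with the reduction to vine curves up front and the ``opposite sides'' contradiction as the punchline.
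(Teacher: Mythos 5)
Your proposal is correct and takes essentially the same route as the paper: the paper's proof also passes to the vine curve $G'$ obtained by contracting $E(V,V)\cup E(V^c,V^c)$, uses that extremality forces $\beta^0(V)=0$ (so $(\phi_0)_{G'}$ is degenerate), and concludes via Proposition~\ref{prop: subsetscoincide} that this is impossible unless $\leg^{-1}(V)=S$. The genericity point you fuss over at the end — that $\phi_0$ lies only on hyperplanes coinciding with $H$ — is exactly what the paper uses, equally implicitly, when reading Definition~\ref{Def: opposite sides}, so your write-up is in substance the same argument spelled out at greater length.
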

\begin{proof}
Let \[G':=G/(E(V,V)\cup E(V^c,V^c))\] be the vine graph associated to $V$. If $\leg^{-1}(V)\neq S$, then $(\phi_0)_{G'}$ is nondegenerate by Proposition~\ref{prop: subsetscoincide}. This contradicts the assumption that $V$ is extremal. 
\end{proof}

For our next result, recall that the canonical divisor $K^{\log}_G$ of a graph $G$ is defined by $K^{\log}_G(v)= 2g(v)-2+|E(v)|+|\leg^{-1}(v)|$ for all $v \in V(G)$.

\begin{lemma}
\label{lem:Ext_Klog}
If $V_1,V_2\in \ext(G,D)$, then $K_G^{\log}(V_1)=K_{G}^{\log}(V_2)$.
\end{lemma}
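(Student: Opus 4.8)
The plan is to reduce the statement to the assertion that the quantity $2g(V)+|E(V,V^c)|$ is independent of $V\in\ext(G,D)$, and then prove this by first passing to comparable extremal sets and afterwards analysing the vertices by which they differ. \emph{Step 1 (an elementary formula for $K^{\log}_G(V)$).} For any $V\subseteq V(G)$ with $G(V)$ connected I would sum the defining identity $K^{\log}_G(v)=2g(v)-2+|E(v)|+|\leg^{-1}(v)|$ over $v\in V$. Since $\sum_{v\in V}|E(v)|$ counts each edge of $G(V)$ twice and each edge of $E(V,V^c)$ once, and since $g(V)=\sum_{v\in V}g(v)+b_1(G(V))$ with $b_1(G(V))=|E(G(V))|-|V|+1$, this collapses to
\[
K^{\log}_G(V)=2g(V)-2+|E(V,V^c)|+|\leg^{-1}(V)|.
\]
By Proposition~\ref{prop:L1V} every $V\in\ext(G,D)$ has $\leg^{-1}(V)=S$ (and $G(V)$ is connected by Hypothesis~\ref{hyp:ext_connected}), so $K^{\log}_G(V)=2g(V)-2+|E(V,V^c)|+|S|$, and it suffices to show that $2g(V)+|E(V,V^c)|$ does not depend on $V\in\ext(G,D)$.

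\emph{Step 2 (reduction to comparable pairs).} Given incomparable $V_1,V_2\in\ext(G,D)$, Corollary~\ref{cor:V1V2_union} gives that either $V_1\cup V_2=V(G)$ or $V_1\cup V_2\subseteq V_3$ for some $V_3\in\ext(G,D)$; in the first case $V_1\cap V_2\in\ext(G,D)$ by Proposition~\ref{prop:extremal_cap_cup} together with Hypothesis~\ref{hyp:ext_connected}, and in the second case $V_1\cap V_2\in\ext(G,D)$ by Corollary~\ref{cor:V1V2_intersection}. As $V_1\cap V_2$ is then a proper extremal subset of each of $V_1$ and $V_2$, it is enough to prove the statement when $V_1\subsetneq V_2$.

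\emph{Step 3 (the comparable case).} Write $W=V_2\setminus V_1$ and let $W^{(1)},\dots,W^{(c)}$ be the connected components of $G(W)$. A direct bookkeeping with the Euler characteristics of $G(V_1)$, $G(W)$ and $G(V_2)$ (using that there are no edges between distinct $W^{(l)}$) yields
\[
\bigl(2g(V_2)+|E(V_2,V_2^c)|\bigr)-\bigl(2g(V_1)+|E(V_1,V_1^c)|\bigr)=\sum_{v\in W}\bigl(2g(v)+|E(v)|-2\bigr).
\]
By Step 1, $\leg^{-1}(W)=\leg^{-1}(V_2)\setminus\leg^{-1}(V_1)=\varnothing$, so every $v\in W$ is unmarked; since $G$ is either of the form $G'^{E'}$ with $G'$ stable or obtained from a stable graph by forgetting the last marking, every unmarked vertex $v$ satisfies $2g(v)+|E(v)|-2\geq 0$, with equality exactly when $g(v)=0$ and $v$ has exactly two half-edges. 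It therefore remains to show that every vertex of $W$ is two-valent of genus $0$. For this I would localise the condition $\beta^0\equiv 0$ on extremal sets: from $\beta^0(V_1)=\beta^0(V_2)=0$, the identity \eqref{eq:beta_sum} (with $\beta^0(\varnothing)=0$), and $\beta^0\geq 0$, one first obtains $\beta^0(W)=|E(V_1,W)|$, and then, summing the nonnegative quantities $\beta^0\bigl(V_1\cup W^{(l)}\bigr)=\beta^0(W^{(l)})-|E(V_1,W^{(l)})|$, that $\beta^0\bigl(V_1\cup W^{(l)}\bigr)=0$ for every $l$. Fixing a component $W'=W^{(l)}$ and ordering its vertices $v_1,\dots,v_r$ so that $v_1$ is adjacent to $V_1$ and each $v_j$ is adjacent to $V_1\cup\{v_1,\dots,v_{j-1}\}$, the same submodularity argument forces all the sets $V_1\cup\{v_1,\dots,v_j\}$ to have $\beta^0=0$; comparing two consecutive ones then forces each $v_j$ to have genus $0$ and exactly two half-edges in $G$, since otherwise one of the nonnegative summands in the displayed identity would be strictly positive.

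The step I expect to be the main obstacle is the last part of Step 3, namely forcing the vertices of $W=V_2\setminus V_1$ to be two-valent of genus $0$. The delicate point is to keep every intermediate subset connected with connected complement, so that the structural results of Propositions~\ref{prop:extremal_cap_cup}--\ref{prop:extremal_disconnected} and Corollaries~\ref{cor:V1V2_union}--\ref{cor:V1V2_intersection} apply at each stage; an efficient way to organise this is probably to work inside the vine-curve stratum associated with $V_2$ (Remark~\ref{Rem: ext-vine}) and the extremal structure it inherits.
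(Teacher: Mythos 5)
There is a genuine gap, and it is in Step 3. Your argument there uses only the relations $\beta^0(V)=0$ for extremal $V$, $\beta^0\geq 0$, the identity \eqref{eq:beta_sum}, and $\leg^{-1}(V_2\setminus V_1)=\emptyset$ — none of which sees the standing hypothesis of the section, namely that the wall $H=H(i,t,S;k)$ is a ``good'' one with $S^c\neq\emptyset$. But the lemma is false without that hypothesis: on a wall with $S=[n]$ (e.g.\ $d=0$ and $\sum_j x_j=g-1$, where all hyperplanes $H(i,t,[n];k)$ with the appropriate relation among $i,t,k$ coincide) one can exhibit stable pairs $(G,D)$ with nested extremal sets $V_1\subsetneq V_2$ such that $V_2\setminus V_1$ contains an unmarked vertex of positive genus, so $K^{\log}_G(V_1)\neq K^{\log}_G(V_2)$, even though every relation you invoke still holds (this nesting with strictly larger $K^{\log}$ is exactly the phenomenon that forces the full-forest/chain machinery later in the paper). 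The underlying reason is that $\beta^0(V)=-\deg(D|_V)+\phi^0(V)+|E(V,V^c)|/2$ is blind to the genera of the vertices of $V_2\setminus V_1$ except through the coordinates of $\phi^0$, and extracting genus information from those coordinates is precisely the content of the explicit wall description \eqref{changeofcoord}/Proposition~\ref{prop:goodwalls}, which you never use. Concretely, the unproved step is ``the same submodularity argument forces all the sets $V_1\cup\{v_1,\dots,v_j\}$ to have $\beta^0=0$'' (vanishing at the two ends plus nonnegativity does not propagate to intermediate sets), and the final sentence is circular: the ``nonnegative summands'' $2g(v)+|E(v)|-2$ are exactly the terms whose sum is the quantity the lemma claims vanishes, so you cannot rule out a positive summand without already knowing the lemma. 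A smaller issue: in Step 2, when $V_1\cup V_2=V(G)$ the set $V_1\cap V_2$ is extremal but its complement $V_1^c\sqcup V_2^c$ is disconnected (Proposition~\ref{prop:extremal_cap_cup} gives $E(V_1^c,V_2^c)=\emptyset$), so it need not lie in $\ext(G,D)$; in the good-wall setting this case is excluded anyway because $\leg^{-1}(V_i^c)=S^c\neq\emptyset$, but your reduction as written does not say this.

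For comparison, the paper's proof is a two-liner that uses the good-wall hypothesis head on: write $K^{\log}_G(V)=2g(V)-2+|E(V,V^c)|+|\leg^{-1}(V)|$ (your Step 1), apply Proposition~\ref{prop:L1V} to get $\leg^{-1}(V_1)=\leg^{-1}(V_2)=S$, and then observe that each extremal $V$ gives a vine-curve hyperplane through $\phi^0$ with marking set $S\neq[n]$, so Proposition~\ref{prop:goodwalls} forces $2g(V_1)+|E(V_1,V_1^c)|=2g(V_2)+|E(V_2,V_2^c)|$. Any repair of your Step 3 would have to import this input (or redo the computation behind Proposition~\ref{prop:goodwalls}); it cannot be completed from $\beta^0$-bookkeeping alone.
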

\begin{proof}
We have that \[K_G^{\log}(V)=2g(V)-2+|E(V,V^c)|+|\leg^{-1}(V)|.\] By Proposition~\ref{prop:L1V}, we have that $|\leg^{-1}(V_1)|=|\leg^{-1}(V_2)|$. By Proposition~\ref{prop:goodwalls} we conclude that \[2g(V_1)-2+|E(V_1,V_1^c)|=2g(V_2)-2+|E(V_2,V_2^c)|.\]
\end{proof}

\begin{proposition}
\label{prop:ext_at_most_1}
  If $(G,D)$ is  $\phi^+$-stable, then $|\ext(G,D)|\leq 1$.
\end{proposition}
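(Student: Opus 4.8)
The plan is to argue by contradiction: suppose $(G,D)$ is $\phi^+$-stable and $V_1, V_2 \in \ext(G,D)$ are distinct. By Hypothesis~\ref{hyp:ext_connected} both contain $\leg(1)$, and by Proposition~\ref{prop:extremal_cap_cup} together with the connectivity built into the definition of $\ext(G,D)$ and Corollary~\ref{cor:V1V2_union} and Corollary~\ref{cor:V1V2_intersection}, replacing $V_1, V_2$ by $V_1 \cap V_2$ and $V_1 \cup V_2$ if necessary, we may assume $V_1 \subsetneqq V_2$ (if $V_1 \cup V_2 = V(G)$ one gets a contradiction with $\leg(1)$ lying in both complements after swapping; I should check this reduction carefully). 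Now consider the ``difference'' $W := V_2 \setminus V_1$, which is nonempty. The key input is Lemma~\ref{lem:Ext_Klog}, which gives $K_G^{\log}(V_1) = K_G^{\log}(V_2)$; expanding the definition $K_G^{\log}(V) = 2g(V) - 2 + |E(V,V^c)| + |\leg^{-1}(V)|$ and using Proposition~\ref{prop:L1V} (so $|\leg^{-1}(V_1)| = |\leg^{-1}(V_2)| = |S|$, hence $\leg^{-1}(W) = \emptyset$), one deduces $2g(V_1) + |E(V_1,V_1^c)| = 2g(V_2) + |E(V_2,V_2^c)|$.

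Next I would extract genus and degree information about the subcurve $W$. Since $\beta^0(V_1) = \beta^0(V_2) = 0$ (extremal sets lie on the wall) and, by the relation~\eqref{eq:beta_sum} applied to $V_1 \subseteq V_2$ — or more directly by additivity of the relevant quantities along the decomposition $V_2 = V_1 \sqcup W$ — I can compute $\beta^0(W)$ and the analogous $\phi^+$-quantities. Combining the $K^{\log}$ equality with the genus additivity $g(V_2) = g(V_1) + g(W) + (\text{edges internal to the cut})$ — more precisely using that $p_a$ of the induced subgraphs adds up with a correction term counting edges between $V_1$ and $W$ — one forces $g(W) = 0$ and the number of edges joining $W$ to the rest to be minimal, i.e.\ $W$ (or each connected component of it) looks like a genus-$0$ subgraph meeting its complement in exactly the ``expected'' number of edges, with no markings. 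Then the $\phi^+$-stability inequality applied to the vertex set $W$ (or to $V_1$, or to $V_2 \setminus W = V_1$, whichever is cleanest) must be violated, because a genus-$0$, marking-free piece of ``degree-$0$ type'' sitting between two extremal sets cannot satisfy the strict stability inequality. This mirrors the final contradiction in the proof of Lemma~\ref{lem: noE}, where an exceptional vertex trapped between two cuts contradicts stability.

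The step I expect to be the main obstacle is the bookkeeping that turns ``$K_G^{\log}(V_1) = K_G^{\log}(V_2)$ plus $\beta^0$ vanishing on $V_1, V_2$'' into the precise statement that the subcurve $W = V_2 \setminus V_1$ has $g(W) = 0$, carries no markings, meets its complement in exactly two edges (or the appropriate analogue), and carries degree forcing a stability violation — in other words, identifying $W$ as (a chain of) exceptional vertices. Once $W$ is pinned down combinatorially, the contradiction with $\phi^+$-stability is immediate, exactly as in Lemma~\ref{lem: noE}. A secondary subtlety is handling the reduction to the nested case $V_1 \subsetneqq V_2$: I need to make sure that if $V_1, V_2$ are incomparable then $V_1 \cup V_2 = V(G)$ (Corollary~\ref{cor:V1V2_union}) and $E(V_1^c, V_2^c) = \emptyset$ (Proposition~\ref{prop:extremal_cap_cup}) already produce the contradiction directly — since then $V_1^c$ and $V_2^c$ are disjoint nonempty connected sets whose union is $V(G) \setminus (V_1 \cap V_2)$, and $\leg(1) \in V_1 \cap V_2$, so applying Lemma~\ref{lem:Ext_Klog} and Proposition~\ref{prop:L1V} to this configuration, combined with stability on $V_1 \cap V_2$, should close the case — so in fact both cases reduce to the same $K^{\log}$-counting argument.
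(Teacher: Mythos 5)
Your skeleton matches the paper's argument up to the last step: reduce to nested extremal sets and then use Lemma~\ref{lem:Ext_Klog}. But your final contradiction is aimed at the wrong notion of stability, and as stated it would fail. Once you know $K_G^{\log}(V_1)=K_G^{\log}(V_2)$ for $V_1\subsetneqq V_2$, vertex-additivity of $K_G^{\log}$ gives $K_G^{\log}(V_2\setminus V_1)=0$, and you are already done: $G$ is a \emph{stable graph}, so every vertex satisfies $2g(v)-2+\mathrm{val}(v)+|\leg^{-1}(v)|>0$ and no nonempty vertex subset can have vanishing $K^{\log}$. This is exactly the paper's one-line conclusion. Your plan instead tries to pin down $W=V_2\setminus V_1$ as a genus-$0$, unmarked, two-edge piece and then derive a violation of the $\phi^+$-stability inequality for $D$. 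That last implication is not available: the $\beta$-bookkeeping on $W$ is perfectly consistent — from $\beta^0(V_1)=\beta^0(V_2)=0$ and \eqref{eq:beta_sum} one only gets $\beta^0(W)=|E(V_1,W)|$, e.g.\ $\deg_W(D)=\phi^0(W)$ when $|E(W,W^c)|=2$ and $|E(V_1,W)|=1$, which contradicts nothing about $\phi^\pm$-(semi)stability of $D$. Indeed, on a \emph{quasistable} graph (an exceptional vertex sandwiched between $V_1$ and $V_2^c$) such nested extremal sets do occur with $D$ stable, so no contradiction can come from the divisor inequalities alone; the graph-stability hypothesis is what is doing the work, and the contradiction in Lemma~\ref{lem: noE} that you invoke as a model is in any case a connectivity argument, not a stability-inequality violation.

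A secondary issue is the reduction to the nested case. Your parenthetical claim that $V_1\cup V_2=V(G)$ yields ``a contradiction with $\leg(1)$ lying in both complements'' is garbled: $\leg(1)$ lies in $V_1\cap V_2$, not in the complements. The clean point, which you never state, is the standing assumption of this subsection that $S^c\neq\emptyset$: by Proposition~\ref{prop:L1V} every extremal $V$ has $\leg^{-1}(V)=S$, so the vertices carrying the markings in $S^c$ lie in both $V_1^c$ and $V_2^c$; hence $V_1\cup V_2\neq V(G)$ (and also $V_1\cap V_2\neq\emptyset$ since $\leg(1)$ is in both). Then Propositions~\ref{prop:extremal_cap_cup} and~\ref{prop:extremal_disconnected} give $V_1\cap V_2\in\ext(G,D)$, and one may replace $V_1$ by $V_1\cap V_2$ to reach the nested case — there is no separate ``union is everything'' case to handle. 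With these two corrections (use graph stability for the final contradiction, and $S^c\neq\emptyset$ plus Proposition~\ref{prop:L1V} for the reduction) your argument becomes the paper's proof.
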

\begin{proof}
Assume there are different elements $V_1\neq V_2$ of $\ext(G,D)$. By Proposition~\ref{prop:L1V}, we have  $\leg^{-1}(V_1)=\leg^{-1}(V_2)\neq\emptyset$, so $V_1\cap V_2\neq \emptyset$. Moreover, $\leg^{-1}(V_1^{c})=\leg^{-1}(V_2^{c})\neq \emptyset$, so $V_1\cup V_2\neq V(G)$. By Propositions \ref{prop:extremal_cap_cup}, \ref{prop:extremal_disconnected} and \ref{prop:L1V} we have that $V_1\cap V_2 \in \ext(G,D)$. \par 

So we may assume $V_1\subseteq V_2$. By Lemma~\ref{lem:Ext_Klog} we have that $K_G^{\log}(V_1) = K^{\log}_G(V_2)$, which implies that $K_G^{\log}(V_2\setminus V_1) = 0$, which is a contradiction with the fact that $G$ must be stable by Lemma~\ref{lem: noE}. 
\end{proof}

\begin{corollary}
We have that $(G,D)$ has a nonempty \gc $V_\bullet$ if and only if $G$ is a vine graph and $\beta^-_D(\{\leg(1)\})<0$.
\end{corollary}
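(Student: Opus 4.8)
The plan is to deduce this from the classification of objects of $\catJtildeE$ in the good case (Proposition~\ref{prop:good_disjoint}) together with a short direct check of the converse. First I would recall, via Proposition~\ref{prop: bijection full} (and the identification in Definition~\ref{Def: Jcategories}), that giving a \gc $V_\bullet$ on $(G,D)$ is the same as exhibiting $(G,D,V_\bullet)$ as an object of $\catJtildeE$; here $(G,D)$ is $\phi^+$-semistable by the standing hypothesis of the chapter, so $(G,D)\in\catJ$ and this makes sense. For the ``only if'' direction I would then invoke Proposition~\ref{prop:good_disjoint}: in the good-hyperplane case the objects of $\catJtildeE$ are either the terminal one (where $G$ has no edges) or those with $G$ a vine curve and $V_\bullet=\{\,\{\leg(1)\}\,\}$. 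In the latter case $V_\bullet\subseteq\ext(G,D)$, so its unique element $\{\leg(1)\}$ is extremal, and by the definition of extremal \eqref{eq:extremal} this gives exactly $\beta^-_D(\{\leg(1)\})<0$.

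For the ``if'' direction, suppose $G$ is a vine curve with vertices $v_1$ (carrying the markings $S\ni\leg(1)$, by the convention in Definition~\ref{vinecurvenotation}) and $v_2$, joined by $t\ge 1$ edges, and suppose $\beta^-_D(\{v_1\})<0$. I would set $V:=\{v_1\}=\{\leg(1)\}$ and check $V\in\ext(G,D)$: it is connected, its complement $\{v_2\}$ is connected, $\beta^-(V)<0$ is the hypothesis, and $\beta^+(V)>0$ because $\phi^+$ is nondegenerate, hence $D$ is $\phi^+$-stable (Remark~\ref{remark-deg}), so $\beta^+(W)>0$ for every proper nonempty $W\subsetneq V(G)$. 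Next I would note $\ext(G,D)=\{V\}$: the only proper subsets of $V(G)=\{v_1,v_2\}$ are $\emptyset$, $\{v_1\}$, $\{v_2\}$, and Hypothesis~\ref{hyp:ext_connected} forces an extremal set to contain $\leg(1)=v_1$, ruling out $\emptyset$ and $\{v_2\}$. Finally I would verify that $V_\bullet:=\{V\}$ is a \gc: a one-element poset is a forest, it contains the (unique, hence maximal) element of $\ext(G,D)$, and $\bigcup_{W\in V_\bullet}E(W,W^c)=E(\{v_1\},\{v_2\})=E(G)$ since every edge of the vine curve joins $v_1$ and $v_2$.

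I do not expect a real obstacle: essentially all the weight is carried by Proposition~\ref{prop:good_disjoint} (built on Propositions~\ref{prop:L1V}, \ref{prop:ext_at_most_1} and Lemma~\ref{lem:Ext_Klog}), and the converse is a one-line verification. The only point needing a word of care is the degenerate boundary case where $G$ has no edges: then $\ext(G,D)=\emptyset$ and the empty forest is vacuously a \gc even though $G$ is not a vine curve, so the equivalence is intended for $(G,D)$ with $G$ having at least one edge — equivalently, for nonterminal strata, which are the only ones ever blown up — and I would phrase the statement with this understanding.
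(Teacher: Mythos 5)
Your ``if'' direction is a correct and complete verification (the paper leaves it implicit), and your caveat about the edgeless terminal object is consistent with how the corollary is meant to be read (the paper's own proof starts from ``$V_\bullet$ must be nonempty'', which presupposes $E(G)\neq\emptyset$). The problem is the ``only if'' direction: you deduce it from Proposition~\ref{prop:good_disjoint}, but in the paper that proposition is only \emph{stated} before this corollary; its proof comes afterwards, and its classification content --- that the objects of $\catJtildeE$ are the terminal one and vine curves with $V_\bullet=\{\{\leg(1)\}\}$ --- is precisely what this corollary establishes, while the displayed proof of Proposition~\ref{prop:good_disjoint} only rules out a common specialization of two extremal vine curves, again via Proposition~\ref{prop:ext_at_most_1}. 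So as written your argument is circular relative to the paper's logical order: you would be quoting a statement whose proof presupposes the corollary being proved.

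The repair is short and is exactly the paper's route, for which you already have all the ingredients. By Proposition~\ref{prop:ext_at_most_1}, $\ext(G,D)$ has at most one element. If $E(G)\neq\emptyset$, condition (2) of Definition~\ref{def: fullfor} forces $V_\bullet\neq\emptyset$, hence $V_\bullet=\ext(G,D)=\{V\}$ and $E(G)=E(V,V^c)$; since $V$ and $V^c$ induce connected subgraphs with no internal edges (connectedness of $V$ comes from Hypothesis~\ref{hyp:ext_connected} via Proposition~\ref{prop:extremal_disconnected}, connectedness of $V^c$ from the definition of $\ext(G,D)$), both are single vertices, so $G$ is a vine curve, $V=\{\leg(1)\}$, and $\beta^-_D(\{\leg(1)\})<0$ because $V$ is extremal. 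Substituting this for the appeal to Proposition~\ref{prop:good_disjoint}, and keeping your ``if'' verification, gives a complete and non-circular proof; the gap is thus one of logical ordering rather than a missing idea.
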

\begin{proof}
The 'if' part is immediate, so we focus on the other implication. By Proposition~\ref{prop:ext_at_most_1} we have that $\ext(G,D)$ has at most one element. Since $V_\bullet$ must be nonempty, we have that $V_\bullet=\ext(G,D)$. Because $V_\bullet=\{V\}$ is a \gc, we deduce that $E(G)=E(V,V^c)$, and that  $G(V)$ and $G(V^c)$ are connected. This means that both $V$ and $V^c$ are singletons and hence that $G$ is a vine graph.
\end{proof}

\begin{proof}[Proof of Proposition~\ref{prop:good_disjoint}]
Let $(G',D')$ be a pair with different specializations
\[
f_1\colon (G',D')\to (G_1,D_1)\,\text{ and }\, f_2\colon (G',D')\to (G_2,D_2)
\]
to extremal pairs. By Remark~\ref{rem:pullback_ext} we have that  \[\ext(G',D')\supseteq f_1^{-1}(\ext(G_1,D_1))\cup f_2^{-1}(\ext(G_2,D_2)),\] for $i=1,2$, which means that $\ext(G',D')$ has at least $2$ elements, thus contradicting Proposition~\ref{prop:ext_at_most_1}. 
\end{proof}
    
\section{Nonsingular resolution of the identity}
\label{Sec: blowup construction}


Let $\phi^-, \phi^+ \in V_{g,n}^d$ be on opposite sides of a stability hyperplane $H$ (Definition~\ref{Def: opposite sides}).  In this section we construct a nonsingular resolution $\jphitilde$ of the identity map $\mathsf{Id} \colon \Jb{d}{g,n} (\phi^+)\dashrightarrow \Jb{d}{g,n} (\phi^-)$. We construct $\jphitilde$ as an iterated blowup of $\Jb{d}{g,n}(\phi^+)$ at certain strata indexed by vine graphs with extremal bidegrees $(G^i,D^i)$ (recall from the previous section that a multidegree is 'extremal' if it is $\phi^+$-stable but not $\phi^-$-stable). 

To define the order in which we blowup the vine curve strata, we first introduce a partial order. Let $(G_i,D_i)$, for $i=1,2$, be a pair where $G_i$ is a vine graph and $D_i$ is an extremal bidegree, we also set $v_i$ to be the vertex of $G_i$ such that $\beta_{D_i}^-(\{v_i\})<0$ (i.e., it is the vertex with the first marked point, in particular $\phi^+$ ans $\phi^-$ satisfy Hypothesis \ref{hyp:ext_connected}). 

We say that $(G_1,D_1) \leq (G_2,D_2)$ if there exists   $(G,D) \in \mathfrak{C}_{g,n}(\phi^+)$ and morphisms $f_i\colon (G,D)\to (G_i,D_i)$ such that $f_1^{-1}(\{v_1\})\subseteq f_2^{-1}(\{v_2\})$. Note that, in particular, $f_i^{-1}(\{v_i\})\in \ext(G,D)$ (see Remark~\ref{rem:pullback_ext}). 

The next proposition guarantees that this  preorder is indeed a partial order.
\begin{proposition} \label{prop: partial order}
 Assume $(G_1,D_1),(G_2,D_2)$ are vine graphs with extremal bidegrees for $\phi^+, \phi^-$. Then $(G_1,D_1)\leq (G_2,D_2)$ if and only if $\leg^{-1}_{G_1}(\{v_1\}) \subseteq \leg_{G_2}^{-1}(\{v_2\})$ and $g_{G_1}(v_1) \leq g_{G_2}(v_2)$ and $g_{G_1}(v_1) + |E(G_1)| \leq g_{G_2}(v_2)+|E(G_2)|$.
\end{proposition}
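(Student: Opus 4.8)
First I would fix the setup: given vine curve strata $(G_1, D_1)$ and $(G_2, D_2)$ with extremal bidegrees for $\phi^+, \phi^-$, with distinguished vertices $v_1, v_2$ (the vertices carrying the first marking, equivalently satisfying $\beta^-_{D_i}(\{v_i\}) < 0$). The goal is to show that the preorder $(G_1,D_1) \leq (G_2,D_2)$ — defined by the existence of $(G,D) \in \mathfrak{C}_{g,n}(\phi^+)$ with morphisms $f_i \colon (G,D) \to (G_i, D_i)$ such that $f_1^{-1}(v_1) \subseteq f_2^{-1}(v_2)$ — is equivalent to the three numerical conditions: $\leg_{G_1}^{-1}(v_1) \subseteq \leg_{G_2}^{-1}(v_2)$, $g_{G_1}(v_1) \leq g_{G_2}(v_2)$, and $g_{G_1}(v_1) + |E(G_1)| \leq g_{G_2}(v_2) + |E(G_2)|$.

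\emph{Necessity} is the easy direction. Suppose such $(G,D)$ and $f_1, f_2$ exist. Set $W_i := f_i^{-1}(v_i) \subseteq V(G)$; by Remark~\ref{rem:pullback_ext} each $W_i$ is extremal, and by construction $W_1 \subseteq W_2$. Since the morphisms $f_i$ respect markings and genus, I have $\leg_{G_i}^{-1}(v_i) = \leg_G^{-1}(W_i)$ (the markings on the $v_i$-side of $G_i$), $g_{G_i}(v_i) = g(W_i)$ (the genus of the subcurve, i.e. sum of vertex genera plus first Betti number of $G(W_i)$), and $g_{G_i}(v_i) + |E(G_i)| = g(W_i) + |E(W_i, W_i^c)|$ because $G_i$ is obtained from $G$ by contracting $E(W_i, W_i) \cup E(W_i^c, W_i^c)$, so the $|E(G_i)|$ edges of $G_i$ are exactly the $|E(W_i, W_i^c)|$ cross-edges. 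Now $W_1 \subseteq W_2$ immediately gives $\leg_G^{-1}(W_1) \subseteq \leg_G^{-1}(W_2)$ and, since $G(W_1)$ is a subgraph of $G(W_2)$, also $g(W_1) \leq g(W_2)$. For the third inequality I would use that $g(W) + |E(W, W^c)| = K^{\log}_G(W)/1 + (\text{const})$ — more precisely $2g(W) - 2 + |E(W,W^c)| + |\leg^{-1}(W)| = K^{\log}_G(W)$ — and observe that the quantity $g(W) + |E(W,W^c)|$ is monotone along inclusions of connected extremal sets with connected complement: passing from $W_1$ to $W_2 \supseteq W_1$ adds the vertices of $W_2 \setminus W_1$, whose total genus contribution plus the change in the number of cross-edges is $\geq 0$ precisely because $G(W_2)$ is connected and contains $G(W_1)$ (no cross-edge of $W_1$ is ``lost'' except by being absorbed into $W_2$, and each such absorption is compensated by genus or by new cross-edges of $W_2$). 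I'd make this precise with a short Euler-characteristic count on the induced subgraphs.

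\emph{Sufficiency} is the substantive direction, and I expect it to be the main obstacle. Given the three numerical inequalities, I must \emph{build} a common specialization $(G,D)$ together with the two contractions. The natural candidate is to take $G$ to be the ``refinement'' whose vertex set is partitioned as $V_1 \sqcup (V_2 \setminus V_1) \sqcup (V(G)\setminus V_2)$ — i.e. a graph with (at most) three ``blocks'' realizing the nested pair $W_1 \subseteq W_2$, where the first block has genus $g_{G_1}(v_1)$ and markings $\leg_{G_1}^{-1}(v_1)$, the middle block absorbs the difference in genus and markings, and the outer block carries the rest. The edges must be chosen so that contracting the middle-plus-outer blocks returns $(G_1, D_1)$ (this forces $|E(W_1, W_1^c)| = |E(G_1)|$ cross-edges out of block one) and contracting the outer block alone returns $(G_2, D_2)$ (forcing $|E(W_2, W_2^c)| = |E(G_2)|$ edges out of the first two blocks combined). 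The numerical inequalities are exactly what is needed to distribute genus, edges and markings consistently: the middle block gets genus $g_{G_2}(v_2) - g_{G_1}(v_1) \geq 0$ plus enough ``internal'' edges (a total of $g_{G_2}(v_2)+|E(G_2)| - g_{G_1}(v_1) - |E(G_1)| \geq 0$ worth of first Betti number and cross-edge adjustment) so that all the contraction identities and the stability of $(G,D)$ hold, and it gets markings $\leg_{G_2}^{-1}(v_2) \setminus \leg_{G_1}^{-1}(v_1)$. I would then set $D$ on the three blocks to be $D_1(v_1)$, $D_2(v_2) - D_1(v_1)$, and $D_i$ on the complement, and verify $\phi^+$-stability of $(G,D)$ — here I can invoke that both $W_1$ and $W_2$ are extremal (hence $\beta^0 = 0$ on them) together with Equation~\eqref{eq:beta_sum} and the relation between $\beta^\star$ and degrees, plus the fact that a small enough perturbation keeps everything stable (Remark~\ref{Rem: perturb}). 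The delicate point is ensuring one can realize the required numbers of edges simultaneously while keeping $G$ connected with connected complements of $W_1, W_2$ and keeping every vertex stable; I'd handle the possible edge cases (e.g. when $g_{G_1}(v_1) = g_{G_2}(v_2)$ and $|E(G_1)| = |E(G_2)|$ and markings agree, forcing $(G_1,D_1) = (G_2,D_2)$ and $G$ itself a vine curve) separately, and otherwise pad the middle block with an extra genus-$0$ vertex carrying enough edges and possibly a non-extremal configuration that stabilizes it.

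Throughout, the conceptual engine is the translation between the combinatorial picture on a common refinement $G$ (nested extremal sets $W_1 \subseteq W_2$) and the vine-curve data $(G_i, D_i)$ via edge-contraction, using Remark~\ref{Rem: ext-vine} and Remark~\ref{rem:pullback_ext}; the only real work is the explicit construction in the sufficiency direction and the bookkeeping that the three displayed inequalities are precisely the feasibility constraints for that construction.
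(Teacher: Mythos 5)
Your proposal is correct and takes essentially the same route as the paper: necessity by pulling back to nested extremal sets $W_1\subseteq W_2$ on a common degeneration and using monotonicity of the genus (the paper obtains your third inequality by applying the same genus monotonicity to the complements $W_2^c\subseteq W_1^c$, which is equivalent to your Euler-characteristic count), and sufficiency by building a three-block common degeneration whose genus/edge/marking bookkeeping is feasible precisely under the three stated inequalities. The paper pins your middle-block construction down with a single parameter $\lambda$, taking three vertices with $|E(w_1,w_3)|=\lambda$, $|E(w_1,w_2)|=|E(G_1)|-\lambda$, $|E(w_2,w_3)|=|E(G_2)|-\lambda$ and $g(w_2)=g_{G_2}(v_2)-g_{G_1}(v_1)+\lambda-|E(G_1)|+1$, and the verifications you flag (degree pushforward, $\phi^+$-semistability, vertex stability) are exactly its ``straightforward to check'' step.
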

\begin{proof}
Let us begin proving the ``only if'' part. If $(G_1,D_1)\leq (G_2,D_2)$, then there are morphisms $(G,D)\to(G_i,D_i)$, $i=1,2$, such that 
\[
V_1:= f_1^{-1}(\{v_1\})\subseteq f_2^{-1}(\{v_2\})=: V_2\text{ and }V_1, V_2\in \ext(G,D).
\]
 This proves that $\leg^{-1}_{G_1}(\{v_1\})\subseteq \leg^{-1}_{G_2}(\{v_2\})$ and that $g_{G_1}(v_1)=g(V_1)\leq g(V_2)=g_{G_2}(v_2)$. Since $V_2^c\subset V_1^c$, it is also true that \[g - g_{G_2}(v_2)-|E(G_2)| = g(V_2^c)\leq g(V_1^c) = g - g_{G_1}(v_1)-|E(G_1)|.\] Note that the inequalities $g(V_1)\leq g(V_2)$ and $g(V_2^c)\leq g(V_1^c)$ hold because $V_1, V_2, V_1^c, V_2^c$ all induce connected subgraphs.
 
For the ``if'' part, consider the graph $G$ with $3$ vertices $w_1$, $w_2$, $w_3$, all markings on $w1$, with $|E_G(w_1,w_3)|=\lambda$  and $|E_G(w_1,w_2)|=|E(G_1)|-\lambda$ and $|E_G(w_2,w_3)|=|E(G_2)|-\lambda$. Set $g_G(w_1)=g_{G_1}(v_1)$ and \[g_G(w_2)=g_{G_2}(v_2)-g_{G_1}(v_1)+\lambda -|E(G_1)| +1,\] and then set $g_G(w_3)$ so that $g(G)=g$. The numerical assumptions in the statement guarantee the existence of $\lambda$ such that $|E_G(w_i,w_j)| \geq 1$ for all $i \neq j$ and $g_G(w_2), g_G(w_3) \geq 0$. It is then straightforward to check that the given graph $G$ admits a morphism to $G_1$ (by contracting $E(w_2,w_3)$) and to $G_2$  (by contracting $E(w_1,w_2)$). The `if'' part of the implication follows by  defining $D$ on $G$ by $D(w_1)=D_1(v_1)$ and $D(w_2)=D_2(v_2)-D_1(v_1)$ and $D(w_3)=d-D_2(v_2)$.
\end{proof}

We are now ready to construct our resolution of the identity map.
\begin{construction} \label{con: blowup} Take any  extension to a total order of the partial order defined above on the set of pairs consisting of a vine graph with an extremal bidegree, and denote this extension by $(G^1,D^1)<(G^2,D^2)<\ldots < (G^m,D^m)$. 

Define $J_i$ inductively as follows: $J_0=\overline{\mathcal{J}}(\phi^+)$ and
\[
J_{i} = \bl_{J_{G^{i},D^i, \alpha^i}}(J_{i-1})
\]
where $\alpha^i$ is the only \gf on $(G^i,D^i)$ with $L_{\alpha^i}=\emptyset$, and $J_{G,D,\alpha^i}$ is the strict transform of $J_{G^i,D^i}\subset \Jb{d}{g,n}(\phi^+)$. Following this, let $\jphitilde:= J_m$.

Similarly, let $G^i(P)$ be the same vine graph $G^i$ with an additional marked point $P$ on the vertex that is not $\leg_{G_i}(1)$, and denote by $D^i(P)$ and $\alpha^i(P)$ the obvious lifts.  Then define  $J_i(P)$ inductively, starting from $J_0(P)=\overline{\mathcal{J}}_{g,n+1}(\phi^+,P)$,
and then
\[
J_{i}(P) = \bl_{J_{G^{i}(P),D^i(P), \alpha^i(P)}}(J_{i-1}),
\]
and finally $\jphitildeP:= J_m(P)$.
\end{construction} 

The first point to observe is that this blowup does not depend upon the chosen extension to a total order.

\begin{proposition} \label{prop: transv}
 Let $(G_i,D_i)$, for $i=1,2$, be two pairs where $G_i$ is a vine graph and $D_i$ is an extremal bidegree such that $(G_1,D_1)$ and $(G_2, D_2)$ are incomparable. Set $\alpha_i$ to be the unique, empty, \gf on $(G_i,D_i)$ such that $L_{\alpha_i}=\emptyset$. If $f_1,f_2$ are morphisms $f_i\colon (G,D,\alpha)\to (G_i,D_i,\alpha_i)$ such that $L_\alpha$ contains 
 \[
 \{V\in \ext(G,D); \ V\subseteq f_1^{-1}(\{v_1\})\cap f_2^{-1}(\{v_2\}) \},
 \]
 then $f_1^*(E(G_1))\cap f_2^*(E(G_2))=\emptyset$.
\end{proposition}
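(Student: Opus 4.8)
The plan is to reduce the statement to Proposition~\ref{prop: transversal-self}, which asserts transversality of $(f_1,f_2)$ in the sense of Definition~\ref{def: generic} precisely when the union $f_1^*(v_1)\cup f_2^*(v_2)$ equals $V(G)$ — and in that case the two edge sets $E(f_1^*(v_1),f_1^*(v_1)^c)$ and $E(f_2^*(v_2),f_2^*(v_2)^c)$ are disjoint, which is exactly the conclusion $f_1^*(E(G_1))\cap f_2^*(E(G_2))=\emptyset$ (recall that $E(G_i)$ is carried isomorphically by $f_i^*$ onto the separating edges of $f_i^*(v_i)$, since $G_i$ is a vine curve). So first I would set $V_i := f_i^{-1}(v_i)$ for $i=1,2$; by Remark~\ref{rem:pullback_ext} both $V_1,V_2\in\ext(G,D)$, and by Hypothesis~\ref{hyp:ext_connected} each contains $\leg(1)$, hence $V_1\cap V_2\neq\emptyset$.

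Next I would argue by contradiction: suppose $V_1\cup V_2\neq V(G)$. Then Proposition~\ref{prop:extremal_cap_cup} gives that $V_1\cap V_2$ is extremal (it is nonempty), and by Hypothesis~\ref{hyp:ext_connected} it is connected; moreover $(V_1\cap V_2)^c = V_1^c\cup V_2^c$ is connected since $V_1^c,V_2^c$ are connected (they are complements of elements of $\ext(G,D)$) and $V_1\cap V_2$ is nonempty and connected so it is not disconnected by removing it — so $V_1\cap V_2\in\ext(G,D)$. Since $V_1\cap V_2\subsetneqq V_i = f_i^{-1}(v_i)$ — here I should note that the inclusion is proper, as equality $V_1=V_2$ would force $f_1,f_2$ to agree up to automorphism and we may as well assume they are genuinely distinct specializations of the same vine curve strata, or handle that trivial case separately — the hypothesis on $L_\alpha$ (it contains all extremal $V\subsetneqq f_i^{-1}(v_i)$) gives $V_1\cap V_2\in L_\alpha$. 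Then $\alpha$ being a \gf (Definition~\ref{def:good_function}, Condition~(2), together with $\alpha(V_1\cap V_2)=\emptyset$ since $L_{\alpha_i}=\emptyset$ forces the value on $V_1\cap V_2$ to be empty — or more directly, $V_1\cap V_2$ is not in the image of the relevant extremal set under either $f_i$) yields the existence of $V'\subseteq V_1\cap V_2$ with $\alpha(V')\cap E(V_1\cap V_2,(V_1\cap V_2)^c)\neq\emptyset$. Since $E(V_1\cap V_2,(V_1\cap V_2)^c)\subseteq E(V_1,V_1^c)\cup E(V_2,V_2^c)$, this edge lies in $E(V_i,V_i^c)$ for some $i$, i.e. in $f_i^*(E(G_i))$, so it belongs to $|\alpha|\cap f_i^*(E(G_i))$; but compatibility of $\alpha$ with $\alpha_i$ (Definition~\ref{def:good_function_compatible}, Condition~(3)) together with $|\alpha_i|=\emptyset$ forces $f_i^*(E(G_i))\cap|\alpha|=\emptyset$, a contradiction.

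Hence $V_1\cup V_2 = V(G)$. Then Proposition~\ref{prop:extremal_cap_cup} gives $E(V_1\setminus V_2, V_2\setminus V_1)=\emptyset$, and writing $V_1\setminus V_2 = V_2^c$ and $V_2\setminus V_1 = V_1^c$ this says $E(V_1^c,V_2^c)=\emptyset$, from which $E(V_1,V_1^c)\cap E(V_2,V_2^c)=\emptyset$: indeed an edge in both would have one endpoint in $V_1\cap V_2$ and, since it is separating for $V_1$, its other endpoint in $V_1^c\subseteq V_2$, so both endpoints in $V_2$, contradicting that it separates $V_2$. Translating back through the identifications $E(G_i)\cong E(V_i,V_i^c)$ via $f_i^*$ gives $f_1^*(E(G_1))\cap f_2^*(E(G_2))=\emptyset$. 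The one point requiring care — the main obstacle — is the bookkeeping around genericity: Proposition~\ref{prop: transversal-self} is stated for \emph{generic} $f_1,f_2$, so I would first reduce to the generic case by further contracting $(G,D,\alpha)$ (as in the proof of Proposition~\ref{prop:simple_vine_curve_crossing}), checking that this contraction preserves both the hypothesis on $L_\alpha$ and, by functoriality of $f_i^*$ on edge sets, the desired disjointness conclusion — so that establishing it after contraction suffices.
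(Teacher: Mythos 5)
Your argument is correct in substance, but it is not the route the paper takes: the paper's proof of Proposition~\ref{prop: transv} is a one-line reduction to the numerical characterization of the partial order in Proposition~\ref{prop: partial order} together with an unspecified ``straightforward analysis'' of the common degenerations of $(G_1,D_1)$ and $(G_2,D_2)$, whereas you never use Proposition~\ref{prop: partial order} at all and instead re-run, for two \emph{different} vine-curve targets, the extremal-set argument of Proposition~\ref{prop: transversal-self}: if $V_1\cup V_2\neq V(G)$ then $V_1\cap V_2\in\ext(G,D)$ (Proposition~\ref{prop:extremal_cap_cup}, Hypothesis~\ref{hyp:ext_connected}, and your direct check that the complement $V_1^c\cup V_2^c$ is connected), the hypothesis on $L_\alpha$ puts $V_1\cap V_2$ in $L_\alpha$, and Definition~\ref{def:good_function}(2) combined with condition~(3) of Definition~\ref{def:good_function_compatible} (with $|\alpha_i|=\emptyset$) produces an edge of $|\alpha|$ inside $f_1^*(E(G_1))\cup f_2^*(E(G_2))$, a contradiction; then $V_1\cup V_2=V(G)$ and Proposition~\ref{prop:extremal_cap_cup} yields the disjointness of the two edge sets. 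Your version is self-contained, makes explicit exactly where the hypothesis on $L_\alpha$ and the compatibility with $\alpha_1,\alpha_2$ enter, and treats comparable and incomparable pairs uniformly (in the nested case your contradiction simply shows the hypotheses cannot be satisfied); the paper's version outsources the work to the order-theoretic bookkeeping already needed for Construction~\ref{con: blowup} and leaves the combinatorics implicit.

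Three points need tightening. First, the case $V_1=V_2$ cannot be ``handled separately'': there the conclusion is literally false, since $f_1^*(E(G_1))=f_2^*(E(G_2))=E(V_1,V_1^c)\neq\emptyset$. It must be excluded, as it implicitly is in the paper: the proposition is applied to two distinct blow-up centres, equivalently to morphisms that differ modulo automorphisms, and then $V_1\neq V_2$ because $V_i$ determines $(G_i,D_i)$ and the morphism up to automorphism (Remark~\ref{Rem: ext-vine}), so your standing assumption is the right one and is all you need to get $V_1\cap V_2\subsetneqq V_i$ for some $i$. Second, your parenthetical claim that $\alpha(V_1\cap V_2)=\emptyset$ is neither needed nor justified as stated: if $\alpha(V_1\cap V_2)\neq\emptyset$, take $V'=V_1\cap V_2$ and the same contradiction with compatibility~(3) follows; Definition~\ref{def:good_function}(2) is only needed in the branch where $\alpha(V_1\cap V_2)=\emptyset$. (Similarly, in the last step the assertion that a common edge has an endpoint in $V_1\cap V_2$ is exactly what $E(V_1^c,V_2^c)=\emptyset$ rules out in the other sub-case; spell that out.) Third, the closing worry about genericity is moot: since you re-prove the statement directly rather than invoking Proposition~\ref{prop: transversal-self}, no reduction to a generic pair is required.
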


\begin{proof}

Set $V_1:=f_1^{-1}(\{v_1\})$, $V_2:=f_2^{-1}(\{v_2\})$, and $V_0:=V_1\cap V_2$. We have then that 
\[
E(V_0, V_0^c)\subseteq E(V_1, V_1^c)\cup E(V_2, V_2^c) = f_{1}^*(E(G_1))\cup f_{2}^*(E(G_2)). 
\]

We observe that  $V_0\notin \ext(G,D)$. Otherwise we would  have that $|\alpha|\cap E(V_0, V_0^c)\neq \emptyset$ and then $|\alpha| \cap (f_{1}^*(E(G_1))\cup f_{2}^*(E(G_2))\neq \emptyset$. This would contradict the assumption that the morphisms $f_1$ and $f_2$ are compatible with the vine functions, and that $L_{\alpha_1}=L_{\alpha_2}=\emptyset$.

We now claim that $V_1\cup V_2=V(G)$.    This follows by applying Corollaries \ref{cor:V1V2_union}, \ref{cor:V1V2_intersection} combined with the facts that  $V_0\notin \ext(G,D)$, and $V_1 \not\subseteq V_2$ and $V_2 \not\subseteq V_1$ (the last two because $(G_1,D_1)$ and $(G_2, D_2)$ are by assumption incomparable).

By Proposition \ref{prop:extremal_cap_cup}, we  have that 
\[
E(V_1\setminus V_2, V_2\setminus V_1)=\emptyset,
\]
By using the above, we deduce
\begin{align*}
f_1^*(E(G_1)) &= E\left(V_1, V(G) \setminus V_1)\right)\\
              & = E(V_1, V_2\setminus V_1)\\
              & = E(V_0, V_2\setminus V_1)\\
              &\subseteq E\left(V_2, V_2\right),
\end{align*}
 from which we conclude that $f_1^*(E(G_1))\cap f_2^*(E(G_2))=\emptyset$.

\end{proof}

We immediately deduce:
\begin{corollary}
The blowup $\jphitilde \to \Jb{d}{g,n}(\phi^+)$ is independent of the chosen extension to a total order. (It only depends on the partial order). The same is true of  $\jphitildeP$.
\end{corollary}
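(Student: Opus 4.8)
The plan is to reduce the independence statement to a local commutation property for pairs of blow-ups, namely that whenever two consecutive blow-ups in Construction~\ref{con: blowup} are performed at strata that are ``transversal'' to each other in the sense that their total transforms meet only along strata that appear as even-earlier centers, the two blow-ups may be swapped without changing the result. Concretely, since any two total orderings extending the fixed partial order differ by a sequence of transpositions of adjacent incomparable pairs $(G_i,D_i), (G_j,D_j)$, it suffices to show that blowing up at (the strict transform of) $(G_i,D_i,\alpha_i)$ and then at $(G_j,D_j,\alpha_j)$ yields the same iterated blow-up as performing these two in the opposite order. I would first record that, by Proposition~\ref{prop: transv}, for incomparable vine-curve strata the centers have the property that $f_1^*(E(G_1)) \cap f_2^*(E(G_2)) = \emptyset$ on every common degeneration, so in particular (after the earlier blow-ups have been carried out) the strict transforms of $J_{G_i,D_i}$ and $J_{G_j,D_j}$ are disjoint, or more precisely meet only in loci that have already been modified. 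Hence the two blow-ups are supported on disjoint (strict-transform) centers, and blow-ups along disjoint smooth centers commute.

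The key steps, in order, would be: (i) observe that the partial order of Proposition~\ref{prop: partial order} is such that any two linear extensions are connected by transpositions of adjacent elements that are incomparable in the partial order; (ii) for such an incomparable pair, use Proposition~\ref{prop: transv} (together with Proposition~\ref{prop: transversal-self} and Proposition~\ref{prop:intersection_full}, which guarantee the centers we blow up have transversal self-intersection, so that the blow-up category $\bl_\delta\cat$ of Definition~\ref{def: blowup cat} and Proposition~\ref{prop :blowupstrat} is defined at each step) to deduce that at the stage just before these two blow-ups, the strict transforms of the two centers $J_{G_i,D_i}$ and $J_{G_j,D_j}$ are disjoint as substacks of the current blow-up; (iii) invoke the elementary fact that blowing up a nonsingular DM stack along two disjoint smooth centers gives a result independent of the order, equivalently that $\bl_{\delta_1}\bl_{\delta_2}\cat = \bl_{\delta_2}\bl_{\delta_1}\cat$ when the (strict transforms of the) centers are disjoint — which at the combinatorial level of Definition~\ref{def: blowup cat} is transparent because the ``vine functions'' $\mf$ attached to $\delta_1$ and to $\delta_2$ have disjoint supports $|\alpha_i|$, so the two enhancements of an object of $\catJ$ can be prescribed independently; (iv) conclude by composing transpositions. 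The analogous argument with an extra marked point $P$ gives the statement for $\jphitildeP$ verbatim, since $G^i(P)$ and $D^i(P)$ are just the obvious lifts and the disjointness of supports is preserved.

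The main obstacle I anticipate is step (ii): making precise that ``$f_1^*(E(G_1)) \cap f_2^*(E(G_2)) = \emptyset$ on all common degenerations'' really does translate into disjointness of the relevant strict transforms at the appropriate intermediate stage of the blow-up tower — one has to track carefully which centers have already been blown up (precisely those $(G^\ell,D^\ell)$ with $\ell < i$) and verify that the locus where the unblown-up $J_{G_i,D_i}$ and $J_{G_j,D_j}$ would meet is contained in the union of those earlier centers, hence has already become exceptional, so that on the current stack the strict transforms are genuinely disjoint. This is exactly the content of the combinatorial analysis alluded to in the proof of Proposition~\ref{prop: transv}; I would spell out that any common degeneration $(G,D)$ of $(G_i,D_i)$ and $(G_j,D_j)$ admitting morphisms compatible with $\alpha_i,\alpha_j$ must, by incomparability and Proposition~\ref{prop: partial order}, factor through a third vine curve $(G^\ell,D^\ell)$ with $(G^\ell,D^\ell) < (G_i,D_i)$ and $< (G_j,D_j)$ (the ``$w_1,w_2,w_3$'' graph of the proof of Proposition~\ref{prop: partial order} degenerates to a genuine vine curve along the appropriate edge set), so the potential intersection sits inside an already-blown-up center. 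Once this is in place, the rest is formal. I would also note in passing that independence of the partial order already implies that the two projections $p, p_-$ of $\jphitilde$ are well defined independently of choices, which is what is needed downstream.
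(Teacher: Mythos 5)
Your overall strategy (reduce to swapping two adjacent incomparable centers in the total order, then prove a local commutation statement) is the same as the paper's, but the local commutation step is justified incorrectly, and this is a genuine gap. You claim that, at the stage just before the two blow-ups, the strict transforms of $J_{G_i,D_i}$ and $J_{G_j,D_j}$ are \emph{disjoint}, because any common degeneration would factor through a third, strictly smaller vine-curve center $(G^\ell,D^\ell)$ that has already been blown up. Neither assertion is true in general. Two incomparable extremal vine curves do admit common degenerations $(G,D)$ with $V_1,V_2\in\ext(G,D)$, $V_1\cup V_2=V(G)$ and $E(V_1^c,V_2^c)=\emptyset$ (Proposition~\ref{prop:extremal_cap_cup}); in such a configuration $V_1\cap V_2$ has \emph{disconnected} complement, so it is not an element of $\ext(G,D)$ and does not correspond to any vine-curve center at all. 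Taking $G$ with exactly three vertices $A=V_1\cap V_2$, $B=V_1\setminus V_2$, $C=V_2\setminus V_1$ and $\ext(G,D)=\{V_1,V_2\}$, the locus $\mathcal{J}'_{G,D}$ lies in the closure of both centers, is untouched by all earlier blow-ups, and hence the two strict transforms genuinely intersect there. So the elementary fact ``blow-ups along disjoint smooth centers commute'' does not apply, and your step (ii)/(iii) breaks down.

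What Proposition~\ref{prop: transv} actually gives, and what the paper uses, is not disjointness of the centers but \emph{transversality} of their intersection: at every common stratum $(G,D,\alpha)$ of the partially blown-up space (i.e.\ with all strictly smaller extremal subsets recorded in $L_\alpha$), the two centers are cut out by the node-smoothing parameters indexed by $f_1^*(E(G_1))$ and $f_2^*(E(G_2))$, and these edge sets are disjoint, exactly the transversality condition of Definition~\ref{def: generic}. The correct conclusion then follows from the standard fact that blow-ups along two smooth centers meeting transversally (normal crossings) can be performed in either order with canonically isomorphic results, which is what the paper invokes. Your parenthetical remark in step (iii) about the two enhancements $\mf$ being prescribable independently is in fact closer to the right argument at the combinatorial level, but as written your proof rests on disjointness of strict transforms, and on the false claim that the intersection locus is always contained in an earlier center; both need to be replaced by the transversality statement above for the argument to be valid. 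The same correction applies verbatim to the statement for $\jphitildeP$.
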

\begin{proof} By Proposition~\ref{prop: transv}, if two vine graphs are incomparable under the partial order, their intersection is transversal, hence swapping the order of the two blowups does not change the result. \end{proof}

We let $\catJtilde$ be the category whose objects are $(G, D, \alpha)$, where $\alpha$ is a \gf with $L_\alpha=\ext(G,D)$. The morphisms of $\catJtilde$ are given in Definition~\ref{def:good_function_compatible}.

\begin{remark} \label{Rem: alpha-gives-blowup}
    The category $\catJtilde$ defined in \ref{Def: Jcategories} is the category obtained by blowing up $\cat_{g,n}(\phi^+)$  (as in Definition~\ref{def: blowup cat}) at $(G^1, D^1)$, then at $(G^2, D^2)$, \ldots, and finally at $(G^m, D^m)$. The case $m=1$ was discussed in Example~\ref{oneblowup}, and the general case follows in the same way.  The category $\catJtildeE$ is the subcategory of $\catJtilde$ generated by the exceptional divisors only.

The category $\catJtildeY$ is the subcategory of the stratification category of the stack $\jphitildeP$ (same as $\catJtilde$, but with an extra marking), whose elements are the intersection of the components over the exceptional divisors that do not contain the first marking.
\end{remark}

Our main result in this section is then

\begin{theorem}  \label{Thm: blowup} The stack $\jphitilde$ is nonsingular and the category $\catJtilde=\catJtilde(\phi^+,\phi^-)$ is its blowup stratification from $\Jb{d}{g,n}(\phi^+)$. The same result holds for $\jphitildeP$. Moreover, the forgetful morphism $\jphitildeP \to \jphitilde$ is the quasistable modification of the universal curve.
\end{theorem}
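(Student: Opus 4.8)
\textbf{Proof plan for Theorem~\ref{Thm: blowup}.} The strategy is to proceed by induction on the number $m$ of blow-ups in Construction~\ref{con: blowup}, using the blow-up formalism of Section~\ref{Sec: blowup} to control both the geometry and the stratification category at each step. The base case $m=0$ is the statement that $\catJ_{g,n}(\phi^+)$ stratifies $\Jb{d}{g,n}(\phi^+)$ (Example~\ref{Ex: stratJ}). For the inductive step, suppose $J_{i-1}$ is nonsingular and stratified by $\bl_{(G^1,D^1)}\cdots\bl_{(G^{i-1},D^{i-1})}\catJ$; I must first verify that the strict transform $J_{G^i,D^i,\alpha^i}$ of the vine-curve stratum is an object with transversal self-intersection in this iterated blow-up category, so that Definition~\ref{def: blowup cat} and Proposition~\ref{prop :blowupstrat} apply. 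Transversal self-intersection is exactly Proposition~\ref{prop: transversal-self} (with $L_\alpha = \emptyset$, and with the hypothesis there verified because by the time we blow up $(G^i,D^i)$ we have already blown up all vine curves strictly below it in the partial order — this is where the order of Construction~\ref{con: blowup} and Proposition~\ref{prop: partial order} enter). Granting this, Proposition~\ref{prop :blowupstrat} gives that $J_i = \bl(J_{i-1})$ is again nonsingular and is stratified by the blow-up category. Iterating, $\jphitilde = J_m$ is nonsingular; that its stratification category is $\catJtilde$ is then a purely combinatorial identification, namely that iterating $\bl_{(G^1,D^1)}, \ldots, \bl_{(G^m,D^m)}$ on $\catJ$ yields the category of triples $(G,D,\alpha)$ with $\alpha$ a vine function and $L_\alpha = \ext(G,D)$. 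This is asserted in Remark~\ref{Rem: alpha-gives-blowup} and reduces, via Proposition~\ref{prop: bijection full} and the bookkeeping of Definition~\ref{Def: Jcategories}, to checking that a compatible choice of $\emptyset \neq \alpha(V) \subseteq E(V,V^c)$ for each extremal $V$ is the same datum as the iterated function $\mf$ appearing in Definition~\ref{def: blowup cat} after $m$ successive blow-ups; I would prove this by induction, peeling off the top blow-up and matching the constraint \eqref{eq:mf_empty} against condition~(2) of Definition~\ref{def:good_function}.

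For the universal-curve part, I would run the same induction on the family $J_i(P)$, which by construction blows up $\Jb{d}{g,n+1}(\phi^+,P)$ along the lifts $(G^i(P),D^i(P))$. By Theorem~\ref{prop: univ-semistable-mod}, the forgetful morphism $\Jb{d}{g,n+1}(\phi^+,\sigma_1) \to \Jb{d}{g,n}(\phi^+)$ is the (positively admissible quasistable) modification of the universal curve. The key point is that the blow-up centers on the two sides are compatible: blowing up $\Jb{d}{g,n+1}(\phi^+,P)$ along $(G^i(P),D^i(P))$ and then forgetting $P$ agrees with first forgetting $P$ and then blowing up $\Jb{d}{g,n}(\phi^+)$ along $(G^i,D^i)$ — more precisely, the preimage of the center $J_{G^i,D^i}$ under the universal curve is exactly $J_{G^i(P),D^i(P)}$ (where $P$ runs over the vine-curve nodes and one non-distinguished vertex), because adding a marking $P$ on the non-$\leg(1)$ vertex of a vine curve $G^i$ records precisely the fiber of the universal curve over that stratum. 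Granting this compatibility of centers, blow-up commutes with the (flat, or at worst with explicitly understood fibers) modification morphism, so $\jphitildeP \to \jphitilde$ is again the quasistable modification of the universal curve over $\jphitilde$; the combinatorial identification of its stratification category with "$\catJtilde$ plus an extra marking" follows from the $n \mapsto n+1$ version of the argument above, and $\catJtildeY$ is then carved out as the stated subcategory.

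The main obstacle I anticipate is the compatibility of the blow-up with the universal-curve morphism, i.e.\ showing $\bl_{(G^i(P),\ldots)} \circ (\text{forget } P) = (\text{forget } P) \circ \bl_{(G^i,D^i)}$ at each stage. The subtlety is that the modification $\psi \colon \Jb{d}{g,n+1}(\phi^+,\sigma_1) \to \Cb{g}{n}(\phi^+) \to \Jb{d}{g,n}(\phi^+)$ is not flat — over the non-line-bundle locus it contracts $\mathbb{P}^1$'s — so "blow-up commutes with base change" cannot be invoked as a black box; one must check that the exceptional fiber $E$ of $\bl_{(G^i,D^i)}$ pulls back correctly, meaning that the center downstairs is not contained in the locus over which $\psi$ has positive-dimensional fibers in a way that would destroy the identification, and that the normal bundles match up (so that the projectivized normal cone $\mathbb{P}(\bigoplus_{e \in \alpha(V)} \mathbb{L}_e)$ upstairs is the pullback of the one downstairs). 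I expect this to come out cleanly using Proposition~\ref{prop:estevespacini} to compare the two universal curves over the blown-up bases and the explicit description of the $\mathbb{L}_e$ as cotangent-line tensor products from Section~\ref{notation - moduli}, but it is the step requiring the most care; everything else is either an application of the Section~\ref{Sec: blowup} machinery (nonsingularity, the stratification functor) or the combinatorial matching of vine functions against iterated blow-up data, which is routine given Propositions~\ref{prop: transversal-self}, \ref{prop: transv}, and \ref{prop: bijection full}.
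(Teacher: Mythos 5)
Your treatment of the first statement is essentially the paper's argument: nonsingularity and the identification of the stratification category follow by applying Proposition~\ref{prop :blowupstrat} at each step of Construction~\ref{con: blowup}, with transversal self-intersection of each center supplied by Proposition~\ref{prop: transversal-self} (the hypothesis there being met because of the order of blowups, i.e.\ Propositions~\ref{prop: partial order} and~\ref{prop: transv}), and the combinatorial matching with $\catJtilde$ is Remark~\ref{Rem: alpha-gives-blowup}; no complaints there.

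The second statement is where your plan has a genuine gap. The compatibility of centers you propose to verify --- ``the preimage of the center $\mathcal{J}'_{G^i,D^i}$ under the universal curve is exactly $J_{G^i(P),D^i(P)}$'' --- is false. The scheme-theoretic preimage of $\mathcal{J}'_{G^i,D^i}$ under the flat family $\Jb{d}{g,n+1}(\phi^+,P)\to\Jb{d}{g,n}(\phi^+)$ is the whole quasistable curve over that stratum, i.e.\ the union of the component carrying $\leg(1)$ and the component carrying $P$; the center of Construction~\ref{con: blowup} is only the latter ($P$ on the non-$\leg(1)$ vertex). Because of this, the commutation of blowups you envisage cannot hold as stated: blowing up only the $P$-side does not even yield a morphism to the blown-up base over points of the $\leg(1)$-side lying above the center (the pulled-back ideal of the center is not invertible there), and the fiber product $\jphitilde\times_{\Jb{d}{g,n}(\phi^+)}\Jb{d}{g,n+1}(\phi^+,P)$ has \emph{singular} total space (a local equation of type $uv=st$ wherever a persisting node meets the exceptional directions of the base blowup), so it cannot coincide with the nonsingular $\jphitildeP$. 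Relatedly, your flatness worry is aimed at the wrong morphism: the map along which one base-changes is the family $\Jb{d}{g,n+1}(\phi^+,P)\to\Jb{d}{g,n}(\phi^+)$, which \emph{is} flat (it is a flat family of quasistable curves by Theorem~\ref{prop: univ-semistable-mod}); the non-flat map is $\psi$ to $\Cmb{g}{n}(\phi^+)$, which plays no role in the base change.

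The paper's (admittedly terse) route is different in exactly this respect: it does not match blowup centers upstairs and downstairs, but invokes Theorem~\ref{prop: univ-semistable-mod}, i.e.\ the universal-property/moduli characterization of positively admissible quasistable modifications (Corollary~\ref{cor:bijection}), together with flat base change of the family, to identify $\jphitildeP\to\jphitilde$ as \emph{the} quasistable modification of the pulled-back universal curve; in particular $\jphitildeP$ is a modification of the (singular) fiber product, not the fiber product itself. Your plan would need to be re-routed through that universal-property argument, or else it must explicitly account for the $\leg(1)$-side of the preimage; the center-matching step you single out as the delicate one is not merely delicate, it is not true in the form you state it.
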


This follows as a combination of results in Subsection~\ref{Sec: blowup} and Subsection~\ref{Sec:combowc}.

\begin{proof}
 To prove the first statement we apply Proposition~\ref{prop :blowupstrat} to the category $\catJ$ of Example~\ref{Ex: stratJ}. The fact that each stratum $(G^i, D^i, \alpha^i)$ has transversal self intersection in $J_{i-1}$ is Proposition~\ref{prop: transversal-self} (see also Remark~\ref{Rem: alpha-gives-blowup}).

The second part follows from Theorem~\ref{prop: univ-semistable-mod}, and the fact that blowup commutes with flat base change.
\end{proof}

\begin{remark} 
Note that the vine curve strata indexed by $G^i$ that are part of the datum of our blowup,  do not necessarily have themselves transversal self-intersection (see Example~\ref{notransversalself}) in $\Mmb {g}{n}$, so the procedure of Subsection~\ref{Sec: blowup} cannot be applied to blowup the strata $G^1, \ldots, G^m$ in $\Mmb{g}{n}$ to produce a nonsingular DM stack with a stratification.

Moreover, similarly to Example~\ref{notransversalself}, one can also see that the strata $(G^i,D^i)$ do not themselves have transversal self-intersection in $\Jmb{d}{g}{n}(\phi^+)$. In our construction each stratum $(G^i, D^i)$ only acquires a transversal self-intersection  once lifted to a stratum of $J_{i-1}$ by means of the function $\alpha^i$.
\end{remark}

Let $Y'$ be the Cartier divisor in $\jphitildeP$ given by the sum of all strata that correspond to $(G,D,\alpha)$, where $(G,D)$ is a  vine graph:
\[
Y'=\sum_{i=1}^m J'_{G^i,D^i,\alpha^i}.
\]

Let $\mathcal{L}$ be the sheaf in $\jphitildeP$ obtained by pulling back a tautological sheaf in $\Jb{d}{g,n+1}(\phi^+,P)$ (see Theorem~\ref{prop: univ-semistable-mod}). We have the following result.

\begin{theorem} \label{Thm: resolution}
The line bundle $\mathcal{L}(-Y')$ is $\phi^-$-stable. In particular, the stack $\jphitilde$ comes with two  morphisms  that resolve the identity map $\Jb{d}{g,n} (\phi^+)\dashrightarrow \Jb{d}{g,n} (\phi^-)$. The first is the blowdown morphism (also defined by $\mathcal{L}$), and the second is the morphism defined by $\mathcal{L}(-Y')$.
\end{theorem}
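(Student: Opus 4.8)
The plan is to verify $\phi^-$-stability of $\mathcal{L}(-Y')$ fiber by fiber over $\jphitilde$, using the combinatorial description of the strata of $\jphitilde$ in terms of triples $(G,D,\alpha)$ supplied by Theorem~\ref{Thm: blowup}. Fix a geometric point of $\jphitildeP$ lying in the (locally closed) stratum corresponding to $(G,D,\alpha)$ with $L_\alpha=\ext(G,D)$. The underlying point of $\Jb{d}{g,n+1}(\phi^+,P)$ carries a curve $C$ with dual graph the stabilization of $G$ and a $\phi^+$-stable sheaf of multidegree the pushforward of $D$; pulling back a tautological sheaf gives a positively admissible line bundle, so by Corollary~\ref{cor:bijection} the object $\mathcal{L}$ restricted to this fiber is the admissible line bundle on a quasistable modification whose multidegree is $D$ on the vertices of $G$ and $1$ on each exceptional vertex. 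First I would compute the multidegree of $\mathcal{L}(-Y')$ on $G$: subtracting $Y'=\sum_i J'_{G^i,D^i,\alpha^i}$ twists by $-[X']$ precisely along those simple vine-curve components $X'$ of the universal curve that are ``switched on'' by $\alpha$, i.e.\ those associated (via Remark~\ref{Rem: ext-vine}) to the sets $V$ in the forest $V_\bullet^\alpha$ attached to $\alpha$ by Proposition~\ref{prop:alpha_to_forest}. Twisting by $-[X']$ changes the multidegree by $-1$ on the vertices inside $V$ and $+1$ on each of the $t$ edges crossing from $V$ to $V^c$ in $G^{|\alpha|}$ (the graph with the relevant nodes blown up). So the resulting multidegree $D'$ on the blown-up graph is obtained from $D$ by the ``$\phi^0$-twist'' along the chain $V_\bullet^\alpha$.

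The heart of the argument is then the inequality check: I must show that for every subset $W$ of the vertices of the blown-up graph $G^{|\alpha|}$, writing $\beta^-_{D'}(W)=-\deg_W(D')+\phi^-(W)+|E(W,W^c)|/2$, one has $\beta^-_{D'}(W)\geq 0$, with strictness for the relevant $W$ (those not contained in the exceptional vertices, using Remark~\ref{Rem: perturb}/the $(\phi^-,\sigma_1)$-quasistability formulation). The key input is that $D$ is $\phi^+$-semistable and that $V_\bullet^\alpha$ is the \emph{full} forest detecting exactly the extremal sets, together with the relation \eqref{eq:beta_sum} for $\beta^\star$ and the identity $\beta^0=\tfrac{1}{2}(\beta^++\beta^-)$. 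Concretely: if $W$ is such that $\beta^+_D$ is already strictly positive (and not one of the extremal sets), the twist changes $\beta$ by a bounded amount and one checks it stays $\geq 0$; the delicate $W$ are those where $\beta^+_D(W)$ is small, i.e.\ $\beta^0_D(W)=0$, which by the definition of extremal sets and Proposition~\ref{prop:extremal_cap_cup}, Proposition~\ref{prop:extremal_disconnected} are governed by intersections and unions of elements of $\ext(G,D)$. For those, the full forest condition ($E(G)=\bigcup_{V\in V_\bullet}E(V,V^c)$, and $V_\bullet$ contains all maximal and all minimal extremal sets by Proposition~\ref{prop:properties_next}) guarantees that the $+1$ contributions to the edges exactly compensate, turning the ``bad'' $\beta^-_D(W)<0$ into $\beta^-_{D'}(W)\geq 0$; the blow-up/exceptional-vertex bookkeeping is what upgrades semistability to stability since the only $W$ where equality could persist are contained in the exceptional vertices. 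This verification, done carefully with \eqref{eq:beta_sum} and the $\nex(\cdot)$ combinatorics of Proposition~\ref{prop:properties_next}, is the main obstacle: it is essentially the point where all of Section~5 is used.

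Granting $\phi^-$-stability of $\mathcal{L}(-Y')$ on every geometric fiber, the family version follows because $Y'$ is Cartier on $\jphitildeP$ and stability is an open (in fact, for $\jphitildeP$ a closed-and-open, hence clopen) condition on the fibers of a flat family of simple sheaves; since it holds on all geometric points it holds for the family. Then, since $\jphitildeP \to \jphitilde$ is the quasistable modification of the universal curve (Theorem~\ref{Thm: blowup}), the pushforward of $\mathcal{L}(-Y')$ under this modification is, by Proposition~\ref{prop:estevespacini} and Corollary~\ref{cor:bijection}, a $\phi^-$-stable rank~$1$ torsion-free simple sheaf on the universal curve over $\jphitilde$; by the universal property of $\Jb{d}{g,n}(\phi^-)$ (Theorem~\ref{existenceofMS}) it induces a morphism $p_-\colon \jphitilde \to \Jb{d}{g,n}(\phi^-)$. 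Finally, over the open dense locus $\mathcal{J}^d_{g,n}$ of line bundles on smooth curves, $Y'$ is empty, so $p_-$ agrees there with the blow-down $p\colon \jphitilde\to \Jb{d}{g,n}(\phi^+)$ followed by the identity $\mathcal{J}^d_{g,n}\hookrightarrow\mathcal{J}^d_{g,n}$; hence $p_-$ resolves $\mathsf{Id}$, completing the proof.
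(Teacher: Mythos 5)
There is a genuine gap: the heart of the theorem is precisely the verification that the multidegree of $\mathcal{L}(-Y')$ satisfies every $\phi^-$-inequality on every stratum, and your proposal does not carry this out — it asserts that ``the full forest condition guarantees that the $+1$ contributions to the edges exactly compensate'' and flags this as ``the main obstacle''. As written, this is not a proof but a restatement of what needs to be shown. Worse, the tool you invoke is not the right one for general strata: on a general stratum $(G,D,\alpha)$ of $\jphitilde$ the \gf $\alpha$ is \emph{not} full (fullness characterizes only the strata in $\catJtildeE$, i.e.\ intersections of exceptional divisors), so the properties of \gcs from Proposition~\ref{prop:properties_next} (all maximal/minimal extremal sets present, $E(G)=\bigcup E(V,V^c)$) simply do not apply; what actually saves the inequalities there is condition~(2) of Definition~\ref{def:good_function}, which supplies a compensating edge in $E(V',V^c)$ whenever a twisted-off extremal set $V$ has $\alpha(V)=\emptyset$. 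Moreover, your claim that only the sets with $\beta^0_D(W)=0$ are delicate is too quick: a twist along $V$ \emph{increases} the degree on subsets $W\subseteq V^c$ meeting $E(V,V^c)$, hence \emph{decreases} $\beta^-(W)$, so sets with $\beta^-_D(W)>0$ can a priori be pushed negative; ruling this out requires \eqref{eq:beta_sum} together with the interaction between the different twists (e.g.\ when $W\cup V$ is again extremal), none of which appears in your sketch. So the combinatorial core is missing, not merely compressed.

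The paper avoids all of this by a reduction you did not find: by Proposition~\ref{Prop: degen-on-vine} and Remark~\ref{remark-deg}, $\phi^-$-stability of the family only needs to be checked on vine curves, i.e.\ on the bidegrees induced on the two-vertex contractions. On each locus $(G^i,D^i,\alpha^i)$ where $\mathcal{L}$ fails $\phi^-$-stability, the divisor $Y'$ meets its fiberwise complement in $t_i=|E(G^i)|$ points, so tensoring by $\mathcal{O}(-Y')$ shifts the bidegree by $(-t_i,+t_i)$; since $D^i$ is extremal ($\beta^0=0$, $\beta^-$ a small negative number), the shifted bidegree is $\phi^-$-stable, and that is the whole check. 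Your strategy of verifying all inequalities $\beta^-_{D'}(W)\geq 0$ stratum by stratum could in principle be completed (the statements you would need are essentially the ones sketched above), but it is substantially harder than the paper's argument and, in the form you gave, its decisive step is unproven. The remainder of your proposal — passing from fiberwise stability to the family, pushing forward along the quasistable modification, invoking the universal property of $\Jb{d}{g,n}(\phi^-)$, and checking agreement with the blow-down over the locus of smooth curves — is correct and matches what the paper does implicitly.
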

\begin{proof}
By Proposition~\ref{Prop: degen-on-vine} and Remark~\ref{remark-deg}, it is enough to check that $\mathcal{L}(-Y')$ is $\phi^-$ stable on all vine curves.

This follows from  Construction~\ref{con: blowup}. The divisor $Y'\subset  \jphitildeP$ is supported on the  strata $(G^i, D^i, \alpha^i)$ of $\Jb{d}{g,n}(\phi^+)$, which are exactly the vine curves where $\mathcal{L}$ fails to be $\phi^+$-stable. Moreover, over each $J'_{G^i, D^i}$, the divisor $Y'$ fiberwise intersects its complement at $t_i=|E(G^i)|$ points. Thus tensoring by $\mathcal{O}(-Y')$ has the effect of modifying the bidegree of $\mathcal{L}$ on each stratum $(G^i,D^i, \alpha^i)$ by adding the bidegree $(-t_i, +t_i)$ (where the first element of the pair is the degree on the component of the vine curve that contains the first marking).  Therefore, because the bidegree $D^i$ is extremal on $G^i$, the line bundle $\mathcal{L}(-Y')$ is $\phi^-$-stable on $(G^i,D^i,\alpha^i)$ for all $i=1,\ldots,m$.
\end{proof}


We conclude with the following observation.

\begin{corollary} \label{Cor:Y snc}
 The Cartier divisor $Y' \subset \jphitildeP$ is simple normal crossing, and the stratification category it generates (as in Example~\ref{ex: snc}) is $\catJtildeY$.
\end{corollary}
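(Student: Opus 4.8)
The plan is to prove the two assertions of Corollary~\ref{Cor:Y snc} in turn: first that $Y'$ is a simple normal crossing divisor, and then that the stratification category it generates (in the sense of Example~\ref{ex: snc}) coincides with $\catJtildeY$. For the first assertion, I would use that $\jphitildeP$ is nonsingular (Theorem~\ref{Thm: blowup}), so it suffices to check that the irreducible components $J'_{G^i,D^i,\alpha^i}$ of $Y'$ meet transversally. By Theorem~\ref{prop: univ-semistable-mod}/Theorem~\ref{Thm: blowup}, $\jphitildeP$ is the quasistable modification of the universal curve over $\jphitilde$; correspondingly each component of $Y'$ is the component ``$X' \cup Y'$''-type universal twistor divisor over an exceptional divisor of the blowup, and in particular is a smooth divisor. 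Transversality of distinct components will follow from Proposition~\ref{prop: transversal-self} together with Proposition~\ref{prop:simple_vine_curve_crossing}: the latter says precisely that, for an $(n+1)$-marked vine curve $(G,D)$ with $v_{n+1}\notin V$ for every $V\in \ext(G,D)$ (which is exactly the situation cut out by the strata of $\catJtildeY$, since the extra marking $P$ sits on the vertex \emph{not} carrying the first marking), any two morphisms to it differ by an automorphism — i.e. $\oMor$ has one element — so that the corresponding stratification category is simple normal crossing in the sense of Axiom~\ref{axi:snc}.

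For the identification of the stratification category, I would argue as follows. The components of $Y'$ are indexed by the simple vine curve strata $(G^i,D^i,\alpha^i)$, and an intersection of a sub-collection of them corresponds, via the fiber-diagram description in Section~\ref{section: ncs}, Item~(4), to a generic tuple of morphisms into these strata; by Proposition~\ref{prop:intersection_full} and the bijection of Proposition~\ref{prop: bijection full}, such data is exactly a triple $(G,D,V_\bullet)$ where $V_\bullet$ is a full forest, each of whose elements is the pullback of the relevant simple vine curve. Because the extra marking $P$ never lies in any $V\in V_\bullet$ (by construction of $J_i(P)$, $P$ sits on the vertex not carrying the first marking, and this property is preserved under all specializations), Lemma~\ref{lemma: incomp} forces $V_\bullet$ to be a chain. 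This is precisely the description of the objects of $\catJtildeY$ in Definition~\ref{Def: Jcategories}. Matching morphisms is then immediate from Definition~\ref{def:morphism_Vbullet} and the compatibility conditions of Definition~\ref{def:good_function_compatible}, and matching the ranks/codimensions follows from Remark~\ref{Rem: blowup}.

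Concretely the steps are: (i) invoke nonsingularity of $\jphitildeP$ and reduce SNC to smoothness of each component plus transversality of intersections; (ii) identify each component of $Y'$ with a universal twistor over an exceptional divisor and note it is smooth, using Theorem~\ref{prop: univ-semistable-mod} and the explicit description in Example~\ref{oneblowup}; (iii) prove transversality and the ``$\oMor$ has at most one element'' property by applying Proposition~\ref{prop: transversal-self} and Proposition~\ref{prop:simple_vine_curve_crossing}, after checking that the relevant vine curves $(G^i(P),D^i(P))$ satisfy the hypothesis $v_{n+1}\notin V$ for all $V\in\ext$; (iv) match the strata of the category generated by $Y'$ with the objects of $\catJtildeY$ via the full-forest/chain dictionary (Propositions~\ref{prop: bijection full} and \ref{prop:intersection_full}, Lemma~\ref{lemma: incomp}), and match morphisms and ranks.

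The main obstacle I expect is step (iii), specifically verifying carefully that the strata of $Y'$ genuinely have transversal (not merely ``transversal self-'') intersections inside the already-blown-up space $\jphitildeP$ — in $\Jmb dgn(\phi^+)$ itself they need not, as the remark after Theorem~\ref{Thm: blowup} emphasises. The point is that after passing to $J_{i-1}$, each center $(G^i,D^i,\alpha^i)$ acquires transversal self-intersection via $\alpha^i$, and one must check that the \emph{pairwise} intersections of the resulting exceptional (twistor) divisors are controlled by Proposition~\ref{prop: transv}; combined with Proposition~\ref{prop:simple_vine_curve_crossing} this upgrades ``normal crossing'' to ``simple normal crossing''. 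Threading these two propositions together correctly — in particular checking that the hypotheses of Proposition~\ref{prop:simple_vine_curve_crossing} ($v_{n+1}\notin V$ for all extremal $V$) are exactly the ones guaranteeing the $\catJtildeY$-strata — is where the real content lies; the rest is bookkeeping with the combinatorial dictionary already set up in Section~\ref{Sec:combowc}.
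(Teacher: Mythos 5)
Your proposal is correct and follows essentially the same route as the paper: the paper's proof simply invokes Proposition~\ref{prop:simple_vine_curve_crossing} (whose hypothesis $v_{n+1}\notin V$ for all extremal $V$ is exactly the condition you check) for the simple normal crossing statement, and notes that the identification with $\catJtildeY$ follows directly from the definition of $Y'$. Your extra scaffolding (Propositions~\ref{prop: transversal-self}, \ref{prop: transv}, the full-forest/chain dictionary) is already subsumed in Theorem~\ref{Thm: blowup}, so it is harmless bookkeeping rather than a different argument.
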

\begin{proof}
     The first part follows from Proposition~\ref{prop:simple_vine_curve_crossing}. The second part follows directly from the definition of $Y'$.
\end{proof}

\section{Wall--Crossing Formulas}

\label{Sec: wc formula}
Let $\phi^+, \phi^- \in V_{g,n}^d$ be   on opposite sides of a stability hyperplane $H$ (Definition~\ref{Def: opposite sides}). In this section we find a formula for the wall crossing along $H$ of Brill-Noether classes in terms of pushforward of boundary strata classes. We first give a formula in Theorem~\ref{maintheorem} on the nonsingular resolution $\jphitilde$ of the identity map \[\mathsf{Id} \colon \Jb{d}{g,n} (\phi^+)\dashrightarrow \Jb{d}{g,n} (\phi^-)\] that we defined in Section~\ref{Sec: blowup construction}. Then we write a second formula in Corollary~\ref{differenceinJbar} by taking the pushforward of that difference along the blowdown morphism $p \colon \jphitilde \to \Jb{d}{g,n} (\phi^+)$.

The universal quasistable family $ \jphitildeP \to \jphitilde$ carries two line bundles: the pullback $\mathcal{L}$ of a tautological line bundle on $\Jb{d}{g,n} (\phi^+)$ and its modification $\mathcal{L}(-Y')$, the pullback of a tautological line bundle on $\Jb{d}{g,n} (\phi^-)$ (see Theorem~\ref{Thm: resolution}).

Our main result in Theorem~\ref{maintheorem} is a formula for the difference of the total Chern classes of the derived pushforward of $-\mathcal{L}(-Y')$ and that of $-\mathcal{L}$ as an explicit pushforward of classes supported on the boundary. Because on the (unprimed) ``resolved'' strata the normal bundles split as a direct sum of line bundles, our formula is better written on the ``resolved'' strata instead of the embedded ones. 

Before stating the main results, let us fix some notation, for Theorem~\ref{maintheorem} and for Corollary~\ref{differenceinJbar}.

For each pair $(G,D)\in \mathfrak{C}_{g,n}(\phi^+)$, denote by $\pi_{G,D}\col \mathcal{C}_{G,D}\to \mathcal{J}_{G,D}$ the pullback to $\mathcal{J}_{G,D}$ of the universal quasistable family $\Jmb{d}{g}{n+1}(\phi^+;P) \to \Jmb{d}{g}{n}(\phi^+)$ . The total space $\mathcal{C}_{\mathcal{J}_{G,D}}$ has one irreducilbe component $\mathcal{C}_v^+:=\mathcal{C}_{G,D,v}$ for each vertex $v$ of $G$. We denote by $\pi_v^+:=\pi_{G,D,v}\col\mathcal{C}_v^+\to \mathcal{J}_{G,D}$ the induced map. Also, for each $V\subset V(G)$, we denote by $\pi_V^+\col \bigcup_{v\in V}\mathcal{C}_v^+\to \mathcal{J}_{G,D}$ the induced map on the union. We write $X^+=X_{G,D}^+:=\mathcal{C}_{\leg(1)}^+$ 
and $\Sigma^+ = X^+\cap \mathcal{C}_{\{\leg(1)\}^c}^+$. We also write $Y_V^+=\mathcal{C}^+_{V^c}$ for every $V\subset V(G)$.

    We can extend these notations to $\jphitilde$. Let us recall the geometry of $\jphitilde$ from Section~\ref{Sec: blowup construction}. For each $1 \leq i \leq m$ set $\beta_i:=(G_i,D_i)$ to be the vine strata from Construction~\ref{con: blowup}, so  $\mathcal{J}_{\beta_i}$ are the strata of $\Jb{d}{g,n} (\phi^+)$ whose strict transforms of the images are blown-up, in the given order, in $\Jb{d}{g,n}(\phi^+)$, to obtain $\jphitilde$. Recall that $\catJtildeE$ is the category of the (resolutions of the closed) strata of $\jphitilde$ that are in the intersection of the exceptional divisors $E_i'$. 
    
Objects are triples $(G,D,V_{\bullet})$ for $(G,D) \in \catJ$. Each stratum $\widetilde{\mathcal{J}}_{G,D,V_\bullet}$ admits a forgetful morphism $p_{G,D,V_\bullet}$ to $\mathcal{J}_{G,D}$, and we define $\pi_v$, $\pi_V$, $X$, $\Sigma$ and  $Y_V$  as the pullbacks via $p_{G,D,V_\bullet}$ of the corresponding items defined in the previous paragraph for $(G,D)$. Also, we set $Y_{G,D,V_\bullet}:=\bigcap_{V\in V_\bullet} Y_V$, note that, by Proposition~\ref{prop:extremal_cap_cup},  $Y_{G,D,V_\bullet}$ is nonempty if and only if $V_\bullet$ is a chain (as in the definition of $\catJtildeY$ in Definition~\ref{Def: Jcategories}), and in that case, we have that $Y_{G,D,V_\bullet} = Y_{\max(V_\bullet)}$.

 Also, for some triple $(G,D, V_\bullet)$, we define
\begin{equation} \small \label{defH}
F^X_+:=- R^{\bullet} (\pi_X^+)_\ast \mathcal{L}^+(-\Sigma^+)_{|X^+}; \  F_V^+:=- R^{\bullet} (\pi_{V^c})_\ast \mathcal{L}^+_{|Y_V^+}; \
H_V^+:=F_V^+-\sum_{\substack{V' \in V_{\bullet},\\ V' \gtrdot V}} F_{V'}^+
\end{equation}
and
\begin{equation} \small \label{defHtilde}
{F}^X:=- R^{\bullet} (\pi_X)_\ast \mathcal{L}(-\widetilde{\Sigma})_{|X}; \ \ {F}_V:=- R^{\bullet} (\pi_{V^c})_\ast \mathcal{L}_{|Y_V}; \ \ \textrm{and} \quad
{H}_V:= F_V-\sum_{\substack{V' \in V_{\bullet},\\ V' \gtrdot V}}  F_{V'}.
\end{equation}
 Note also that $\mathcal{L}$ is the pullback of $\mathcal{L}^+$, hence ${F}^X$, ${F}_V$ and ${H}_V$ are the pullback via $p_{G,D,V}$ of $F^X_+$, $F_V^+$ and $H_V^+$ respectively.

Let $E_i$  be the exceptional stratum of the blowup morphism $\jphitilde\to \Jb{d}{g,n} (\phi^+)$, so that $E_i \to E_i' \subset \jphitilde$ is the exceptional divisor, and each $E_i'$ is contracted to $\mathcal{J}'_{\beta_i}$. Following the notation in the previous paragraph, we let then ${X}_i' \cup {Y}_i'$ denote the two irreducible components of the restriction to $E_i'$ of the universal quasistable family $\jphitildeP \to \jphitilde$, where ${X}_i'$ is the component containing the first marked point and ${Y}_i'$ is the other component, and denote by ${X}_i$ and ${Y}_i$ the base change to $E_i$ of ${X}_i'$ and ${Y}_i'$. Recall that the divisor $Y'$ in Theorem~\ref{Thm: resolution} is precisely $\sum_i {Y}_i'$.

We now define  psi-classes following \eqref{def: psi}. Each edge $e \in E(G)$ defines a morphism  $f_e \colon \mathcal{J}_{G,D} \to \mathcal{J}_{G',D'}$ to some codimension one stratum $(G',D')$, and we set 
\begin{equation}
   \Psi_{G',D'}:= -c_1\left(N_{\mathcal{J}_{G',D'}} \Jmb dgn(\phi^+)\right), \quad \Psi_{(G,D,e)}:= f_e^*(\Psi_{G',D'}).
\end{equation}
In Remark~\ref{relate-psi} we will discuss how these compare to the usual psi-classes on $\Mmb gn$.

Similarly, for a triple $(G,D,V_\bullet)$ in $\catJtildeE$, we have that $S_{(G,D,V_\bullet)}=V_\bullet$ (recall the definition of $S_{(G,D,V_\bullet)}$ in Subsection~\ref{subsec:normal_strat} Item~(4)) and each $V \in V_{\bullet}$ defines a morphism $f_{G,D, V} \colon  \jtilde_{G,D, V_\bullet} \to E_i$ for some $i=1,\ldots,m$. As in  Definition~\ref{def: ncs} Item~(3), we set $\LL_V:=f_{G,D,V}^*N_{E_i}(\jphitilde$ and define the  psi-classes \[ \Psi_i:= -c_1 \left(N_{E_i}(\jphitilde \right), \quad \Psi_{G,D, V}:= f_{G,D,V}^*(\Psi_i)=-c_1(\mathbb{L}_V)\]  on $E_i$ and on $\jtilde_{G,D,V_{\bullet}} $ respectively.

Finally, define the coefficient
\[
b_{G,D,V}((j_V)_{V \in V_\bullet}; (k_V)_{V \in V_{\bullet}}):= - \binom{k_V+ g_V-d_V-\sum_{V' \geq V} j_{V'}+(k_{V'}+1)}{k_{V}+1} 
\]
for each vectors $(j_V\geq 0)_{V \in V_{\bullet}}$ and $(k_V\geq 0)_{V \in V_{\bullet}}$  of  nonnegative integers.

\begin{theorem} \label{maintheorem} The difference of total Chern classes 
\begin{equation} \label{minus}
c_t(-R^{\bullet} \pi_\ast \mathcal{L})-c_t(-R^{\bullet} \pi_\ast \mathcal{L}(-Y'))
\end{equation}
in $A^\ast \left(\jphitilde\right)$ equals the boundary class
\begin{equation} \small \label{mainformula}
    \sum_{\substack{\Gamma=(G,D,V_{\bullet}) \in \\ \catJtildeE \setminus\{\bullet\}}} \frac{-f_{\Gamma  \ast}}{\left| \Aut(\Gamma)\right|} \Bigg(  \sum_{\substack{(j_V \geq 0)_{V \in V_{\bullet}}, \\(k_V\geq 0)_{V \in V_{\bullet}} \\ s\geq 0}}   c_s(\widetilde{F}^X) \cdot \prod_{V \in V_{\bullet}}  b_{G,D, V}((j_V)_V, (k_V)_V)  \cdot c_{j_V}(\widetilde{H}_V) \cdot \Psi_V^{k_V}   \Bigg),
\end{equation}
where the sum runs over all resolved strata $\Gamma \in \catJtildeE$  (intersection of exceptional divisors) of $\jphitilde$ (see Section~\ref{Sec: blowup construction}), except the terminal object (the open stratum).
\end{theorem}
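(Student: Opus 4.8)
The plan is to compute the difference \eqref{minus} by reducing it to a local computation on each exceptional divisor and then assembling the contributions via the intersection theory formalism of Section~4. First I would observe that $\mathcal{L}$ and $\mathcal{L}(-Y')$ differ only along $Y' = \sum_i Y_i'$, so the two complexes $-R^\bullet\pi_\ast\mathcal{L}$ and $-R^\bullet\pi_\ast\mathcal{L}(-Y')$ agree off the union of the exceptional divisors $E_i'$; hence the difference of total Chern classes is supported on $\bigcup_i E_i'$, and more precisely on the strata $\Gamma\in\catJtildeE$. The main tool is the short exact sequence $0\to \mathcal{L}(-Y')\to \mathcal{L}\to \mathcal{L}_{|Y'}\to 0$ on the universal curve $\jphitildeP$, which after applying $R^\bullet\pi_\ast$ gives a distinguished triangle relating the two $K$-theory classes to $R^\bullet\pi_\ast(\mathcal{L}_{|Y'})$. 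Iterating this for the components $Y_i'$ of $Y'$ (using that $Y'$ is simple normal crossing by Corollary~\ref{Cor:Y snc}), and using the base change compatibility, one expresses the two derived pushforwards in terms of pushforwards of the sheaves $F^X$, $F_V$, $H_V$ defined in \eqref{defHtilde} restricted to the various $Y_V$ and $X$.

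Next I would do the bookkeeping on a single stratum $\Gamma=(G,D,V_\bullet)$. Here the universal curve breaks up along the chain/forest $V_\bullet$ into the pieces $X$, and the components indexed by $V\in V_\bullet$; the relevant twists by $\mathcal{O}(-Y')$ translate, as in the proof of Theorem~\ref{Thm: resolution}, into shifting degrees by $(-t_i,+t_i)$ across each blown-up vine curve. The combinatorial identity \eqref{tensorbylinebundle} from Remark~\ref{independent of tauto} (Fulton, Example~3.2.2) is what produces the binomial coefficients $b_{G,D,V}$: tensoring a $K$-theory class of the appropriate rank by the line bundle $\mathcal{O}(E_i)$ whose first Chern class restricts to $-\Psi_V$ on $\jtilde_{G,D,V_\bullet}$ introduces exactly sums of products of lower Chern classes against powers $\Psi_V^{k_V}$, with coefficients given by the binomials appearing in $b_{G,D,V}$. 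I would organize this as an induction on $|V_\bullet|$ (equivalently on the number of blow-ups whose exceptional divisors $\Gamma$ meets), peeling off one maximal element of $V_\bullet$ at a time and using the excess intersection formula Proposition~\ref{prop:intersection} (and its Corollary~\ref{cor:intersection}) to push the class from the deeper stratum up to the divisor $E_i$ and then into $\jphitilde$. The factor $1/|\Aut(\Gamma)|$ and the sum over $\catJtildeE$ arise precisely from the normal-crossing bookkeeping in that formula, exactly as in the general GRR statement Proposition~\ref{prop: GRR}.

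Concretely, the computation splits into two independent pieces on each $\Gamma$: the contribution of the component $X$ containing the first marking, which only sees $\mathcal{L}(-\widetilde\Sigma)_{|X}$ and contributes the factor $c_s(\widetilde F^X)$ with no twist (because $\mathcal{O}(-Y')$ does not affect $X$ beyond the node-shift already absorbed into $\widetilde\Sigma$); and, for each $V\in V_\bullet$, the contribution of $Y_V$, where the nested twists along the sub-chain $\{V'\ge V\}$ accumulate and, after applying \eqref{tensorbylinebundle} repeatedly, yield $\sum_{j_V,k_V} b_{G,D,V}((j_V),(k_V))\, c_{j_V}(\widetilde H_V)\,\Psi_V^{k_V}$ — the replacement of $F_V$ by the ``reduced'' class $H_V = F_V - \sum_{V'\gtrdot V}F_{V'}$ being exactly what makes the contributions of nested $V$'s multiply rather than interfere. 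Multiplying the $X$-factor and the per-$V$ factors and summing over $\Gamma$ gives \eqref{mainformula}.

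The hard part, I expect, will be getting the exact form of the coefficient $b_{G,D,V}$ right and verifying that the nested twists combine multiplicatively. The rank of the relevant $K$-theory class changes as one moves through the chain $V_\bullet$ (each twist shifts a fiberwise degree, hence a Riemann--Roch rank), so the argument $g_V-d_V-\sum_{V'\ge V}(j_{V'}-k_{V'}-1)$ inside the binomial has to be tracked carefully through the induction; a sign error or off-by-one in the rank would propagate. The cleanest way to control this is probably to first prove the purely formal statement: if $F$ is a $K$-theory class of virtual rank $r$ on a space and $\Psi$ a divisor class, then $c_t(F\otimes\mathcal{O}(-\Psi))$ expands via \eqref{tensorbylinebundle} with the stated binomials, and then feed in the correct value of $r$ for each $Y_V$ computed from $\deg$ and the genus $g_V$. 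A secondary subtlety is making sure that when $V_\bullet$ fails to be a chain (so $Y_{G,D,V_\bullet}=\emptyset$ by Proposition~\ref{prop:extremal_cap_cup}) those terms genuinely drop out, which they do because the corresponding stratum does not appear in $\catJtildeE$ with a nonempty intersection of the $Y_V$'s — but one should check that the formula \eqref{mainformula} as written already only involves $\widetilde H_V$ for $V$ in the given $V_\bullet$ and no spurious cross-terms.
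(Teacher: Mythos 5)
Your opening moves are the paper's: the short exact sequence $0\to\mathcal{L}(-Y')\to\mathcal{L}\to\mathcal{L}|_{Y'}\to 0$ together with Whitney's formula, the simple normal crossing property of $Y'$ (Corollary~\ref{Cor:Y snc}) feeding an inclusion--exclusion in $K$-theory, then excess intersection (Proposition~\ref{prop:intersection}), the GRR-type pushforward, and the expansion \eqref{tensorbylinebundle} to generate the binomials and the powers of $\Psi_V$. So the skeleton is right.

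The genuine gap is in the middle, exactly where you flag the difficulty and then defer it. After restricting the product $\prod_{\Gamma'\in\catJtildeY}c_t\big(\pm f_{\Gamma'\ast}F^Y_{\Gamma'}/|\Aut(\Gamma')|\big)$ to a stratum $\Gamma=(G,D,V_\bullet)$, excess intersection produces cross-terms indexed by generic collections of morphisms $\Gamma\to\Gamma'$, i.e.\ (via Proposition~\ref{prop:bijection_parts}) by collections of chains covering $V_\bullet$, each decorated with wedge powers of the split normal bundle. Your claim that these contributions ``split into independent pieces'' --- one factor $c_s(\widetilde F^X)$ and one factor per $V\in V_\bullet$ built from $\widetilde H_V$ --- is essentially the conclusion of the theorem, not something that can be assumed: in the paper it is extracted by an inclusion--exclusion that removes the covering condition followed by the rooted-forest identity of Lemma~\ref{lem:sum_tree}, which is what makes the chain sums telescope into $H_V=F_V-\sum_{V'\gtrdot V}F_{V'}$, and which also produces $c_t(\widetilde F^X)$ out of $c_t(F_\Gamma)$ by cancellation against the minimal elements of $V_\bullet$ (the $X$-factor is not simply ``unaffected by the twist''). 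Your proposed substitute --- an induction on $|V_\bullet|$ peeling off a maximal element --- is not developed far enough to see how the cross-terms between the peeled element and the nested or incomparable elements of the remaining forest would be absorbed; until that step (or an equivalent of Lemma~\ref{lem:sum_tree}) is actually carried out, the passage to the product form \eqref{mainformula} with the precise coefficients $b_{G,D,V}$ does not follow. The formal statement you propose to isolate first (Chern classes of a twist by a line bundle, i.e.\ \eqref{tensorbylinebundle}) is only the easy ingredient and is already in place; the real content is the chain/forest combinatorics you left as an assertion.
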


Note that Formula~\eqref{mainformula} gives a total Chern class from which one can immediately deduce the difference of the Brill--Noether classes on $\jphitilde$. This is a more direct formula than e.g. the main result of \cite{prvZ}, where an explicit formula is given for the Chern character, which then requires inversion to obtain the desired Chern class.
\begin{proof}
We start our calculation by making use of the short exact sequence
\begin{equation} \label{exact1}
0 \to \mathcal{L}(-Y') \to \mathcal{L} \to \mathcal{L}|_{Y'} \to 0.
\end{equation}
on the quasistable family $\jphitildeP \to \jphitilde$. For  convenience, define 
\[
F:=-R^\bullet\pi_\ast(\mathcal{L}), \quad \widetilde{F}:= R^\bullet\pi_\ast(\mathcal{L}_{| Y'}).
\]
We then apply Whitney's formula
\begin{equation} \label{whitney}
c_t(-R^\bullet\pi_\ast(\mathcal{L}(-Y')))- c_t(-R^\bullet\pi_\ast(\mathcal{L}))= (c_t(\widetilde{F}) - 1)\cdot c_t(F)
\end{equation}
for the total Chern class of the three terms in  \eqref{exact1}. This computes the opposite of \eqref{minus}. From now on, we will mostly work on the term $c_t(\widetilde{F})-1$. 

For each $\Gamma' \in \catJtildeE$ we let 
\[
F_{\Gamma'}^Y:=-R^\bullet\pi_\ast \mathcal{L}_{| Y_{\Gamma'}}.
\]
We now apply the following:
\begin{lemma}  \label{lem: incl-excl} (Inclusion-exclusion principle for a simple normal crossing stratification.)
Let $\mathcal{D}$ be a simple normal crossing divisor in $X$, and let $\cat$ be its category of  strata. 
Then the following equality holds in the $K$-theory of $X$:
\[
\mathcal{L}|_\mathcal{D}=\sum_{\alpha\in \cat}(-1)^{\codim(\alpha)-1}\mathcal{L}|_{\mathcal{D}_\alpha}
\]
\end{lemma}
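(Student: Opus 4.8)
The plan is to prove the inclusion-exclusion formula for a simple normal crossing divisor $\mathcal{D} = \mathcal{D}_1 + \ldots + \mathcal{D}_r$ by stratifying $\mathcal{D}$ according to which components a point lies on, exactly as one proves the topological inclusion-exclusion principle. First I would reduce to a statement about structure sheaves: since $\mathcal{L}|_{\mathcal{D}_\alpha} = \mathcal{L} \otimes \mathcal{O}_{\mathcal{D}_\alpha}$ and tensoring with $\mathcal{L}$ is exact (as $\mathcal{L}$ is a line bundle, hence flat), it suffices to prove
\[
\mathcal{O}_\mathcal{D} = \sum_{\alpha \in \cat} (-1)^{\codim(\alpha)-1} \mathcal{O}_{\mathcal{D}_\alpha}
\]
in the rational $K$-theory of $X$, where $\mathcal{D}_\alpha$ runs over the closed strata (intersections $\mathcal{D}_{i_1} \cap \cdots \cap \mathcal{D}_{i_k}$) and $\codim(\alpha) = k$. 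Note the terminal object $\bullet$ (with $\codim = 0$, corresponding to $X$ itself) does not occur in the sum since $\mathcal{D}_\bullet$ would be $X$, not a subscheme of $\mathcal{D}$; the sum is over $\alpha \neq \bullet$.

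The key step is an induction on the number $r$ of components of $\mathcal{D}$. For $r = 1$ the statement is trivial: $\mathcal{O}_\mathcal{D} = \mathcal{O}_{\mathcal{D}_1}$. For the inductive step, write $\mathcal{D} = \mathcal{D}' + \mathcal{D}_r$ with $\mathcal{D}' = \mathcal{D}_1 + \ldots + \mathcal{D}_{r-1}$. From the standard short exact sequence relating a sum of divisors to its pieces, namely
\[
0 \to \mathcal{O}_X(-\mathcal{D}')|_{\mathcal{D}_r} \to \mathcal{O}_{\mathcal{D}} \to \mathcal{O}_{\mathcal{D}'} \to 0,
\]
(valid because $\mathcal{D}'$ and $\mathcal{D}_r$ meet transversally, so $\mathcal{D}' \cap \mathcal{D}_r$ is the scheme-theoretic intersection with no embedded components), we get in $K$-theory
\[
\mathcal{O}_\mathcal{D} = \mathcal{O}_{\mathcal{D}'} + \mathcal{O}_{\mathcal{D}_r} - \mathcal{O}_{\mathcal{D}' \cap \mathcal{D}_r}.
\]
Now $\mathcal{D}' \cap \mathcal{D}_r$ is itself a simple normal crossing divisor inside the nonsingular stack $\mathcal{D}_r$, with components $\mathcal{D}_i \cap \mathcal{D}_r$ for $i = 1, \ldots, r-1$, so by the inductive hypothesis (applied twice — once to $\mathcal{D}'$ in $X$ and once to $\mathcal{D}' \cap \mathcal{D}_r$ in $\mathcal{D}_r$) one expands both $\mathcal{O}_{\mathcal{D}'}$ and $\mathcal{O}_{\mathcal{D}' \cap \mathcal{D}_r}$ as alternating sums over strata. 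Combining these with the term $\mathcal{O}_{\mathcal{D}_r}$ and bookkeeping the signs — a stratum $\mathcal{D}_{i_1} \cap \cdots \cap \mathcal{D}_{i_k}$ contained in $\mathcal{D}_r$ arises either from the $\mathcal{D}' \cap \mathcal{D}_r$ expansion (with an extra sign flip from the minus in front) or, if it contains $\mathcal{D}_r$ as one of its factors, only there — yields precisely the claimed alternating sum over all strata of $\mathcal{D}$.

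I do not anticipate a serious obstacle here; the main point requiring care is the sign bookkeeping in the inductive step, making sure that each closed stratum of $\mathcal{D}$ is counted exactly once with the sign $(-1)^{\codim - 1}$, distinguishing the strata that already lie in $\mathcal{D}'$, those that lie in $\mathcal{D}_r$ but not $\mathcal{D}'$, and those in $\mathcal{D}' \cap \mathcal{D}_r$. Since we are working in rational $K$-theory the identity is purely additive and no subtlety about torsion arises. Alternatively — and this is perhaps the cleanest write-up — one can avoid the induction entirely by working locally: on a small enough chart $\mathcal{D}$ looks like a union of coordinate hyperplanes, the $K$-theory class of a union of subschemes meeting transversally is governed by the Koszul-type relation $\mathcal{O}_{\bigcup V_i} = \sum_{\emptyset \neq I} (-1)^{|I|-1} \mathcal{O}_{\bigcap_{i \in I} V_i}$, and one checks this follows from iterating the two-term relation $\mathcal{O}_{A \cup B} = \mathcal{O}_A + \mathcal{O}_B - \mathcal{O}_{A \cap B}$; then one globalizes since both sides are defined globally and the stratification category $\cat$ of the simple normal crossing divisor exactly indexes the nonempty intersections $\bigcap_{i \in I} \mathcal{D}_i$.
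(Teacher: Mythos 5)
Your proposal is correct and follows essentially the same route as the paper: reduce to $\mathcal{L}=\mathcal{O}$, establish the two-term relation $\mathcal{O}_{\mathcal{D}_1+\mathcal{D}_2}=\mathcal{O}_{\mathcal{D}_1}+\mathcal{O}_{\mathcal{D}_2}-\mathcal{O}_{\mathcal{D}_1\cap\mathcal{D}_2}$ from short exact sequences of structure and ideal sheaves, and then iterate over the components (your induction on $r$ with sign bookkeeping is exactly the paper's ``repeatedly decomposing $\mathcal{D}$ until all summands are irreducible''). The only cosmetic difference is that you package the four exact sequences into one sequence on $X$ plus an implicit one on $\mathcal{D}_r$, which changes nothing of substance.
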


\begin{proof} 
It is enough to prove the statement for the case of the structure sheaf $\mathcal{L}=\mathcal{O}$. Assuming that $\mathcal{D}=\mathcal{D}_1+ \mathcal{D}_2$, we have the short exact sequences
\begin{gather*}
0 \to \mathcal{O}(-\mathcal{D}_1-\mathcal{D}_2)\to \mathcal{O}\to \mathcal{O}_{\mathcal{D}_1+\mathcal{D}_2} \to  0 \\
0 \to \mathcal{O}(-\mathcal{D}_1-\mathcal{D}_2)\to \mathcal{O}(-\mathcal{D}_2)\to \mathcal{O}_{\mathcal{D}_1}(-\mathcal{D}_2) \to 0 \\
0 \to \mathcal{O}(-\mathcal{D}_2)\to \mathcal{O}\to \mathcal{O}_{\mathcal{D}_2}\to 0\\
0\to \mathcal{O}_{\mathcal{D}_1}(-\mathcal{D}_2)\to \mathcal{O}_{\mathcal{D}_1}\to \mathcal{O}_{\mathcal{D}_1\cap \mathcal{D}_2} \to 0.
\end{gather*}
By combining these, we obtain the equality
\[
\mathcal{O}_{\mathcal{D}_1+\mathcal{D}_2} = \mathcal{O}_{\mathcal{D}_1} + \mathcal{O}_{\mathcal{D}_2} - \mathcal{O}_{\mathcal{D}_1 \cap \mathcal{D}_2}.
\]
The statement is then obtained by repeatedly decomposing $\mathcal{D}$ until all summands are irreducible. 
\end{proof}

By combining Lemma~\ref{lem: incl-excl} with Lemma~\ref{Cor:Y snc} (the fact that the $Y_i'$ are indeed simple normal crossing), and the multiplicativity of the total Chern class, together with the fact that $Y_{\Gamma} \to Y_{\Gamma}'$ is \'etale of degree $|\Aut(\Gamma)|$ (by Corollary~\ref{Cor:Y snc}), 
we obtain

\begin{equation} \label{step1}
    c_t(F) \cdot (-1+c_t(\widetilde{F})) =c_t(F) \cdot \left(-1+ \prod_{\Gamma' \in \catJtildeY \setminus \{\bullet\}} c_t\left({(-1)^{\codim \Gamma'}} \frac{f_{\Gamma'  \ast}  F_{\Gamma'}^Y}{|\Aut(\Gamma')|}\right)\right)
\end{equation}
where $\catJtildeY\subseteq \catJtildeE$ is the image in $\jphitilde$ of the  stratification induced by $Y_1', \ldots, Y_m'$ (and in the product we have removed its terminal object),  and 
\[
f_{\Gamma'}\colon \widetilde{\mathcal{J}}_{\Gamma'} \to\jphitilde=: \widetilde{\mathcal{J}}
\]
is the (resolution of the closed) stratum $\widetilde{\mathcal{J}}'_{\Gamma'}$.

For a fixed $\Gamma'$, we now aim to write each factor of the product in the RHS of \eqref{step1} as a pushforward via the corresponding  stratum. 
We apply Formula~\eqref{prop: Grothendieck-Riemann-Roch} (Grothendieck-Riemann-Roch for the total Chern class)
to obtain that each factor 

\[c_t\left({(-1)^{\codim \Gamma'}} \frac{f_{\Gamma'  \ast}  F_{\Gamma'}^Y}{|\Aut(\Gamma')|}\right)\] equals 
\begin{equation} \label{step2} \small
   1+ \sum_{\substack{\Gamma \in \catJtildeE,\\ k \geq 1}}\frac{f_{\Gamma \ast}}{\left|\Aut(\Gamma)\right|} \left(\sum_{\substack{\{f_1, \ldots, f_k\} \in \\ \sInt( (f_{\Gamma})^k)_{f_{\Gamma'}} }} \frac{\prod_{j=1}^k \left( f_j^* c_t(\left(\bigwedge^{\bullet}N^{\vee}_{\jtilde_{\Gamma}} \jtilde  \otimes (-1)^{\codim(\Gamma')} F_{\Gamma'}^Y\right)-1\right)}{c_{\text{top}} N_{\jtilde_{\Gamma}} \jtilde  }\right). 
\end{equation}
Note that  intersections in $\catJtildeY$  are not necessarily objects of the latter category, so the product is  taken over  $\catJtildeE \supseteq  \catJtildeY$.

After that, to continue our derivation from Formula~\eqref{step1}, we aim to calculate the product of the terms  in \eqref{step2} for varying $\Gamma'$. 
We  apply the excess intersection formula Proposition~\ref{prop:intersection} to the product~\eqref{step1}, to deduce that it equals 
 \begin{multline}
\label{step3}
 \sum_{\Gamma \in \catJtildeE \setminus \{\bullet\}} \frac{1}{\left|\Aut(\Gamma)\right|}\\ f_{\Gamma \ast} \Bigg(\sum_{\substack{\Gamma_1, \ldots, \Gamma_k  \in \catJtildeY\\ Q_t \subseteq \oMor(\Gamma, \Gamma_t)  \textrm{ for all } t=1, \ldots, k \\ \textrm{such that } \cup_t Q_t \textrm{ is generic} \\}}  
 \frac{c_t(F_{\Gamma}) \cdot  \prod_{\substack{j=1, \ldots, k,\\ f \in Q_j}} \left(f^* c_t(\left(\bigwedge^{\bullet}N^{\vee}_{\jtilde_{\Gamma_j}} \jtilde  \otimes (-1)^{\codim(\Gamma_j)} F_{\Gamma_j}^Y\right)-1\right)}{c_{\text{top}} N_{\jtilde_{\Gamma}} \jtilde  }\Bigg).
\end{multline}

Now we focus on simplifying the term inside the pushforward $f_{\Gamma \ast}$.  Following Proposition~\ref{prop:bijection_parts}.for fixed $\Gamma=(G, D, V_{\bullet})$, there is a natural bijection between the set of morphisms
\[
\left\{ \{Q_t \subseteq \Aut(\Gamma)\backslash\Mor(\Gamma, \Gamma_t)\}_{t=1, \ldots, k} \textrm{ for } \Gamma_1, \ldots, \Gamma_k \in \catJtildeY \text{ s.t. } \cup_t Q_t \textrm{ is generic}\right\} 
\]
and the set
\[
\left\{ \{\ell_1, \ldots, \ell_M \} \subseteq \chains(V_{\bullet}) \textrm{ such that } V_{\bullet} = \cup_{i=1}^M \ell_i  \right\}
\]
with $M=|Q_1| + \ldots + |Q_k|$, given by 
\[
\{Q_1, \ldots, Q_k\} \mapsto \bigcup_{t=1}^k \{f^*(V_{\Gamma_j,\bullet})\}_{f \in Q_j}.
\]

Moreover, if $f_{\ell} \colon \Gamma \to \Gamma_t$ is a contraction that corresponds to the chain $\ell \subseteq V_{\bullet}$, then $f_{\ell}^*(F^Y_{\Gamma_t})=F_{\Gamma, \max(\ell)}$ (in particular, the latter only depends on $\max(\ell) \in V_{\bullet}$, and not on the whole chain). Furthermore, the pullback $f_{\ell}^*(N_{\jtilde_{ \Gamma_t}} \jtilde)$ equals a direct sum of line bundles, which allows us to expand the wedge product
\[\bigwedge^{\bullet} f_{\ell}^*(N_{\jtilde_{ \Gamma_t}} \jtilde)=\bigwedge^{\bullet} \bigoplus_{V \in \ell} \LL_V= \sum_{S \subseteq \ell}  (-1)^{|S|} \bigotimes_{V \in S} \LL_V. \]

In light of this, we  rewrite the numerator inside the pushforward via $f_{\Gamma}$ in \eqref{step3} as
\[ \small
  \sum_{\substack{\Gamma_1, \ldots, \Gamma_k  \in \catJtildeY\\ Q_t \subseteq \oMor(\Gamma, \Gamma_t),\ t=1, \ldots, k \\ \textrm{s.t. } \cup_t Q_t \textrm{ is generic} \\}} { c_t(F_{\Gamma}) \cdot \prod_{\substack{j=1, \ldots, k,\\ f \in Q_j}} \left(f^* c_t\left(\bigwedge^{\bullet}N^{\vee}_{\jtilde_{\Gamma_j}} \jtilde  \otimes (-1)^{\codim \Gamma_j}F_{\Gamma_j}^Y\right)-1\right)},\] which equals \begin{equation} \label{step4}
   c_t(F_{\Gamma}) \cdot \sum_{\substack{\{\ell_1, \ldots, \ell_M\} \subseteq  \\ \chains(V_{\bullet})  \textrm{ s.t. } \\ \ell_1 \cup \ldots \cup \ell_M= V_{\bullet}}}\prod_{i=1}^M \prod_{S \subseteq \ell_i} \left( c_t\left((-1)^{|S|} \bigotimes_{V \in \ell_i} \LL_V^\vee  \otimes (-1)^{|\ell_i|} F_{\Gamma, \max(\ell_i)}\right)-1\right).
  \end{equation}
Next, we apply the inclusion-exclusion principle in the form
\[
\sum_{\substack{\{\ell_1, \ldots, \ell_M\} \subseteq \chains(V_{\bullet})}}\varphi(\ell_1, \ldots, \ell_M)=\sum_{K \subseteq V_{\bullet}}  \sum_{\substack{\{\ell_1, \ldots, \ell_M\} \subseteq \\ \chains(V_{\bullet}) \textrm{ s.t. } \\ \ell_1 \cup \ldots \cup \ell_M= V_{\bullet}\setminus K} } (-1)^{|K|} \varphi(\ell_1, \ldots, \ell_M)
\]
for any function $\varphi\colon \chains{V_{\bullet}} \to \mathbb{Z}$, to eliminate the condition that $\bigcup_{i=1}^M \ell_i=V_{\bullet}$ in the last set of indices of \eqref{step4}. We thus obtain that \eqref{step4} equals
\begin{equation} \label{step5}
    \sum_{K \subseteq V_{\bullet}} (-1)^{|K|}c_t(F_{\Gamma}) \cdot \prod_{S \in \chains(V_{\bullet}\setminus K)}  c_t \Bigg( \bigotimes_{V \in S} \LL_V^\vee  \otimes \sum_{\substack{\ell \in \chains(V_{\bullet} \setminus K)\\ \textrm{such that } S \subseteq \ell}} (-1)^{|S|+|\ell|} F_{\Gamma, \max(\ell)}\Bigg)
\end{equation}
We now apply Lemma~\ref{lem:sum_tree} to simplify \eqref{step5}, so it becomes

\begin{equation} \label{step6} \small
    \sum_{K \subseteq V_{\bullet}} (-1)^{|K|}  c_t\left( F_X\right) \cdot \prod_{V_0 \in V_{\bullet} \setminus K} c_t \Bigg( \bigotimes_{\substack{V \in V_\bullet\setminus K \\ V\leq V_0}} \LL_V^\vee  \otimes H_{K, V_0}\Bigg).
\end{equation}

After all these simplifications, we now go back and replace \eqref{step6} as the numerator of the term in \eqref{step3} that is pushed forward via $f_{\Gamma}$, to obtain that \eqref{step3} equals
 \begin{multline} 
\label{step7}
 \sum_{\Gamma \in \catJtildeE \setminus \{\bullet\}} \frac{f_{\Gamma \ast}}{\left|\Aut(\Gamma)\right|} \\ \Bigg( 
 \frac{    \sum_{K \subseteq V_{\bullet}} (-1)^{|K|}  c_t\left( F_X\right) \cdot \prod_{V_0 \in V_{\bullet} \setminus K} c_t \left( \bigotimes_{V \leq V_0} \LL_V^\vee  \otimes H_{K, V_0}\right)}{c_{\text{top}} N_{\jtilde_{\Gamma}} \jtilde  }\Bigg).
\end{multline}

Our final step to conclude  repeatedly uses Formula~\eqref{tensorbylinebundle} for the total Chern class of the tensor product of a K-theory element times a line bundle, and then divide by
\begin{equation} \label{ctop}
c_{\text{top}} N_{\jtilde_\Gamma} \jtilde  = \prod_{V \in V_{\bullet}} -\Psi_V.
\end{equation}
After combining the binomial coefficients by means of Vandermonde's identity, we obtain that Formula~\eqref{step7} equals the final formula~\eqref{mainformula}. (One way to obtain the formula is to consider only the case $K=\emptyset$ in \eqref{step6}, then expanding as a polynomial in $\{\Psi_V\}_{V \in V_{\bullet}}$, and considering only the monomial containing $\prod_{V \in V_{\bullet}} \Psi_V^{a_V}$ for all $a_V~\geq~1$, and then lowering the exponents $a_V$ by one because of the division by the term in \eqref{ctop}).
\end{proof}

We now prove the ancillary results used in the proof of Theorem~\ref{maintheorem}.

\begin{lemma}
\label{lem:sum_tree}
Let $V_\bullet$ be a rooted forest, and let $(x_V)_{V \in V_{\bullet}}$ be formal variables. Let $S\subseteq V_\bullet$ be a chain in $V_\bullet$. Then
\[
\sum_{\substack{\ell\in \chains(V_\bullet)\\ \text{such that } S\subseteq \ell}} (-1)^{|S|+|\ell|}x_{\max(\ell)} \] equals \begin{enumerate}
    \item 
$-\sum_{V\in \min(V_\bullet)} x_V$, if $S=\emptyset$ \item
$0$,  if there is $V\in V_\bullet\setminus S$ such that $V< \max(S)$  and $S\cup\{V\}$ is a chain, and
\item $
x_{\max(S)}- \sum_{\substack{V\in V_\bullet \\ V\gtrdot \max(S)}} x_V$, in all other cases.
\end{enumerate}
\end{lemma}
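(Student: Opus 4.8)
The plan is to collapse the sum into a product of two elementary alternating‑sum‑of‑subsets computations by cutting each chain $\ell\supseteq S$ at $m:=\max(S)$ (treating $S=\emptyset$ separately at the end). The one structural input is the forest hypothesis: the strict down‑set $D_m:=\{W\in V_\bullet:W<m\}$ is then a chain. Using this, I would set up the bijection sending a chain $\ell$ with $S\subseteq\ell$ to the pair $(T,\ell'')$ with $T:=\{V\in\ell:V<m\}$ and $\ell'':=\{V\in\ell:V>m\}$; here $T$ is any subset of $D_m$ containing $S\setminus\{m\}$ and $\ell''$ is any chain contained in $\{V:V>m\}$, and $\ell=T\cup\{m\}\cup\ell''$ with $\max(\ell)=\max(\{m\}\cup\ell'')$ and $|\ell|=|T|+1+|\ell''|$. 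This factors the sum as
\[
\sum_{\substack{\ell\in\chains(V_\bullet)\\ S\subseteq\ell}}(-1)^{|S|+|\ell|}x_{\max(\ell)}
=(-1)^{|S|}\Bigg(\sum_{S\setminus\{m\}\subseteq T\subseteq D_m}(-1)^{|T|}\Bigg)\Bigg(\sum_{\substack{\ell''\in\chains(V_\bullet)\\ \ell''\subseteq\{V:V>m\}}}(-1)^{1+|\ell''|}x_{\max(\{m\}\cup\ell'')}\Bigg).
\]

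Next I would evaluate the first factor by substituting $T=(S\setminus\{m\})\sqcup T'$, turning it into $(-1)^{|S|-1}(1-1)^{|D_m\setminus(S\setminus\{m\})|}$: this vanishes unless $D_m=S\setminus\{m\}$ and equals $(-1)^{|S|-1}$ otherwise. Since $D_m\supseteq S\setminus\{m\}$ always, $D_m\neq S\setminus\{m\}$ says precisely that there is some $V\in V_\bullet\setminus S$ with $V<\max(S)$ — and then $S\cup\{V\}$ is automatically a chain, as $V$ and all of $S\setminus\{m\}$ lie in the chain $D_m$ — so this already yields the middle (vanishing) case of the lemma. For the remaining case $D_m=S\setminus\{m\}$ I would compute the second factor: peel off $\ell''=\emptyset$ (giving $-x_m$), and for $\ell''\neq\emptyset$ group by $V:=\max(\ell'')>m$, noting that the chains in $\{V':V'>m\}$ with maximum $V$ are exactly $\{V\}\cup\rho$ for $\rho$ a subset of the chain $\{V':m<V'<V\}$; summing $(-1)^{1+|\rho|}$ over $\rho$ gives $-(1-1)^{|\{m<V'<V\}|}$, which is $0$ unless $V\gtrdot m$ and is $-1$ when $V\gtrdot m$. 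Hence the second factor equals $-x_m+\sum_{V\gtrdot m}x_V$, and multiplying by $(-1)^{|S|}(-1)^{|S|-1}=-1$ gives $x_{\max(S)}-\sum_{V\gtrdot\max(S)}x_V$, the last case.

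Finally, for $S=\emptyset$ — where the summand $\ell=\emptyset$, having no well‑defined $x_{\max(\ell)}$, is omitted by convention — I would run the same grouping with no cut point: a nonempty chain $\ell$ with $\max(\ell)=V$ is $\{V\}\cup\rho$ with $\rho\subseteq\{W:W<V\}$, and the identical binomial cancellation leaves only the minimal $V$'s, each contributing $-x_V$, for the total $-\sum_{V\in\min(V_\bullet)}x_V$. The whole argument is bookkeeping around the identity $(1-1)^k=[k=0]$; the two places I would be most careful are the two invocations of the forest property (which are exactly what make ``subset of $D_m$'' the same as ``subchain below $m$'', and ``subset of $\{m<V'<V\}$'' the same as ``subchain strictly between $m$ and $V$''), and the concluding sign reconciliation, which is where a slip is most likely.
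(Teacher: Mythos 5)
Your argument is correct and proves the identity exactly with the statement's normalization $(-1)^{|S|+|\ell|}$. It is organized differently from the paper's proof: you factor the sum at $m=\max(S)$ into an alternating sum over subsets $T$ of the chain $D_m=\{W<m\}$ with $S\setminus\{m\}\subseteq T$, times an alternating sum over chains above $m$, and evaluate both factors by $(1-1)^k$; the paper instead argues case by case, killing the middle case by the involution $C\mapsto C\cup\{V\}$ for a fixed $V<\max(S)$, and in the last case computing the coefficient of each $x_V$ separately (with a second toggling involution on an intermediate $V'$ when $V>\max(S)$ is not a cover). The underlying cancellations are the same, and both arguments use the forest property (strict down-sets are chains) at exactly the two places you flag, but your factorization treats all cases, including $S=\emptyset$, uniformly, and it keeps the signs of the statement, whereas the paper's proof works with $(-1)^{|S|+|C|-1}$, i.e.\ with both sides negated. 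One slip to fix in the middle of your computation of the second factor: for $\ell''=\{V\}\cup\rho$ the summand is $(-1)^{1+|\ell''|}=(-1)^{2+|\rho|}=(-1)^{|\rho|}$, not $(-1)^{1+|\rho|}$, so the $\rho$-sum is $+(1-1)^{|\{V':m<V'<V\}|}$ and the coefficient of $x_V$ for $V\gtrdot m$ is $+1$; this is what your stated conclusion $-x_m+\sum_{V\gtrdot m}x_V$ and the final reconciliation to $x_{\max(S)}-\sum_{V\gtrdot\max(S)}x_V$ actually use, so only that intermediate sentence needs correcting, not the result.
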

\begin{proof}
Assume that $S$ is nonempty and that there exists $V\in V_\bullet\setminus S$ such that $V<\max(S)$ and $S\cup\{V\}$ still is a chain. Then we can write
\begin{multline*}
    \sum_{\substack{C\in \chains(V_\bullet)\\ S\subset C}} (-1)^{|S|+|C|-1}x_{\max(C)} = \\ = \sum_{\substack{C\in \chains(V_\bullet)\\ S\subset C, V\notin C}} (-1)^{|S|+|C|-1}x_{\max(C)} + (-1)^{|S|+|C\cup\{ V\}|-1}x_{\max(C\cup \{V\})} 
\end{multline*}
since $\max(C)=\max(C\cup\{V\})$, we have that the sum is $0$.

Assume that $S$ is nonempty and denote by $\max(S) = V_0$. Also assume that $S =\{ V\in V_\bullet: V\leq V_0\} $. Then we can write
\[
\sum_{\substack{C\in \chains(V_\bullet)\\ S\subseteq C}} (-1)^{|S|+|C|-1}x_{\max(C)} = \sum_{V\in V_\bullet}x_V\sum_{\substack{C\in \chains(V_\bullet)\\ S\subseteq C, \max(C) = V}} (-1)^{|S|+|C|-1}
\]
If $V=V_0$, then the condition $S\subseteq C$ and $\max(C) = V_0$ is equivalent to $C=S$, so 
\[
\sum_{\substack{C\in \chains(V_\bullet)\\ S\subseteq C, \max(C) = V}} (-1)^{|S|+|C|-1} = -1.
\]

If $V\gtrdot V_0$, then the condition $S\subset C$ and $\max(C) = V_0$ is equivalent to $C=S\cup\{V\}$, so
\[
\sum_{\substack{C\in \chains(V_\bullet)\\ S\subseteq C, \max(C) = V}} (-1)^{|S|+|C|-1} = 1.
\]

If $V < V_0$, then the sum is empty, and so it is $0$.

If $V\gg V_0$, choose $V'$ such that $V_0< V'< V$, and then
\[
\sum_{\substack{C\in \chains(V_\bullet)\\ S\subseteq C, \max(C) = V}} (-1)^{|S|+|C|-1} = \\ 
\sum_{\substack{C\in \chains(V_\bullet), \\ S\subseteq C, \max(C) = V \\ V'\notin C}} (-1)^{|S|+|C|-1} + (-1)^{|S|+|C\cup\{V'\}|-1}
\]
which equals $0$.

The case $S=\emptyset$ is similar.
\end{proof}

Our next task is to take the pushforward of Formula~\eqref{mainformula} via the blowdown morphism $p \colon \jphitilde \to \Jb{d}{g,n} (\phi^+)$, to produce an explicit graph formula for the difference of the Brill--Noether classes. Recall the notation for the strata $\mathcal{J}_{G,D}$ that was set in the beginning of this section.


For $V \in V_{\bullet}$, we define the ``close upper edges'' and ``far upper edges'' as
\begin{equation} \label{def: CU FU}
\cu(V):= E(V, \nex(V) \setminus V) \ \textrm{and} \ \fu(V) := E(V, \nex(V)^c)
\end{equation}
so there is a decomposition
\[
E(V,V^c)= \cu(V) \sqcup \fu(V).
\]

 For every collection $(g_V)_{V \in V_{\bullet}}$ of  nonnegative integers, define the class \[c_{(G,D, V_{\bullet})}((g_V)_{V \in V_{\bullet}})\in A^{\bullet}(\overline{\mathcal{J}}_{(G,D)})\] to equal 
\begin{multline}
\label{pushofpsi}
\sum_{\substack{(a_{e,V})_{V \in V_{\bullet}, e \in \fu(V)} \\ (g_{e,V})_{V \in V_{\bullet}, e \in \cu(V)} \\ \textrm{such that, for all }V \in V_{\bullet}, \\  \sum (g_{e,V} +1) - \sum a_{e,V} = g_V+1}}  \prod_{e \in E(G) } \Psi_{(G,D,e)}^{g_{e, k(e)- \sum_{V \in S(e)}}a_{e,V}} \cdot \\ \cdot \prod_{V \in S(e)}(-1)^{a_{e,V}} \binom{g_{e, k(e)} - \sum_{\substack{V' \in S(e) \\ V \subsetneq V'}}, a_{e,V'} }{a_{e,V}}
\end{multline} 
where, the  $a_{e,V}$ and $g_{e,V}$ vary over the nonnegative integers, and for $e \in E(G)$, we let $k(e) \in V_{\bullet}$ be the unique  (by Proposition~\ref{prop:properties_next}) element such that $e \in \cu(k(e))$, and we let $S(e):=\{V\in V_{\bullet} : e \in \fu(V)\}$.

For a given specialization $h \colon (G',D', V'_{\bullet})\to (G,D,V_{\bullet})$, we can define the pullback  $h^*((g_V))~=~h^*((g_V)_{V \in V_{\bullet}})_{V' \in V'_{\bullet}}$ by:
\begin{equation} \label{PB gv}
h^*((g_V)_{V \in V_{\bullet}})_{V'}:= \begin{cases} g_V & \textrm{ if } V'=h^{-1}(V) \textrm{ for some } V \in V_{\bullet}; \\ -1 & \textrm{ otherwise.} \end{cases}
\end{equation}

We have then the following pushforward result.

\begin{proposition} \label{pushpsi}
 The following pushforward formula holds
\begin{multline}
    p_* \left( \frac{f_{(G,D,V_{\bullet})*}}{\left|\Aut(G,D,V_{\bullet})\right|}\left( \prod_{V \in V_{\bullet}}  \Psi_V^{g_V}\right)\right) =\\  \sum_{(G',D') \in \catJ} \frac{f_{(G',D') *}}{\left|\Aut(G',D')\right|}\Bigg(\sum_{\substack{V'_{\bullet} \text{ a \gc in } \ext(G',D'), \\ h \in \oMor((G', D', V'), (G,D,V))}} c_{(G',D',V_{\bullet}')}(h^*((g_V))) \Bigg)
\end{multline}
\end{proposition}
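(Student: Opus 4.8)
<br>

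The plan is to derive Proposition~\ref{pushpsi} as an instance of the general blowup pushforward formula from Corollary~\ref{prop: comparison-psi}, by carefully unwinding the translation dictionary between the abstract stratification-category language of Section~\ref{section:nc and blowup} and the concrete ``full forest'' combinatorics of Section~\ref{sec: combo-of-wc}. Recall that $\jphitilde$ is obtained from $\Jmb dgn(\phi^+)$ by an iterated sequence of blowups at the strata $(G^i, D^i, \alpha^i)$ (Construction~\ref{con: blowup}), and that these blowups are at centers of transversal self-intersection that are, moreover, pairwise transversal (Proposition~\ref{prop: transv}). Hence the whole tower $\jphitilde \to \Jmb dgn(\phi^+)$ is, \'etale-locally, a product of independent blowups, so the pushforward formula of Corollary~\ref{prop: comparison-psi}---stated there for a single blowup---applies with $\delta$ replaced by the (disjoint, transversal) family $\{(G^i,D^i,\alpha^i)\}_{i=1}^m$. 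First I would make this reduction precise: because the $f_i^*(E(G_i))$ are pairwise disjoint (Proposition~\ref{prop: transv}), a general object $(G,D,V_\bullet) \in \catJtildeE$ decomposes uniquely according to the chain decomposition of $V_\bullet$ (Proposition~\ref{prop:alpha_to_forest}), and the combinatorial data $M_\delta(\gamma)$, $H^\delta_{\gamma,\nf}$, $\cu_\delta$, $\fu_\delta$ of Section~\ref{Sec: blowup} specialize exactly to the data $\cu(V)$, $\fu(V)$, and the index set appearing in \eqref{pushofpsi}.

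The main bookkeeping step is the identification of the abstract edge sets with the concrete ones. For a stratum $(\gamma,\nf)$ of the blowup category, the decomposition $S_{(\gamma,\nf)} = S_{\gamma\setminus\delta} \sqcup \fu_\delta(\gamma,\nf) \sqcup (\text{exceptional } \oMor(\gamma,\delta))$ from Remark~\ref{Rem: blowup} must be matched with the decomposition $E(G) = \bigsqcup_{V \in V_\bullet} E(V, \nex(V)\setminus V)$ of Item~(7) in Proposition~\ref{prop:properties_next}, refined by $E(V,V^c) = \cu(V) \sqcup \fu(V)$. Concretely: an edge $e \in E(G)$ contributes a ``$\Psi_{(G,D,e)}$'' factor; it lies in a unique $\cu(k(e))$ (its ``close upper edge'' class, $k(e)$ well-defined by Proposition~\ref{prop:properties_next}(7)), and it lies in $\fu(V)$ for exactly the set $S(e) = \{V : V < k(e), e \in E(V,V^c)\}$ of strictly-smaller extremal sets. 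This is precisely the structure that the abstract $H^\delta_{\gamma,\nf}$ encodes, with the constraint $\sum_{e\in\nf(\bar j)}(g_e+1) = g'_{\bar j}+1+\sum_{e \in j^*(S_\delta)\setminus\nf(\bar j)} a_e$ becoming, for each $V \in V_\bullet$, the constraint $\sum_{e\in\cu(V)}(g_{e,V}+1) - \sum_{e\in\fu(V)} a_{e,V} = g_V + 1$ displayed under the summation in \eqref{pushofpsi}. The binomial coefficients $(-1)^{a_e}\binom{g_e}{a_e}$ of Corollary~\ref{prop: comparison-psi} then become the product $\prod_{V \in S(e)} (-1)^{a_{e,V}}\binom{g_{e,k(e)} - \sum_{V' \supsetneq V,\, V'\in S(e)} a_{e,V'}}{a_{e,V}}$, where the nested subtraction in the top of the binomial reflects that each successive blowup (as one moves down the chain from $k(e)$ toward the minimal elements) acts on the proper transform of the previous exceptional divisor, so the ``available'' exponent on $\Psi_{(G,D,e)}$ is reduced at each stage---this is exactly the iterated structure of Construction~\ref{con: blowup}.

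The surjection $\oMor((G',D',V'_\bullet),(G,D,V_\bullet))$ indexing the inner sum is just the abstract $\oMor((\gamma,\nf),(\beta,\mf))$ from Corollary~\ref{prop: comparison-psi}, and the pullback $h^*((g_V))$ of \eqref{PB gv}---with the convention that a $V'$ \emph{not} in the image of $V_\bullet$ gets the value $-1$---is literally the abstract $h^*(g'_{e'})$ with the ``$-1$ for $\widetilde e \in S_{\gamma,\nf}\setminus h^*(S_{\beta,\mf})$'' convention. So after these identifications the statement of Proposition~\ref{pushpsi} is a verbatim transcription of Corollary~\ref{prop: comparison-psi} applied to $\cat = \catJ$ and the family of centers $\{(G^i,D^i,\alpha^i)\}$. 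I expect the hardest part to be checking that the ``$M_\delta(\gamma)$ ranges over full forests refining $V_\bullet$'' bookkeeping is correct for an \emph{iterated} (rather than single) blowup, i.e.\ that Corollary~\ref{prop: comparison-psi} composes cleanly along the tower $J_0 \to J_1 \to \cdots \to J_m$; this follows from the transversality in Proposition~\ref{prop: transv} (which guarantees that blowup and strict-transform commute with each subsequent blowup), but the combinatorial verification that the composed indexing data equals the single-step data for the disjoint union $\delta = \bigsqcup (G^i,D^i,\alpha^i)$ requires some care, using Remark~\ref{rem:lifting} to track how strata lift through each stage. I would therefore structure the proof as: (1) reduce to the disjoint-union single-step case via Proposition~\ref{prop: transv}; (2) invoke Corollary~\ref{prop: comparison-psi}; (3) translate the abstract data into $\cu$, $\fu$, $k(e)$, $S(e)$ using Proposition~\ref{prop:properties_next}; (4) match the constraint and binomial coefficients term by term.
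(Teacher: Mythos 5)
Your proposal is correct and follows essentially the same route as the paper, whose entire proof of Proposition~\ref{pushpsi} is the one-line citation ``Follows from Corollary~\ref{prop: comparison-psi}.'' The translation dictionary you spell out---matching $M_\delta(\gamma)$, $H^\delta_{\gamma,\nf}$, $\cu_\delta$, $\fu_\delta$ with the full-forest data $\cu(V)$, $\fu(V)$, $k(e)$, $S(e)$ via Proposition~\ref{prop:properties_next}, and handling the iterated blowups through the transversality of Proposition~\ref{prop: transv}---is exactly what the paper leaves implicit in that citation.
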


\begin{proof}
Follows from Corollary~\ref{prop: comparison-psi}.
\end{proof}
Our final step is to take the pushforward of our formula in Theorem~\ref{maintheorem} via the blowdown morphism $p \colon \jphitilde \to \Jmb dgn(\phi^+)$. Note first that the K-theory elements defined in \eqref{defH} are pullbacks via $p$ of similar classes, which we will denote with the same name in the next result. The pushforward via $p$ is then obtained by combining Theorem~\ref{maintheorem} and Proposition~\ref{pushpsi}.
\begin{corollary} \label{differenceinJbar}
The difference $\mathsf{w}_d(\phi^+)- \mathsf{Id}^*\mathsf w_d(\phi^-) $ in $A^{g-d} \big(\Jmb dgn(\phi^+)\big)$ equals

\begin{multline} \label{relation}
-\sum_{(G,D) \in \catJ} \frac{1}{|\Aut(G,D)|} f_{(G,D) \ast} \\ \Bigg( \sum_{\substack{  V_{\bullet} \textrm{ a \gc  in }  \ext(G,D), \\ s+\sum_{V} j_{V} + \sum g_{e} =g-d-|E(G)|}} \alpha(s,(j_V), (g_e)) \cdot  c_s(F^X_+) \cdot \prod_{V \in V_{\bullet}} c_{j_{V}} (H_{V}^+) \prod_{e} \Psi_{(G,D,e)}^{g_e}\Bigg),\end{multline}
where each coefficient $\alpha(s,(j_V)_{V \in V_{\bullet}}, (g_e)_{e \in E(G)})$ is defined by
\begin{multline} \small
\label{eq:alpha}
    \sum_{(a_{e,V})_{(e,V)}} (-1)^{|V_{\bullet}|} \prod_{e \in E(G)} \prod_{V \in S(e)} (-1)^{a_{e,V}}\binom{g_e + \sum_{\substack{V' \in S(e) \\ V' \subseteq V}}, a_{e,V'} }{a_{e,V}} \cdot \\ \small
     \prod_{V \in V_{\bullet}} \binom{\bigg(\sum_{V \subseteq V'} (\rk H_{V'}^+-j_{V'})\bigg) - \bigg(\sum_{e \in \operatorname{CN}(V)} (g_e+1 + \sum_{\substack{V' \subseteq V,\\e \in \fu(V')}} a_{e,V'})\bigg)} {\sum_{e \in \cu(V)}g_e+ 1+\sum_{\substack{e \in \cu(V) \\ V' \in S(e)}} a_{e, V'} - \sum_{e \in \fu(V)}a_{e,V} } ,
\end{multline}
where $\operatorname{CN}(V):=E(\nex(V), \nex(V)^c)$, each $a_{(e,V)}$ ranges over the integers, and the indices $(e,V)$ range over all $V \in V_{\bullet}$ and over all $e \in E(V, \fu(V))$.
\end{corollary}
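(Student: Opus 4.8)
\textbf{Proof plan for Corollary~\ref{differenceinJbar}.}
The statement is obtained by pushing forward the formula of Theorem~\ref{maintheorem} along the iterated blow-down $p\colon \jphitilde\to\Jmb dgn(\phi^+)$, so the plan is to feed the right-hand side \eqref{mainformula} through the pushforward machinery of Proposition~\ref{pushpsi} (equivalently Corollary~\ref{prop: comparison-psi}), and then to collect and re-package the resulting coefficients. First I would recall that the K-theory classes $\widetilde F^X$, $\widetilde H_V$ appearing in \eqref{mainformula} are pullbacks via $p$ (more precisely via the forgetful maps from the resolved strata $\jtilde_{G,D,V_\bullet}$ to $\mathcal{J}_{G,D}$) of the classes $F^X_+$, $H_V^+$ defined in \eqref{defH}; this is noted right before the statement and follows from the fact that $\mathcal{L}$ is the pullback of $\mathcal{L}^+$. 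Hence, by the projection formula, the only genuinely nontrivial part of the pushforward is the monomial $\prod_{V\in V_\bullet}\Psi_V^{k_V}$ in the psi-classes of the exceptional divisors, and the Chern-class factors $c_s(F^X_+)$, $c_{j_V}(H_V^+)$, together with the combinatorial coefficient $\prod_V b_{G,D,V}$, come along as spectators. So the core computation is exactly the content of Proposition~\ref{pushpsi}: $p_*\big(\tfrac{f_{(G,D,V_\bullet)*}}{|\Aut|}\prod_V\Psi_V^{g_V}\big)$ is a sum over pairs $(G',D')$ and \gcs $V'_\bullet$ of the explicit classes $c_{(G',D',V'_\bullet)}(h^*((g_V)))$ of \eqref{pushofpsi}.

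With Proposition~\ref{pushpsi} in hand, the argument proceeds as follows. Substitute the expression from Proposition~\ref{pushpsi} for each summand of \eqref{mainformula}; after re-indexing, the double sum over $\Gamma=(G,D,V_\bullet)\in\catJtildeE$ and over $(G',D',V'_\bullet)$ with a morphism $h$ collapses, because the outer sum in \eqref{mainformula} already ranges over all of $\catJtildeE$, so summing over sources $\Gamma$ mapping to a fixed target amounts (after the bijection of Proposition~\ref{prop:bijection_parts} between morphisms and subsets, and the compatibility of the coefficients with pullback via \eqref{PB gv}) to a single sum over $(G',D')\in\catJ$ and \gcs $V'_\bullet$ in $\ext(G',D')$ — which is precisely the index set in \eqref{relation}. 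The degree-counting constraint $s+\sum_V j_V+\sum_e g_e=g-d-|E(G)|$ in \eqref{relation} comes from matching codimensions: the left-hand side has codimension $g-d$, the pushforward $f_{(G,D)*}$ drops codimension by $|E(G)|=\codim\mathcal{J}_{G,D}$, and the Chern/psi factors must make up the rest. Then one reads off the coefficient: the $b_{G,D,V}$-factors of \eqref{mainformula}, the psi-pushforward coefficients of \eqref{pushofpsi} (the signs $(-1)^{a_{e,V}}$ and binomials in the $a_{e,V}$), and the $(-1)^{|V_\bullet|}$ from the $-f_{\Gamma*}$ signs in \eqref{mainformula}, all assemble into the explicit $\alpha(s,(j_V),(g_e))$ of \eqref{eq:alpha}. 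The binomial $\binom{(\sum_{V\subseteq V'}(\rk H^+_{V'}-j_{V'}))-\cdots}{\cdots}$ arises from combining, via Vandermonde, the coefficient $b_{G,D,V}$ (which involves $g_V-d_V=\rk H^+_V$ and the exponents $k_V$) with the constraint $\sum(g_{e,V}+1)-\sum a_{e,V}=g_V+1$ that defines the index set of \eqref{pushofpsi}; I would carry this out by expanding $b_{G,D,V}$ in terms of $\rk H_V^+$ using that $c_{j_V}(H_V^+)=0$ for $j_V>\rk H_V^+$, so that $g_V-d_V-\sum j_{V'}$ can be rewritten in terms of $\sum(\rk H^+_{V'}-j_{V'})$.

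The main obstacle I anticipate is purely bookkeeping rather than conceptual: tracking the interplay of the three layers of indices — the psi-exponents $g_e$ on the edges of $G$, the splitting $E(V,V^c)=\cu(V)\sqcup\fu(V)$ with the associated integers $g_{e,V}$ (on close edges) and $a_{e,V}$ (on far edges) from Proposition~\ref{pushpsi}, and the exceptional psi-exponents $k_V$ from Theorem~\ref{maintheorem} — and verifying that after the Vandermonde contraction the surviving free parameters are exactly $(s,(j_V),(g_e))$ with the stated constraint, with everything else summed into $\alpha$. A secondary point requiring care is the correct handling of the various $\frac{1}{|\Aut(-)|}$ normalizations: Proposition~\ref{pushpsi} already carries the $\frac{1}{|\Aut(G',D')|}$ and $\frac{1}{|\Aut(G,D,V_\bullet)|}$ factors, and one must check these are compatible with the $\frac{1}{|\Aut(\Gamma)|}$ in \eqref{mainformula} under the morphism-vs-subset bijection, so that no spurious multiplicities appear. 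Finally, one should double-check the identity $\operatorname{CN}(V)=E(\nex(V),\nex(V)^c)$ is the natural quantity governing the ``downstream'' edge-count in the binomial — this follows from Proposition~\ref{prop:properties_next}(7) decomposing $E(G)$ along the forest $V_\bullet$ — and that the convention \eqref{PB gv} (assigning $-1$ to edges not in the image of $h$) is what produces, via $\binom{g_e}{a_e}$ with $g_e=-1$, the correct vanishing/sign behaviour on those edges.
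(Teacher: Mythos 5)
Your plan is correct and matches the paper's own proof: the paper likewise pushes forward Formula~\eqref{mainformula} via $p$, uses that $\widetilde F^X$ and $\widetilde H_V$ are pullbacks of $F^X_+$ and $H_V^+$ together with the push--pull formula and Proposition~\ref{pushpsi}, and then collapses the double sum into a single sum over $(G',D')$ and full forests by re-indexing (allowing exponents $g_{V'}=-1$ to encode the contraction $h$) before absorbing all remaining indices into the coefficient $\alpha$. The only cosmetic difference is that you spell out the Vandermonde/rank bookkeeping that the paper leaves implicit in the definition of $\alpha$.
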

 (Note that, because of the last binomial, the summand is zero except for finitely many natural number values of $a_{e,V}$, Also, note that $H^+_V$  depends on $V_{\bullet}$).
\begin{proof}
First observe that the result amounts to taking the degree $g-d$ part of the pushforward via $p$ of Formula~\eqref{mainformula}.

The calculation that we are attempting has the form
\begin{equation} \label{one}
p_* \left( \sum_{(G,D,V_{\bullet}) \in \catJtildeE} \frac{f_{(G,D,V_{\bullet}) *}}{\left|\Aut(G,D,V_{\bullet})\right|} \left(\sum_{(g_V)_{V \in V_{\bullet}}} p_{(G,D)}^* (\beta_{(g_V)_{V\in V_{\bullet}}}) \prod_{V \in V_{\bullet}} \Psi_V^{g_V}\right) \right).
\end{equation}
for suitable classes $\beta_{(g_V)_{V \in V_{\bullet}}} \in A^*(\overline{\mathcal{J}}_{(G,D)})$ as in \eqref{mainformula}. Since $F^X$ and $H_V$ are pullback of $F^X_+$ and $H_V^+$, by the push-pull formula, and by Proposition~\ref{pushpsi}, we obtain that \eqref{one} equals
\begin{multline}\label{two} 
 \sum_{(G',D') \in \catJ} \frac{f_{(G',D') *}} {\left|\Aut(G',D')\right|} \\ \Bigg(\sum_{\substack{V'_{\bullet} \textrm{ a \gc in } \ext(G',D'); \\ h \in \oMor((G',D',V'_{\bullet}),(G,D,V_{\bullet}))\\ (g_V \geq 0)_{V \in V_{\bullet}}}}   h^*\beta_{{(g_{V)_{{V \in V_{\bullet}}}}}} \cdot c_{(G',D',V'_{\bullet})}(h^*((g_V)))\Bigg).
\end{multline}
For a tuple $(g_{V'}\geq -1)_{V'\in V'_\bullet}$ we define $h\colon (G',D',V'_\bullet)\to (G,D,V_\bullet)$ as the unique contraction with the property that $g_{V'}\geq 0$ if and only $V'=h^{-1}(V)$ for some $V\in V_\bullet$. That is, we contract each collection of vertices $V'$ such that $g_{V'}=-1$ (see \ref{PB gv}). We then define $\beta_{(g_{V'})_{V'\in V'_\bullet}}:=h^*(\beta_{(g_V)_{V\in V_\bullet}})$.
Formula~\eqref{two} can then be simplified to
\begin{equation}
    \sum_{(G',D') \in \catJ} \frac{f_{(G',D') *}}{\left| \Aut(G',D')\right|} \Bigg(\sum_{\substack{V' \textrm{ a \gc } \\ \textrm{in } \ext(G',D'); \\ (g_{V'} \geq -1)_{V' \in V'_{\bullet}}}} \beta_{{(g_{V'})_{{V' \in V'_{\bullet}}}}} \cdot  c_{(G',D',V'_{\bullet})}((g_{V'}))\Bigg).
\end{equation}
Now to obtain the final result, we rename $(G',D')$ and $V'_{\bullet}$ into $(G,D)$ and $V_{\bullet}$. Then we eliminate the indices $(g_V)$ by means of the equality \[g_V= -1+\sum (g_{e,V}+1)-\sum a_{e,V},\] and we replace the indices $(g_{e,V})$ with indices $(g_e)$ defined by
$g_e:=g_{e,k(e)}-\sum_{V\in S(e)}a_{e,V}$.
\end{proof}

As promised earlier, here we compare the $\psi$ classes on Jacobians with the classical ones on  moduli of curves.
\begin{remark} \label{relate-psi} Denote by $f \colon \mathcal{J}_{G,D} \to \overline{\mathcal{M}}_G$  the forgetful morphism. For every $e \in E(G)$,  we have:
\[
\Psi_{(G,D,e)}= f^*\Psi_{G,e}+ \Delta_{G,D,e},
\]
where $\Psi_{G,e}=-c_1(\mathbb{L}_e)$ is the first Chern class of the normal line bundle corresponding to the node $e$ on the stratum $\overline{\mathcal{M}}_G \to \Mmb gn$, and $\Delta_{G,D,e}$ is the divisor in $\mathcal{J}_{G,D}$  whose points represent sheaves that fail to be locally free at the edge $e$.
\end{remark}

\subsection{The case of disjoint blowups}
Our main results, Theorem~\ref{maintheorem} and Corollary~\ref{differenceinJbar} massively simplify in the case when the $m$ vine strata $\beta_1, \ldots, \beta_m$ are disjoint.  This is for example the case for all  hyperplanes on divisorial (or compact type) vine strata \eqref{walls1} (Proposition~\ref{prop:ctwalls}) --in this case $m$ equals $1$ and no blowup is required--, and for all hyperplanes of the form \eqref{walls2} with $S \neq [n]$ (Proposition~\ref{prop:good_disjoint}).

In each of these cases, the category $\catJtildeE$ only contains the terminal object and the resolved strata $(\beta_i,V^i_{\bullet})$ where $V^i_\bullet = \{V_i\}$ contains only the one vertex set $V_i=\{\leg_{\beta_i}(1)\}$ for all $i=1, \ldots,m$ (Proposition~\ref{prop:good_disjoint}). 

Recall the notation from the previous section.
We set $X_i^+$, $Y_i^+$ (respectively, $X_i$, $Y_i$) to be the two components over $\beta_i$ (respectively, over $(\beta_i,V^i_\bullet)$). We denote by $g_{Y_i}$ the genus of the fiber of $Y_i$ and by $d_{Y_i}$ the degree of the universal line bundle on $Y_i$. We also set $F^{Y_i}_+=F_{V_i}^+$ and $F^{Y_i}=F_{V_i}$.  Let $t_i$ be the number of nodes of a general curve in $\beta_i$, so $|\Aut(\beta_i)|=t_i!$.

Then we have:
\begin{corollary} \label{disjoint} When the hyperplane $H=H(\phi^+,\phi^-)$ is such that the vine strata $\beta_1, \ldots, \beta_m$ are pairwise disjoint, Formula~\eqref{mainformula} simplifies to
\begin{equation}
 \sum_{i=1}^m \sum_{s_i, j_i,k_i \geq 0}  \frac{1}{t_i!} \binom{g_{Y_i}-d_{Y_i}-j_i-1}{k_i+1} \cdot {f_{{E_i} \ast}} \big( c_{s_i}(F^{X_i}) \cdot c_{j_i}(F^{Y_i}) \cdot \Psi_i^{k_i}\big)
\end{equation}
\end{corollary}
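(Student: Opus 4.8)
The plan is to specialize the general wall-crossing formula \eqref{mainformula} to the hypothesis that the vine curves $\beta_1,\ldots,\beta_m$ are pairwise disjoint, and then to carry out the corresponding simplification step by step. First I would invoke Proposition~\ref{prop:good_disjoint} (together with Proposition~\ref{prop: transv}/Proposition~\ref{prop: transversal-self} in the compact-type case \eqref{walls1}, where $m=1$) to identify exactly which resolved strata appear in $\catJtildeE$: the terminal object (which is excluded from the sum in \eqref{mainformula}) and, for each $i$, the single resolved stratum $\Gamma_i=(\beta_i,V^i_\bullet)$ with $V^i_\bullet=\{V_i\}$, $V_i=\{\leg_{\beta_i}(1)\}$. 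In particular each such $\Gamma_i$ is a rank~$1$ object of $\catJtildeE$, so $f_{\Gamma_i}=f_{E_i}$, and since $\beta_i=G(i_\ast,t_i,S)$ is a vine curve with $t_i$ nodes we have $|\Aut(\Gamma_i)|=t_i!$.

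Next I would plug $\Gamma=\Gamma_i$ into the inner sum of \eqref{mainformula}. Because $V_\bullet=\{V_i\}$ is a single element (with no element above it), the index set over $(j_V)_{V\in V_\bullet}$, $(k_V)_{V\in V_\bullet}$, $s$ collapses to three nonnegative integers $s_i,j_i,k_i$. The class $\widetilde H_{V_i}$ reduces to $\widetilde F_{V_i}=F^{Y_i}$ (there is no $V'\gtrdot V_i$ to subtract), and $\widetilde F^X=F^{X_i}$; the only $\Psi$-class is $\Psi_{V_i}=\Psi_i$. The coefficient $\prod_{V\in V_\bullet} b_{G,D,V}$ reduces to the single factor $b_{\beta_i,D_i,V_i}(j_i;k_i)=-\binom{k_i+g_{Y_i}-d_{Y_i}-j_i+(k_i+1)}{k_i+1}$ — here I would be careful to unwind the definition of $b$ with $\sum_{V'\ge V}j_{V'}=j_i$ and $k_{V'}+1=k_i+1$, and check that $g_V=g_{Y_i}$ and $d_V=d_{Y_i}$ match the conventions set just before the statement of the corollary (the degree/genus of the $Y$-component). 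The overall sign $-f_{\Gamma\ast}/|\Aut(\Gamma)|$ combines with the $-1$ inside $b$ to give the positive $\binom{g_{Y_i}-d_{Y_i}-j_i-1}{k_i+1}/t_i!$ displayed in the statement, after the elementary identity $\binom{k_i+g_{Y_i}-d_{Y_i}-j_i+k_i+1}{k_i+1}=\binom{g_{Y_i}-d_{Y_i}-j_i-1}{k_i+1}$ (a sign flip $\binom{-n-1}{r}=(-1)^r\binom{n+r}{r}$ type manipulation, combined with the cancellation of the two overall minus signs). Summing the contributions of the $m$ disjoint strata then yields exactly the claimed formula.

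I expect the only real content of the proof to be bookkeeping: checking that the general combinatorial data of \eqref{mainformula} genuinely degenerates as claimed when $\ext(G,D)$ has the trivial structure guaranteed by Proposition~\ref{prop:good_disjoint}, and matching the binomial coefficient in $b_{G,D,V}$ with the one in the statement. The main (minor) obstacle is the sign and index reconciliation in the binomial coefficient and making sure the definitions of $g_{Y_i}$, $d_{Y_i}$, $F^{X_i}$, $F^{Y_i}$ used in the corollary coincide with the restrictions of $\widetilde F^X$, $\widetilde H_V$ from Theorem~\ref{maintheorem}; since $\mathcal{L}$ is the pullback of $\mathcal{L}^+$, these all agree with the $+$-versions, so no further geometric input is needed. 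The proof is therefore short, and I would present it essentially as ``apply Proposition~\ref{prop:good_disjoint} to \eqref{mainformula}, observe that each surviving stratum contributes a single term, and simplify the coefficient.''
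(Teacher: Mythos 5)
Your overall route is the same as the paper's: the proof there is essentially a one-liner (``follows immediately from Theorem~\ref{maintheorem}''), together with the observation that the minus sign in the definition of $b_{G,D,V}$ cancels against the minus sign in front of $f_{\Gamma\ast}$ in \eqref{mainformula}. Your identification of the surviving strata (the terminal object, excluded from the sum, plus the rank-one strata $\Gamma_i=(\beta_i,\{V_i\})$ with $|\Aut(\Gamma_i)|=t_i!$, $\widetilde{H}_{V_i}=\widetilde{F}_{V_i}=F^{Y_i}$, $\widetilde{F}^X=F^{X_i}$, $\Psi_{V_i}=\Psi_i$) is correct and is exactly what the paper intends.

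However, the coefficient step contains a genuine error. You substitute into $b_{G,D,V}$ to get $-\binom{k_i+g_{Y_i}-d_{Y_i}-j_i+(k_i+1)}{k_i+1}$ and then invoke the ``elementary identity'' $\binom{k_i+g_{Y_i}-d_{Y_i}-j_i+k_i+1}{k_i+1}=\binom{g_{Y_i}-d_{Y_i}-j_i-1}{k_i+1}$. This identity is false: for $k_i=0$ and $g_{Y_i}-d_{Y_i}-j_i=2$ the left-hand side is $\binom{3}{1}=3$ while the right-hand side is $\binom{1}{1}=1$; moreover upper negation $\binom{-n-1}{r}=(-1)^r\binom{n+r}{r}$ cannot produce it, since it would introduce an extra sign $(-1)^{k_i+1}$ that you have no way to absorb after already spending the two overall minus signs. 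The actual source of the simplification is the parsing of the definition of $b_{G,D,V}$: the term $(k_{V'}+1)$ is bound by the sum over $V'\geq V$ (otherwise the index $k_{V'}$ would be free), so the top of the binomial is $k_V+g_V-d_V-\sum_{V'\geq V}\bigl(j_{V'}+k_{V'}+1\bigr)$. For the singleton $V_\bullet=\{V_i\}$ this equals $g_{Y_i}-d_{Y_i}-j_i-1$ on the nose, no binomial manipulation is needed, and the only remaining point is the cancellation of the two minus signs, exactly as the paper's proof notes. This reading is also forced by consistency with Corollary~\ref{Cor: wc-goodwalls} and the formula in the introduction, where the same binomial $\binom{g_Y-d_Y-j-1}{g-d-j-s}$ appears with $k_i+1=g-d-j_i-s_i$. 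So your proposal reaches the right statement, but the justification of the key coefficient matching, which you yourself single out as the main point to check, does not go through as written.
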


\begin{proof} Follows immediately from Theorem~\ref{maintheorem}. Note the simplification of the minus sign in the definition of $b_{G,D,V}((j_V);(k_V))$ and the minus sign before $f_{\Gamma*}$ in Equation~\eqref{mainformula}.
\end{proof}

We can also recast the main result of \ref{differenceinJbar}.  by simply taking the pushforward along each $\mathbb{P}^{t_i-1}$ bundle $p_i \colon {E}_i \to {\beta}_i$. Note that the K-theory elements $F^{X_i}$ and $F^{Y_i}$ are pullbacks of corresponding elements $F^{X_i}_+$ and $F^{Y_i}_+$ on ${\beta}_i$.   For each $i$ and $1 \leq r_i \leq t_i$, let $\Psi_{i,r_i}$ be the first Chern class of the conormal bundle to the $r_i$-th gluing on the resolved stratum ${\beta}_i$.
\begin{corollary} \label{Cor: wc-goodwalls}
When the hyperplane $H=H(\phi^+,\phi^-)$ is such that the vine strata $\beta_1, \ldots, \beta_m$ are pairwise disjoint, the formula in Corollary~\ref{differenceinJbar} equals
\begin{equation} \label{goodwalls} \small
\sum_{\substack{s_i+j_i+\lambda_i=g-d-t_i \\ \text{for all } i=1, \ldots, m}} \frac{1}{t_i!} \binom{g_{Y_i}-d_{Y_i}-j_i-1}{g-d-j_i-s_i} \cdot f_{{\beta_i} \ast}   \big(  c_{s_i}(F^{X_i}_+) \cdot c_{j_i}(F^{Y_i}_+) \cdot h_{\lambda_i}(\Psi_{i,1}, \ldots, \Psi_{i,t_i})\big)
\end{equation}
where $h_{\lambda_i}$ is the complete homogeneous polynomial of degree $\lambda_i$ in its entries.
\end{corollary}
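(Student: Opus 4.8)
The plan is to derive Corollary~\ref{Cor: wc-goodwalls} as the pushforward of Corollary~\ref{disjoint} along the union of projective bundles $p_i \colon E_i \to \beta_i$, where each $E_i = \mathbb{P}(\bigoplus_{r=1}^{t_i} \mathbb{L}_{i,r})$ with $\mathbb{L}_{i,r}$ the line bundles into which the conormal bundle to $f_{\beta_i}$ splits, with first Chern classes $-\Psi_{i,r_i}$ (see Remark~\ref{relate-psi} and Section~\ref{section: ncs}, Item~(3)). Since the vine curves $\beta_1, \ldots, \beta_m$ are pairwise disjoint, by Corollary~\ref{differenceinJbar} combined with Corollary~\ref{disjoint}, the class $\mathsf{w}_d(\phi^+) - \mathsf{Id}^* \mathsf{w}_d(\phi^-)$ is a sum over $i$ of the degree $g-d$ parts of $p_{i *} f_{E_i *}(\ldots)$, and by the push-pull formula applied to $p_i \circ f_{E_i} = f_{\beta_i} \circ p_i$ (where $F^{X_i}, F^{Y_i}$ are pullbacks of $F^{X_i}_+, F^{Y_i}_+$), it suffices to compute, for each $i$,
\[
p_{i *}\left( \Psi_i^{k_i} \right) \in A^*(\beta_i).
\]
Here $\Psi_i = -c_1(N_{E_i}\jphitilde)$; on the $\mathbb{P}^{t_i - 1}$-bundle $E_i$ the class $-\Psi_i = \zeta_i$ is the relative hyperplane class $c_1(\mathcal{O}_{E_i}(1))$ (because $E_i'$ is the exceptional divisor of the blowup of $\beta_i'$, so its normal bundle restricted to $E_i$ is the tautological quotient $\mathcal{O}(-1)$ on the projectivized conormal bundle).

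The key computation is then the standard Segre-class identity for projective bundles: for the bundle $N = \bigoplus_{r=1}^{t_i} \mathbb{L}_{i,r}$ of rank $t_i$ with $E_i = \mathbb{P}(N)$ (Grothendieck convention, so $\zeta_i = c_1(\mathcal{O}(1))$ satisfies the relation $\sum_{j=0}^{t_i} c_j(N^\vee) \zeta_i^{t_i - j} = 0$), we have $p_{i *}(\zeta_i^{t_i - 1 + \lambda}) = s_\lambda(N^\vee)$ where $s_\lambda$ is the $\lambda$-th Segre class; and since $N^\vee = \bigoplus \mathbb{L}_{i,r}^\vee$ has Chern roots $\Psi_{i,1}, \ldots, \Psi_{i,t_i}$, one has $s_\lambda(N^\vee) = h_\lambda(\Psi_{i,1}, \ldots, \Psi_{i,t_i})$, the complete homogeneous symmetric polynomial (this is the classical identity $\sum_\lambda s_\lambda t^\lambda = \prod_r (1 - \Psi_{i,r} t)^{-1}$). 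Writing $k_i = t_i - 1 + \lambda_i$, the constraint $s_i + j_i + k_i = g - d - t_i$ from Corollary~\ref{disjoint} (after extracting the degree $g-d$ part, noting $\codim(\beta_i) = t_i$ and that each $c_{s_i}(F^{X_i})$, $c_{j_i}(F^{Y_i})$, $\Psi_i^{k_i}$ contributes its own degree) becomes $s_i + j_i + \lambda_i = g - d - 2t_i + 1 + t_i - 1 = g - d - t_i$ — wait, one must track this carefully: $\Psi_i^{k_i}$ pushes to degree $\lambda_i = k_i - t_i + 1$ on $\beta_i$, and $f_{\beta_i}$ raises codimension by $t_i$, so the total codimension in $\Jmb dgn(\phi^+)$ is $s_i + j_i + \lambda_i + t_i$, which must equal $g - d$; hence $s_i + j_i + \lambda_i = g - d - t_i$, matching \eqref{goodwalls}. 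The binomial coefficient transforms as $\binom{g_{Y_i} - d_{Y_i} - j_i - 1}{k_i + 1} = \binom{g_{Y_i} - d_{Y_i} - j_i - 1}{t_i + \lambda_i}$; using the constraint to rewrite $t_i + \lambda_i = g - d - j_i - s_i$ gives exactly $\binom{g_{Y_i} - d_{Y_i} - j_i - 1}{g - d - j_i - s_i}$ as in the statement.

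The main obstacle I anticipate is bookkeeping the normal-bundle conventions consistently: I must verify that $-\Psi_i$ is genuinely the relative $\mathcal{O}(1)$ (versus $\mathcal{O}(-1)$) on $E_i = \mathbb{P}(\text{conormal})$ — equivalently that $E_i$ is the projectivization of the conormal rather than the normal bundle, and which Grothendieck/classical convention governs the Segre identity — because a sign or dualization error would replace $h_\lambda$ by $(-1)^\lambda e_\lambda$ or give a vanishing/wrong-range pushforward. This is pinned down by the blow-up construction: $E_i'$ is the exceptional divisor of $\bl_{\beta_i'}$, so as a stack $E_i' = \mathbb{P}(N_{\beta_i'/\Jmb dgn(\phi^+)})$ and $\mathcal{O}_{E_i'}(-1) = N_{E_i'/\bl}$, matching the description of $X_{\beta, \mf}$ in Section~\ref{Sec: blowup} as $\mathbb{P}(\bigoplus_{e \in \mf} \mathbb{L}_e)$ with $\mathbb{L}_e$ carrying $c_1 = -\Psi_{i,r}$; combined with $\Psi_i := -c_1(N_{E_i}\jphitilde)$ this forces $-\Psi_i = c_1(\mathcal{O}(1))$ and hence the $h_\lambda$ formula. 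Once the conventions are nailed down, the remainder is the routine Segre-class pushforward plus the binomial identity above, and the proof reduces to the one-line invocation that the statement itself indicates, namely substituting $p_{i*}(\Psi_i^{k_i}) = h_{\lambda_i}(\Psi_{i,1}, \ldots, \Psi_{i,t_i})$ into Corollary~\ref{disjoint} and re-indexing.
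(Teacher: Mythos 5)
Your proposal is correct and is essentially the paper's own proof: the paper deduces the corollary by exactly this push--pull along the $\mathbb{P}^{t_i-1}$-bundles $E_i \to \beta_i$ applied to Corollary~\ref{disjoint}, citing the fact $p_{i*}(\Psi_i^{k_i}) = h_{k_i-t_i+1}(\Psi_{i,1},\ldots,\Psi_{i,t_i})$ (which you justify via the Segre-class pushforward) and then re-indexing the binomial through $k_i+1 = g-d-j_i-s_i$ as you do. One convention slip to fix in your justification: since $N_{E_i}\jphitilde = \mathcal{O}_{E_i}(-1)$ on $E_i=\mathbb{P}\big(\bigoplus_r \mathbb{L}_{i,r}\big)$ and $\Psi_i:=-c_1(N_{E_i}\jphitilde)$, one has $\Psi_i = c_1(\mathcal{O}_{E_i}(1))$ (not $-\Psi_i$), and the relevant pushforward is $p_{i*}(\Psi_i^{t_i-1+\lambda}) = s_\lambda(N)$ with $N=\bigoplus_r \mathbb{L}_{i,r}$ whose Chern roots are $-\Psi_{i,r}$, so $s(N)=\prod_r(1-\Psi_{i,r})^{-1}=\sum_\lambda h_\lambda$ (not $s_\lambda(N^\vee)$, which would carry a sign $(-1)^\lambda$); with this correction your chain is sign-consistent and yields precisely the identity you substitute.
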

\begin{proof}
This follows from Corollary~\ref{differenceinJbar} or, more directly, by applying the push-pull formula to Corollary~\ref{disjoint} combined with the fact that the pushforward of $\Psi_i^{k_i}$ along ${E_i} \to {\beta}_i$ equals $h_{k_i-t_i+1}(\Psi_{i,1}, \ldots, \Psi_{i,t_i})$.
\end{proof}

We now analyze some even more special cases of this, already special, formula.

\begin{remark} (The ``compact type'' hyperplanes). A special case of Corollaries~\ref{disjoint} and~\ref{Cor: wc-goodwalls} occurs when $H(\phi^+, \phi^-)$ is a hyperplane of the form~\eqref{walls1}. In this case the generic locus where $\phi^+$ differs from $\phi^-$ is a compact type boundary divisor. In particular, $m$ equals $1$ and $\jphitilde \to \Jmb dgn (\phi^+)$ is the identity. The unique vine stratum $\beta=\beta_1$ consists of a boundary divisor $\Delta_{g-g_Y,S}\subset \Mmb{g}{n}$ decorated with a unique pair of $\phi^+$-stable bidegrees, say $(d-d_Y,d_Y)$. In this case the degree $g-d$ part of the formula in~\ref{disjoint} coincides  with the formula in~\ref{Cor: wc-goodwalls}, and they equal to
\begin{equation} \label{WC - ct}
\sum_{s+j+\lambda=g-d-1}  \binom{g_{Y}-d_{Y}-j-1}{g-d-j-s} \cdot f_{{\beta} \ast} \big( c_{s}(F^{X}_+) \cdot c_{j}(F^{Y}_+) \cdot \Psi^{\lambda}\big).
\end{equation}
\end{remark}

\subsection{Wall crossing in low codimension}

We now analyze the first few cases of our main result, ordered by codimension. 
\subsubsection{Codimension $1$} Let $d=g-1$. In this case the classes $\mathsf{w}_{g-1}(\phi)$ are divisors, also known under the name of \emph{theta divisors}. This case was the main result of \cite{kp2}.

Because  each $\mathsf{w}_{g-1}(\phi)$ is a divisor class and $\Jmb dgn (\phi^+)$ is nonsingular, the wall--crossing term equals zero across any hyperplane not of the form \eqref{walls1}. 

Assume that the hyperplane crossed is $H=H(g-g_Y, 1, S; d-d_Y+\frac{1}{2})$. Then Formula~\eqref{WC - ct} collapses and it gives \[\mathsf{w}_{g-1}(\phi^+)-\mathsf{Id}^*\mathsf{w}_{g-1}(\phi^-)= (g_Y-d_Y-1) \cdot [{\mathcal{J}}_\beta]\] for $\beta=(G(g-g_Y, 1, S), (d-d_Y, d_Y))$. This recovers \cite[Theorem~4.1]{kp2} after observing that the divisor ${\mathcal{J}}_\beta \subset \Jmb dgn(\phi^+)$ is the pullback of $\Delta_{g-g_Y,S} \subset \Mmb{g}{n}$.

\subsubsection{Codimension $2$} When $d=g-2$, the 
classes $\mathsf{w}_{g-2}(\phi)$ have codimension $2$. There are $2$ types of hyperplanes where the wall--crossing term is not zero.

If the hyperplane has the form \eqref{walls1},  the vine stratum $\beta$ is a boundary divisor. Assuming that $H$  and  $\beta$ are as in the previous paragraph, then Formula~\ref{WC - ct} reads
 \[f_{\beta \ast} \left( \binom{g_Y-d_Y-1}{g-d-1} c_1(F^X_+) + \binom{g_Y-d_Y-2}{g-d-1} c_1(F^Y_+) + \binom{g_Y-d_Y-1}{g-d} \Psi \right).\]

If the hyperplane is of type \eqref{walls2}, then the only cases when the formula is nontrivial is for $H=H(g-g_Y-1, 2, S, d-d_Y-1)$. While this hyperplane might witness a change in stability on more than $1$ vine stratum, the intersection of any  $2$ would occur in codimension $>2$ and hence it would not be relevant. We can read the wall--crossing term off Formula~\eqref{Cor: wc-goodwalls}:
\[
 \sum_{i=1}^m \binom{g_{Y_i}-d_{Y_i}-1}{g-d} [{\mathcal{J}}_{\beta_i}]
\]
Here $\beta_i$ for $i=1, \ldots, m$ is a vine graph of the form $(G(g-g_{Y_i}-1,2,S), (d-d_{Y_i}, d_{Y_i}))$  where the stability condition changes.  (If $S^c$ is not empty, then $m=1$).

\subsection{Pullbacks via Abel--Jacobi sections}

 Fix integers ${\bf d}=(k;d_1, \ldots, d_n)$, $\bf{f}=(f_{i,S})_{i,S}$, and let $\mathcal{L}=\mathcal{L}_{{\bf d}, {\bf f}}$ be the line bundle on the universal curve $\Cmb gn$ defined in Subsection~\ref{Sec: AJ}. Let then $\phi^+$ and $\phi^-$ be on opposite sides of a hyperplane $H$ (Definition~\ref{Def: opposite sides}), and such that $\mathcal{L}$ is $\phi^+$ stable. This defines an Abel--Jacobi section $\sigma=\sigma_{{\bf d},{\bf f}}\col \Mmb{g}{n}\to \Jmb{d}{g}{n}(\phi^+)$. We now compute the pullback of Formula~\ref{relation} via $\sigma$.
 
 For every $G \in G_{g,n}$, define the divisor $D=D_{{\bf d}, {\bf f}}$ on $G$ as the multidegree of $\mathcal{L}$ on any curve whose dual graph equals $G$. We then have a poset $\ext(G)=\ext(G,D)$, depending on $\phi^+, \phi^-$, defined in Section~\ref{sec: combo-of-wc}.

 The total space $\mathcal{C}_G \to \overline{\mathcal{M}}_G$ has one irreducilbe component $\mathcal{C}_v:=\mathcal{C}_{G,v}$ for each vertex $v$ of $G$. We redefine $\pi_v=\pi_{G, v}\col\mathcal{C}_v\to \overline{\mathcal{M}}_G$. Also, for each $V\subset V(G)$, we denote by $\pi_V\col \bigcup_{v\in V}\mathcal{C}_v\to \overline{\mathcal{M}}_G$ the induced map on the union. We write $X=X_{G}=\mathcal{C}_{\leg(1)}$ 
and $\Sigma = X\cap \mathcal{C}_{\{\leg(1)\}^c}$. We also write $Y_V=\mathcal{C}_{V^c}$ for every $V\subset V(G)$.
 
We then define the following K-theory elements in $\overline{\mathcal{M}}_G$
 \[ \small
 F^X_{{\bf d}, {\bf f}}:=- R^{\bullet} (\pi_X)_\ast \mathcal{L}(-\Sigma)_{|X}; \ \ F_V^{{\bf d}, {\bf f}}:=- R^{\bullet} (\pi_{V^c})_\ast (\mathcal{L}_{|Y_V}); \ \ 
H_V^{{\bf d}, {\bf f}}:=F_V-\sum_{\substack{V' \in V_{\bullet},\\ V' \gtrdot V}} F_{V'}.
 \]

The line bundle $\mathcal{L}$ also defines a, possibly rational, section $\sigma_-\colon \Mmb{g}{n} \dashrightarrow \Jmb dgn (\phi^-)$.
\begin{corollary} The difference
\[
\sigma^*(\mathsf{w}_d(\phi^+))-\sigma_{-}^*(\mathsf{w}_d(\phi^-))
\]
equals
\begin{multline} \label{differenceinMbar}
-\sum_{G \in G_{g,n}} \frac{1}{|\Aut(G)|} f_{G \ast} \\ \Bigg( \sum_{\substack{  V_{\bullet} \textrm{ a \gc in }  \ext(G), \\ s+\sum_{V} j_{V} + \sum g_{e} =g-d-|E(G)|}} \alpha(s,(j_V), (g_e)) \cdot  c_s(F^X_{{\bf d}, {\bf f}}) \cdot \prod_{V \in V_{\bullet}} c_{j_{V}} (H_{V}^{{\bf d}, {\bf f}}) \prod_{e} \Psi_{(G,e)}^{g_e}\Bigg) \end{multline}
where the coefficient $\alpha$ is defined in Equation~\eqref{eq:alpha}. \end{corollary}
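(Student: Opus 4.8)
The plan is to obtain Formula~\eqref{differenceinMbar} as the pullback of Corollary~\ref{differenceinJbar} along the Abel--Jacobi section $\sigma = \sigma_{{\bf d}, {\bf f}}$. The starting point is the observation that $\sigma$ is a section of the forgetful morphism $\Jmb dgn(\phi^+) \to \Mmb gn$, and therefore it is compatible with the stratifications: for each $G \in G_{g,n}$, the preimage $\sigma^{-1}(\mathcal{J}_{(G,D_{{\bf d},{\bf f}})}')$ is (an open subset of, up to the usual stacky subtleties) the stratum $\overline{\mathcal{M}}_G' \subseteq \Mmb gn$, where $D = D_{{\bf d},{\bf f}}$ is the multidegree of $\mathcal{L}_{{\bf d},{\bf f}}$ on the generic curve with dual graph $G$. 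The crucial combinatorial input is that the pseudodivisor part of every object of $\catJ$ that meets the image of $\sigma$ is automatically determined: since $\mathcal{L}_{{\bf d},{\bf f}}$ is an honest line bundle, its multidegree on a curve with dual graph $G$ is $D_{{\bf d},{\bf f}}$, with $E_G = \emptyset$. Hence the only strata of $\Jmb dgn(\phi^+)$ that pull back nontrivially under $\sigma$ are those of the form $(G, D_{{\bf d},{\bf f}})$, and the sum over $(G,D) \in \catJ$ in \eqref{relation} collapses to a sum over $G \in G_{g,n}$. This also identifies $\ext(G, D_{{\bf d},{\bf f}})$ with the poset $\ext(G)$ introduced in the statement.

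Next I would carry out the pullback term by term. The morphism $f_{(G,D)} \colon \mathcal{J}_{(G,D)} \to \Jmb dgn(\phi^+)$ and the gluing morphism $f_G \colon \overline{\mathcal{M}}_G \to \Mmb gn$ fit into a cartesian diagram with $\sigma$ and the forgetful morphism $\mathcal{J}_{(G,D)} \to \overline{\mathcal{M}}_G$ (here one uses that $\sigma$ restricted to $\overline{\mathcal{M}}_G$ lands in $\mathcal{J}_{(G,D)}$, being the section picking out $\mathcal{L}_{{\bf d},{\bf f}}$ with its fixed multidegree). Base change then gives $\sigma^* f_{(G,D) *} = f_{G *} (\sigma|_{\overline{\mathcal{M}}_G})^*$. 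It remains to identify the pullbacks of the individual classes appearing inside the pushforward: the $\psi$-classes $\Psi_{(G,D,e)}$ pull back to $\Psi_{(G,e)}$ (this is essentially Remark~\ref{relate-psi}, noting that since $\sigma$ selects a line bundle the divisor $\Delta_{G,D,e}$ of non-locally-free sheaves restricts to zero, so that $\sigma^* \Psi_{(G,D,e)} = f^* \Psi_{G,e} = \Psi_{(G,e)}$); and the K-theory classes $F^X_+$, $F_V^+$, $H_V^+$ defined in \eqref{defH}, being formed from $R^\bullet \pi_*$ of the tautological sheaf restricted to subcurves, pull back precisely to the classes $F^X_{{\bf d},{\bf f}}$, $F_V^{{\bf d},{\bf f}}$, $H_V^{{\bf d},{\bf f}}$ defined on $\overline{\mathcal{M}}_G$ via $\mathcal{L}_{{\bf d},{\bf f}}$, because the pullback of the universal quasistable family along $\sigma$ is (a quasistable modification of) $\mathcal{C}_G \to \overline{\mathcal{M}}_G$ and $\sigma^* \mathcal{L}_{\mathrm{tau}}(\phi^+) = \mathcal{L}_{{\bf d},{\bf f}}$ up to a twist by a line bundle pulled back from $\overline{\mathcal{M}}_G$, which does not affect the relevant degeneracy-type classes by the argument of Remark~\ref{independent of tauto}. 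The coefficients $\alpha(s, (j_V), (g_e))$ are purely combinatorial and unchanged.

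Putting these identifications together transforms \eqref{relation} into \eqref{differenceinMbar} directly. Finally one must account for the rational section $\sigma^-$: on the open locus where both $\sigma$ and $\sigma^-$ are defined (the complement of the indeterminacy locus of $\mathsf{Id}$, which is of codimension $\geq 1$, in fact $\geq 2$ away from the boundary and supported on vine curves), one has $\sigma^- = \mathsf{Id} \circ \sigma$, so $\sigma^*(\mathsf{Id}^* \mathsf{w}_d(\phi^-)) = (\sigma^-)^* \mathsf{w}_d(\phi^-)$; since everything is pulled back from the DM stack $\Mmb gn$ and both sides of \eqref{differenceinMbar} are genuine Chow classes, the equality of classes on the open locus extends by the standard fact that Chow groups are unaffected by removing closed substacks of positive codimension after one has already matched classes of the correct codimension --- here more cleanly, $\mathsf{Id}^* \mathsf{w}_d(\phi^-)$ is an honest class on all of $\Jmb dgn(\phi^+)$ by Corollary~\ref{differenceinJbar}, so $\sigma^*$ of it is well-defined globally and agrees with $(\sigma^-)^* \mathsf{w}_d(\phi^-)$ wherever $\sigma^-$ makes sense.

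I expect the main obstacle to be the careful bookkeeping of the cartesian-square / base-change step: one must verify that the diagram relating $f_{(G,D)}$, $f_G$, $\sigma$ and the forgetful morphisms is genuinely cartesian as stacks (including automorphism factors $|\Aut(G,D)|$ versus $|\Aut(G)|$ --- note that for the line-bundle multidegree $D_{{\bf d},{\bf f}}$ one has $\Aut(G,D) = \Aut(G)$, which is exactly what makes the denominators in \eqref{relation} and \eqref{differenceinMbar} match), and that the restriction of the universal quasistable family pulls back correctly so that the K-theory classes are genuinely identified rather than merely agreeing generically. Everything else is a mechanical substitution once these compatibilities are in place, and none of it requires re-deriving the combinatorial coefficient $\alpha$, which is inherited verbatim from Corollary~\ref{differenceinJbar}.
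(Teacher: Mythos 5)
Your route is exactly the paper's: the paper's entire proof is ``pull back Formula~\eqref{relation} (Corollary~\ref{differenceinJbar}) along $\sigma$'', and your proposal is a detailed execution of that pullback — collapsing the sum to the pairs $(G,D_{{\bf d},{\bf f}})$, base-changing the strata pushforwards to $f_{G*}$, matching $|\Aut(G,D_{{\bf d},{\bf f}})|=|\Aut(G)|$, killing $\Delta_{G,D,e}$ in Remark~\ref{relate-psi}, and identifying the $K$-theory inputs. All of this is consistent with what the paper leaves implicit.

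One justification you give is wrong as stated, though easily repaired. You dismiss the discrepancy $\sigma^*\mathcal{L}_{\text{tau}}(\phi^+)\cong \mathcal{L}_{{\bf d},{\bf f}}\otimes \pi^*M$ by appealing to Remark~\ref{independent of tauto}; but that remark only says the \emph{top} class $c_{g-d}$ is twist-invariant (the binomial in \eqref{tensorbylinebundle} vanishes precisely in degree $g-d$), whereas Formula~\eqref{relation} involves the individual Chern classes $c_s(F^X_+)$ and $c_{j_V}(H_V^+)$ in all degrees, and these do change under twisting by $\pi^*M$ (by the projection formula, $F^X_+\mapsto F^X_+\otimes M$, etc.). The correct fix is to note that Theorem~\ref{maintheorem} and Corollary~\ref{differenceinJbar} hold for \emph{any} choice of tautological sheaf, and then to normalize that choice along the section: replacing $\mathcal{F}_{\text{tau}}$ by $\mathcal{F}_{\text{tau}}\otimes q^*p^*M^{-1}$ (with $p$ the forgetful map to $\Mmb gn$, which is possible exactly because $\sigma$ is a section) one arranges $\sigma^*\mathcal{F}_{\text{tau}}\cong\mathcal{L}_{{\bf d},{\bf f}}$ on the nose, after which your termwise identifications $\sigma^*F^X_+=F^X_{{\bf d},{\bf f}}$, $\sigma^*H_V^+=H_V^{{\bf d},{\bf f}}$ are literally true rather than true only up to an uncontrolled twist. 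Finally, your closing argument that $\sigma^*(\mathsf{Id}^*\mathsf{w}_d(\phi^-))=(\sigma^-)^*\mathsf{w}_d(\phi^-)$ by ``extending from an open locus'' is not a valid general principle (restriction to opens is not injective on Chow groups); the clean reading, which the paper tacitly adopts, is that $\sigma_-^*\mathsf{w}_d(\phi^-)$ \emph{means} $\sigma^*$ of the class $\mathsf{Id}^*\mathsf{w}_d(\phi^-)=p_*p_-^*\mathsf{w}_d(\phi^-)$ on $\Jmb dgn(\phi^+)$, so no extension argument is needed.
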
 
\begin{proof}
    Follows directly by pulling back \eqref{relation} via $\sigma$.
\end{proof}
When the hyperplane $H$ is such that the vine curve strata  that fail $\phi^-$-stability are disjoint, the latter can be simplified, as for Formula~\eqref{goodwalls}.

\begin{remark} \label{PB-AJ} The pullback of Formula~\ref{relation} via the Abel--Jacobi section $\sigma_{{\bf d},{\bf f}}$ can be explicitly computed via \cite[Theorem~1]{prvZ}. 

 For a full forest $V_\bullet$ in $\ext(G)$, recall the definition of the next element from \eqref{def: next}.   Defining $Z_V:=\mathcal{C}_{\nex(V)\setminus V}$ and set $\Sigma_V=Z_V\cap \bigcup_{V'\gtrdot V} \mathcal{C}_{V'^c}$ and $\Sigma_V'=Z_V\cap  \mathcal{C}_{V}$, we have 
\[
H_V^{{\bf d},{\bf f}} = -R^\bullet(\pi_V)_{\ast}((\mathcal{L}_{{\bf d},{\bf f}})_{|Z_{V}}(-\Sigma_V)),
\]
 and the line bundle $(\mathcal{L}_{{\bf d},{\bf f}})_{|Z_{V}}(-\Sigma_V) $ equals
 \[
\omega_{Z_V/\overline{\mathcal{M}}_G}^k\Bigg(k\Sigma'_V+(k-1)\Sigma_V+\sum_{\substack{\leg(j)\in \\ \nex(V)\setminus V}}d_jP_j+\sum_{\leg(S^c)\subseteq \nex(V)\setminus V} f_{i,S^c}C_{i, S^c}|_{Z_V}\Bigg).
 \]
We note that $Z_V$ is the disjoint union $\mathcal{C}_v$ for $v\in \nex(V)\setminus V$ (see Proposition~\ref{prop:properties_next}). Moreover, the line bundle above, restricted to each one of these components, is precisely the pullback via the projection $\overline{\mathcal{M}}_G\to \overline{\mathcal{M}}_{g(v),\text{val}(v)}$ of a line bundle as in \cite[Formula~0.1]{prvZ}.
\end{remark}

\begin{example} \label{ex-AJ} As an illustration of Remark~\ref{PB-AJ}, we write the simpler case where $H$ corresponds to changing the stability condition on a single vine stratum $\beta$ with $t$ nodes, components of genus $g_X$ and $g_Y$, with markings $S_\beta$ and $S_\beta^c$ respectively. Then $\overline{\mathcal{M}}_\beta= \overline{\mathcal{M}}_{g_X,|S_\beta|+t}\times \overline{\mathcal{M}}_{g_Y,|S_\beta^c|+t}$ and we denote by $p_X$ and $p_Y$ the projections. In this case we have that
\begin{align*}
F^X_{\mathcal{L}} =&p_X^*\Bigg( -R^\bullet(\pi_\ast^X)\Bigg(\omega_X^k((k-1)\Sigma+\sum_{j\in S_\beta}d_jP_j+\sum_{\substack{i\leq g_X \\ 1\in S\subseteq S_\beta}}f_{i,S_\beta\setminus S} \cdot C^X_{i,S_\beta\setminus S})\Bigg)\Bigg)\\
F^Y_{\mathcal{L}} =&p_Y^*\Bigg( -R^\bullet(\pi_\ast^Y)\Bigg(\omega_Y^k(k\Sigma+\sum_{j\in S_\beta^c}d_jP_j+\sum_{\substack{i\leq g_Y \\  S\subseteq S_\beta^c}}f_{i,S_\beta^c\setminus S} \cdot C^Y_{i,S_\beta^c\setminus S})\Bigg)\Bigg)\\
\end{align*}
 Formula~\eqref{differenceinMbar} for the difference $\sigma^*(\mathsf{w}_d(\phi^+))-\sigma_{-}^*(\mathsf{w}_d(\phi^-))$ in this case becomes
\[
\sum_{\substack{s+j+\lambda\\=g-d-t}} \binom{g_{Y}-d_{Y}-j-1}{g-d-j-s} \frac{f_{{\beta} *}}{t!} \Bigg(  c_{s}( F^X_{\mathcal{L}}) \cdot c_{j}(F^Y_{\mathcal{L}}) \cdot h_{\lambda}(\Psi_{1}, \ldots, \Psi_{t})\Bigg).
\]
The Chern classes above are computed in \cite[Theorem~1]{prvZ}.
\end{example}

\bibliographystyle{alpha}
\bibliography{biblio-curves}

\end{document}